\numberwithin{equation}{subsection}
\let\oldtocsection=\tocsection
\let\oldtocsubsection=\tocsubsection
\let\oldtocsubsubsection=\tocsubsubsection
\renewcommand{\tocsection}[2]{\hspace{0em}\oldtocsection{#1}{#2}}
\renewcommand{\tocsubsection}[2]{\hspace{1em}\oldtocsubsection{#1}{#2}}
\renewcommand{\tocsubsubsection}[2]{\hspace{2em}\oldtocsubsubsection{#1}{#2}}
\newtheorem{theorem}[equation]{Theorem}
\newtheorem{lemma}[equation]{Lemma}
\newtheorem{conjecture}[equation]{Conjecture}
\newtheorem{definition}[equation]{Definition}
\newtheorem{corollary}[equation]{Corollary}
\theoremstyle{definition}
\newtheorem{remark}[equation]{Remark}
\renewcommand{\qedsymbol}{{\vrule height5pt width5pt depth1pt}}
\newcommand{\be}{\begin{equation}}
\newcommand{\ee}{\end{equation}}
\newcommand{\bes}{\begin{equation*}}
\newcommand{\ees}{\end{equation*}}
\newcommand{\cD}{\mathcal{D}}
\newcommand{\cH}{\mathcal{H}}
\newcommand{\cK}{\mathcal{K}}
\newcommand{\cL}{\mathcal{L}}
\newcommand{\cM}{\mathcal{M}}
\newcommand{\cF}{\mathcal{F}}
\newcommand{\cA}{\mathcal{A}}
\newcommand{\cB}{\mathcal{B}}
\newcommand{\cO}{\mathcal{O}}
\newcommand{\cS}{\mathcal{S}}
\newcommand{\cT}{\mathcal{T}}
\newcommand{\cI}{\mathcal{I}}
\newcommand{\mb}[1]{\mathbb{#1}}
\newcommand{\rank}{\operatorname{rank}}
\newcommand{\Lat}{\operatorname{Lat}}
\newcommand{\Id}{\operatorname{Id}}
\newcommand{\alg}{\operatorname{alg}}
\newcommand{\Aut}{\operatorname{Aut}}
\newcommand{\dist}{\operatorname{dist}}
\newcommand{\Mult}{\operatorname{Mult}}
\newcommand{\spn}{\operatorname{span}}
\newcommand{\wot}{\textsc{wot}}
\newcommand{\Han}{\operatorname{Han}}
\newcommand{\mcc}{M\textsuperscript{c}Carthy}
\let\@wraptoccontribs\wraptoccontribs
\begin{document}

\title[Drury-Arveson space]{Operator theory and function theory in Drury-Arveson space and its quotients}

\author{Michael Hartz}
\address{Fachrichtung Mathematik, Universit\"at des Saarlandes, 66123 Saarbr\"ucken, Germany}
\email{hartz@math.uni-sb.de}
\thanks{M.H. was partially supported by the Emmy Noether Program
of the German Research Foundation (DFG Grant 466012782).}

\author{Orr Moshe Shalit}
\thanks{O.S. is supported by ISF Grant no. 431/20.}
\address{Technion Israel Institute of Technology\\
Technion City, Haifa\; 3200003\\
Israel}
\email{oshalit@technion.ac.il}

\begin{abstract}
The Drury-Arveson space $H^2_d$ (also known as {\em symmetric Fock space} or {\em the $d$-shift space}), is the reproducing kernel Hilbert space on the unit ball of $\mathbb{C}^d$ with the kernel $k(z,w) = (1 - \langle z, w \rangle )^{-1}$. The operators $M_{z_i} : f(z) \mapsto z_i f(z)$, arising from multiplication by the coordinate functions $z_1, \ldots, z_d$, form a commuting $d$-tuple $M_z = (M_{z_1}, \ldots, M_{z_d})$. The $d$-tuple $M_z$ --- which is called the {\em $d$-shift} --- gives the Drury-Arveson space the structure of a Hilbert module. 

This Hilbert module is arguably the correct multivariable generalization of the Hardy space on the unit disc $H^2(\mathbb{D})$. It turns out that the Drury-Arveson space $H^2_d$ plays a universal role in operator theory (every pure, contractive Hilbert module is a quotient of an ampliation of $H^2_d$) as well as in function theory (every irreducible complete Pick space is essentially a restriction of $H^2_d$ to a subset of the ball). 
These universal properties resulted in the Drury-Arveson space being the subject of extensive studies, and the theory of the Drury-Arveson is today broad and deep. 

This survey aims to introduce the Drury-Arveson space, to give a panoramic view of the main operator theoretic and function theoretic aspects of this space, and to describe the universal role that it plays in multivariable operator theory and in Pick interpolation theory. 
\end{abstract}

\maketitle

\tableofcontents
   
\section{Introduction}

The {\em Drury-Arveson space} is a Hilbert function space which plays a universal role in operator theory as well as function theory. This space, denoted $H^2_d$ (or sometimes $\cF_+(E)$), and also known as the {\em $d$-shift space}, {\em Arveson's Hardy space} or the {\em symmetric Fock space}, has been the object of intensive study in the last fifteen years or so. Arguably, it is the subject of so much interest because it is the correct generalization of the classical Hardy space $H^2(\mb{D})$ from one variable to several. The goal of this survey is to collect together various important features of $H^2_d$, with detailed references and sometimes proofs, so as to serve as a convenient reference for researchers working with this space. 

Of course, a Hilbert space is a Hilbert space, and any two are isomorphic. Thus, when one sets out to study the Drury-Arveson space one is in fact interested in a certain concrete realization of Hilbert space which carries some additional structure. The additional structures are of two kinds: operator theoretic or function theoretic. For the operator theorist, the object of interest is the space $H^2_d$ together with a particular $d$-tuple $S = (S_1,\ldots, S_d)$ of commuting operators called the $d$-shift; in other words, the object of interest is a {\em Hilbert module} over the algebra $\mb{C}[z]$ of polynomials in $d$ variables. The function theorist would rather view $H^2_d$ as a Hilbert space comprised of functions on the unit ball $\mb{B}_d$ of $\mb{C}^d$, in which point evaluation is a bounded functional --- in other words: a {\em Hilbert function space}. 

There are many Hilbert modules and many Hilbert function spaces that one may study. Many of the results presented below have versions that work in other spaces. This survey focuses on the results in Drury-Arveson space for three reasons. First, as is explained below, $H^2_d$ is a universal object both as a Hilbert module and as Hilbert functions space, and results about $H^2_d$ have consequences in other spaces of interest. Second, $H^2_d$ is an interesting object of study in itself: being a natural analogue of $H^2(\mb{D})$ it enjoys several remarkable properties, and it could be useful to have an exposition which treats various facets of this space. Third, the study of $H^2_d$ is now quite developed, and can serve as a model for a theory in which multivariable operator theory and function theory are studied together. 

Most results are presented below without proof, but with detailed references. When a proof is presented it is usually because the result and/or the proof are of special importance. Sometimes a proof is also provided for a piece of folklore for which a convenient reference is lacking. 

\vskip 5pt
\noindent {\bf Prefatory note.} This survey was written by O.S. in 2014 for the 1st edition of the Springer reference work {\em Operator Theory}. 
In the decade that passed deep and interesting results on the Drury-Arveson space continued to be discovered, and the 2nd edition of {\em Operator Theory} is a good opportunity to collect and present them in concentrated form. 
O.S. invited M.H. to assist with the task of incorporating new results into this survey, and M.H. agreed write an appendix containing important developments that took place since the appearance of the survey. 
The first twelve sections of the survey remain largely unchanged, with only a few updates and corrections. 
New references have been added, most of which are cited in the appendix. The relatively recent survey papers \cite{FX19,Har22}  are cited at this point as they may serve the readers as useful alternative introductions to the subject. 

\vskip 5pt
\noindent {\bf Acknowledgements.}
The authors are grateful to the referees of the first and the second versions of this survey for their thoughtful and helpful feedback.

\section{Notation and terminology}\label{sec:notation}
\subsection{Basic notation}
Let $d$ be an integer or $\infty$ (the symbol $\infty$ will always stand for a countable infinity). $\mb{C}^d$ denotes $d$-dimensional complex Hilbert space.
$\mb{B}_d$ denotes the (open) unit ball in $\mb{C}^d$. The unit disc $\mb{B}_1$ is also denoted $\mb{D}$. 
It has become a convenient notational convention in the field to treat $d$ as a finite integer even when it is not. Some of the results are valid (or are known to be valid) only in the case of $d<\infty$, and these cases will be pointed out below. 

Let $H$ be a Hilbert space. The identity operator on $H$ is denoted by $I_H$ or $I$. If $M$ is a closed subspace of $H$ then $P_M$ always denotes the orthogonal projection from $H$ onto $M$. If $\cS$ is a subset of $H$, then $[\cS]$ denotes the closed subspace spanned by $\cS$. 
All operators below are assumed to be bounded operators on a separable Hilbert space. 

If $z_1, \ldots, z_d$ are $d$ commuting variables, then let $z = (z_1, \ldots, z_d)$ and write $z^\alpha$ for the product $z_1^{\alpha_1} \cdots z_d^{\alpha_d}$ for every multi-index $\alpha = (\alpha_1, \ldots, \alpha_d) \in \mb{N}^d$. The algebra of polynomials in $d$ commuting variables is denoted $\mb{C}[z_1, \ldots, z_d]$ or $\mb{C}[z]$ (this has an obvious interpretation also when $d=\infty$). The symbols $\alpha!$ and $|\alpha|$ are abbreviations for $\alpha_1! \cdots \alpha_d!$ and $|\alpha| = \alpha_1 + \ldots + \alpha_d$, respectively.

For the purposes of this survey, a function $f : \mb{B}_d \rightarrow \mb{C}$ is said to be {\em analytic} if it can be expressed as an absolutely convergent power series $f(z) = \sum_{\alpha\in \mb{N}^d} c_\alpha z^\alpha$ (when $d < \infty$ this is equivalent to the usual local definition). $\cO(\mb{B}_d)$ will denote the analytic functions on $\mb{B}_d$.

\subsection{Tuples of operators}\label{subsec:tuples} A $d$-tuple of operators on a Hilbert space $H$ is denoted $T = (T_1, \ldots, T_d)$. If $A$ and $B$ are $d$-tuples on two Hilbert spaces $H$ and $K$, and $U: H \rightarrow K$ is a unitary such that $UA_iU^* = B_i$ for all $i=1, \ldots, d$, then one says that $A$ and $B$ are {\em unitarily equivalent}, and one writes $UAU^* = B$. Similarly, one writes $T^*$ for the tuple $(T_1^*, \ldots, T_d^*)$, and so forth.

\subsection{Commuting and essentially normal tuples} Let $T$ be a $d$-tuple in $B(H)$. $T$ is said to be {\em commuting} if $[T_i,T_j] := T_i T_j - T_j T_i = 0$ for all $i,j$. If $T$ is a commuting contraction then for every $p \in \mb{C}[z]$ one may evaluate $p(T)$; for example, $T^\alpha = T_1^{\alpha_1} \cdots T_d^{\alpha_d}$.

A commuting tuple $T$ is said to be {\em normal} if $[T_i, T_j^*] = 0$ for all $i,j$, and {\em essentially normal} if $[T_i, T_j^*]$ is compact for all $i,j$.  Finally, a commuting tuple $T$ that satisfies $\textrm{trace} |[T_i, T_j^*]|^p < \infty$ is said to be {\em $p$-essentially normal}. 

$T$ is said to be {\em subnormal} if there is a Hilbert space $K \supseteq H$ and a normal $d$-tuple $N$ on $K$ such that $T = N\big|_H$.

\subsection{Row contractions and $d$-contractions} 
The tuple $T$ is said to be a {\em row contraction} if $\sum_{i=1}^d T_i T^*_i \leq I_H$ (when $d = \infty$ it is assumed that the partial sums are bounded by $I$, and hence that the sum converges in the strong operator topology to a positive operator less than the identity). Equivalently, this means that the row operator 
\[
\begin{bmatrix} T_1 &  T_2 & \cdots &  T_d \end{bmatrix} : \underbrace{H \oplus \cdots \oplus H}_{d \textrm{ times}} \rightarrow H
\]
is a contraction. With every row contraction one associates a completely positive map $\Theta_T : B(H) \rightarrow B(H)$ given by $\Theta_T(A) = \sum_{i=1}^d T_i A T_i^*$. Note that when $d = \infty$ the assumption $\sum T_i T_i^* \leq I$ ensures that $\sum_{i=1}^d T_i A T_i^*$ indeed converges in the strong operator topology. $T$ is said to be {\em pure} if $\Theta_T^n(I) \longrightarrow_{n \rightarrow \infty} 0$ in the strong operator topology. A commuting row contraction $T = (T_1, \ldots, T_d)$ is also called a {\em $d$-contraction}.  

\subsection{Defect operator and defect space}
\label{ss:defect}
The {\em defect operator} of a row contraction $T$ is the operator $\Delta_T = \sqrt{I - \Theta_T(I)}$, and the {\em defect space} is $\cD_T = \overline{\Delta_T H}$. The {\em rank} of $T$ is defined to be the dimension of the defect space, $\rank(T) = \dim(\cD_T)$. When no confusion may arise the notation $\Delta = \Delta_T$ is used.

\subsection{Hilbert modules} A popular and fruitful point of view for studying commuting operators on Hilbert space is that of {\em Hilbert modules} \cite{DougPaul} (see the chapters on Hilbert modules by Sarkar \cite{Sar25a,Sar25b} in this reference work). If $T$ is a commuting $d$-tuple on $H$, then $T$ induces on $H$ the structure of a Hilbert module via
\[
p\cdot h = p(T) h \,\, , \,\, p \in \mb{C}[z], h \in H.
\]
A Hilbert module is said to be {\em pure/contractive/of finite rank/essenitally normal/etc.}, if $T$ is pure/a row contraction/of finite rank/essenitally normal/etc., respectively. In \cite{DougPaul} Douglas and Paulsen put emphasis on Hilbert modules over {\em function} algebras, but Arveson \cite{Arv07} has found it useful to consider Hilbert modules over $\mb{C}[z]$. In general there is a big difference between these approaches, but by Section \ref{sec:model_theory} below every pure contractive Hilbert module over $\mb{C}[z]$ is in fact a Hilbert module over a certain natural, canonical algebra of functions.

\subsection{Hilbert function spaces} A {\em Hilbert function space} is a Hilbert space $H$ consisting of functions on some space $X$, such that for every $x \in X$ the point evaluation $f \mapsto f(x)$ is bounded linear functional on $H$ (such spaces are also commonly referred to as {\em reproducing kernel Hilbert spaces}). The reader is referred to \cite{AM02} as a reference for Hilbert function spaces.

\section{Drury-Arveson space as a function space}\label{sec:function}

The Drury-Arveson space is named after Drury, who basically introduced it into multivariable operator theory \cite{Drury}, and after Arveson, who has brought this space to the center of the stage \cite{Arv98}. 

\subsection{$H^2_d$ as a graded completion of the polynomials}\label{subsec:completion}
The most elementary definition of the Drury-Arveson space $H^2_d$ is as a graded completion of the polynomials \cite{Arv07}. Define an inner product on $\mb{C}[z]$ by setting
\be\label{eq:ort}
\langle z^\alpha, z^\beta \rangle = 0 \,\, , \,\, \textrm{ if } \alpha \neq \beta ,
\ee
and 
\be\label{eq:weights}
\langle z^\alpha, z^\alpha \rangle = \frac{\alpha!}{|\alpha|!}. 
\ee
The condition (\ref{eq:ort}) may seem natural, but the choice of weights (\ref{eq:weights}) might appear arbitrary at this point; see Section \ref{subsec:identification}. The completion of $\mb{C}[z]$ with respect to this inner product is denoted by $H^2_d$. It is clear that $H^2_d$ can be identified with the space of holomorphic functions $f: \mb{B}_d \rightarrow \mb{C}$ which have a power series $f(z) = \sum_\alpha c_\alpha z^\alpha$ such that 
\[
\|f\|^2_{H^2_d} \equiv \|f\|^2 := \sum_{\alpha} |c_\alpha|^2 \frac{\alpha!}{|\alpha|!} < \infty. 
\]

\subsection{$H^2_d$ as a Hilbert function space}\label{subsec:HFS} The space $H^2_d$ turns out to be the Hilbert function space on $\mb{B}_d$ determined by the kernel 
\be\label{eq:kernel}
k(z,w) = k_w(z) = \frac{1}{1-\langle z, w \rangle}. 
\ee
Indeed, for $|w|<1$, $k_w(z) = \sum_{n=0}^\infty \langle z, w \rangle^n  = \sum_{n=0}^\infty \sum_{|\alpha| = n} \frac{|\alpha|!}{\alpha!} \overline{w}^\alpha z^\alpha $  is clearly in $H^2_d$, and 
\be
f(w) = \sum_\alpha c_\alpha w^\alpha = \sum_\alpha c_\alpha \frac{|\alpha|!}{\alpha!} w^\alpha \langle z^\alpha, z^\alpha \rangle = \langle f, k_w \rangle . 
\ee
This shows that point evaluation is a bounded functional on $H^2_d$, so $H^2_d$ is a Hilbert function space \cite{AM02}, and it has the kernel (\ref{eq:kernel}). Since the only function that is orthogonal to all the kernel function $k_w$ is the zero function, $\spn \{k_w : w \in \mb{B}_d\}$ is dense in $H^2_d$. When $d = 1$, $H^2_d = H^2(\mb{D})$ is the usual Hardy space on the unit disc (see \cite{Garnett} for a thorough treatment of $H^2(\mb{D})$).

\subsection{The multiplier algebra of $H^2_d$}

As every Hilbert function space, $H^2_d$ comes along with its {\em multiplier algebra}
\[
\cM_d := \Mult(H^2_d) = \{f : \mb{B}_d \rightarrow \mb{C} \big| fh \in H^2_d \textrm{ for all } h \in H^2_d \} .
\]
To every multiplier $f \in \cM_d$ there is associated a {\em multiplication operator} $M_f : h \mapsto fh$. Standard arguments (see \cite{AM02}) show that $M_f$ is bounded and that 
\be\label{eq:ineq_norm}
\|f\|_\infty \leq \|M_f\| .
\ee
In the case $d = 1$ it is known that $\|f\|_\infty = \|M_f\|$ and that $\cM_1 = H^\infty(\mb{D})$ \cite{AM02}. 
The {\em multiplier norm} of $f \in \cM_d$ is given by 
\be\label{eq:mult_norm}
\|f\|_{\cM_d} = \|M_f\|, 
\ee
and this norm gives $\cM_d$ the structure of an operator algebra. On the other hand, $\cM_d$ is also an algebra of analytic functions contractively contained in $H^\infty(\mb{B}_d)$. It will be shown below that if $d>1$ then $\cM_d$ is strictly contained in $H^\infty$, the supremum norm is not comparable with the multiplier norm, and hence that $\cM_d$ is not a {\em function algebra}. 

A trivial but useful observation is that since $1 \in H^2_d$, one immediately obtains $\cM_d \subset H^2_d$, as spaces of functions.

\subsection{The $d$-shift}\label{subsec:dshift}

The most natural $d$-tuple of operators occurring in the setting of $H^2_d$ is the $d$-shift, given by $M_z = (M_{z_1}, \ldots, M_{z_d})$, where $z_1, \ldots, z_d$ are the coordinate functions in $\mb{C}^d$; thus
\be\label{eq:shift}
(M_{z_i}h)(z) = z_i h(z) \,\, , \,\, i=1, \ldots, d, \,\,\, \,h \in H^2_d .
\ee
It is straightforward that multiplication by every coordinate function is a bounded operator, hence the coordinate functions are all in $\cM_d$. In fact, by \ref{subsec:thedshift} and \ref{subsec:identification} below, $M_z$ is a pure row contraction. Consequently, $\mb{C}[z] \subseteq \cM_d$. 
When $d=1$ then the $d$-shift is nothing but the unilateral shift on $H^2(\mb{D})$.

\subsection{Homogeneous decomposition of functions}\label{subsec:homogeneous_dec}

Every $f \in \cO(\mb{B}_d)$ has a Taylor series $f(z) = \sum_{\alpha} a_\alpha z^\alpha$ convergent in $\mb{B}_d$, so in particular $f$ has a decomposition into its homogeneous parts:
\be\label{eq:homogeneous}
f(z) = \sum_{n=0}^\infty f_n(z) ,
\ee
where $f_n(z) = \sum_{|\alpha|=n} a_\alpha z^\alpha$, and the series (\ref{eq:homogeneous}) converges uniformly on compact subsets of the ball. When $f$ happens to be in one of the function spaces studied here then more can be said. 

For $f \in H^2_d$ the homogeneous components $f_n$ are all orthogonal one to another, the series (\ref{eq:homogeneous}) converges in norm and $\|f\|^2 = \sum \|f_n\|^2$. If $f \in \cM_d$ this is still true since $\cM_d \subset H^2_d$, but understanding (\ref{eq:homogeneous}) in terms of the structure of $\cM_d$ is a more delicate task. The series does not necessarily converge in norm (as can be seen by considering the case $d=1$). 

Recall that (\ref{eq:mult_norm}) allows one to consider $\cM_d$ as an algebra of operators on $H^2_d$. For $t \in \mb{R}$, let $U_t$ be the unitary on $H^2_d$ sending $h(z)$ to $h(e^{it}z)$, and let $\gamma_t$ be the automorphism on $B(H^2_d)$ implemented by $U_t$ . A computation shows that $\cM_d$ is stable under $\gamma$ and that $\gamma_t(f)(z) = f(e^{it}z)$ for $f \in \cM_d$. 

\begin{lemma}
For all $n=0,1,\ldots,$, the integral 
\[
\frac{1}{2\pi} \int_0^{2\pi} \gamma_t(f) e^{-int} dt
\]
converges in the strong operator topology to $f_n$. In particular, $\|f_n\|_{\cM_d} \leq \|f\|_{\cM_d}$. 
\end{lemma}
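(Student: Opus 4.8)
The plan is to verify the formula on the dense subspace of polynomials, then upgrade to all of $\cM_d$ by a uniform-boundedness argument. First I would fix $f \in \cM_d$ and, for $h \in H^2_d$, consider the vector-valued integral $\frac{1}{2\pi}\int_0^{2\pi} \gamma_t(f) e^{-int}\, dt \cdot h = \frac{1}{2\pi}\int_0^{2\pi} (M_{\gamma_t(f)} h) e^{-int}\, dt$. The integrand $t \mapsto (fh)(e^{it} z) e^{-int}$ is a continuous $H^2_d$-valued function of $t$ (continuity in norm follows because $U_t$ is a strongly continuous unitary group and $M_{\gamma_t(f)} h = U_t M_f U_{-t} h$), so the Riemann integral exists as a Bochner integral in $H^2_d$. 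Computing its $\alpha$-th Taylor coefficient (using that point/coefficient evaluation is bounded on $H^2_d$ and hence commutes with the integral), one gets $\frac{1}{2\pi}\int_0^{2\pi} e^{i(|\alpha| - n)t}\, dt$ times the $\alpha$-th coefficient of $fh$, which is $0$ unless $|\alpha| = n$. Hence the integral applied to $h$ equals the $n$-th homogeneous part of $fh$.

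Next I would identify the $n$-th homogeneous part of $fh$ with $f_n h$ in the relevant degree-wise sense: writing $f = \sum_m f_m$ and $h = \sum_k h_k$ as their homogeneous expansions (convergent in $H^2_d$, as recalled in \ref{subsec:homogeneous_dec}), the product $fh$ has homogeneous component $(fh)_n = \sum_{m+k = n} f_m h_k$ — but this is precisely the formula one would get by applying $f_n$, a homogeneous multiplier, and summing. More carefully: since the integral operator $\frac{1}{2\pi}\int_0^{2\pi}\gamma_t(\cdot)e^{-int}\,dt$ is linear and continuous in the \wot{} on $B(H^2_d)$, and since for a polynomial $p = \sum_m p_m$ it clearly sends $M_p$ to $M_{p_n}$ (this is the elementary case: $\gamma_t(M_{p}) = M_{p(e^{it}\cdot)} = \sum_m e^{imt} M_{p_m}$, and $\frac{1}{2\pi}\int_0^{2\pi} e^{i(m-n)t}\,dt = \delta_{mn}$), the computation of the previous paragraph shows that for general $f \in \cM_d$ the operator produced has the same action on every $h$ as $M_{f_n}$; in particular $f_n$ is a multiplier and the integral converges \wot{} (indeed strongly, once applied to a fixed $h$) to $M_{f_n}$.

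Finally, the norm bound $\|f_n\|_{\cM_d} \le \|f\|_{\cM_d}$ follows because $M_{f_n}$ is represented as an average of the operators $\gamma_t(M_f) = U_t M_f U_{-t}$ against the probability measure $\frac{1}{2\pi}e^{-int}\,dt$ of total variation $1$; each $U_t M_f U_{-t}$ has the same operator norm as $M_f$ (unitary conjugation), so $\|M_{f_n}\| = \sup_{\|h\| = \|g\| = 1} |\langle M_{f_n} h, g\rangle| = \sup \big| \frac{1}{2\pi}\int_0^{2\pi} \langle U_t M_f U_{-t} h, g\rangle e^{-int}\, dt \big| \le \|M_f\|$. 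The main obstacle — and the only point requiring genuine care rather than bookkeeping — is justifying that the Riemann sums approximating the integral converge in the strong operator topology to a bounded operator and that one may interchange the integral with coefficient functionals; this is handled by the norm-continuity of $t \mapsto \gamma_t(f) h$ noted above together with the uniform bound $\|\gamma_t(M_f)\| = \|M_f\|$, which makes the Riemann sums a Cauchy net in $H^2_d$ for each fixed $h$.
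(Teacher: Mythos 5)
Your overall strategy --- use strong continuity of $t \mapsto U_t$ to define the Bochner integral, extract Taylor coefficients via bounded functionals, and deduce the norm bound from unitary invariance --- is sound. There is, however, a concrete computational error in the middle of the argument.

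The integrand you write, $(fh)(e^{it}z)e^{-int}$, is not $M_{\gamma_t(f)}h$. You correctly assert $M_{\gamma_t(f)}h = U_t M_f U_{-t}h$, but this evaluates at $z$ to $f(e^{it}z)\,h(z)$, whereas $(fh)(e^{it}z) = f(e^{it}z)\,h(e^{it}z) = (U_t M_f h)(z)$. You have conflated $U_t M_f U_{-t}$ with $U_t M_f$. Your Taylor coefficient computation is consistent with the \emph{incorrect} integrand: the claim that the $\alpha$-th coefficient is $e^{i(|\alpha|-n)t}$ times the $\alpha$-th coefficient of $fh$ is exactly what happens for $U_t(fh)$, and the resulting Fourier average is the homogeneous component $(fh)_n$, a polynomial of degree $n$ --- which cannot equal $f_n h$ for general $h$.

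The attempted reconciliation --- ``$(fh)_n = \sum_{m+k=n} f_m h_k$ is precisely the formula one would get by applying $f_n$'' --- is false. The left-hand side is homogeneous of degree $n$, while $f_n h = \sum_k f_n h_k$ has components of every degree $\geq n$. These two errors happen to cancel in your final answer, but the middle of the proof is broken.

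The correct calculation: if $f = \sum_\beta a_\beta z^\beta$ and $h = \sum_\gamma b_\gamma z^\gamma$, then the $\alpha$-th coefficient of $U_t M_f U_{-t}h = f(e^{it}\cdot)\,h$ is $\sum_{\beta+\gamma=\alpha} e^{i|\beta|t}a_\beta b_\gamma$; the exponent records the degree of the $f$-factor only, not $|\alpha|$. Averaging against $e^{-int}$ picks out the terms with $|\beta|=n$, giving the $\alpha$-th coefficient of $f_n h$. Since the Bochner integral lies in $H^2_d$ and its Taylor coefficients agree with those of $f_n h$, one concludes $f_n h \in H^2_d$ (so $f_n$ is a multiplier) and the integral is $M_{f_n}h$. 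Your final norm bound via unitary invariance is correct as written.
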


For $r \in (0,1)$, the function $f_r(z) := f(rz)$ has homogeneous decomposition
\be\label{eq:f_r}
f_r(z) = \sum_{n=0}^\infty r^n f_n(z),  
\ee
and this series converges absolutely in the multiplier norm, by the lemma. Rewrite
\be\label{eq:f_rPoisson}
f_r = \frac{1}{2\pi} \int_0^{2\pi} \gamma_t(f) P_r(t) dt,
\ee
where $P_r(t)$ denotes the Poisson kernel on the disc. By well known techniques of harmonic analysis, one has the following theorem. 
\begin{theorem}\label{thm:Poisson}
Let $f \in \cM_d$, and for all $r \in (0,1)$ denote $f_r(z) = f(rz)$. Then $f_r \in \cM_d$, $\|f_r\|_{\cM_d} \leq \|f\|_{\cM_d}$, and the series (\ref{eq:homogeneous}) is Poisson summable to $f$ in the strong operator topology: $\lim_{r\rightarrow 1} f_r = f$. 
\end{theorem}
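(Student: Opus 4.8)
The plan is to reduce the theorem to a standard fact from harmonic analysis on the circle, applied to an operator-valued setting. The key object is the map $t \mapsto \gamma_t(f) \in \cM_d \subseteq B(H^2_d)$; since $\cM_d$ is stable under $\gamma$ and $\|\gamma_t(f)\|_{\cM_d} = \|f\|_{\cM_d}$ (the $\gamma_t$ are implemented by unitaries, so they are isometric on $B(H^2_d)$ and preserve the multiplier algebra), this is a bounded, $2\pi$-periodic function of $t$ with values in $\cM_d$, and it is continuous in the strong operator topology (this continuity should be checked first, using that $U_t \to U_{t_0}$ strongly as $t \to t_0$ and that $\{U_t\}$ is uniformly bounded). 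Formula (\ref{eq:f_rPoisson}), $f_r = \frac{1}{2\pi}\int_0^{2\pi} \gamma_t(f) P_r(t)\,dt$, then exhibits $f_r$ as the Poisson integral (in the Bochner / strong-operator sense) of this $\cM_d$-valued function. Granting that the integral defines an element of $\cM_d$, the bound $\|f_r\|_{\cM_d} \le \|f\|_{\cM_d}$ is immediate from $P_r \ge 0$ and $\frac{1}{2\pi}\int_0^{2\pi} P_r(t)\,dt = 1$.

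The first substantive step is therefore to verify (\ref{eq:f_rPoisson}) itself, i.e. that $f_r$, which is the analytic function $z \mapsto f(rz)$, really equals the claimed integral. By the preceding Lemma the $n$th Fourier coefficient of $t \mapsto \gamma_t(f)$ in the strong operator topology is $f_n$, and the $n$th Fourier coefficient of $P_r$ is $r^{|n|}$ (only $n \ge 0$ contribute here since $f$ is analytic); so the convolution $\frac{1}{2\pi}\int \gamma_t(f) P_r(t)\,dt$ has $n$th Fourier coefficient $r^n f_n$, which matches the homogeneous decomposition (\ref{eq:f_r}) of $f_r$. Making this rigorous amounts to: (i) showing the convolution integral converges strongly to some $g_r \in B(H^2_d)$, (ii) computing its strong-operator Fourier coefficients by interchanging the integral with $\gamma_t$-averaging (Fubini for Bochner-type integrals against the bounded kernel $P_r$), and (iii) concluding $g_r = f_r$ because both have the same strongly convergent Fourier expansion $\sum_n r^n f_n$, which converges absolutely in $\cM_d$-norm by the Lemma (here $\sum_n r^n \|f_n\|_{\cM_d} \le \sum_n r^n \|f\|_{\cM_d} < \infty$). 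This absolute convergence is what secures $f_r \in \cM_d$ with the stated norm bound, and it is cleaner to argue directly from it than from the integral.

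The second step is the limit $\lim_{r \to 1} f_r = f$ in the strong operator topology. Fix $h \in H^2_d$; one wants $\|f_r h - f h\|_{H^2_d} \to 0$. Write $fh = \sum_n (fh)_n$ (the homogeneous decomposition in $H^2_d$, which converges in $H^2_d$-norm by \ref{subsec:homogeneous_dec}), and note $(f_r h)(z) = f(rz)h(z)$; comparing homogeneous parts, $(f_r h)_m = \sum_{k=0}^m r^k (f_k h_{m-k})$ while $(fh)_m = \sum_{k=0}^m f_k h_{m-k}$, so $f_r h$ is obtained from $fh$ by the same Abel-type smoothing, $f_r h = \sum_m \rho_{r,m}$ with $\rho_{r,m}$ a weighted average of the homogeneous pieces. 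More efficiently: since $g \mapsto g_r$ on $H^2_d$ is the unitary-averaged Poisson operator $\frac{1}{2\pi}\int U_t P_r(t)\,dt$ — note $U_t(fh) = \gamma_t(f) U_t h = (f_r h \text{ after averaging})$, more precisely $(fh)_r = \frac{1}{2\pi}\int U_t(fh) P_r(t)\,dt$ — and Poisson means converge in norm to the identity on $H^2_d$ (again because $t \mapsto U_t(fh)$ is norm-continuous and periodic, so its Poisson means converge to it in $L^2$-sense at $t=0$), we get $f_r h = (fh)_r \to fh$ in $H^2_d$. One must check the identity $f_r h = (fh)_r$, which follows by comparing power series: both equal $\sum_\alpha (f h)_{\hat\alpha}\, r^{|\alpha|}$-type expressions since scaling $z \mapsto rz$ is an algebra homomorphism on analytic functions, i.e. $(fh)(rz) = f(rz)h(rz)$ — wait, that gives $(fh)_r = f_r h_r$, not $f_r h$; the correct statement is that $M_{f_r} = \frac{1}{2\pi}\int U_{-t} M_f U_t P_r(t)\,dt$ in the strong operator topology, which reproduces $f_r$ on the level of operators and gives $M_{f_r} h = \frac{1}{2\pi}\int U_{-t} M_f U_t h \,P_r(t)\,dt$. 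Then strong convergence of Poisson means of $t \mapsto U_{-t} M_f U_t h$ (a norm-continuous periodic $H^2_d$-valued function whose value at $0$ is $M_f h$) delivers $M_{f_r} h \to M_f h$.

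I expect the main obstacle to be bookkeeping rather than depth: carefully setting up the operator-valued (strong-operator-topology) integrals, justifying the Fubini interchange needed to identify Fourier coefficients, and pinning down the precise identity relating $M_{f_r}$ to the conjugation-averaged integral of $M_f$ against $P_r$. None of these is hard, but stating them so that "well known techniques of harmonic analysis" genuinely close the argument requires some care. Everything else — positivity and normalization of $P_r$, norm-continuity of $t \mapsto U_t\xi$, the absolute $\cM_d$-convergence $\sum r^n f_n$ — is routine and can be cited or dispatched in a line.
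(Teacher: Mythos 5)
Your proposal is correct and follows the same route the paper sketches: establish the Poisson‑integral representation $M_{f_r}=\frac{1}{2\pi}\int_0^{2\pi}\gamma_t(M_f)P_r(t)\,dt$ (equivalently, use the absolute $\cM_d$‑convergence of $\sum r^n f_n$ from the preceding lemma to get $f_r\in\cM_d$ with $\|f_r\|_{\cM_d}\le\|f\|_{\cM_d}$), then invoke strong convergence of Poisson means of the norm‑continuous $2\pi$‑periodic function $t\mapsto\gamma_t(M_f)h$ to its value at $t=0$. You spotted and fixed the one potential misstep (the false identity $(fh)_r=f_rh$) by passing to the operator‑conjugation formula, which, combined with the symmetry $P_r(t)=P_r(-t)$, is exactly what the paper's (\ref{eq:f_rPoisson}) encodes.
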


\subsection{The structure of $\cM_d$}\label{subsec:structure}

Denote by $\overline{\alg}^\wot (M_z)$ the unital weak-operator topology (\wot) closed operator algebra generated by the $d$-shift. The $d$-shift generates $\cM_d$ in the sense of the following theorem.

\begin{theorem}\label{thm:wot_gen}
The unital \wot -closed algebra generated by $M_z$ is equal to $\{M_f : f \in \cM_d\}$. 
\end{theorem}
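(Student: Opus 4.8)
The plan is to prove the two inclusions
\[
\{M_f : f \in \cM_d\} \subseteq \overline{\alg}^\wot(M_z)
\quad\text{and}\quad
\overline{\alg}^\wot(M_z) \subseteq \{M_f : f \in \cM_d\}.
\]
The second inclusion is the easy one: $\overline{\alg}^\wot(M_z)$ is by definition generated by $I = M_1$ and $M_{z_1},\ldots,M_{z_d}$, each of which is of the form $M_f$; the set $\{M_f : f \in \cM_d\}$ is a unital algebra (multipliers form an algebra and $M_f M_g = M_{fg}$); and it is \wot-closed, because if $M_{f_\lambda} \to A$ in \wot\ then for each $w$ one has $f_\lambda(w) = \langle M_{f_\lambda}^* k_w, k_w\rangle/\|k_w\|^2 \to \langle A^* k_w, k_w\rangle/\|k_w\|^2$, so the $f_\lambda$ converge pointwise to a function $f$, and then $\langle A h, g\rangle = \lim \langle f_\lambda h, g\rangle$ identifies $Ah$ with $fh$ for $h,g$ ranging over (a dense set of) functions, forcing $f \in \cM_d$ and $A = M_f$. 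Hence $\overline{\alg}^\wot(M_z)$, being the smallest \wot-closed unital algebra containing the $M_{z_i}$, sits inside $\{M_f : f\in\cM_d\}$.

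For the first inclusion, fix $f \in \cM_d$; I must approximate $M_f$ in \wot\ by polynomials in $M_z$. The idea is to use the homogeneous decomposition and the summability results already established. By Theorem~\ref{thm:Poisson}, $f_r \to f$ in the strong operator topology as $r \to 1$, so $M_{f_r} \to M_f$ in \wot; thus it suffices to approximate each $M_{f_r}$. By \eqref{eq:f_r} the series $f_r = \sum_n r^n f_n$ converges \emph{absolutely in the multiplier norm}, so its partial sums $s_N := \sum_{n=0}^N r^n f_n$ satisfy $M_{s_N} \to M_{f_r}$ in operator norm, hence in \wot. Finally each $f_n$ is a homogeneous polynomial of degree $n$ — a finite linear combination of monomials $z^\alpha$ with $|\alpha| = n$ — and $M_{z^\alpha} = M_{z_1}^{\alpha_1}\cdots M_{z_d}^{\alpha_d}$ is manifestly a polynomial in $M_z$, so $M_{s_N}$ lies in the \emph{norm-closed} (a fortiori \wot-closed) unital algebra generated by $M_z$. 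Chaining the three approximations — polynomials $\to M_{s_N} \to M_{f_r} \to M_f$ — places $M_f$ in $\overline{\alg}^\wot(M_z)$.

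The main obstacle is making sure that the ``obvious'' facts used in the easy inclusion are genuinely available: that the weights \eqref{eq:weights} make each $M_{z_i}$ bounded (so the $M_f$'s form an algebra of operators at all), and that pointwise convergence of multipliers together with \wot-convergence of the operators really does identify the limit as a multiplication operator. Both follow from the reproducing-kernel formalism of \cite{AM02} together with the facts recorded in \S\ref{subsec:dshift} and \S\ref{subsec:structure} (in particular that $\mathbb{C}[z] \subseteq \cM_d$ and $1 \in H^2_d$, so the kernel functions $k_w$ are cyclic and separate multipliers), so no new input is needed — one only has to assemble the summability machinery of Theorem~\ref{thm:Poisson} and equation \eqref{eq:f_r} with the elementary density argument.
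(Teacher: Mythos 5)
Your proof is correct and follows essentially the same route as the paper: both directions rest on the homogeneous decomposition $f_r = \sum_n r^n f_n$ (absolutely convergent in multiplier norm, so $M_{f_r}$ is in the norm-closed algebra generated by $M_z$) followed by the radial/Poisson limit $f_r \to f$ to pass from norm approximation to \wot\ approximation. The only cosmetic difference is that you invoke the SOT convergence stated in Theorem~\ref{thm:Poisson} directly, whereas the paper instead combines the bound $\|f_r\|_{\cM_d} \le \|f\|_{\cM_d}$ with the pointwise convergence $f_r \to f$ and the auxiliary lemma (bounded net of multipliers converges \wot\ iff it converges pointwise); both routes land in the same place, and yours is if anything a hair more direct. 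Your spelled-out argument for the ``easy'' inclusion (that $\{M_f : f \in \cM_d\}$ is a \wot-closed unital algebra) is the standard one and matches what the paper cites as a known fact. One small caveat worth flagging, which applies equally to the paper's own write-up: the assertion that each $f_n$ is a \emph{finite} linear combination of monomials is automatic only when $d < \infty$; for $d = \infty$ a homogeneous component may involve infinitely many monomials, and one would need an extra line to see that $M_{f_n}$ still lies in the norm-closed algebra generated by $M_z$.
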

The following lemma is required for the proof of the theorem. 
\begin{lemma}
Let $\{f_\alpha\}$ be a bounded net in $\cM_d$ that is bounded in the multiplier norm. If $f \in \cM_d$, then $M_{f_\alpha}$ converges to $M_f$ in the weak-operator topology if and only if $f_\alpha(z) \rightarrow f(z)$ for all $z \in \mb{B}_d$. 
\end{lemma}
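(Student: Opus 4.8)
The plan is to reduce everything to testing against reproducing kernels. Recall from Section~\ref{sec:function} that $H^2_d$ has reproducing kernel $k_w(z)=(1-\langle z,w\rangle)^{-1}$ and that $\spn\{k_w:w\in\mb{B}_d\}$ is dense in $H^2_d$. The one computation I need is that for any $g\in\cM_d$ and $u,w\in\mb{B}_d$,
\[
\langle M_g k_u,\,k_w\rangle \;=\; (M_g k_u)(w) \;=\; g(w)\,k_u(w)\;=\;\frac{g(w)}{1-\langle w,u\rangle},
\]
since $M_g k_u$ is the function $z\mapsto g(z)k_u(z)$; in particular $\langle M_g k_w,k_w\rangle = g(w)\,(1-|w|^2)^{-1}$ with $(1-|w|^2)^{-1}>0$.

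First I would dispatch the ``only if'' implication, which needs no norm bound. If $M_{f_\alpha}\to M_f$ in \wot, then in particular $\langle M_{f_\alpha}k_w,k_w\rangle\to\langle M_f k_w,k_w\rangle$ for each fixed $w\in\mb{B}_d$; by the displayed identity this says $f_\alpha(w)\,(1-|w|^2)^{-1}\to f(w)\,(1-|w|^2)^{-1}$, and dividing by the positive number $(1-|w|^2)^{-1}$ gives $f_\alpha(w)\to f(w)$.

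For the ``if'' implication, put $C:=\sup_\alpha\|f_\alpha\|_{\cM_d}<\infty$, so that the operators $T_\alpha:=M_{f_\alpha}-M_f$ are uniformly bounded, $\|T_\alpha\|\le K:=C+\|M_f\|$. From the displayed identity and the hypothesis $f_\alpha(w)\to f(w)$ I get $\langle T_\alpha k_u,k_w\rangle\to 0$ for all $u,w$, hence $\langle T_\alpha x,y\rangle\to 0$ for all $x,y$ in the dense subspace $D:=\spn\{k_w:w\in\mb{B}_d\}$. It then remains to upgrade this to all of $H^2_d$: given $g,h\in H^2_d$ and $\ep>0$, pick $x,y\in D$ with $\|g-x\|<\ep$ and $\|h-y\|<\ep$, and split
\[
|\langle T_\alpha g,h\rangle|\le|\langle T_\alpha(g-x),h\rangle|+|\langle T_\alpha x,h-y\rangle|+|\langle T_\alpha x,y\rangle|\le K\ep\|h\|+K\|x\|\ep+|\langle T_\alpha x,y\rangle|.
\]
Letting $\alpha$ run, the last term tends to $0$, so $\limsup_\alpha|\langle T_\alpha g,h\rangle|\le K\ep(\|h\|+\|x\|)$; since $\ep$ is arbitrary, $\langle T_\alpha g,h\rangle\to 0$, i.e. $M_{f_\alpha}\to M_f$ in \wot.

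I do not expect a genuine obstacle here. The only place where the multiplier bound is used is the final step of the ``if'' direction: without uniform boundedness of $\{M_{f_\alpha}\}$ the $\ep/3$ reduction to the dense set $D$ breaks down, and pointwise convergence of the symbols $f_\alpha$ gives no control over $\langle M_{f_\alpha}g,h\rangle$ for general $g,h$. Everything else is just the reproducing-kernel identity together with density of the kernel functions.
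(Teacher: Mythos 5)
Your proof is correct, and it follows the standard reproducing-kernel argument that any proof of this lemma would use: the paper simply cites \cite[Lemma 11.10]{DRS11}, where the same strategy appears (kernel functions are eigenvectors of $M_g^*$, so pointwise convergence of symbols is equivalent to weak convergence on the dense span of kernels; uniform boundedness then extends this to all of $H^2_d$). Your write-up is clean and correctly identifies where the multiplier-norm bound is actually needed --- only in the $\ep/3$ density argument, not in the ``only if'' direction.
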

\begin{proof}
See, e.g., \cite[Lemma 11.10]{DRS11}. 
\end{proof}
\noindent {\bf Proof of Theorem \ref{thm:wot_gen}}. Every multiplier algebra is \wot -closed, so $\overline{\alg}^\wot (M_z)$ is contained in $\{M_f : f \in \cM_d\}$. Let $f \in \cM_d$. For $r \in (0,1)$, define $f_r(z) = f(rz)$. Then by (\ref{eq:f_r}) $M_{f_r}$ is in the norm closed algebra generated by $M_z$. By \ref{thm:Poisson} the net $\{f_r\}_{r \in (0,1)}$ is bounded by $\|f\|$. Since $f _r \rightarrow f$ pointwise, the lemma implies the \wot -convergence $M_{f_r} \rightarrow M_f$. \qedsymbol

The above theorem allows one to make the identification 
\be
\cM_d = \overline{\alg}^\wot(M_z) . 
\ee

\subsection{The strict containment $\cM_d \subsetneq H^\infty(\mb{B}_d)$}

When $d = 1$, $H^2_d = H^2(\mb{D})$ is the usual Hardy space, its multiplier algebra is equal to $H^\infty$, and the multiplier norm of a multiplier $f$ is equal to $\|f\|_\infty = \sup_{z \in \mb{D}}|f(z)|$. When $d>1$ this is no longer true. 

\begin{theorem}\label{thm:MnotHinfty}
For $d>1$ the norms $\|\cdot \|_\infty$ and $\| \cdot \|_{\cM_d}$ are not comparable on $\cM_d$, there is a strict containment 
\be\label{eq:strict}
\cM_d \subsetneq H^\infty(\mb{B}_d),
\ee
and the $d$-tuple $M_z$ is not subnormal. 
\end{theorem}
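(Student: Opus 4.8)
The plan is to obtain all three assertions from a single elementary computation showing that a well-chosen monomial has multiplier norm far exceeding its supremum norm, with the gap blowing up along a sequence. The only fact needed about multiplier norms is the trivial lower bound $\|g\|_{\cM_d} = \|M_g\| \geq \|M_g 1\|_{H^2_d} = \|g\|_{H^2_d}$, so everything reduces to comparing $\|z^\alpha\|_{H^2_d}^2 = \alpha!/|\alpha|!$, read off from (\ref{eq:weights}), with $\|z^\alpha\|_\infty^2 = \prod_i(\alpha_i/|\alpha|)^{\alpha_i}$, the latter a one-line Lagrange-multiplier maximization on the unit sphere. I would then specialize, for $d \geq 2$, to $\alpha = (m,m,0,\ldots,0)$, where the ratio of squared norms is exactly $4^m/\binom{2m}{m}$; since $\binom{2m}{m}4^{-m}$ decreases to $0$ (an elementary monotonicity argument suffices), this ratio tends to $\infty$. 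Together with $\|g\|_\infty \leq \|g\|_{\cM_d}$ from (\ref{eq:ineq_norm}), this shows the two norms are not comparable on $\cM_d$; already the polynomial $z_1 z_2$, with $\|z_1 z_2\|_{\cM_d} \geq \|z_1 z_2\|_{H^2_d} = 1/\sqrt{2} > 1/2 = \|z_1 z_2\|_\infty$, shows they do not even coincide.

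For the strict containment I would run a standard sliding-hump construction. Recall $\cM_d \subseteq H^\infty(\mb{B}_d)$ by (\ref{eq:ineq_norm}) and $\cM_d \subseteq H^2_d$ because $1 \in H^2_d$. Take the $\sup$-normalized monomials $Q_k(z) = 2^{m_k} z_1^{m_k} z_2^{m_k}$, so $\|Q_k\|_\infty = 1$ while $\|Q_k\|_{H^2_d}^2 = 4^{m_k}/\binom{2m_k}{m_k}$ can be made as large as desired by taking $m_k$ large; fix a strictly increasing $m_k$ with $\|Q_k\|_{H^2_d}^2 \geq 4^k k^2$ and set $f = \sum_{k \geq 1} 2^{-k} Q_k$. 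Uniform convergence on $\overline{\mb{B}_d}$ puts $f$ in $H^\infty(\mb{B}_d)$ (indeed in the ball algebra), whereas the summands $2^{-k}Q_k$, being homogeneous of pairwise distinct degrees, are mutually orthogonal in $H^2_d$, so $\|f\|_{H^2_d}^2 \geq \sum_k 4^{-k}\cdot 4^k k^2 = \infty$; hence $f \in H^\infty(\mb{B}_d) \setminus H^2_d \subseteq H^\infty(\mb{B}_d)\setminus\cM_d$.

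Finally, for non-subnormality I would argue by contradiction. If $M_z$ admitted a minimal normal extension $N$ on $K \supseteq H^2_d$, then, $M_z$ being a row contraction, its Taylor spectrum lies in $\overline{\mb{B}_d}$, and the spectral inclusion for minimal normal extensions gives $\sigma(N) \subseteq \sigma(M_z) \subseteq \overline{\mb{B}_d}$; since $H^2_d$ is $N$-invariant with $N|_{H^2_d} = M_z$, one gets $p(M_z) = p(N)|_{H^2_d}$ and hence, by the spectral theorem for the normal tuple $N$, $\|p\|_{\cM_d} = \|p(M_z)\| \leq \|p(N)\| = \sup_{\sigma(N)}|p| \leq \|p\|_\infty$ for every polynomial $p$ --- contradicting the first paragraph already for $p = z_1 z_2$. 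The only non-elementary ingredient in this plan is that spectral-inclusion fact for normal extensions of subnormal tuples; the genuine content, and the step I would think about most carefully, is simply locating the monomial family for which the $H^2_d$-norm overshoots the sup-norm --- this is precisely the feature distinguishing the symmetric Fock kernel $(1-\langle z,w\rangle)^{-1}$ from the Hardy kernel $(1-\langle z,w\rangle)^{-d}$ of the ball, whose multiplier algebra is all of $H^\infty$.
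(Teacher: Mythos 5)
Your proposal is correct and follows essentially the same strategy as the paper: exhibit a family of monomials whose $H^2_d$-norm (and hence multiplier norm, since $\|g\|_{\cM_d} \geq \|g\|_{H^2_d}$) overshoots the sup-norm by an unbounded factor, then form an $\ell^1$-type superposition of such monomials of distinct degrees to get a ball-algebra function outside $\cM_d$, and finally deduce non-subnormality from the failure of the von Neumann inequality $\|p(M_z)\| \leq \|p\|_\infty$.

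The differences are minor but worth noting. The paper's proof works with powers of $z_1\cdots z_d$, while you use $(z_1 z_2)^m$; yours has the cosmetic advantage of covering $d=\infty$ directly (whereas the paper remarks that for strictness it suffices to treat $d<\infty$). Your reduction of ``not in $\cM_d$'' to ``not in $H^2_d$'' --- using $\cM_d\subset H^2_d$ since $1\in H^2_d$, and then simply computing $H^2_d$-norms via (\ref{eq:weights}) rather than estimating multiplier norms --- is a clean and slightly more explicit version of the paper's ``suitable choice of constants $a_n$.'' For non-subnormality the paper simply refers to \cite[Section 3]{Arv98}; you spell out an argument via the spectral-inclusion theorem for minimal normal extensions of subnormal tuples, which, together with the fact that the Taylor spectrum of $M_z$ is contained in $\overline{\mb{B}_d}$, yields $\|p(M_z)\|\leq\|p\|_\infty$ and the desired contradiction. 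Both of those inputs are nontrivial background facts, but your argument is sound given them, and you flag them honestly. The numerical checks ($\|z^\alpha\|_\infty^2 = \prod_i(\alpha_i/|\alpha|)^{\alpha_i}$; the squared-norm ratio $4^m/\binom{2m}{m}\to\infty$; $\|z_1z_2\|_{H^2_d}=1/\sqrt 2 > 1/2 = \|z_1z_2\|_\infty$) are all correct.
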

\begin{proof}
If $f \in \cM_d$ and $\lambda \in \mb{B}_d$, then for all $h \in H^2_d$ 
\[
\langle h, M_f^* k_\lambda \rangle = f(\lambda) h(\lambda) = \langle h, \overline{f(\lambda)} k_\lambda \rangle .
\]
Thus $\overline{f(\lambda)}$ is an eigenvalue of $M_f^*$ and in particular $|\overline{f(\lambda)}|\leq \|M_f\|$. It follows that $f$ is bounded on $\mb{B}_d$ and that $\sup_{\mb{B}_d}|f| \leq \|M_f\|$ (this argument works for any multiplier algebra). 
Since $1 \in H^2_d$ it follows that $f = f \cdot 1$ is analytic, thus $\cM_d \subseteq H^\infty(\mb{B}_d)$. 

For the strictness of the containment it suffices to consider the case $d<\infty$. Direct computations show that for a suitable choice of constants $a_1, a_2, \ldots$, the functions
\[
f_N(z) := \sum_{n=0}^N a_n (z_1 \cdots z_d)^n
\] 
satisfy $\|f_N\|_\infty \leq 1$ while $\|f_N\|_{\cM_d} \rightarrow \infty$. Moreover, the limit $f := \lim_{N\rightarrow \infty} f_N$ exists uniformly, and serves as an explicit example of a function that is in the ``ball algebra" $A(\mb{B}_d)$ (that is, the algebra of continuous functions on the closed ball which are analytic on the interior), but is not in $\cM_d$.  

That $M_z$ is not subnormal follows from the incomparability of the norms; see Section 3 in \cite{Arv98} for full details (see also Section 2 in \cite{DavPittsPick} for a slightly different derivation of the first parts of the theorem). 
\end{proof}

\subsection{Vector valued $H^2_d$ and operator valued multipliers}

Let $K$ be a Hilbert space. The Hilbert space tensor product $H^2_d \otimes K$ can be considered as the space of all holomorphic functions $f : \mb{B}_d \rightarrow K$ with Taylor series $f(z) = \sum_\alpha a_\alpha z^\alpha$, where the coefficients $a_\alpha$ are in $K$ and 
\[
\sum_\alpha \frac{\alpha!}{|\alpha|!} \|a_\alpha\|^2 < \infty .
\]
Let $K_1$ and $K_2$ be two Hilbert spaces, and let $\Phi : \mb{B}_d \rightarrow B(K_1, K_2)$ be an operator valued function. For $h \in H^2_d \otimes K_1$, define $M_\Phi h$ to be the function $\mb{B}_d \rightarrow K_2$ given by 
\[
M_\Phi h (z) = \Phi(z) h(z) \,\, , \,\, z \in \mb{B}_d. 
\]
Denote by $\cM_d(K_1, K_2)$ the space of all $\Phi$ for which $M_\Phi h \in H^2_d \otimes K_2$ for all $h \in H^2_d \otimes K_1$ (another common notation is $\Mult(H^2_d \otimes K_1, H^2_d \otimes K_2)$). An element $\Phi$ of $\cM_d(K_1, K_2)$ is said to be a {\em multiplier}, and in this case $M_\Phi$ (which can be shown to be bounded) is called a {\em multiplication operator}. If $K_1 = K_2 = K$ then $\cM_d(K_1, K_2)$ is abbreviated to $\cM_d(K)$. The space $\cM_d(K_1, K_2)$ is endowed with the norm $\|\Phi\| = \|M_\Phi\|$. 

The following characterization of multipliers, which is useful also in the scalar case, holds in any Hilbert function space (the proof is straightforward, see \cite[Theorem 2.41]{AM02}). 

\begin{theorem}\label{thm:what_multpliers_do}
Let $\Phi : \mb{B}_d \rightarrow B(K_1, K_2)$ be an operator valued function. If $\Phi$ is a multiplier then
\be\label{eq:what_mutlpliers_do}
M_\Phi^* (k_\lambda \otimes v) = k_\lambda \otimes \Phi(\lambda)^* v
\ee
for all $\lambda \in \mb{B}_d$ and $v \in K_2$. Conversely, if $\Phi : \mb{B}_d \rightarrow B(K_1, K_2)$ and the mapping $k_\lambda \otimes v \mapsto k_\lambda \otimes \Phi(\lambda)^* v$ extends to a bounded operator $T \in B(H^2_d \otimes K_2, H^2_d \otimes K_1)$, then $\Phi \in \cM_d(K_1, K_2)$ and $T = M^*_\Phi$. 
\end{theorem}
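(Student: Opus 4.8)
The plan is to reduce both directions to the reproducing-kernel property of $H^2_d \otimes K_i$, together with the fact (noted in the excerpt right after equation~(\ref{eq:kernel})) that $\spn\{k_\lambda : \lambda \in \mb{B}_d\}$ is dense in $H^2_d$, hence that $\spn\{k_\lambda \otimes v : \lambda \in \mb{B}_d, v \in K_2\}$ is dense in $H^2_d \otimes K_2$. For the forward direction, suppose $\Phi \in \cM_d(K_1,K_2)$, so $M_\Phi : H^2_d \otimes K_1 \to H^2_d \otimes K_2$ is bounded and $M_\Phi h(z) = \Phi(z)h(z)$. First I would take an arbitrary $h \in H^2_d \otimes K_1$, $\lambda \in \mb{B}_d$ and $v \in K_2$, and compute $\langle h, M_\Phi^*(k_\lambda \otimes v)\rangle = \langle M_\Phi h, k_\lambda \otimes v\rangle$. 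The right-hand side equals $\langle (M_\Phi h)(\lambda), v\rangle_{K_2} = \langle \Phi(\lambda)h(\lambda), v\rangle_{K_2} = \langle h(\lambda), \Phi(\lambda)^* v\rangle_{K_1}$, where the first equality is the reproducing property applied in the $K_2$-valued space and the last is just the definition of the adjoint operator $\Phi(\lambda)^* \in B(K_2,K_1)$. Finally $\langle h(\lambda), \Phi(\lambda)^* v\rangle_{K_1} = \langle h, k_\lambda \otimes \Phi(\lambda)^* v\rangle$, again by the reproducing property, now in the $K_1$-valued space. Since this holds for all $h$, we conclude $M_\Phi^*(k_\lambda \otimes v) = k_\lambda \otimes \Phi(\lambda)^* v$, which is~(\ref{eq:what_mutlpliers_do}).

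For the converse, assume $\Phi : \mb{B}_d \to B(K_1,K_2)$ is given and that the densely-defined map $k_\lambda \otimes v \mapsto k_\lambda \otimes \Phi(\lambda)^* v$ extends to a bounded operator $T \in B(H^2_d \otimes K_2, H^2_d \otimes K_1)$. I would then set $M := T^* \in B(H^2_d \otimes K_1, H^2_d \otimes K_2)$ and show that $M$ is exactly the multiplication operator by $\Phi$, i.e. that $(Mh)(z) = \Phi(z)h(z)$ for every $h \in H^2_d \otimes K_1$ and every $z \in \mb{B}_d$; this simultaneously shows $\Phi \in \cM_d(K_1,K_2)$ and $T = M_\Phi^*$. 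To this end, fix $h$, $z = \lambda$, and $v \in K_2$, and run the computation in reverse: $\langle (Mh)(\lambda), v\rangle_{K_2} = \langle Mh, k_\lambda \otimes v\rangle = \langle h, T(k_\lambda \otimes v)\rangle = \langle h, k_\lambda \otimes \Phi(\lambda)^* v\rangle = \langle h(\lambda), \Phi(\lambda)^* v\rangle_{K_1} = \langle \Phi(\lambda)h(\lambda), v\rangle_{K_2}$. Since $v$ ranges over all of $K_2$, this gives $(Mh)(\lambda) = \Phi(\lambda)h(\lambda)$, as desired, and so $Mh = M_\Phi h \in H^2_d \otimes K_2$ for every $h$.

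There is essentially no serious obstacle here; the only points that require a word of care are bookkeeping. One should note that the prescription $k_\lambda \otimes v \mapsto k_\lambda \otimes \Phi(\lambda)^* v$ is defined on the (not necessarily linearly independent) spanning set $\{k_\lambda \otimes v\}$, so the hypothesis ``extends to a bounded operator'' is exactly what is needed to make $T$ well-defined and continuous; once $T$ exists, density of the span of these vectors makes $T$ unique and lets the adjoint $M = T^*$ be handled on all of $H^2_d \otimes K_1$. A second small point: in the vector-valued setting the reproducing property reads $\langle g, k_\lambda \otimes v\rangle_{H^2_d \otimes K} = \langle g(\lambda), v\rangle_K$ for $g \in H^2_d \otimes K$, which follows immediately from the scalar reproducing property $\langle f, k_\lambda\rangle = f(\lambda)$ by tensoring; I would state this once at the outset and use it freely. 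With these in hand, both implications are the one-line adjoint computations above, read in the two directions, so the statement follows from \cite[Theorem 2.41]{AM02} as cited.
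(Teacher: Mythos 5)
Your proposal is correct and is precisely the standard adjoint/reproducing-kernel computation that the paper delegates to \cite[Theorem 2.41]{AM02}; both directions come down to the vector-valued reproducing identity $\langle g, k_\lambda \otimes v\rangle = \langle g(\lambda), v\rangle_K$ read against the adjoint of the (given or putative) multiplication operator. Nothing is missing, and your note that the hypothesis of boundedness is exactly what makes the prescription on the spanning set $\{k_\lambda \otimes v\}$ well-defined is the right point of care.
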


It is immediate from (\ref{eq:what_mutlpliers_do}) that any multiplier $\Phi$ is bounded (in the sense that there is $M>0$ such that $\|\Phi(z)\|\leq M$ for all $z \in \mb{B}_d$) and holomorphic (in the sense that for all $u \in K_1, v \in K_2$ the function $z \mapsto \langle \Phi(z)u,v \rangle$ is holomorphic in the ball).

The following theorem, due to Ball, Trent and Vinnikov, provides a characterization of multipliers in $\cM_d(K_1, K_2)$, which is specific to the setting of $H^2_d$. For a proof and additional characterizations, see \cite[Section 2]{BTV01} (see also \cite{AT02,EP02}). 

\begin{theorem}[\cite{BTV01}, Theorem 2.1; \cite{EP02}, Theorem 1.3]
  \label{thm:transfer}
Let $\Phi : \mb{B}_d \rightarrow B(H^2_d \otimes K_1, H^2_d \otimes K_2)$. Then the following statements are equivalent:
\begin{enumerate}
\item $\Phi \in \cM_d(K_1, K_2)$ with $\|\Phi\| \leq 1$. 
\item The kernel 
\be\label{eq:KPhi1}
K_\Phi(z,w) = \frac{I - \Phi(z) \Phi(w)^*}{1 - \langle z, w \rangle}
\ee
is a positive sesqui-analytic $B(K_2)$ valued kernel on $\mb{B}_d \times \mb{B}_d$; i.e., there is an auxiliary Hilbert space $H$ and a holomorphic $B(H, K_2)$-valued function $\Psi$ on $\mb{B}_d$ such that for all $z,d \in \mb{B}_d$, 
\be\label{eq:KPhi2}
K_\Phi(z,w) = \Psi(z) \Psi(w)^* .
\ee
\item There exists an auxiliary Hilbert space $H$ and a unitary operator 
\be\label{eq:lurking}
U = \begin{pmatrix}
A & B \\
C & D
\end{pmatrix} : \begin{pmatrix}
H \\ K_1
\end{pmatrix} \rightarrow \begin{pmatrix}
\oplus_{1}^d H \\ K_2
\end{pmatrix}
\ee
such that 
\be\label{eq:realization}
\Phi(z) = D + C(I- Z(z)A)^{-1}Z(z)B ,
\ee
where $Z(z) =  \begin{bmatrix} z_1I_H & \cdots &  z_d I_H \end{bmatrix} : \oplus_1^d H \rightarrow H$. 
\end{enumerate}
\end{theorem}

The formula (\ref{eq:realization}) is referred to as {\em the realization formula}. Sometimes, $U$ is said to be a {\em unitary colligation}, and $\Phi$ is called the associated {\em transfer function}. The papers \cite{BBF07a,BBF07b,BBF08} of Ball, Bolotnikov and Fang provide more details on the connections of the transfer function with systems theory in the context of Drury-Arveson space.

\subsection{The commutant of $\cM_d$}\label{subsec:Mtag}
The {\em commutant} of an operator algebra $\cB \subseteq B(H)$ is defined to be
\[
\cB' = \{a \in B(H) :  ab = ba \,\textrm{  for all } b \in B\}.
\]

A standard argument shows that $\cM_d$ is its own commutant:
\be
\cM_d' = \cM_d.
\ee
More generally, one has the following, which is a special case of the commutant lifting theorem (Theorem \ref{thm:BTV} below). 

\begin{theorem}
Let $K_1, K_2$ be Hilbert space, and let $X \in B(K_1, K_2)$ such that 
\be
X (M_f \otimes I_{K_1}) = (M_f \otimes I_{K_2}) X ,
\ee
for all $f \in \cM_d$. Then there exists $\Phi \in \cM_d(K_1, K_2)$ such that $X = M_\Phi$. 
\end{theorem}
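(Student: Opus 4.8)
The plan is to reduce the statement to the characterization of multipliers in Theorem \ref{thm:what_multpliers_do}. That theorem says that an operator $T \in B(H^2_d \otimes K_2, H^2_d \otimes K_1)$ is of the form $M_\Phi^*$ for some $\Phi \in \cM_d(K_1,K_2)$ precisely when it sends $k_\lambda \otimes v$ to $k_\lambda \otimes \Phi(\lambda)^* v$, i.e., when each $k_\lambda \otimes v$ is mapped into the subspace $k_\lambda \otimes K_1$. So, given $X$ intertwining $M_f \otimes I_{K_1}$ and $M_f \otimes I_{K_2}$ for all $f \in \cM_d$, the goal is to show that $X^*$ has this property, and then read off $\Phi$ from $X^* = M_\Phi^*$, equivalently $X = M_\Phi$.

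First I would record the dual intertwining relation: from $X(M_f \otimes I_{K_1}) = (M_f \otimes I_{K_2})X$ for all $f \in \cM_d$ one gets $(M_f^* \otimes I_{K_2}) X^{*} = X^{*}(M_f^* \otimes I_{K_1})$ for all $f$, hence $X^*$ intertwines the adjoints in the other direction. Next I would apply this with $f$ ranging over the coordinate functions $z_1,\dots,z_d$, which lie in $\cM_d$ by \ref{subsec:dshift}. Since the joint eigenvectors of the backward $d$-shift $M_z^*$ in $H^2_d \otimes K$ are exactly the vectors $k_\lambda \otimes v$ (with $M_{z_i}^* k_\lambda = \overline{\lambda_i} k_\lambda$, so $(M_{z_i}^* \otimes I)(k_\lambda \otimes v) = \overline{\lambda_i}(k_\lambda \otimes v)$), the intertwining relation forces $X^*(k_\lambda \otimes v)$ to be a joint eigenvector of $M_z^* \otimes I_{K_1}$ with the same eigenvalue tuple $\overline{\lambda}$, or the zero vector. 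Either way $X^*(k_\lambda \otimes v) \in k_\lambda \otimes K_1$, so there is a well-defined linear map $\Phi(\lambda)^* : K_2 \to K_1$ with $X^*(k_\lambda \otimes v) = k_\lambda \otimes \Phi(\lambda)^* v$; boundedness of $X^*$ gives a uniform bound on $\Phi(\lambda)^*$, and the converse direction of Theorem \ref{thm:what_multpliers_do} then yields $\Phi \in \cM_d(K_1,K_2)$ with $X^* = M_\Phi^*$, i.e. $X = M_\Phi$. One should also check that the $\Phi$ so obtained genuinely intertwines the full multiplier action, but that is automatic once $X = M_\Phi$, since $M_\Phi$ commutes with every $M_f \otimes I$.

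The main obstacle I anticipate is the step asserting that the joint eigenspaces of $M_z^* \otimes I_K$ are precisely the spaces $k_\lambda \otimes K$ — one needs that there are no other joint eigenvectors, which follows from the fact that $\spn\{k_w : w \in \mb{B}_d\}$ is dense in $H^2_d$ (so a vector orthogonal to all $k_w$ is zero) together with the identity $M_{z_i}^* k_w = \overline{w_i} k_w$; a short computation then shows any joint eigenvector of $M_z^*$ in $H^2_d$ must be a multiple of some $k_\lambda$, and tensoring with $K$ gives the vector-valued version. A minor secondary point is verifying that $\lambda \mapsto \Phi(\lambda)$ is itself holomorphic and bounded, but this is exactly the content of the remark following Theorem \ref{thm:what_multpliers_do}, so it requires no extra work. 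Alternatively, one can bypass the eigenvector discussion entirely and simply invoke the commutant lifting theorem, Theorem \ref{thm:BTV}, with the trivial coisometry (the identity on $H^2_d \otimes K_i$); the proof above is the hands-on version of that reduction.
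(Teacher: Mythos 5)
Your proposal is correct, and the main argument takes a genuinely different (and more elementary) route than the paper. The paper does not give a proof at all: it simply declares the statement to be a special case of the Ball--Trent--Vinnikov commutant lifting theorem, which is exactly the alternative you mention at the end (taking $M_i = H^2_d \otimes K_i$ in Theorem \ref{thm:BTV}). Your primary argument instead characterizes the joint eigenvectors of $M_z^* \otimes I_K$ with eigenvalue tuple $\overline{\lambda}$ ($\lambda \in \mb{B}_d$) as the subspace $k_\lambda \otimes K$, shows via the adjoint intertwining relation that $X^*$ preserves these eigenspaces, reads off a bounded fibrewise operator $\Phi(\lambda)^*$, and invokes the converse half of Theorem \ref{thm:what_multpliers_do} to conclude $X = M_\Phi$. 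This is fully self-contained and requires only the reproducing-kernel identity $M_{z_i}^* k_\lambda = \overline{\lambda_i} k_\lambda$, the density of $\spn\{k_w\}$ in $H^2_d$, and the fact that point evaluation at $\mu$ is unbounded for $|\mu| \geq 1$; in contrast, the paper's route rests on the much heavier BTV machinery. The trade-off is clear: your proof is elementary and transparent, the paper's is a one-line citation that showcases commutant lifting as the organizing principle.

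Two small remarks. First, in the displayed dual intertwining relation the subscripts $K_1$ and $K_2$ are swapped: taking adjoints of $X(M_f \otimes I_{K_1}) = (M_f \otimes I_{K_2})X$ yields $(M_f^* \otimes I_{K_1})X^* = X^*(M_f^* \otimes I_{K_2})$, not the other way round; the rest of your argument is consistent with the corrected version, so this is purely a typo. Second, in the alternative route, the phrase ``with the trivial coisometry'' is a slight misnomer --- there is no coisometry being chosen; what one does is take the co-invariant subspaces $M_i$ in Theorem \ref{thm:BTV} to be the full spaces $H^2_d \otimes K_i$, which are trivially co-invariant, and then the lifted multiplier $\Phi$ already satisfies $M_\Phi^* = X^*$ on all of $H^2_d \otimes K_2$, hence $X = M_\Phi$.
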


\subsection{$H^2_d$ as a Besov-Sobolev space}\label{sec:BS}

The Drury-Arveson space also fits into a family of function spaces which have been of interest in harmonic analysis (see, e.g., \cite{ARS08,CSW12,VW12}). In this subsection it is assumed that $d<\infty$. 

For an analytic function $f \in \cO(\mb{B}_d)$, {\em the radial derivative of $f$} is defined to be $R f = \sum_{i=1}^d z_i \frac{\partial f}{\partial z_i}$. 
It is useful to note that if $f$ is a homogeneous polynomial of degree $n$, then $Rf = nf$. 

Let $\sigma \geq 0$, $p \in [1,\infty)$, and let $m$ be an integer strictly greater than $d/p-\sigma$. For every $f \in \cO(\mb{B}_d)$, one can consider the norm $\|f\|_{m,\sigma,p}$ defined by 
\[
\|f\|^p_{m,\sigma,p} = \sum_{|\alpha|<m} \left|\frac{\partial^\alpha f}{\partial z^\alpha}(0)\right|^p + \int_{\mb{B}_d} |R^m f(z)|^p (1 - |z|^2)^{p(m+\sigma) - d - 1} d\lambda(z) ,
\]
where $\lambda$ is Lebesgue measure on the ball. It turns out that choosing different $m > d/p-\sigma$ results in equivalent norms. One defines {\em the analytic Besov-Sobolev spaces $B^\sigma_p(\mb{B}_d)$} as
\[
B^\sigma_p(\mb{B}_d) = \{f \in \cO(\mb{B}_d) : \|f\|_{m,\sigma,p} < \infty\}.
\]
When $p = 2$ one obtains a family of Hilbert function spaces, which --- up to a modification to an equivalent norm --- have reproducing kernel (for $\sigma > 0$) 
\[
k^\sigma (z,w) = \frac{1}{(1-\langle z, w \rangle)^{2 \sigma}} .
\]
The proof of this is straightforward, using basic integral formulas on the ball (available in \cite[Section 1.4]{RudinBall} or \cite[Section 1.3]{ZhuBook}) and the fact that the reproducing kernel in a Hilbert function space is given by $\sum e_k(z)\overline{e_k(w)}$, where $\{e_k\}_{k=1}^\infty$ is any orthonormal basis. 
In particular this scale of spaces contains the Bergman space $L^2_a(\mb{B}_d)$ ($\sigma  = (d+1)/2$) and the Hardy space $H^2(\mb{B}_d)$ ($\sigma = d/2)$. For $p=2$ and $\sigma = 1/2$ one gets the Drury-Arveson space.

\begin{theorem}\label{thm:equiv_def}
Fix an integer $m>(d-1)/2$. For $f \in \cO(\mb{B}_d)$ the following are equivalent:
\begin{enumerate}
\item $f \in H^2_d$. 
\item $R^{(d-1)/2} f \in H^2(\mb{B}_d)$ (the Hardy space of the ball). 
\item $\|f\|_{m,1/2,2}< \infty$. 
\item $|||f||| < \infty$, where 
\[|||f|||^2 = \sum_{|\alpha|<m} \left|\frac{\partial^\alpha f}{\partial z^\alpha}(0)\right|^2 + \sum_{|\alpha|=m}\int_{\mb{B}_d} \left|\frac{\partial^\alpha f}{\partial z^\alpha} (z)\right|^2 (1 - |z|^2)^{2m - d} d\lambda(z) .
\]
\end{enumerate}
Moreover, the norms $\|\cdot\|_{m,1/2,2}$, $|||\cdot|||$ and $\|\cdot\|_{H^2_d}$ are equivalent. 
\end{theorem}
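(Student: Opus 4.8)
The plan is to establish the cycle of equivalences $(1)\Leftrightarrow(2)$, $(2)\Leftrightarrow(3)$, and $(3)\Leftrightarrow(4)$, and to track the norm comparisons along the way so that the final ``moreover'' clause comes for free. The conceptual heart is a computation on homogeneous expansions: if $f=\sum_n f_n$ with $f_n$ homogeneous of degree $n$, then $R^s f = \sum_n n^s f_n$ (interpreting this first for integer $s$ and extending), and all the norms in the statement decouple across the orthogonal homogeneous components. So each equivalence reduces to comparing, for each $n$, a weight applied to $\|f_n\|^2$ in one of the spaces against the corresponding weight in another, uniformly in $n$.

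First I would treat $(1)\Leftrightarrow(2)$. Recall that in $H^2(\mb{B}_d)$ a homogeneous polynomial $p$ of degree $n$ has $\|p\|^2_{H^2(\mb{B}_d)} = \sum_{|\alpha|=n} |a_\alpha|^2 \frac{\alpha!\,(d-1)!}{(n+d-1)!}$, whereas in $H^2_d$ it has $\|p\|^2 = \sum_{|\alpha|=n}|a_\alpha|^2 \frac{\alpha!}{n!}$; the ratio of these two weights is $\frac{(n+d-1)!}{n!\,(d-1)!}\sim c_d\, n^{d-1}$ as $n\to\infty$. Applying $R^{(d-1)/2}$ multiplies the degree-$n$ component by $n^{(d-1)/2}$, hence multiplies its squared norm by $n^{d-1}$, which exactly matches (up to constants bounded above and below) the discrepancy between the two weight sequences. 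Therefore $\|f\|_{H^2_d}^2 \asymp \|R^{(d-1)/2}f\|^2_{H^2(\mb{B}_d)}$ componentwise and hence in total. (Since $(d-1)/2$ need not be an integer, $R^{(d-1)/2}$ is defined on $\cO(\mb{B}_d)$ via its action $n^{(d-1)/2}$ on the degree-$n$ part; one should remark this.) For $(3)\Leftrightarrow(4)$, note that up to lower-order polynomial terms and an integration-by-parts-type identity relating $\int_{\mb{B}_d}|R^m f|^2 (1-|z|^2)^{2m-d-1}$ with $\int_{\mb{B}_d}\sum_{|\alpha|=m}|\partial^\alpha f|^2 (1-|z|^2)^{2m-d}$, both of which one checks on monomials using the standard beta-integral $\int_{\mb{B}_d}|z^\alpha|^2(1-|z|^2)^{t}\,d\lambda(z) = c\,\frac{\alpha!\,\Gamma(t+1)}{\Gamma(|\alpha|+d+t+1)}$ from \cite[\S1.4]{RudinBall}; the radial-derivative and coordinate-derivative norms differ by bounded factors on each homogeneous component, so $|||f|||\asymp \|f\|_{m,1/2,2}$.

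The remaining equivalence $(2)\Leftrightarrow(3)$ — equivalently, that $\|f\|_{m,1/2,2}$ for any admissible integer $m>(d-1)/2$ is comparable to $\|R^{(d-1)/2}f\|_{H^2(\mb{B}_d)}$, and in particular independent of the choice of $m$ — is where the main work lies. The strategy is again to evaluate on a homogeneous component $f_n$: the integral term contributes, using the beta-integral above, a weight of order $n^{2m}\cdot \frac{\Gamma(2m-d+1)}{\Gamma(n+2m+1)}\asymp n^{2m}\cdot n^{-(n\text{-independent shift})}$ — more precisely one gets $\|f_n\|^2_{m,1/2,2}\asymp n^{2m}\cdot n^{-2m+d-1}\cdot\|f_n\|^2_{H^2(\mb{B}_d)}=n^{d-1}\|f_n\|^2_{H^2(\mb{B}_d)}$, where the powers of $n$ from $|R^m f_n|^2=n^{2m}|f_n|^2$, from the measure weight, and from the beta-integral's denominator combine so that the net exponent $d-1$ is independent of $m$. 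Matching this against the weight $n^{d-1}$ coming from $\|R^{(d-1)/2}f_n\|^2_{H^2(\mb{B}_d)}$ gives the equivalence and the $m$-independence simultaneously. The main obstacle is bookkeeping the asymptotics of the Gamma-function ratios carefully enough to see that all the $m$-dependence cancels and that the implied constants are uniform in $n$; Stirling's formula applied to $\frac{\Gamma(n+2m+1)}{\Gamma(n+d+1)}\asymp n^{2m-d}$ handles this, but one must be careful that this holds with two-sided bounds for all $n\geq 1$ (not merely asymptotically), which forces the hypothesis $2m-d>-1$, i.e. $m>(d-1)/2$, to be used precisely here to keep the beta-integral convergent. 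Chaining the three norm comparisons yields $\|f\|_{H^2_d}\asymp\|f\|_{m,1/2,2}\asymp|||f|||$, completing the proof.
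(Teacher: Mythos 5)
Your strategy --- decompose into homogeneous components and compare the resulting weight sequences $n$-by-$n$ via the beta-integral formula and Gamma-ratio asymptotics --- is exactly the one the paper indicates. The paper does not give a proof: it cites \cite{Chen03} and remarks that the argument ``boils down to calculations of the various integrals defining the norms (using formulas from \cite[Section 1.4]{RudinBall}).'' Your sketch is a correct fleshing-out of that remark, and your identification of $m>(d-1)/2$ as precisely the condition guaranteeing convergence of the beta integral (i.e. $2m-d>-1$) is the right observation.

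One slip, though it does not propagate: you write the weight in $\|\cdot\|_{m,1/2,2}$ as $(1-|z|^2)^{2m-d-1}$, but with $p=2$ and $\sigma=1/2$ the exponent $p(m+\sigma)-d-1$ evaluates to $2m-d$, not $2m-d-1$. This means the integrals in (3) and (4) have \emph{the same} weight exponent, so the comparison $(3)\Leftrightarrow(4)$ does not need the ``integration-by-parts-type identity'' you invoke to reconcile an exponent shift; all that is needed is the comparison of $|R^m f_n|^2=n^{2m}|f_n|^2$ with $\sum_{|\alpha|=m}|\partial^\alpha f_n|^2$ on each homogeneous component, which is a direct combinatorial check on monomials. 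Your subsequent Gamma computation implicitly uses the correct exponent (you obtain $\Gamma(2m-d+1)/\Gamma(n+2m+1)$, which corresponds to $c=2m-d$), so the final comparison $\|f_n\|^2_{m,1/2,2}\asymp n^{d-1}\|f_n\|^2_{H^2(\mb{B}_d)}$ and the chain of equivalences are correct. You might also note explicitly that for $d>1$ the operator $R^{(d-1)/2}$ annihilates constants (and $R^m$ annihilates polynomials of degree below $m$), which is why the norms in (3) and (4) carry the separate $\sum_{|\alpha|<m}|\partial^\alpha f(0)|^2$ term and why assertion (2) is stated only as a membership condition and not a norm equivalence.
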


Theorem \ref{thm:equiv_def} appears as Theorem 1 in \cite{Chen03} (one should beware that the same paper included another characterization of Drury-Arveson space \cite[Theorem 2]{Chen03}, but unfortunately that other result (which will not be stated here) is incorrect --- see \cite{FX13}). In \cite{Chen03} the result was stated only for the smallest integer $m$ satisfying $m > (d-1)/2$, but the proof of the theorem --- which boils down to calculations of the various integrals defining the norms (using formulas from \cite[Section 1.4]{RudinBall} or \cite[Section 1.3]{ZhuBook}) --- works for all $m > (d-1)/2$.

\section{Drury-Arveson space as symmetric Fock space}\label{sec:sym}

A crucial fact is that the Hilbert function space $H^2_d$ can be identified with the familiar symmetric Fock space. This identification (essentially contained in \cite{Drury}, but most clearly explained in \cite{Arv98}) accounts for the universal properties of $H^2_d$, and among other things also explains the significance of the choice of weights (\ref{eq:weights}). 

\subsection{Full Fock space} 
Let $E$ be a $d$-dimensional Hilbert space. The {\em full Fock space} is the space 
\[
\cF(E) = \mb{C} \oplus E \oplus E^{\otimes 2} \oplus E^{\otimes 3}  \oplus \ldots 
\]

\subsection{The noncommutative $d$-shift} 
Fix a basis $\{e_1, \ldots, e_d\}$ of $E$. On $\cF(E)$ define $L = (L_1, \ldots, L_d)$ by 
\[
L_i x_1 \otimes \cdots \otimes x_n = e_i \otimes x_1 \otimes \cdots \otimes x_n .
\] 
$L$ is called the {\em noncommutative $d$-shift}. The tuple $L$ is easily seen to be a {\em row isometry}, meaning that the row operator  $\begin{bmatrix} L_1 & L_2 & \cdots & L_d \end{bmatrix}$  from the direct sum of $\cF(E)$ with itself $d$ times into $\cF(E)$ is an isometry; equivalently, this means that $L_1, \ldots, L_d$ are isometries with pairwise orthogonal ranges. The tuple $L$ plays a central role in noncommutative multivariable operator theory, see, e.g., \cite{AriasPopescu00, DavPittsPick, DavPitts2, DavPitts1, Popescu89, Popescu91, Popescu06}. The noncommutative $d$-shift is a universal row contraction, see Section \ref{subsec:noncommutative} below. 

The construction does not depend on the choice of the space $E$ or the orthonormal basis, and henceforth $\cF(E)$ will be sometimes denoted $\cF_d$, understanding that some choice has been made.

\subsection{The noncommutative analytic Toeplitz algebra $\cL_d$}\label{sec:ncat}

The noncommutative analytic Toeplitz algebra $\cL_d$ is defined to be $\overline{\alg}^\wot (L)$. This algebra was introduced by Popescu in \cite{Popescu91}, where it was shown that it is the same as the noncommutative multiplier algebra of the full Fock space. $\cL_d$ is also referred to as the {\em left regular representation free semigroup algebra}, and plays a fundamental role in the theory of {\em free semigroup algebras} (see the survey \cite{DavidsonSurvey}).

Since $\cL_d$ is \wot-closed, it is also weak-$*$ closed as a subspace of $B(\cF_d)$, the latter considered as the dual space of the trace class operators on $\cF_d$. Thus it is a {\em dual algebra}, that is, an operator algebra that is also the dual space of a Banach space. One then has a weak-$*$ topology on $\cL_d$, and weak-$*$ continuous functionals come into play. The following factorization property for weak-$*$ functionals has proved very useful \cite{BFP85}. 

\begin{definition}
Let $\cB \subseteq B(H)$ be a dual algebra, and denote by $\cB_*$ its predual. $\cB$ is said to have {\em property $\mb{A}_1$} if for every $\rho \in \cB_*$ there exist $g,h \in H$ such that 
\[
\rho(b) = \langle bg,h \rangle \,\, , \,\, b \in \cB.
\]
If, for every $\epsilon > 0$, $g$ and $h$ can be chosen to satisfy $\|g\| \|h\|<(1+ \epsilon) \|\rho\|$, then $\cB$ is said to have {\em property $\mb{A}_1(1)$}. 
\end{definition}

\begin{theorem}[\cite{DavPitts1}, Theorem 2.10]\label{thm:LnA1}
$\cL_d$ has property $\mb{A}_1(1)$. 
\end{theorem}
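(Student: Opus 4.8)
The plan is to prove that $\cL_d = \overline{\alg}^\wot(L)$ has property $\mb{A}_1(1)$ by exploiting the concrete structure of the full Fock space $\cF_d$ as an $\cL_d$-module, following the strategy that works for the analogous statement in $H^2(\mathbb{D})$ (where $H^\infty$ has $\mb{A}_1(1)$, a classical fact) and its generalization by Davidson and Pitts. First I would fix a weak-$*$ continuous functional $\rho \in (\cL_d)_*$ with $\|\rho\| = 1$ and a small $\epsilon > 0$; the goal is to produce $g, h \in \cF_d$ with $\rho(b) = \langle bg, h\rangle$ for all $b \in \cL_d$ and $\|g\|\,\|h\| < 1 + \epsilon$. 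The natural first move is to reduce to a dense or manageable subclass of functionals: since finite-rank functionals $b \mapsto \sum_{j=1}^N \langle b x_j, y_j\rangle$ (restricted to $\cL_d$) are weak-$*$ dense in the unit ball of $(\cL_d)_*$, and since $\mb{A}_1(1)$ is preserved under the norm-closure of the set of representable functionals once one has uniform control of the representing vectors, it suffices to handle such finite-rank functionals with the asserted norm bound, and then pass to a limit using a compactness/normal-families argument for the representing vectors.

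The core of the argument is the single-vector case and an inductive ``absorption'' step. For a functional of the form $b \mapsto \langle b x, y\rangle$ with $\|x\|\,\|y\| \le 1$ one wants to replace the pair $(x,y)$ by a pair $(g,h)$ with the same action on $\cL_d$ but with $h$ taken, say, to be the vacuum vector $\Omega$ (the generator of the $\mb{C}$ summand) up to scaling — this is possible because $\cL_d \Omega$ is dense in $\cF_d$ (indeed $\cL_d$ contains $L^\alpha$ for all words $\alpha$, and $L^\alpha \Omega = e_\alpha$ ranges over an orthonormal basis), so cyclicity of $\Omega$ lets one transfer information. Concretely, the key technical device is an inner-outer type factorization in $\cL_d$: given $y \in \cF_d$, one shows (this is Popescu's noncommutative Beurling theorem / the structure of $\cL_d$-invariant subspaces, or equivalently the existence of a ``noncommutative outer part'') that there is a partial isometry or contraction $A \in \cL_d$ with $A^* y$ comparable in norm to $y$ and with good cyclicity properties, allowing the right half of the functional to be pushed onto a fixed cyclic vector. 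Then one iterates: starting from $\sum_{j=1}^N \langle b x_j, y_j\rangle$ one absorbs the terms one at a time, at each stage enlarging $g$ and keeping $h$ close to a single cyclic vector, while controlling the norm growth by a geometric series so the final product stays below $1+\epsilon$.

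The main obstacle — and the place where the special geometry of $\cF_d$ is essential rather than incidental — is the absorption/factorization step: one needs, for a given contraction-normalized pair, to solve an equation of the shape $b \mapsto \langle b x, y\rangle = \langle b g, h\rangle$ with $h$ (nearly) prescribed and $\|g\|$ controlled, and this requires knowing that $\cL_d$ acts ``richly enough'' on $\cF_d$, i.e.\ that ranges of elements of $\cL_d$ are large and that one has an $H^2$-type Cauchy–Schwarz/outer-function estimate. In the classical disc case this is exactly where $H^\infty$ outer functions and the fact that $\log|f|$ is integrable enter; in the Fock-space case the substitute is the Wold-type decomposition for row isometries together with Popescu's description of wandering subspaces, which guarantees that any cyclic invariant subspace is generated by a single wandering vector and that the associated ``inner multiplier'' is an isometry in $\cL_d$. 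I would expect to quote the relevant structure theorems for $\cL_d$ (noncommutative Beurling, the $\mb{A}_1$-machinery of Bercovici–Foiaş–Pearcy adapted to dual operator algebras) rather than reprove them, and the remaining bookkeeping — the geometric-series norm estimate and the weak-$*$ limit to remove the finite-rank restriction — is routine. Since this is a survey, the cleanest presentation is simply to cite \cite[Theorem 2.10]{DavPitts1} for the full argument and, if desired, indicate the disc-case analogy and the role of the noncommutative Beurling theorem as the engine driving the proof.
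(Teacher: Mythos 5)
The survey itself offers no proof of this statement --- it is cited verbatim as \cite[Theorem 2.10]{DavPitts1} --- so at the level of presentation your final suggestion (just cite Davidson--Pitts) matches the paper exactly.

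However, the proof sketch you give does not reflect the actual Davidson--Pitts argument, and parts of it would not go through. The engine of their proof is not an iterated absorption with inner--outer factorizations; it is a single, clean structural fact that is special to $d\geq 2$ and has no analogue in the disc: the full Fock space $\cF_d$ contains $L$-invariant subspaces on which $L$ restricts to $L^{(\infty)}$. Concretely, pick words $w_1,w_2,\dots$ in the free semigroup no one of which is a suffix of another (for $d\ge 2$, e.g.\ $w_n = 1 2^n$). The right-creation operators $R_{w_n}$ commute with $\cL_d$, are isometries with pairwise orthogonal ranges, and $R_{w_n}\cF_d$ is $L$-invariant with $L|_{R_{w_n}\cF_d}$ unitarily equivalent to $L$. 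Hence $V\colon \cF_d^{(\infty)}\to \cF_d$, $V(x_n)_n=\sum_n R_{w_n}x_n$, is an isometry with $VL_i^{(\infty)}=L_iV$. Given a weak-$*$ continuous $\rho$ of norm $1$ and $\epsilon>0$, one chooses a trace-class representative with $\|T\|_1<1+\epsilon$, writes $\rho(A)=\sum_n \langle Ax_n,y_n\rangle=\langle A^{(\infty)}\xi,\eta\rangle$ with $\|\xi\|\,\|\eta\|=\|T\|_1$, and sets $g=V\xi$, $h=V\eta\in\cF_d$. Since $VA^{(\infty)}=AV$ for $A\in\cL_d$, one gets $\rho(A)=\langle Ag,h\rangle$ with $\|g\|\,\|h\|<1+\epsilon$. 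That is the whole argument: no finite-rank reduction, no iteration, no geometric series, no Beurling theorem, no ``$H^2$-type outer-function estimate.''

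Two concrete problems with your sketch. First, the plan to replace $(x,y)$ by $(g,\Omega)$ (or $(g,c\Omega)$) cannot work: cyclicity of $\Omega$ only says $\cL_d\Omega$ is dense, and the map $g\mapsto\langle\,\cdot\,g,\Omega\rangle$ produces a very thin set of functionals; there is no way to achieve $\|g\|\,\|\Omega\|<1+\epsilon$ for a general $\rho$ of norm $1$ this way. Second, the analogy with outer functions and $\log|f|\in L^1$ is exactly the feature that makes the $d=1$ case (classical $\mb{A}_1(1)$ for $H^\infty$) genuinely delicate, and it is precisely what the $d\ge 2$ argument \emph{avoids}: the noncommutative shift has so much room (multiplicity absorption) that the hard function-theoretic step of the disc case simply disappears. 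If you want to cite a structural ingredient, the right one to name is the intertwining right multipliers $R_w$ (equivalently the commutant $\cR_d$), not the noncommutative Beurling theorem.
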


\begin{corollary}
The weak-$*$ and \wot-topologies on $\cL_d$ coincide. 
\end{corollary}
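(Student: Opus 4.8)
The plan is to deduce the corollary directly from property $\mb{A}_1$ (Theorem~\ref{thm:LnA1}), using the fact that the weak-$*$ topology on $\cL_d$, being a weak topology, has a neighbourhood basis consisting of sets defined by only \emph{finitely many} functionals at a time. First I would record the easy inclusion: the \wot\ topology on $B(\cF_d)$ is generated by the vector functionals $T \mapsto \langle Tg, h\rangle$ with $g, h \in \cF_d$, each of which is implemented by a rank-one, hence trace-class, operator and is therefore weak-$*$ continuous; restricting to $\cL_d$, every basic \wot-neighbourhood is weak-$*$ open, so the \wot\ topology is contained in the weak-$*$ topology on $\cL_d$.

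For the reverse inclusion I would fix $b_0 \in \cL_d$ and a basic weak-$*$ neighbourhood
\[
V = \{ b \in \cL_d : |\rho_i(b - b_0)| < \ep,\ i = 1, \ldots, n \},
\]
with $\rho_1, \ldots, \rho_n \in (\cL_d)_*$ and $\ep > 0$. By Theorem~\ref{thm:LnA1} the algebra $\cL_d$ has property $\mb{A}_1$, so for each $i$ there are vectors $g_i, h_i \in \cF_d$ with $\rho_i(b) = \langle b g_i, h_i \rangle$ for all $b \in \cL_d$. Then $V = \{ b \in \cL_d : |\langle (b - b_0) g_i, h_i \rangle| < \ep,\ i = 1, \ldots, n \}$ is manifestly \wot-open. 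Since such sets $V$ form a neighbourhood basis for the weak-$*$ topology, the weak-$*$ topology is contained in the \wot\ topology, and combining this with the first step the two coincide.

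I do not expect any genuine obstacle: all of the content is already packaged in Theorem~\ref{thm:LnA1}, whose role is precisely to replace the (absolutely convergent but possibly infinite) series $\rho(b) = \sum_k \langle b g_k, h_k \rangle$ representing a general weak-$*$ functional by a single term, and single-term functionals are exactly what \wot-continuity detects. The only mild point to be careful about is the routine identification of the predual $(\cL_d)_*$ with the space of weak-$*$ continuous functionals on $\cL_d$, which is built into the notion of ``dual algebra'' and ``predual'' used in the Definition preceding Theorem~\ref{thm:LnA1}. It is also worth emphasising, to forestall a false analogy, that the finiteness built into weak topologies is essential here: one cannot conclude equality of two topologies from mere agreement of their continuous functionals, as the norm versus the weak topology on an infinite-dimensional Banach space shows.
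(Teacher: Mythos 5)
Your proof is correct and is exactly the argument the paper intends: the corollary is stated with no proof immediately after Theorem~\ref{thm:LnA1}, and the intended reasoning is precisely your observation that property $\mb{A}_1$ lets every weak-$*$ continuous functional be written as a single vector functional $b \mapsto \langle bg, h\rangle$, which together with the trivial inclusion of the \wot\ topology in the weak-$*$ topology (rank-one operators are trace class) gives equality, since each topology is generated by finite families of its continuous functionals.
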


\subsection{Quotients of $\cL_d$}\label{subsec:invNidealsNC}

The following theorem is a collection of results from \cite[Section 4]{AriasPopescu00} and \cite[Section 2]{DavPittsPick}. 

\begin{theorem}
Fix a \wot-closed two sided ideal $J$ and denote $N = [J\cF_d]^\perp$. Put $B = P_N L P_N$. Then the map $\pi : A \mapsto P_NAP_N$ is a homomorphism from the algebra $\cL_d$ onto $P_N \cL_d P_N$ which annihilates $J$. Moreover:
\begin{enumerate}
\item $P_N\cL_d P_N = \overline{\alg}^\wot(B)$ --- the unital \wot-closed algebra generated by $B$. 
\item $P_N\cL_d P_N$ has property  $\mb{A}_1(1)$. 
\item $P_N\cL_d P_N = (P_N\cL_d P_N)''$. 
\item $\pi$ promotes to a natural completely isometric isomorphism and weak-$*$ hom\-eomorphism $\cL_d / J$ onto $P_N \cL_d P_N$. 
\end{enumerate}
\end{theorem}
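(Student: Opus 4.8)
The plan is to establish each of the four items by exploiting the $\mb{A}_1(1)$ property of $\cL_d$ (Theorem \ref{thm:LnA1}) together with the basic structure of the compression map $\pi(A) = P_N A P_N$. The central observation is that since $J$ is a \wot-closed two-sided ideal, $J\cF_d$ is invariant for each $L_i$, so $N = [J\cF_d]^\perp$ is coinvariant, i.e. invariant for each $L_i^*$; consequently $P_N$ commutes with the $L_i^*$ only in the compressed sense, and compression to a semi-invariant (here coinvariant) subspace is multiplicative on the algebra $\cL_d$ whenever the subspace is invariant or coinvariant. This is the standard Sarason-type lemma, and it gives immediately that $\pi$ is a homomorphism. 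That $\pi$ annihilates $J$ is clear: if $A \in J$ then $A\cF_d \subseteq J\cF_d = N^\perp$, so $P_N A = 0$.

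For item (1), the inclusion $\overline{\alg}^\wot(B) \subseteq P_N\cL_d P_N$ follows because $B = \pi(L)$ (entrywise) and $\pi$ is a \wot-continuous homomorphism (compression is \wot-continuous), so $\pi$ maps $\overline{\alg}^\wot(L) = \cL_d$ into $\overline{\alg}^\wot(B)$; the reverse inclusion is the previous sentence read backwards together with the fact that $\pi$ is surjective onto $P_N\cL_d P_N$ by definition. So $P_N\cL_d P_N = \pi(\cL_d) = \overline{\alg}^\wot(B)$, using \wot-continuity of $\pi$ and \wot-closedness of the target (the latter because $P_N\cL_d P_N$ is a compression to a reducing-up-to-semi-invariance subspace; one checks it is \wot-closed directly, or deduces it from item (3)). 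For item (2), the inheritance of property $\mb{A}_1(1)$ under compression to a coinvariant subspace: given $\rho$ a weak-$*$ functional on $P_N\cL_d P_N$, pull it back along $\pi$ to a weak-$*$ functional $\tilde\rho$ on $\cL_d$ of the same norm (here one must check $\pi$ is a weak-$*$ quotient map, not just continuous — this uses that $\ker\pi$ on $\cL_d$ is weak-$*$ closed, which holds since it contains $J$ and more precisely equals $\{A : P_N A P_N = 0\}$, a weak-$*$ closed subspace), then apply $\mb{A}_1(1)$ for $\cL_d$ to get $g, h \in \cF_d$ with $\|g\|\|h\| < (1+\ep)\|\tilde\rho\| = (1+\ep)\|\rho\|$ and $\tilde\rho(A) = \langle Ag, h\rangle$; finally replace $g, h$ by $P_N g, P_N h$ — one checks $\langle \pi(A) P_N g, P_N h\rangle = \langle P_N A P_N g, P_N h \rangle = \langle A P_N g, P_N h\rangle$, and one needs $\langle Ag,h\rangle = \langle A P_N g, P_N h\rangle$ for all $A \in \cL_d$, which follows because $\tilde\rho$ factors through $\pi$ so its value only depends on $P_N A P_N$; the norm estimate only improves. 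Item (3) is the double commutant assertion: $P_N\cL_d P_N \subseteq (P_N\cL_d P_N)''$ is trivial, and for the reverse one uses that property $\mb{A}_1$ (or more precisely reflexivity, which $\mb{A}_1(1)$ algebras enjoy by a result of the Davidson-Pitts circle) forces the algebra to equal its reflexive closure, and the reflexive closure contains the double commutant; alternatively cite that a \wot-closed algebra with property $\mb{A}_1$ acting on its space is reflexive, hence equal to $\operatorname{alg}\operatorname{Lat}$, hence a fortiori equal to its second commutant.

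Finally, item (4): $\pi$ annihilates $J$, so it descends to a homomorphism $\bar\pi : \cL_d/J \to P_N\cL_d P_N$ which is surjective by construction and \wot-to-\wot (equivalently weak-$*$ to weak-$*$, using the corollary that the two topologies coincide on $\cL_d$ and the analogue on the quotient, which itself has $\mb{A}_1(1)$ by item (2)). To see $\bar\pi$ is injective and completely isometric one must show $\ker\pi = J$, i.e. that $A \in \cL_d$ with $P_N A P_N = 0$ forces $A \in J$; this is the key point where one uses that $J$ is exactly the set of elements of $\cL_d$ annihilated by the compression, which in turn uses a Beurling-type description of \wot-closed ideals of $\cL_d$ (from \cite{AriasPopescu00, DavPittsPick}) identifying $J$ with $\{A \in \cL_d : A\cF_d \subseteq N^\perp\}$ — with that in hand, $P_N A P_N = 0$ combined with $N^\perp$ being $L$-invariant (so $A N^\perp \subseteq N^\perp$ automatically for $A \in \cL_d$) gives $A\cF_d \subseteq N^\perp$, hence $A \in J$. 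For the complete isometry one applies the same argument at the level of matrix ampliations $M_n(\cL_d)$, which act on $\cF_d \otimes \mb{C}^n$ and still have property $\mb{A}_1(1)$ (or quotes the known fact that the quotient of $\cL_d$ by a \wot-closed ideal is completely isometrically isomorphic to the compression, which is exactly the cited result); the bound $\|\bar\pi\| \le 1$ is automatic and $\|\bar\pi^{-1}\| \le 1$ follows because the quotient norm of $A + J$ equals $\inf_{B \in J}\|A - B\|$ and the compression realizes the distance to $J$ — this last equality is where property $\mb{A}_1(1)$ does real work, via a Hahn–Banach/duality argument pairing against the weak-$*$ functionals that factor through $\pi$. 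I expect the main obstacle to be precisely this identification $\ker\pi = J$ together with the isometric (rather than merely contractive) nature of the induced map: it is not formal, it genuinely requires the Beurling-type structure theory of ideals in $\cL_d$ and the $\mb{A}_1(1)$ factorization, and the cleanest route is to cite \cite[Section 4]{AriasPopescu00} and \cite[Section 2]{DavPittsPick} for these two inputs and then assemble the four claims as above.
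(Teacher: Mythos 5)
The paper itself gives no proof of this theorem; it is stated explicitly as ``a collection of results from [Section~4]{AriasPopescu00} and [Section~2]{DavPittsPick},'' so there is nothing internal to compare your argument against. Against the cited sources, your outline correctly identifies the right ingredients: Sarason multiplicativity on a coinvariant subspace for the homomorphism property, the Davidson--Pitts bijection between \wot-closed two-sided ideals and doubly-invariant subspaces of $\cF_d$ for $\ker\pi = J$, and the $\mb{A}_1(1)$ factorization for the isometric identification $\cL_d/J \cong P_N\cL_dP_N$ and the distance formula. Those are the same tools the cited papers use.

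That said, two of your steps contain genuine gaps. For item (2), the replacement $g \mapsto P_N g$, $h \mapsto P_N h$ does not preserve the functional. Writing $P = P_N$ and $Q = I - P_N$, the invariance of $N^\perp$ gives $PAQ = 0$, hence
\[
\langle Ag, h\rangle = \langle PAPg, Ph\rangle + \langle Ag, Qh\rangle,
\]
and the cross term $\langle Ag, Qh\rangle$ does not vanish in general (it only vanishes for $A \in J$, where $Ag \in N^\perp$, but that is automatic). Knowing that $\tilde\rho$ ``only depends on $P_NAP_N$'' does not make $\langle Ag, Qh\rangle$ disappear; it just tells you the term is also a functional that factors through $\pi$. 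The actual proof in the sources is more careful and does not reduce to a one-line projection argument. For item (3), the assertion that property $\mb{A}_1$ (or even $\mb{A}_1(1)$) ``forces reflexivity,'' and that reflexivity implies the double commutant identity, is not correct for non-selfadjoint algebras. Reflexivity says $\cB = \alg\Lat\cB$, but for a \wot-closed non-selfadjoint $\cB$ one does not in general have $\cB'' \subseteq \alg\Lat\cB$ (the projections onto non-reducing invariant subspaces are not in $\cB'$, so elements of $\cB''$ need not preserve $\Lat\cB$). The identity $\cL_d = \cL_d''$, and its compressed analogue, is a separate result in \cite{DavPitts2} proved by identifying the commutant with the algebra $\cR_d$ of the right regular representation and computing directly; it is not a formal corollary of any $\mb{A}_1$-type property. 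Your item (4) is in reasonable shape modulo the distance formula, which you correctly locate as where $\mb{A}_1(1)$ does the real work, although the ``pairing against weak-$*$ functionals that factor through $\pi$'' step is stated too vaguely to count as a proof.

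In short: the skeleton is right and the citations are the right ones, but items (2) and (3) need repair, and the cleanest route for a genuine proof is to follow \cite[Section~4]{AriasPopescu00} and \cite[Section~2]{DavPittsPick} closely rather than attempt a reduction by projection.
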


%
%

\subsection{Symmetric Fock space} 
For every permutation $\sigma$ on $n$ elements, one defines a unitary operator $U_\sigma$ on $E^{\otimes n}$ by 
\[
U_\sigma (x_1 \otimes \cdots \otimes x_n) = x_{\sigma(1)} \otimes \cdots \otimes x_{\sigma(n)}.
\]
The {\em $n$th-fold symmetric tensor product of $E$}, denoted $E^n$, is defined to be the subspace of $E^{\otimes n}$ which consists of the vectors fixed under the unitaries $U_\sigma$ for all $\sigma$. The {\em symmetric Fock space} is the subspace of $\cF(E)$ given by 
\[
\cF_+(E) = \mb{C} \oplus E \oplus E^2 \oplus E^3 \oplus \ldots .
\]
If $x_1 \in E^{n_1}, \ldots x_k \in E^{n_k}$, write $x_1 x_2 \cdots x_k$ for the projection of $x_1 \otimes x_2 \otimes \cdots \otimes x_k$ into $E^{n_1 + \ldots +n_k}$. Letting $\{e_1, \ldots, e_d\}$ be an orthonormal basis for $E$, $e^\alpha$ is shorthand for $e_1^{\alpha_1} \cdots e_d^{\alpha_d}$ for all $\alpha\in \mb{N}^d$. A computation shows that $\{e^\alpha\}_{|\alpha| = n}$ is an orthogonal basis for $E^n$ and that
\be\label{eq:norm_e}
\|e^\alpha\|^2 = \frac{\alpha!}{|\alpha|!} .
\ee

The space $\cF_+(E)$ is not invariant under the noncommutative $d$-shift $L$, but it is {\em co-invariant}, meaning that $L^*_i \cF_+(E) \subseteq \cF_+(E)$ for all $i$.

\subsection{The $d$-shift}\label{subsec:thedshift}
The {\em (commutative) $d$-shift} is the $d$ tuple $S = (S_1, \ldots, S_d)$ of operators given by compressing the noncommutative $d$-shift to $\cF(E)$. Thus, for all $n$ and all $x \in E^n$
\be\label{eq:defS}
S_i x = e_i x \quad, \quad i=1, \ldots, d .
\ee
It is straightforward to check that the $d$-shift has the following properties:
\begin{enumerate}
\item $S$ is commuting, i.e., $S_i S_j - S_j S_i = 0$. 
\item $\sum_{i=1}^d S_i S_i^* = I - P_{\mb{C}}$, and in particular $S$ is a row contraction. 
\item $S$ is pure. 
\end{enumerate}
 
Many results on the $d$-shift can be obtained by ``compressing theorems" about the noncommutative $d$-shift down to $\cF_+(E)$; see, e.g.,  \cite{DL10,DavPittsPick,DRS15,Popescu06}, the proof of Theorem \ref{thm:comp_Pick} or Sections \ref{subsec:identification2} and \ref{subsec:invNideals} below as well as the appendix.  This is a powerful technique, due to the availability of strong results for the noncommutative $d$-shift, e.g., \cite{DavPitts2,DavPitts1,Popescu89,Popescu91} or more generally \cite{MS98}. Another advantage of this technique is that it allows to obtain similar results for a very large class of Hilbert modules by compressing the noncommutative $d$-shift to other co-invariant spaces; see \cite{Popescu06,ShalitSolel}.

\subsection{Essential normality of the $d$-shift}\label{subsec:ess_norm_d_shift}

Let $N$ be the densely defined unbounded operator $N$ on $\cF_+(E)$ defined by $N h = n h$ for $h \in E^n$. $N$ is usually referred to in this context as the {\em number operator} (it is equal to the restriction of the radial derivative $R$ from \ref{sec:BS}). A straightforward computation (see \cite[Proposition 5.3]{Arv98}) shows that 
\be
[S^*_i,  S_j] = S^*_i S_j - S_j S^*_i = (1+N)^{-1}(\delta_{ij}1 - S_j S_i^*).
\ee
It follows readily that if $d<\infty$ then $S$ is $p$-essentially normal for all $p>d$ (but not for $p=d$). In particular $[S_i, S^*_j]$ is compact when $d<\infty$. It is not compact when $d= \infty$.

\subsection{Identification of $H^2_d$ with symmetric Fock space}\label{subsec:identification}

Fix $d \in \{1,2,\ldots, \infty\}$ and let $E$ be a $d$-dimensional Hilbert space with orthonormal basis $\{e_n\}_n$. Define $V : \mb{C}[z_1, \ldots, z_d] \rightarrow \cF_+(E)$ by 
\[
V\left(\sum_\alpha c_\alpha z^\alpha\right) = \sum_\alpha c_\alpha e^\alpha .
\]
By equations (\ref{eq:weights}) and (\ref{eq:norm_e}) $V$ extends to a unitary from $H^2_d$ onto $\cF_+(E)$. All separable infinite dimensional Hilbert spaces are isomorphic; the important feature here is that 
\[
V M_z V^* = S .
\]
Alternatively, there is also an anti-unitary identification of these two spaces. Every $f \in H^2_d$ can be written in a unique way as 
\[
f(z) = \sum \langle z^n , \xi_n \rangle, 
\]
where $z^n$ denotes the $n$th symmetric product of $z \in \mb{C}^d$ with itself, $\xi_n \in (\mb{C}^d)^n$, and $\sum_n \|\xi_n\|^2 < \infty$ (see \cite[Section 1]{Arv98}). Then the map $J : H^2_d \rightarrow \cF_+(E)$ given by $J f = \sum_n \xi_n$ is an anti-unitary and $J M_z J^{-1} = S$. 

Because of the above identification, the notation $S$ is also used for the tuple $M_z$ acting on $H^2_d$. It is safe to switch from $\cF_+(E)$ to $H^2_d$ and back, as convenient. Together with this identification, the results of Section \ref{subsec:structure} allow one to identify between $\cM_d$ and the unital \wot-closed algebra generated by $S$.

\subsection{Identification of $\cM_d$ with the compression of $\cL_d$}\label{subsec:identification2}

The {\em antisymmetric Fock space (over $E$)} is defined to be $\cF_-(E) = \cF(E) \ominus \cF_+(E)$. By \ref{subsec:invNidealsNC} and \ref{subsec:identification} $\cM_d$ can be identified with the compression of $\cL_d$ to $\cF_+(E)$, or as the quotient of $\cL_d$ by the 
two sided \wot-closed commutator ideal corresponding to $\cF_-(E)$. From \ref{subsec:invNidealsNC} the following theorem follows. 

\begin{theorem}
$\cM_d$ is a dual algebra which has property $\mb{A}_1(1)$. In particular, the weak-$*$ and weak operator topologies on $\cM_d$ coincide. The same holds for quotients of $\cM_d$ by weak-$*$ closed ideals. 
\end{theorem}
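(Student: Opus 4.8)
The plan is to deduce everything from the identifications already established in the excerpt, together with the structural results for $\cL_d$ and its quotients collected in Section~\ref{subsec:invNidealsNC}. Recall from Section~\ref{subsec:identification2} that $\cM_d$ is completely isometrically isomorphic and weak-$*$ homeomorphic to $P_N \cL_d P_N$, where $N = \cF_+(E)$ and the relevant \wot-closed two sided ideal is the commutator ideal $J$ with $[J\cF_d]^\perp = \cF_+(E)$; equivalently $\cM_d \cong \cL_d / J$. So the whole statement reduces to transporting the corresponding facts about $\cL_d/J$ (equivalently $P_N\cL_d P_N$) through this isomorphism.

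First I would note that $\cM_d$ is a dual algebra: it is \wot-closed in $B(H^2_d)$ (every multiplier algebra is, and this was used already in the proof of Theorem~\ref{thm:wot_gen}), hence weak-$*$ closed in $B(H^2_d)$ viewed as the dual of the trace class, hence itself a dual space. Next, property $\mb{A}_1(1)$: by part~(2) of the theorem in Section~\ref{subsec:invNidealsNC}, $P_N\cL_d P_N$ has property $\mb{A}_1(1)$, and since $\cM_d$ is completely isometrically isomorphic to this algebra via a map that is also a weak-$*$ homeomorphism (part~(4) of that theorem together with the identification of Section~\ref{subsec:identification2}), property $\mb{A}_1(1)$ — being a statement purely about the predual pairing and norms — transfers verbatim. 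For the strongly unique predual, I would invoke the Kennedy--Yang result (Theorem~\ref{KenYan12}, Theorem~1.4 in the excerpt) which says $\cL_d/J$ has a strongly unique predual for \emph{every} weak-$*$ closed two sided ideal $J$; applying this to the commutator ideal and transporting through the weak-$*$ homeomorphism gives that $\cM_d$ has a strongly unique predual. The coincidence of the weak-$*$ and weak operator topologies on $\cM_d$ then follows exactly as the Corollary to Theorem~\ref{thm:LnA1} follows from property $\mb{A}_1(1)$: property $\mb{A}_1$ forces every weak-$*$ continuous functional to be a vector functional $b \mapsto \langle bg,h\rangle$, and such functionals are \wot-continuous, so the two topologies have the same continuous functionals on the (convex) algebra and hence coincide on bounded sets, and on all of $\cM_d$ since both are determined by their bounded parts here.

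Finally, for quotients of $\cM_d$ by weak-$*$ closed ideals: a weak-$*$ closed ideal $\cI \subseteq \cM_d$ pulls back under the isomorphism $\cM_d \cong \cL_d/J$ to a weak-$*$ closed ideal in $\cL_d/J$, which in turn corresponds to a weak-$*$ closed two sided ideal $J' \supseteq J$ in $\cL_d$, so that $\cM_d/\cI \cong \cL_d/J'$. Then the same three citations — part~(2) of the Section~\ref{subsec:invNidealsNC} theorem (after replacing $J$ by $J'$, i.e. $N$ by $[J'\cF_d]^\perp$) and the Kennedy--Yang theorem — give property $\mb{A}_1(1)$ and a strongly unique predual for $\cM_d/\cI$, and hence the coincidence of topologies as before.

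**Main obstacle.** The real content is entirely imported: it lives in the Arias--Popescu/Davidson--Pitts structure theory for quotients of $\cL_d$ and in Kennedy--Yang's predual uniqueness theorem. So the only thing to be careful about is bookkeeping — checking that the commutator ideal $J$ is genuinely weak-$*$ closed and two sided (so that the quoted theorems apply), that the identification of Section~\ref{subsec:identification2} really is simultaneously a complete isometry \emph{and} a weak-$*$ homeomorphism (it is, by part~(4) of the Section~\ref{subsec:invNidealsNC} theorem), and that a weak-$*$ closed ideal of $\cL_d/J$ lifts to a weak-$*$ closed ideal of $\cL_d$ containing $J$ (this uses that the quotient map $\cL_d \to \cL_d/J$ is weak-$*$ continuous with weak-$*$ closed kernel, so preimages of weak-$*$ closed sets are weak-$*$ closed). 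None of these is hard, but they are the points where a careless argument could slip.
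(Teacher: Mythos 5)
Your proposal is correct and follows exactly the route the paper intends: the paper's own ``proof'' is the single sentence ``From \ref{subsec:invNidealsNC} the following theorem follows,'' i.e.\ transfer the Arias--Popescu/Davidson--Pitts and Kennedy--Yang results for quotients of $\cL_d$ through the identification $\cM_d \cong \cL_d/J \cong P_{\cF_+(E)}\cL_d P_{\cF_+(E)}$ of \ref{subsec:identification2}, which is precisely what you do. The bookkeeping you flag (that $J$ is a \wot-closed two sided ideal, that the identification is simultaneously a complete isometry and a weak-$*$ homeomorphism, and that weak-$*$ closed ideals of $\cM_d$ pull back to weak-$*$ closed two sided ideals of $\cL_d$ containing $J$) is exactly the content one must check, and your checks are right.
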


\subsection{Subproduct systems}\label{subsec:sps} 

The commutative and noncommutative $d$-shifts were defined above in a way which might make it seems to depend on the choice of an orthonormal basis in a $d$-dimensional space $E$ (and, in the function space picture, on a choice of coordinate system in $\mb{C}^d$). Of course, the same structure is obtained regardless of the choice of basis (see, e.g., \cite{Arv98}). Alternatively, a coordinate free definition of the shift is given by viewing it as a representation of a {\em subproduct system}; see \cite{ShalitSolel} for details.

\section{Operator algebras associated to the $d$-shift}

\subsection{The norm closed algebra and the Toeplitz algebra}
Let $\cA_d$ be the norm closed algebra generated by $S$ on $H^2_d$. This algebra is sometimes referred to as the ``algebra of continuous multipliers", but this terminology is misleading --- see \ref{eq:not_cont_mult} below.
The {\em Toeplitz algebra} $\cT_d$ is defined to be the unital C*-algebra generated by $S$, that is, 
\be 
\cT_d = C^*(\cA_d) = C^*(1, S).
\ee
From \ref{subsec:thedshift} and \ref{subsec:ess_norm_d_shift} the following theorem follows (for proof see \cite[Theorem 5.7]{Arv98}).

\begin{theorem}\label{thm:exact_sequence}
Fix $d<\infty$ and denote the compact operators on $H^2_d$ by $\cK$. Then $\cK \subset \cA_d$, and 
\be\label{eq:quotient_cong}
\cT_d / \cK \cong C(\partial \mb{B}_d).
\ee
Thus, there exists an exact sequence 
\be
0 \longrightarrow \cK \longrightarrow \cT_d \longrightarrow C(\partial \mb{B}_d) \longrightarrow 0 .
\ee
\end{theorem}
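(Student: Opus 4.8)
The plan is to establish the isomorphism $\cT_d/\cK \cong C(\partial\mb{B}_d)$ by first showing that $\cK \subseteq \cT_d$, then constructing an explicit $*$-homomorphism from $\cT_d$ onto $C(\partial\mb{B}_d)$ whose kernel is exactly $\cK$. First I would use the essential normality from \ref{subsec:ess_norm_d_shift} and the pure row contraction property $\sum_i S_i S_i^* = I - P_{\mb{C}}$ from \ref{subsec:thedshift}. From the latter, $I - \sum_i S_i S_i^* = P_{\mb{C}}$ is the rank-one projection onto the constants, hence a nonzero compact operator in $\cT_d$; combining this with irreducibility of the tuple $S$ (which follows from the fact that $[S_1,\ldots,S_d]$ is a pure row contraction with one-dimensional wandering space, or directly because no nontrivial subspace reduces all $S_i$), the C*-algebra $\cT_d$ contains a nonzero compact operator and acts irreducibly, so $\cK \subseteq \cT_d$ by the standard fact that an irreducible C*-algebra containing one nonzero compact operator contains all of them.

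Next I would produce the symbol map. Because $[S_i^*,S_j] = (1+N)^{-1}(\delta_{ij}I - S_jS_i^*)$ is compact for $d<\infty$ (stated in \ref{subsec:ess_norm_d_shift}), the images of $S_1,\ldots,S_d$ in the Calkin algebra, or more precisely in $\cT_d/\cK$, form a commuting tuple of normal elements $s_1,\ldots,s_d$. Moreover, modulo $\cK$ the relation $\sum_i S_iS_i^* = I - P_{\mb{C}}$ becomes $\sum_i s_i s_i^* = 1$. Thus the commutative C*-algebra $\cT_d/\cK$ is generated by a normal tuple $(s_1,\ldots,s_d)$ whose joint spectrum $X \subseteq \mb{C}^d$ satisfies $\sum_i |\zeta_i|^2 = 1$ for all $\zeta \in X$, i.e. $X \subseteq \partial\mb{B}_d$. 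By the Gelfand theory this gives a surjective $*$-homomorphism $\cT_d \to C(X)$ with kernel $\cK$ (the kernel is $\cK$ precisely because $\cK$ is the ideal killed by passing to the commutative quotient and $\cT_d/\cK$ is already commutative). The remaining point is to show $X$ is all of $\partial\mb{B}_d$: this follows by exhibiting approximate eigenvectors, e.g. the normalized reproducing kernels $k_w/\|k_w\|$ as $|w|\to 1$ along a ray toward $\zeta \in \partial\mb{B}_d$, for which $S_i^* k_w/\|k_w\| - \overline{w_i}\, k_w/\|k_w\| \to 0$, showing $\overline\zeta$ (hence $\zeta$, by normality mod compacts) lies in the joint approximate point spectrum and therefore in $X$.

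The exact sequence then follows formally: $0 \to \cK \to \cT_d \to \cT_d/\cK \to 0$ together with the identification $\cT_d/\cK \cong C(\partial\mb{B}_d)$ just established.

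The main obstacle is the verification that the symbol map has kernel exactly equal to $\cK$, not something larger, and that the joint spectrum $X$ fills out all of $\partial\mb{B}_d$ rather than a proper closed subset. The first is handled by noting that $\cT_d/\cK$ is commutative (from compactness of all the commutators and cross-commutators) combined with the fact that the quotient by $\cK$ is the \emph{maximal} commutative quotient in this situation — any closed ideal strictly containing $\cK$ would have to contain a nonzero ideal of the commutative algebra $\cT_d/\cK$, but the argument via approximate point spectrum shows $\cT_d/\cK$ has no proper quotients coming from closing off part of $\partial\mb{B}_d$; said differently, one checks directly that the $*$-homomorphism $C(\partial\mb{B}_d) \to \cT_d/\cK$ sending coordinate functions to $s_i$ is injective because $X = \partial\mb{B}_d$. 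The approximate-eigenvector computation with the kernel functions is the one genuine calculation, but it is short and is exactly the multivariable analogue of the classical fact that the Hardy-space Toeplitz algebra has symbol map onto $C(\partial\mb{D})$; the full details are in \cite[Theorem 5.7]{Arv98}.
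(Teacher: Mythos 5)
Your argument is correct and follows exactly the route of Arveson's Theorem 5.7 in \cite{Arv98}, which is the reference the survey gives for this result: $P_{\mb{C}} = I - \sum_i S_iS_i^*$ is a rank-one projection in $\cT_d$, irreducibility then forces $\cK\subset\cT_d$, essential normality makes $\cT_d/\cK$ a commutative C*-algebra with Gelfand spectrum contained in $\partial\mb{B}_d$ because $\sum_i s_is_i^* = 1$, and the normalized kernels $k_{r\zeta}/\|k_{r\zeta}\|$ tend weakly to zero and are in fact \emph{exact} (not merely approximate) joint eigenvectors of $S_1^*,\ldots,S_d^*$ with eigenvalue $r\overline{\zeta}$, which shows the spectrum fills out the whole sphere and pins down the kernel of the symbol map as precisely $\cK$.

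One caution, which concerns the printed statement rather than your proof: the theorem asserts $\cK\subset\cA_d$, whereas what you establish (correctly) is $\cK\subset\cT_d$, and these cannot coincide. Since $\cA_d$ is norm-closed and sits inside the WOT-closed multiplier algebra $\cM_d$, every element of $\cA_d$ is a multiplication operator $M_f$; but $M_f^* k_w = \overline{f(w)}\, k_w$, so $M_f^*$ carries the family of eigenvalues $\{\overline{f(w)} : w\in\mb{B}_d\}$, which is uncountable whenever $f\not\equiv 0$ (an open map on a slice, or simply a nonzero constant), while a compact operator has at most countably many nonzero eigenvalues. Hence $\cA_d\cap\cK=\{0\}$, and the printed inclusion $\cK\subset\cA_d$ must be read as $\cK\subset\cT_d$, which is what \cite[Theorem 5.7]{Arv98} actually states and what your argument proves. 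There is no gap on your part; your proposal is the intended proof.
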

The isomorphism (\ref{eq:quotient_cong}) is the natural one given by sending the image of $S_i$ in the quotient to the coordinate function $z_i$ on $\partial \mb{B}_d$. It follows that the essential norm $\|M_f\|_e$ of an element $f \in \cA_d$ is given by 
\be\label{eq:ess_norm_Ad}
\|M_f\|_e = \sup_{z \in \mb{B}_d} |f(z)| \quad , \quad f \in \cA_d.
\ee
Another consequence of the above theorem is
\be
\cT_d = \overline{\spn \cA_d \cA_d^*}^{\|\cdot\|}.
\ee

It is worth noting that for $d = \infty$ equation (\ref{eq:quotient_cong}) fails, because $S$ is not essentially normal in that case. There is a naturally defined ideal $\cI \triangleleft \cT_d$ (that coincides with $\cK$ when $d<\infty$) such that $\cT_d / \cI$ is  commutative. This ideal $\cI$ is given by 
\[
\cI = \{A \in \cT_d : \lim_{n\rightarrow 0}\|AP_{E^n}\| = 0\} , 
\]
where $P_{E^n}$ is the orthogonal projection $\cF_+(E) \rightarrow E^n$. 
The counterpart of (\ref{eq:quotient_cong}) still fails, instead one has
\[
\cT_\infty / \cI = C(\overline{\mb{B}_\infty}) . 
\]
See \cite[Example 3.6]{Viselter} for details.

\subsection{Continuous multipliers versus $\cA_d$}
It follows from (\ref{eq:ineq_norm}) and (\ref{eq:mult_norm}) that $\cA_d \subseteq C(\overline{\mb{B}_d}) \cap \cM_d$. When $d=1$ this containment is an equality, but for $d>1$ the reverse containment does not hold. 

Indeed, in \cite{FX11} it is proved that there is a sequence of continuous multipliers $\{\psi_k\}$ such that $\lim_{k\rightarrow \infty} \|\psi_k\|_\infty = 0$ while $\inf_k \|M_{\psi_k}\|_e \geq 1/2$. 
It follows that (\ref{eq:ess_norm_Ad}) does not hold for the multipliers $\psi_k$.  Since $\psi_k \in C(\overline{\mb{B}_d}) \cap \cM_d$, it follows that
\be\label{eq:not_cont_mult}
\cA_d \subsetneq C(\overline{\mb{B}_d}) \cap \cM_d .
\ee

\subsection{Nullstellensatz and approximation in homogeneous ideals}

\begin{definition}
Let $\cB \subseteq \cO(\mb{B}_d)$ be an algebra. An ideal $J \triangleleft \cB$ is said to be a {\em homogeneous ideal} if for every $f \in J$ with homogeneous decomposition (\ref{eq:homogeneous}) and every $n \in \mb{N}$, it holds that $f_n \in J$.
\end{definition}

\begin{definition}
Let $\cB \subseteq \cO(\mb{B}_d)$ be an algebra and $J \triangleleft \cB$ an ideal. The {\em radical of $J$} is the ideal
\[
\sqrt{J}  = \{f \in \cB : \exists N .\, f^N \in J\}. 
\]
An ideal $J$ is said to be a {\em radical ideal} if $\sqrt{J} = J$. 
\end{definition}

If $\cB \subseteq \cO(\mb{B}_d)$ is an algebra and $X \subseteq \mb{B}_d$ is a set, denote 
\[
I_\cB(X) = \{f \in \cB : f\big|_X \equiv 0\}. 
\]
For $J \subseteq \cB$ denote
\[
V(J) = \{z \in \mb{B}_d : f(z) = 0 \, \textrm{ for all } f \in J\}. 
\]

\begin{theorem}[\cite{DRS11}, Theorem 6.12; \cite{RamseyThesis}, Theorem 2.1.30]
Let $\cB$ be either $\cA_d$ equipped with the norm topology, or $\cM_d$ equipped with the weak-$*$ topology, and let $J \triangleleft \cB$ be a closed homogeneous ideal. Then 
\be
\sqrt{J} = I_\cB (V(J)). 
\ee
\end{theorem}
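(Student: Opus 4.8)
The plan is to prove a Nullstellensatz for closed homogeneous ideals $J$ in $\cB$, where $\cB$ is either $\cA_d$ (norm topology) or $\cM_d$ (weak-$*$ topology). One inclusion is elementary: if $f \in \sqrt{J}$, then $f^N \in J$ for some $N$, so $f$ vanishes on $V(J)$ and hence $f \in I_\cB(V(J))$. The substantive content is the reverse inclusion $I_\cB(V(J)) \subseteq \sqrt{J}$. Since $J$ is homogeneous, the first step is to pass to the ideal $I := J \cap \mb{C}[z]$ of polynomials in $J$; I would want to show (or recall from \cite{DRS11}) that $J$ is the closure of $I$ in $\cB$, using the homogeneous structure together with Theorem \ref{thm:Poisson} (Poisson summability in $\cM_d$, restricting to $\cA_d$ via uniform convergence of the Ces\`aro/Abel means of the homogeneous decomposition). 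This reduces matters to understanding closures of polynomial ideals.

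Next I would invoke the classical Hilbert Nullstellensatz in $\mb{C}[z]$ (for $d < \infty$; the $d = \infty$ case needs a separate remark or is excluded): the zero set $Z := V(I) \cap \mb{B}_d = V(J)$, and the radical $\sqrt{I}$ in $\mb{C}[z]$ equals $I_{\mb{C}[z]}(Z)$ provided $Z$ is nonempty (one must be slightly careful because $V(J)$ is the zero set inside the ball, but since $I$ is homogeneous its affine variety is a cone, so its intersection with $\mb{B}_d$ determines it and vice versa — the case $V(J) = \{0\}$ or $\emptyset$, where $I$ is primary to the maximal ideal at the origin, should be handled directly). So given $g \in I_\cB(V(J))$, I want to produce $N$ with $g^N \in J$. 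The idea is: $g$ vanishes on $V(J) = Z$; approximate $g$ in $\cB$ by polynomials $p_k \to g$; these $p_k$ need not vanish on $Z$, but by the homogeneous structure one can instead work with the homogeneous components of $g$, each of which lies in $I_\cB(V(J))$, and argue that a truncation $g^{(k)} = \sum_{n \le k} g_n$ of $g$ satisfies $g^{(k)} \in I_{\mb{C}[z]}(Z) = \sqrt{I}$, hence $(g^{(k)})^{N_k} \in I \subseteq J$; then pass to the limit $g^{(k)} \to g$ in $\cB$ and control the power. The delicate point here is that the exponent $N_k$ might blow up as $k \to \infty$, so one needs a uniform Noetherianity bound — which is available since $\mb{C}[z]$ is Noetherian, so $\sqrt{I}^{N} \subseteq I$ for a single $N$, giving $g^N \in \overline{I} = J$ after passing to the limit.

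Making that limit argument rigorous is where I expect the real work to be. One must know that multiplication $\cM_d \times \cM_d \to \cM_d$ is separately weak-$*$ continuous on bounded sets (true, since $\cM_d$ is a dual algebra by the theorem in \ref{subsec:identification2}, with weak-$*$ = \wot, and multiplication is \wot-continuous in each variable on bounded sets), so that $g^{(k)} \to g$ weak-$*$ with a uniform norm bound forces $(g^{(k)})^N \to g^N$ weak-$*$; combined with $(g^{(k)})^N \in J$ and $J$ weak-$*$ closed, we get $g^N \in J$. For $\cA_d$ the analogous statement is easier since multiplication is norm-continuous and $g^{(k)} \to g$ in norm (this requires the Abel means $\sum r^n g_n \to g$ in $\cA_d$-norm, which follows from Theorem \ref{thm:Poisson} together with $g \in \cA_d$, or one works with Ces\`aro means directly). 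The hard part, then, is not any single estimate but assembling the pieces correctly: verifying that truncations/Abel means of $g$ land in the polynomial radical $\sqrt{I}$ — which hinges on $g$ and all its homogeneous parts vanishing on the cone $Z$ — and then transferring the uniform-exponent Nullstellensatz bound through the topological closure. I would structure the final proof as: (i) $\sqrt{J} \subseteq I_\cB(V(J))$ trivially; (ii) reduce to polynomials via homogeneity and Poisson/Abel summability; (iii) apply Hilbert's Nullstellensatz with a uniform exponent $N$; (iv) close up using separate (weak-$*$ or norm) continuity of multiplication and the closedness of $J$.
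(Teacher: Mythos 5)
Your overall strategy --- pass to the polynomial ideal $I = J \cap \mb{C}[z]$ via homogeneity and Poisson/Abel summability, apply Hilbert's Nullstellensatz together with a uniform Noetherian exponent $N$ with $\sqrt{I}^{\,N} \subseteq I$, and then close up --- is the same as the argument in \cite{DRS11}, and the inclusion $\sqrt{J} \subseteq I_\cB(V(J))$, the identification $V(J) = V(I) \cap \mb{B}_d$ (with $V(I)$ an affine cone so that each homogeneous component $g_n$ of $g \in I_\cB(V(J))$ lies in $\sqrt{I}$), and the reduction $J = \overline{I}$ are all sound.

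There is, however, one genuine flaw in your final limiting step for $\cM_d$: you invoke \emph{separate} weak-$*$ (equivalently \wot) continuity of multiplication on bounded sets to conclude that $g^{(k)} \rightarrow g$ weak-$*$ with a uniform norm bound forces $(g^{(k)})^N \rightarrow g^N$ weak-$*$. Separate continuity does not yield this, since all $N$ factors of the product vary simultaneously; in fact joint \wot-continuity of multiplication in $B(H)$ fails even on bounded sets (for the unilateral shift one has $S^n \rightarrow 0$ and $S^{*n} \rightarrow 0$ in \wot\ while $S^{*n}S^n = I$). The conclusion you want is nevertheless true, and the correct justification is already available in the survey: work with the Abel means $g_r$ rather than the raw truncations (partial sums are not multiplier-norm contractive, whereas $\|g_r\|_{\cM_d} \leq \|g\|_{\cM_d}$ by Theorem \ref{thm:Poisson}); then $g_r^N(z) \rightarrow g^N(z)$ for every $z \in \mb{B}_d$ and $\|g_r^N\|_{\cM_d} \leq \|g\|_{\cM_d}^N$, so the lemma in Section \ref{subsec:structure} (a norm-bounded net of multipliers converges \wot\ if and only if it converges pointwise) gives $g_r^N \rightarrow g^N$ in \wot $=$ weak-$*$. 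With that replacement, and with the standard polynomial-approximation argument to upgrade the SOT convergence $f_r \rightarrow f$ of Theorem \ref{thm:Poisson} to norm convergence when $f \in \cA_d$ (which you implicitly acknowledge), your proof is the one in \cite{DRS11}.
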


The above may be considered as a Nullstellensatz for homogeneous ideals in the algebra $\cB$. Besides its intrinsic interest, it also implies the following approximation-theoretic result. 

\begin{theorem}[\cite{DRS11}, Corollary 6.13; \cite{RamseyThesis}, Corollary 2.1.31]
Let $\cB$ be either $\cA_d$ equipped with the norm topology, or $\cM_d$ equipped with the weak-$*$ topology, and let $I$ be a radical homogeneous ideal in $\mb{C}[z]$. If $f \in \cB$ vanishes on $V(I)$, then $f \in \overline{I}$. 
\end{theorem}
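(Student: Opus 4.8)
The plan is to derive this corollary directly from the preceding Nullstellensatz. Let $\cB$ be either $\cA_d$ (norm topology) or $\cM_d$ (weak-$*$ topology), let $I \triangleleft \mb{C}[z]$ be a radical homogeneous ideal, and suppose $f \in \cB$ vanishes on $V(I)$. The first step is to pass from the polynomial ideal $I$ to its closure $\overline{I}$ in $\cB$; since the coordinate functions lie in $\cA_d \subseteq \cM_d$ and multiplication by them is continuous in both topologies in question, $\overline{I}$ is a closed ideal of $\cB$. Moreover, because $I$ is a homogeneous ideal of $\mb{C}[z]$, its closure $\overline{I}$ is a closed \emph{homogeneous} ideal of $\cB$: one uses Lemma~\ref{}-type facts, specifically that the projection onto the $n$th homogeneous component is given by averaging the gauge action $\gamma_t$ against $e^{-int}$ (the Lemma after Theorem~\ref{thm:wot_gen}, respectively its $\cA_d$-analogue), and this operation is continuous in the relevant topology and fixes $\mb{C}[z]$-homogeneity; hence it maps $\overline{I}$ into itself.

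The second step is to identify the zero set. Since $I \subseteq \overline{I}$, we have $V(\overline{I}) \subseteq V(I)$; conversely every element of $\overline{I}$ is a limit (in a topology finer than pointwise convergence on $\mb{B}_d$, by inequality~(\ref{eq:ineq_norm})) of elements vanishing on $V(I)$, so $V(\overline{I}) = V(I)$. Thus $f$ vanishes on $V(\overline{I})$, i.e. $f \in I_\cB(V(\overline{I}))$. Now apply the Nullstellensatz (the displayed theorem, \cite{DRS11} Thm.~6.12 / \cite{RamseyThesis} Thm.~2.1.30) to the closed homogeneous ideal $\overline{I} \triangleleft \cB$: it gives
\[
\sqrt{\overline{I}} = I_\cB(V(\overline{I})),
\]
and hence $f \in \sqrt{\overline{I}}$, meaning $f^N \in \overline{I}$ for some $N$.

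The third and final step is to remove the radical, i.e. to upgrade $f^N \in \overline{I}$ to $f \in \overline{I}$. This is where the hypothesis that $I$ is a \emph{radical} homogeneous ideal of $\mb{C}[z]$ is used: one must show $\sqrt{\overline{I}} = \overline{I}$ when $I$ is radical in the polynomial ring. For homogeneous ideals this is tractable because the variety $V(I)$ is then (up to taking the affine cone) the zero set of a genuine radical ideal, and the classical Nullstellensatz over $\mb{C}[z]$ gives $I = I_{\mb{C}[z]}(V(I))$; combined with the homogeneous decomposition, an element $f$ of $\overline{I}$ whose power lies in $\overline{I}$ has all homogeneous components vanishing on $V(I)$, and each such component, being a polynomial vanishing on $V(I) = V(\overline{I})$, lies in $I_{\mb{C}[z]}(V(I)) = I \subseteq \overline{I}$; then Poisson/Cesàro summation of the homogeneous expansion (Theorem~\ref{thm:Poisson} in the $\cM_d$ case, its analogue for $\cA_d$) recovers $f$ as a limit of elements of $\overline{I}$, so $f \in \overline{I}$.

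I expect the main obstacle to be the bookkeeping in this last step: making precise that membership in $\overline{I}$ is detected homogeneous-component by homogeneous-component, and that each component of an element of $\overline{I}$ is a \emph{polynomial} in $I$ rather than merely in the closure. The cleanest route is probably to observe that $\overline{I}$ is the closed linear span (in $\cB$) of the homogeneous polynomials it contains, that these polynomials are exactly the homogeneous elements of $I$ (using $I = I_{\mb{C}[z]}(V(I))$ and homogeneity of $I$), and then invoke Poisson summability so that $f \in \overline{I}$ iff every homogeneous component $f_n$ of $f$ lies in $I$ — at which point $f^N \in \overline{I}$ forces $f_n \in I$ for all $n$ by reducing mod the finitely many low-degree relations and using that $I$ is radical. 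Everything else is a formal consequence of the Nullstellensatz already proved.
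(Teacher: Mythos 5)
Your proof is correct, and it derives the statement from the preceding Nullstellensatz as the paper intends; but it is worth noting that the Nullstellensatz application does essentially no work in your argument. Applying it with $J = \overline{I}$ returns $\sqrt{\overline{I}} = I_\cB(V(\overline{I})) = I_\cB(V(I))$, so all you learn is that $f$ vanishes on $V(I)$ --- precisely the hypothesis. The real content, including your proof that $\sqrt{\overline{I}} = \overline{I}$, is the computation you sketch at the end, and it proves the corollary directly: since $I$ is homogeneous, $V(I)$ is a cone, so for $x \in V(I)$ the function $\lambda \mapsto f(\lambda x) = \sum_n f_n(x)\lambda^n$ vanishes identically on $\mb{D}$, hence each homogeneous part $f_n$ (a polynomial) vanishes on $V(I)$; the classical affine Nullstellensatz plus $I = \sqrt{I}$ give $f_n \in I$; and Poisson summability (Theorem~\ref{thm:Poisson} for $\cM_d$, or Fej\'{e}r/Ces\`{a}ro means in norm for $\cA_d$) exhibits $f$ as a limit of elements of $I$, so $f \in \overline{I}$. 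That is a complete, self-contained proof which bypasses the operator-theoretic Nullstellensatz altogether, and it is exactly what should replace your closing phrase about ``reducing mod the finitely many low-degree relations,'' which as written is unclear. One small referencing slip: the averaging lemma that projects onto the $n$th homogeneous part is the unnamed lemma in Section~\ref{subsec:homogeneous_dec}, not the one following Theorem~\ref{thm:wot_gen}.
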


In other words, if a function $f \in \cA_d$ vanishes on a homogeneous variety $V \subset \mb{B}_d$, then it can be approximated in norm (thus, uniformly) by polynomials that vanish on $V$. 

\begin{remark}
The results for $\cB = \cA_d$ were obtained in \cite{DRS11}, while the extension to $\cB = \cM_d$ is from \cite{RamseyThesis}. For brevity, this Section describes the results in the setting of either $\cA_d$ or $\cM_d$; but --- as the proof depends only on the fact that $\cA_d$ and $\cM_d$ are algebras of multipliers on a Hilbert function space with circular symmetry --- similar results hold in a more general setting, in particular in the setting of the ball algebra $A(\mb{B}_d)$ or $H^\infty(\mb{B}_d)$ (see the \cite{DRS11,RamseyThesis} for further details). In the setting of non-homogeneous ideals, however, not much is known. 
\end{remark}

\section{Model theory}\label{sec:model_theory}

The importance of the $d$-shift stems from the fact that it is a universal model for $d$-contractions, in fact, {\em the unique} universal model for $d$-contractions. The results of \ref{subsec:uni_pure} and \ref{subsec:Drury_inequality} have become well known thanks to their appearance in \cite{Arv98}, though these results and the techniques that give them have been known before, at least in some form or other (see, e.g., \cite{Ath90,Ath92,Drury,MV93,Popescu89,Popescu99,Vas92}), and have been extended and generalized afterwards (see, e.g.,  \cite{AEM02,AE02,MS09,Popescu06,Popescu10a}).

\subsection{Universality of the $d$-shift among pure row contractions}\label{subsec:uni_pure}

Recall the notation from Section \ref{sec:notation}.

\begin{lemma}\label{lem:Poisson_kernel}
Let $T$ be a pure $d$-contraction on a Hilbert space $H$. Then there exists an isometry $W : H \rightarrow H^2_d \otimes \cD_T$ such that for every multi-index $\alpha$ and all $g \in \cD_T = \overline{\Delta_T H}$, 
\be\label{eq:Poisson_kernel}
W^*(z^\alpha \otimes g) = T^\alpha \Delta g .
\ee
\end{lemma}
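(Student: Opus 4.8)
The plan is to build $W$ from its ``Fourier coefficients'' on the $H^2_d\otimes\cD_T$ side. Throughout write $\Delta=\Delta_T$ and $\Theta=\Theta_T$. The first and only substantive step is to record the operator identity
\[
\sum_{\alpha}\frac{|\alpha|!}{\alpha!}\,T^\alpha\Delta^2(T^*)^\alpha = I_H ,
\]
with the sum converging in the strong operator topology. This follows from the telescoping
\[
I_H-\Theta^{N+1}(I)=\sum_{n=0}^{N}\bigl(\Theta^n(I)-\Theta^{n+1}(I)\bigr)=\sum_{n=0}^{N}\Theta^n\bigl(I-\Theta(I)\bigr)=\sum_{n=0}^{N}\Theta^n(\Delta^2),
\]
together with the observation that, because $T$ is \emph{commuting}, expanding $\Theta^n(\Delta^2)=\sum_{i_1,\dots,i_n}T_{i_1}\cdots T_{i_n}\,\Delta^2\,T_{i_n}^*\cdots T_{i_1}^*$ and collecting the $|\alpha|!/\alpha!$ words that spell the monomial $z^\alpha$ gives $\Theta^n(\Delta^2)=\sum_{|\alpha|=n}\frac{|\alpha|!}{\alpha!}T^\alpha\Delta^2(T^*)^\alpha$. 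Since $T$ is pure, $\Theta^{N+1}(I)\to0$ strongly, and letting $N\to\infty$ yields the identity. (When $d=\infty$ the hypothesis $\sum_i T_iT_i^*\le I$ is exactly what is needed to make all these sums converge strongly.)

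Next I would \emph{define}
\[
Wh=\sum_{\alpha}\frac{|\alpha|!}{\alpha!}\,z^\alpha\otimes\Delta(T^*)^\alpha h .
\]
Since the vectors $z^\alpha\otimes g$ for distinct $\alpha$ are mutually orthogonal in $H^2_d\otimes\cD_T$ and $\|z^\alpha\|_{H^2_d}^2=\alpha!/|\alpha|!$ by \eqref{eq:weights}, the partial sum over $|\alpha|\le N$ has squared norm
\[
\sum_{|\alpha|\le N}\Bigl(\frac{|\alpha|!}{\alpha!}\Bigr)^{\!2}\frac{\alpha!}{|\alpha|!}\,\|\Delta(T^*)^\alpha h\|^2=\sum_{|\alpha|\le N}\frac{|\alpha|!}{\alpha!}\,\langle T^\alpha\Delta^2(T^*)^\alpha h,\,h\rangle ,
\]
an increasing sequence bounded above (by $\|h\|^2$); hence the series defining $Wh$ converges, and by the identity of the first step its limit equals $\|h\|^2$. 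Thus $W$ is a well-defined isometry from $H$ into $H^2_d\otimes\cD_T$.

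Finally the adjoint formula \eqref{eq:Poisson_kernel} is a one-line inner-product computation: for a monomial $z^\beta$, a vector $g\in\cD_T$ and $h\in H$, using $\frac{|\beta|!}{\beta!}\|z^\beta\|^2=1$ and $\Delta=\Delta^*$,
\[
\langle h,\,W^*(z^\beta\otimes g)\rangle=\langle Wh,\,z^\beta\otimes g\rangle=\langle\Delta(T^*)^\beta h,\,g\rangle=\langle h,\,T^\beta\Delta g\rangle ,
\]
so $W^*(z^\beta\otimes g)=T^\beta\Delta g$, as claimed (equivalently, $Wh$ is the $\cD_T$-valued function $w\mapsto\Delta(I-\sum_i w_iT_i^*)^{-1}h$ on $\mb{B}_d$, which is the source of the name ``Poisson kernel''). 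The only genuinely delicate point is the combinatorial bookkeeping in the first step: tracking the multinomial coefficients and recognizing that commutativity of $T$ is precisely what collapses the word-indexed sum — which would otherwise live in full Fock space — down to a monomial-indexed sum landing in $H^2_d$; everything after that is routine Hilbert-space manipulation, modulo the usual care with strong convergence when $d=\infty$.
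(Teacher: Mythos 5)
Your proof is correct and takes essentially the same route as the paper: the paper first defines $W$ into the full Fock space $\cF(E)\otimes\cD_T$ via the word-indexed sum $\sum e_{i_1}\otimes\cdots\otimes e_{i_n}\otimes\Delta T^*_{i_n}\cdots T^*_{i_1}h$, checks the same telescoping identity to get isometry, and only then invokes commutativity to note the image lands in $\cF_+(E)\otimes\cD_T$, whereas you carry out the multinomial collapse up front and define $W$ directly into $H^2_d\otimes\cD_T$. The underlying computations — telescoping $\Theta^n(\Delta^2)$, purity, and the one-line inner-product identification of $W^*$ — are identical.
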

\begin{proof}
Fix a Hilbert space $E$ with orthonormal basis $\{e_1, \ldots, e_d\}$. In this proof, $H^2_d$ and $\cF_+(E)$ will be identified. Define an operator $W : H \rightarrow \cF(E) \otimes \cD_T$ by 
\[
Wh = \sum_{n=0}^\infty \sum_{i_1, \ldots, i_n=1}^d e_{i_1} \otimes \cdots \otimes e_{i_n} \otimes \Delta T^*_{i_n} \cdots T^*_{i_1} h.
\]
By purity, one has
\begin{align*}
\|Wh\|^2 &= \sum_{n=0}^\infty \sum_{i_1, \ldots, i_n=1}^d \langle T_{i_1} \cdots T_{i_n }\Delta^2 T^*_{i_n} \cdots T^*_{i_1} h, h \rangle \\ 
&= \lim_{N\rightarrow \infty}\sum_{n=0}^N \langle (\Theta_T^n(I) - \Theta_T^{n+1}(I))h, h \rangle \\
&= \langle h, h \rangle - \lim_{N\rightarrow \infty} \langle \Theta_T^{N+1}(I)h, h \rangle = \|h\|^2. 
\end{align*}
From commutativity of $T$ is follows that $W$ maps $H$ into $\cF_+(E) \otimes \cD_T$. Finally, letting $g \in \cD_T$ and $h \in H$, it holds that
\begin{align*}
\langle W^* (e^\alpha \otimes g) ,  h \rangle 
&= \sum_{n=0}^\infty \sum_{i_1, \ldots, i_n=1}^d \langle e^\alpha \otimes g, e_{i_1} \otimes \cdots \otimes e_{i_n} \otimes \Delta T^*_{i_n} \cdots T^*_{i_1} h \rangle \\
&= \frac{|\alpha|!}{\alpha!} \|e^\alpha\|^2 \langle T^\alpha \Delta g, h \rangle \\
&= \langle T^\alpha \Delta g, h \rangle. 
\end{align*}
Identifying $z^\alpha$ with $e^\alpha$ gives (\ref{eq:Poisson_kernel}). 
\end{proof}

If $A$ is tuple of operators on $G$, a subspace $K \subseteq G$ is said to {\em co-invariant for} $A$ if $K$ is invariant for $A^*$, (equivalently, if $A K^\perp \subseteq K^\perp$).

\begin{theorem}\label{thm:model_pure}
Let $T$ be a pure $d$-contraction on $H$. Then there exists a subspace $K \subset H^2_d \otimes \cD_T$ invariant for $S^*$ such that $T$ is unitarily equivalent to the compression of $S \otimes I_{\cD_T}$ to $K$. To be precise, there is an isometry $W: H \rightarrow H^2_d \otimes \cD_T$ such that $W(H) = K$ and 
\be\label{eq:model_pure}
T^* = W^* \big( S^* \otimes I_{\cD_T} \big)\big|_K W. 
\ee
\end{theorem}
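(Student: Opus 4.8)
The plan is to derive Theorem \ref{thm:model_pure} directly from Lemma \ref{lem:Poisson_kernel}. The lemma already hands us an isometry $W : H \rightarrow H^2_d \otimes \cD_T$ satisfying $W^*(z^\alpha \otimes g) = T^\alpha \Delta_T g$ for all multi-indices $\alpha$ and all $g \in \cD_T$. Since $W$ is an isometry, $K := W(H)$ is a closed subspace of $H^2_d \otimes \cD_T$, and $W$ is a unitary from $H$ onto $K$. So the only real content to verify is that $K$ is co-invariant for $S \otimes I_{\cD_T}$ (i.e. invariant for $S^* \otimes I_{\cD_T}$) and that (\ref{eq:model_pure}) holds, i.e. that $T^*$ is carried by $W$ to the compression of $S^* \otimes I_{\cD_T}$ to $K$.

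First I would compute $W T_j^*$ in terms of $S_j^* \otimes I$. The cleanest route is to check the adjoint identity: for every multi-index $\alpha$, every $g \in \cD_T$, and every $h \in H$,
\[
\langle (S_j \otimes I_{\cD_T})^* W h, z^\alpha \otimes g \rangle = \langle W T_j^* h, z^\alpha \otimes g \rangle.
\]
The right-hand side equals $\langle T_j^* h, W^*(z^\alpha \otimes g) \rangle = \langle T_j^* h, T^\alpha \Delta_T g \rangle = \langle h, T_j T^\alpha \Delta_T g \rangle$. For the left-hand side, $\langle Wh, (S_j \otimes I)(z^\alpha \otimes g) \rangle = \langle Wh, z_j z^\alpha \otimes g \rangle = \langle Wh, z^{\alpha + \delta_j} \otimes g \rangle = \langle h, W^*(z^{\alpha+\delta_j} \otimes g) \rangle = \langle h, T^{\alpha+\delta_j} \Delta_T g \rangle = \langle h, T_j T^\alpha \Delta_T g \rangle$, using commutativity of $T$ so that $T^{\alpha+\delta_j} = T_j T^\alpha$. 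The two sides agree. Since $\{z^\alpha \otimes g : \alpha \in \mb{N}^d, g \in \cD_T\}$ has dense span in $H^2_d \otimes \cD_T$, this proves the operator identity
\[
(S_j \otimes I_{\cD_T})^* W = W T_j^* \qquad (j = 1, \ldots, d).
\]
In particular, for any $k = Wh \in K$ we get $(S_j^* \otimes I_{\cD_T}) k = (S_j^* \otimes I) W h = W T_j^* h \in W(H) = K$, so $K$ is invariant for $S^* \otimes I_{\cD_T}$, as required. (One should note here that $S_j$ is bounded and $W$ is bounded, so these identities between bounded operators, checked on a dense set, hold everywhere; the earlier formal sum defining $W$ has already been shown to converge to an isometry in the proof of the lemma.)

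Finally, to get (\ref{eq:model_pure}) in the stated form, let $W : H \to K$ also denote the corestricted unitary. Then from $(S_j^* \otimes I_{\cD_T}) W = W T_j^*$ and the fact that $(S_j^* \otimes I_{\cD_T})$ maps $K$ into $K$, we have $P_K (S_j^* \otimes I_{\cD_T})\big|_K W = (S_j^* \otimes I_{\cD_T})\big|_K W = W T_j^*$, hence $T_j^* = W^* \big(S_j^* \otimes I_{\cD_T}\big)\big|_K W$, which is exactly (\ref{eq:model_pure}) component-wise; equivalently $T_j = W^* P_K (S_j \otimes I_{\cD_T})\big|_K W$, so $T$ is unitarily equivalent to the compression of $S \otimes I_{\cD_T}$ to the co-invariant subspace $K$. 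I do not expect a genuine obstacle here: the entire argument is essentially the adjoint bookkeeping above, and the one point that deserves a sentence of care is the passage from the dense-set identity to the operator identity and the observation that invariance of $K$ under $S^* \otimes I$ lets one replace $S^* \otimes I$ by its compression without changing anything on $K$.
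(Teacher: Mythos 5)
Your proof is correct and takes the same approach as the paper: both start from the isometry $W$ of Lemma \ref{lem:Poisson_kernel}, verify the intertwining $(S \otimes I_{\cD_T})^*W = WT^*$ (the paper states this as $W^*(S\otimes I_{\cD_T}) = TW^*$ and leaves the verification to the reader, which you carry out explicitly on the dense spanning set $\{z^\alpha \otimes g\}$), and then read off invariance of $K = W(H)$ and the compression formula (\ref{eq:model_pure}). The only difference is that you supply the adjoint bookkeeping that the paper compresses into ``From (\ref{eq:Poisson_kernel}) one finds \dots''.
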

\begin{proof}
Let $W$ be as in Lemma \ref{lem:Poisson_kernel} and denote $K = W(H)$. From (\ref{eq:Poisson_kernel}) one finds $W^* (S \otimes I_{\cD_T}) = T W^*$, thus  $(S \otimes I_{\cD_T})^* W = W T^*$. From this the invariance of $K$ under $S^* \otimes I_{\cD_T}$ as well as (\ref{eq:model_pure}) follow. 
\end{proof}

In particular, if one identifies $H$ with $K$ via $W$, then for every polynomial $p \in \mb{C}[z]$
\be\label{eq:dilation_S}
p(T) = P_K (p(S) \otimes I) P_K. 
\ee

\subsection{Drury's inequality}\label{subsec:Drury_inequality}

The following facts are well known (see \cite{SzNF2010} or the chapter on commutative dilation theory by Ambrozie and M\"{u}ller \cite{AM25} in this reference work):
\begin{enumerate}
\item (von Neumann's inequality \cite{vN}) For every contraction $T$ and every polynomial $p$, 
\[
\|p(T)\| \leq \sup_{|z|\leq 1}|p(z)| .
\]
\item (Ando's inequality \cite{Ando}) For every pair of commuting contractions $S,T$ and every bivariate polynomial $p$,
\[
\|p(S,T)\| \leq \sup_{|y|,|z|\leq 1}|p(y,z)|. 
\]
\item (Varopoulos's example \cite{Varopoulos74}) There exists a triple of commuting contractions $R,S,T$ and a polynomial in three variables $p$ such that 
\[
\|p(R,S,T)\|>\sup_{|x|,|y|,|z|\leq 1}|p(x,y,z)|.
\]
\end{enumerate}

Thus, the naive generalization of von Neumann's inequality to the multivariate setting,  
\be\label{eq:vNpoly}
\|p(T)\| \stackrel{?}{\le} \|p\|_{\infty, \mb{D}^k}
\ee
for every $k$-tuple of commuting contractions, fails. The failure of von Neumann's inequality (\ref{eq:vNpoly}) in the multivariate setting, and the search for a suitable replacement that does work for several commuting operators, have been and are still the subject of great interest. A candidate for a replacement of von Neumann's inequality was obtained by Drury \cite{Drury}. 

\begin{theorem}\label{thm:DruryVNineq}
Let $T$ be a $d$-contraction. Then for every matrix valued polynomial $p \in \mb{C}[z_1, \ldots, z_d] \otimes M_k(\mb{C})$, 
\be\label{eq:vNineq}
\|p(T)\| \leq \|p(S)\|. 
\ee
\end{theorem}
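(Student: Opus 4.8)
The plan is to reduce Drury's inequality to the dilation result of Theorem \ref{thm:model_pure} (equivalently equation (\ref{eq:dilation_S})), after first disposing of the subtlety that an arbitrary $d$-contraction $T$ need not be pure. The point is that (\ref{eq:dilation_S}) says a pure $d$-contraction $T$ satisfies $p(T) = P_K (p(S)\otimes I_{\cD_T})\big|_K$, and since $p(S)\otimes I_{\cD_T}$ has the same norm as $p(S)$ (tensoring with an identity does not change the norm) and compressing to a subspace does not increase the norm, one gets $\|p(T)\| \le \|p(S)\otimes I_{\cD_T}\| = \|p(S)\|$ immediately in the pure case. The matrix-valued case $p \in \mb{C}[z]\otimes M_k$ is handled identically: $p(T)$ acts on $H\otimes\mb{C}^k$, and $p(T) = (P_K\otimes I_{\mb{C}^k})(p(S)\otimes I_{\cD_T}\otimes I_{\mb{C}^k})\big|_{K\otimes\mb{C}^k}$, so $\|p(T)\| \le \|p(S)\otimes I_{\cD_T\otimes\mb{C}^k}\| = \|p(S)\|$, where the last equality again uses that ampliation preserves norm (and in fact that the inequality (\ref{eq:vNineq}) for scalar polynomials with operator-coefficient arguments is equivalent to the matrix-valued version).

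First I would treat the pure case exactly as above, spelling out that $W$ from Theorem \ref{thm:model_pure} identifies $H$ with $K\subseteq H^2_d\otimes\cD_T$ and that $WT^\alpha = (S^\alpha\otimes I)W$ for every multi-index $\alpha$, hence $Wp(T) = (p(S)\otimes I)W$ for every polynomial, and then $\|p(T)\| = \|(p(S)\otimes I)W\| \le \|p(S)\otimes I\| = \|p(S)\|$ because $W$ is an isometry. The matrix-valued version follows by applying this to the entries, or more cleanly by running the same argument with $W\otimes I_{\mb{C}^k}$.

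The main obstacle is the reduction from a general $d$-contraction to a pure one. The standard device is the following: if $T$ is any $d$-contraction on $H$, then for $r\in(0,1)$ the scaled tuple $rT = (rT_1,\ldots,rT_d)$ is again a commuting row contraction, and moreover $rT$ is pure, since $\Theta_{rT}^n(I) = r^{2n}\Theta_T^{\,?}$ — more precisely one checks $\sum_{|\alpha|=n}\binom{n}{\alpha}(rT)^\alpha(rT)^{*\alpha} = r^{2n}\sum_{|\alpha|=n}\binom{n}{\alpha}T^\alpha T^{*\alpha} = r^{2n}\Theta_T^n(I) \le r^{2n}I \to 0$ strongly. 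Hence the pure case gives $\|p(rT)\| \le \|p(rS)\| \le \|p(S)\|$ for all $r\in(0,1)$, where the second inequality holds because $rS$ is itself a $d$-contraction whose compression-to-a-coinvariant-subspace model sits inside that of $S$ — or, more simply, because $p\mapsto\|p(rS)\|$ increases to $\|p(S)\|$ as $r\to 1$ by strong continuity of $r\mapsto p(rS)$ together with $\|p(rS)\| = \|p(S)P_{r}\|$-type estimates; in any case $\limsup_{r\to1}\|p(rS)\| \le \|p(S)\|$ suffices. Letting $r\to 1$ and using that $r\mapsto p(rT)$ converges in norm to $p(T)$ (polynomials are norm-continuous in the operator entries) yields $\|p(T)\| \le \|p(S)\|$. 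I would present the purity of $rT$ and the limiting argument as the technical heart, since everything else is a direct consequence of the already-proved dilation theorem.
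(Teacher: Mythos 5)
Your proof is essentially correct and follows the same route as the paper: replace $T$ by $rT$ for $r\in(0,1)$, observe that $rT$ is pure (via $\Theta_{rT}^n(I) = r^{2n}\Theta_T^n(I) \le r^{2n}I \to 0$), invoke Theorem \ref{thm:model_pure}, and let $r\to 1$. The paper's proof is a two-line version of exactly this argument. One small slip worth flagging: you write that the pure case gives $\|p(rT)\| \le \|p(rS)\| \le \|p(S)\|$, but that first inequality is not what Theorem \ref{thm:model_pure} delivers. Applied to the pure $d$-contraction $rT$, the dilation theorem gives $\|p(rT)\| \le \|p(S)\|$ \emph{directly} --- the model always compresses $S \otimes I$ on $H^2_d\otimes\cD_{rT}$, not a scaled copy $rS$, regardless of which pure tuple one starts from. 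The detour through $\|p(rS)\|$ is therefore both unnecessary and, as written, unjustified; once you drop it and read the dilation theorem as giving $\|p(rT)\|\le\|p(S)\|$, the remaining limiting argument is exactly right.
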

\begin{proof}
It is enough to prove this inequality for $rT$ instead of $T$, for all $r \in (0,1)$. But as $rT$ is pure, the inequality $\|p(rT)\| \leq \|p(S)\|$ is a direct consequence Theorem \ref{thm:model_pure} (or equality (\ref{eq:dilation_S})). 
\end{proof}

When $d=1$ then the above theorem reduces to von Neumann's inequality. When $d=2$ then the above theorem fundamentally differs from Ando's inequality: one cannot replace the right hand side by multiple of the sup norm of $p$ on the ball (cf. Theorem \ref{thm:MnotHinfty}).

\subsection{Universality of the $d$-shift among $d$-contractions}\label{subsec:uni}

The model theory for $d$-contractions reached final form in \cite[Theorem 8.5]{Arv98}, and is presented in Theorem \ref{thm:arvesons_model} below. 
For a precise formulation additional terminology is required. 

\begin{definition}
Let $A$ be a tuple of operators on a Hilbert space $G$ and $K$ a subspace of $G$ which is co-invariant for $A$. $K$ is said to be {\em full} if 
\[
G = [C^*(1,A)K].
\]
\end{definition}

\begin{definition}
A {\em spherical unitary} is a $d$-tuple $Z$ of commuting normal operators such that $\sum_i Z_i Z^*_i = 1$. 
\end{definition}

Fix $d \in \{1, 2, \ldots, \infty\}$. Given $n \in \{0, 1,2, \ldots, \infty\}$, one denotes by $n \cdot S$ the direct sum of $S$ with itself $n$ times acting on $n \cdot H^2_d$. Given a spherical unitary $Z = (Z_1, \ldots, Z_d)$ on a Hilbert space $H_Z$, one writes $n \cdot S \oplus Z$ for the $d$-contraction 
\[
(\underbrace{S_1 \oplus \cdots \oplus S_1}_{n \textrm{ times}} \oplus Z_1, \ldots, \underbrace{S_d \oplus \cdots \oplus S_d}_{n \textrm{ times}} \oplus Z_d)
\]
on $\underbrace{H^2_d \oplus \cdots \oplus H^2_d}_{n \textrm{ times}} \oplus H_Z$. This notation is extended to allow also the cases in which one of the summands is absent. The case in which $S$ is absent corresponds to $n=0$. In the case where $Z$ is absent we say that $Z$ is {\em nil}. 

\begin{theorem}\label{thm:arvesons_model}
Let $d < \infty$ and let $T$ be a $d$-contraction on a separable Hilbert space. Then there is an $n \in \{0,1,2, \ldots, \infty\}$, a spherical unitary $Z$ on $H_Z$, and subspace $K \subseteq n \cdot H^2 \oplus H_Z$ that is co-invariant and full for $n \cdot S \oplus Z$, such that $T$ is unitarily equivalent to the compression of $n \cdot S \oplus Z$ to $K$. 

The triple $(n,Z,K)$ is determined uniquely, up to unitary equivalence, by the unitary equivalence class of $T$. Moreover, $Z$ is the nil operator if and only if $T$ is pure, and $n = \rank(T)$.
\end{theorem}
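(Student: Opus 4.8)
The plan is to bootstrap from the pure case (Theorem \ref{thm:model_pure}) by peeling off the ``spherical unitary part'' of an arbitrary $d$-contraction $T$ via a dilation-and-decomposition argument, following Arveson's treatment in \cite[Section 8]{Arv98}. First I would construct a spherical unitary dilation: since $\Theta_T(I)\le I$, the sequence $\Theta_T^n(I)$ is decreasing, so $Q := \textrm{SOT-}\lim_n \Theta_T^n(I)$ exists and is a positive contraction satisfying $\Theta_T(Q)=Q$. Intuitively $Q$ measures the ``non-pure'' part of $T$. The first key step is to dilate $T$ to a commuting tuple $\widetilde T$ on a larger space which splits as a part modeled on copies of $S$ and a part which is a spherical unitary $Z$; concretely one builds $\widetilde T$ so that $\Theta_{\widetilde T}^n(I)\to 0$ plus an honest spherical unitary summand. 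This can be done by a Stinespring-type construction applied to the completely positive map $\Theta_T$, or by invoking the existing minimal isometric/normal dilation machinery for row contractions (e.g.\ the Frazho--Bunce--Popescu dilation, then symmetrizing using commutativity of $T$ as in Lemma \ref{lem:Poisson_kernel}); the spherical unitary $Z$ emerges as the restriction of the dilation to the reducing subspace on which $\sum_i Z_iZ_i^*=1$.

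Second, once $T$ sits inside $N\cdot S\oplus Z$ compressed to a co-invariant subspace, I would take $K$ to be the smallest such co-invariant subspace, i.e.\ replace the ambient space by $[C^*(1,N\cdot S\oplus Z)K]$; this is automatically reducing for the C*-algebra, so after this reduction $K$ becomes \emph{full} by construction, which gives the existence half of the theorem with $n\le N$. The identification $n=\rank(T)$ comes from Lemma \ref{lem:Poisson_kernel} and Theorem \ref{thm:model_pure}: on the pure summand the wandering subspace has dimension $\dim\cD_T$, and the spherical unitary summand contributes nothing to the defect since $\Theta_Z(I)=I$ forces $\Delta_Z=0$; so a multiplicity count of the defect operator of $N\cdot S\oplus Z$, combined with fullness, pins down $n=\dim\cD_T=\rank(T)$. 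The statement that $Z$ is nil iff $T$ is pure is then immediate: if $T$ is pure then $Q=0$ and the dilation has no spherical summand; conversely a nonzero spherical unitary summand $Z$ survives under $\Theta^n$ (since $\Theta_{N\cdot S\oplus Z}^n(I)\downarrow 0\oplus I_{H_Z}$, whose compression to the full co-invariant $K$ is nonzero), contradicting purity of the compression.

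Third, for uniqueness of the triple $(n,Z,K)$ up to unitary equivalence, I would argue that any two such ``full co-invariant'' models for $T$ generate, via $C^*(1,N\cdot S\oplus Z)$ acting on the full space, unitarily equivalent representations of the Toeplitz C*-algebra $\cT_d$ (using Theorem \ref{thm:exact_sequence}: $\cT_d$ is generated by $S$ with $\cK$ as an ideal and commutative quotient $C(\partial\mb{B}_d)$). The full-ness condition says the model space is the C*-algebra generated by $K$, so the model is precisely the minimal Stinespring-type dilation of the operator-space map $p\mapsto p(T)$, which is unique up to unitary equivalence by the standard minimality argument. Decomposing this representation of $\cT_d$ according to the exact sequence \eqref{eq:quotient_cong} separates the ``$\cK$-absolutely continuous'' part (which must be a multiple $n\cdot S$ of the unique irreducible-modulo-compacts piece, $n$ being the multiplicity) from the part factoring through $C(\partial\mb{B}_d)$ (which is exactly the spherical unitary $Z$, determined by its spectral measure on $\partial\mb{B}_d$); and then $K$ is recovered inside the now-unique ambient space.

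The main obstacle I expect is the very first step --- producing the spherical unitary dilation cleanly. One must dilate a \emph{commuting} row contraction to a tuple that decomposes as pure-plus-spherical-unitary while keeping commutativity, and the naive noncommutative Frazho--Popescu dilation does not commute; the fix (symmetrization, or working directly with the symmetric Fock space and the cp map $\Theta_T$) requires care, and proving that the ``defect at infinity'' $Q$ really gives rise to a genuine spherical \emph{unitary} (normality plus $\sum Z_iZ_i^*=1$, not merely $\le 1$) is the technical heart. Everything downstream --- fullness, the multiplicity count $n=\rank(T)$, and uniqueness via the $\cT_d$-representation --- is then a relatively routine consequence of Theorem \ref{thm:model_pure} and Theorem \ref{thm:exact_sequence}.
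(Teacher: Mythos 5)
Your proposal takes a genuinely different, more ``constructive'' route than the paper, and while its general outline is plausible, the first step contains a real gap that the paper's argument is designed to sidestep.

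The paper's existence proof runs the classical Arveson dilation machine. Drury's von Neumann inequality (Theorem \ref{thm:DruryVNineq}) shows that $p(S)\mapsto p(T)$ is completely contractive, hence extends to a unital completely contractive homomorphism $\cA_d \to B(H)$. Arveson's extension theorem promotes this to a unital completely positive map $\Psi : \cT_d \to B(H)$, and Stinespring's theorem gives a minimal dilation to a $*$-representation $\pi$ of $\cT_d$ on $G = [\pi(\cT_d)VH]$, with $T$ realized as the compression of $\pi(S)$ to $K := VH$. Minimality of the Stinespring dilation is exactly fullness of $K$. The decomposition $n\cdot S \oplus Z$ and the spherical-unitary property of $Z$ then fall out \emph{for free} from the representation theory of $\cT_d$ via the exact sequence $0 \to \cK \to \cT_d \to C(\partial\mb{B}_d)\to 0$ (Theorem \ref{thm:exact_sequence}): $\pi$ splits as $\pi_a \oplus \pi_s$, where $\pi_a$ is a multiple $n\cdot\mathrm{id}$ of the identity representation (the multiple being $\rank T$) and $\pi_s$ annihilates $\cK$, hence factors through $C(\partial\mb{B}_d)$ --- so $Z := \pi_s(S)$ is automatically a commuting \emph{normal} tuple with $\sum Z_i Z_i^* = 1$.

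Your first step tries instead to produce the dilated tuple directly from $Q := \mathrm{SOT}\text{-}\lim_n \Theta_T^n(I)$ and either a Stinespring construction applied to $\Theta_T$ or a ``symmetrized'' Frazho--Bunce--Popescu dilation; this is where I see the gap, which you yourself flag as the ``technical heart.'' Concretely: (i) Stinespring applied to $\Theta_T : B(H)\to B(H)$ gives a dilation of the \emph{map} $\Theta_T$, not a commuting operator tuple dilating $T$; the cp map that must be dilated is the UCP extension $\Psi$ on $\cT_d$, not $\Theta_T$, and you need Drury's inequality to even define $\Psi$ on $\cA_d$. (ii) The Frazho--Bunce--Popescu dilation of a commuting row contraction is not commuting, and there is no obvious ``symmetrization'' retaining both commutativity and the dilation property; Lemma \ref{lem:Poisson_kernel} works by compressing to $\cF_+(E)$, but that compression discards exactly the ``boundary'' component you need. (iii) Most seriously, establishing \emph{normality} of the non-pure part is precisely where the $\cT_d$-representation theory is indispensable --- witness the remark after the theorem, which points out that for $d=\infty$ (where the exact sequence fails) one obtains only $\sum Z_i Z_i^* = 1$ without normality. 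A direct argument that the ``defect at infinity'' $Q$ produces a genuine spherical unitary would need to smuggle this in somewhere.

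The irony is that your uniqueness step already invokes exactly the right tool --- decomposition of $*$-representations of $\cT_d$ along the exact sequence. The paper's insight is that the same mechanism gives the existence too: one just has to see Drury's inequality not merely as a corollary of the model theorem but as the \emph{input} that powers the Arveson--Stinespring machinery. If you replace your first step with Drury $\Rightarrow$ Arveson extension $\Rightarrow$ Stinespring, and then apply your third step's representation-theoretic decomposition, your proof collapses to the paper's.
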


\begin{proof}
The main ingredient of the proof is a combination of Arveson's extension theorem \cite{Arv69} and Stinespring's dilation theorem \cite{Stinespring}. This method has appeared first in \cite{Arv72}, and has been reused many times to obtain many dilation theorems. It runs as follows. 

Suppose that $T$ acts on $H$. By Theorem \ref{thm:DruryVNineq}, the map $S_i \mapsto T_i$ extends to a unital completely contractive homomorphism $\Psi : \cA_d \rightarrow B(H)$. By Arveson's extension theorem \cite[Theorem 1.2.9]{Arv69}, $\Psi$ extends to a unital completely positive map $\Psi : \cT_d \rightarrow B(H)$. By Stinespring's theorem \cite{Stinespring}, there is a Hilbert space $G$, an isometry $V: H \rightarrow G$, and a $*$-representation $\pi: \cT_d \rightarrow B(G)$ such that 
\[
\Psi(X) = V^* \pi(X) V \,\, , \,\, X \in \cT_d,
\]
and such that $G = [\pi(\cT_d)VH]$. The space $K = VH$ is full and co-invariant for $\pi(S)$, and $V$ implements a unitary equivalence between $T$ and a compression of $\pi(S)$. 

Using Theorem \ref{thm:exact_sequence}, basic representation theory (see \cite[Section 1.3]{Arv76}) shows that $\pi$ breaks up as a direct sum $\pi = \pi_a \oplus \pi_s$, where $\pi_a$ is a multiple of the identity representation and $\pi_s$ annihilates the compacts. It follows that $\pi_a(S) = n \cdot S$, that $Z:=\pi_s(S)$ is a spherical unitary, and that $\pi(S) = n \cdot S \oplus Z$ dilates $VTV^*$. That shows that a model as stated in the first part of the proof exists. The remaining details are omitted. 
\end{proof}

\begin{remark}
The above theorem and proof are also valid in the case $d = \infty$, with the important change that $Z$ is no longer a spherical unitary, but merely a commuting tuple satisfying $\sum Z_i Z^*_i = 1$. In particular, $Z$ is not necessarily normal, hence in this case the model reveals far less than in the $d<\infty$ case. 
\end{remark}

Theorem \ref{thm:arvesons_model} implies the following subnormality result due originally to Athavale \cite{Ath90}. 
\begin{corollary}
Let $T$ be a commuting $d$-tuple ($d<\infty$) on a Hilbert space such that $T_1^* T_1 + \ldots + T_d^*T_d = 1$. Then $T$ is subnormal. 
\end{corollary}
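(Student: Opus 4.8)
The plan is to invoke Arveson's model theorem, but applied to the \emph{adjoint} tuple $T^*$ rather than to $T$ itself, and then to read off subnormality from the shape of the resulting model.

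First I would record two elementary observations about $T^* = (T_1^*, \ldots, T_d^*)$. Since $[T_i^*, T_j^*] = [T_j, T_i]^* = 0$, the tuple $T^*$ is again commuting. Moreover the hypothesis $\sum_i T_i^* T_i = 1$ says precisely that $\Theta_{T^*}(I) = \sum_i T_i^* T_i = I$, so $T^*$ is a row contraction, hence a $d$-contraction. Its defect operator is $\Delta_{T^*} = \sqrt{I - \Theta_{T^*}(I)} = 0$, so $\cD_{T^*} = \{0\}$ and $\rank(T^*) = 0$. (Note that $T$ itself is not, a priori, a row contraction, so Theorem \ref{thm:arvesons_model} cannot be applied to it directly; the point is that it is $T^*$ that is the $d$-contraction.)

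Next I would apply Theorem \ref{thm:arvesons_model} to $T^*$; this is legitimate since $d < \infty$ and the underlying Hilbert space is separable. The theorem produces $n \in \{0,1,\ldots,\infty\}$, a spherical unitary $Z$ on a Hilbert space $H_Z$, and a subspace $K \subseteq n \cdot H^2_d \oplus H_Z$, co-invariant and full for $n \cdot S \oplus Z$, with $T^*$ unitarily equivalent to the compression of $n \cdot S \oplus Z$ to $K$; moreover $n = \rank(T^*) = 0$. Thus, up to unitary equivalence, $T^* = P_K Z|_K$ with $K$ co-invariant for $Z$, i.e. $Z_i^* K \subseteq K$ for all $i$.

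Finally I would take adjoints: $T_i = (P_K Z_i|_K)^* = P_K Z_i^*|_K = Z_i^*|_K$, the last equality because $K$ is invariant for each $Z_i^*$. Since $Z^* = (Z_1^*, \ldots, Z_d^*)$ is a normal $d$-tuple on $H_Z$ (the adjoint of a normal tuple is normal) and $K$ is invariant for $Z^*$, this exhibits $T$ (up to unitary equivalence) as the restriction of the normal tuple $Z^*$ to an invariant subspace, which is exactly the definition of subnormality. There is no serious obstacle here beyond the one already flagged: the correct move is to pass to $T^*$, observe that its defect vanishes, and use that to force the multiplicity of the shift part of Arveson's model down to zero, leaving only the normal (spherical unitary) summand.
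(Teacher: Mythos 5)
Your proof is correct and follows the natural route the paper intends (the paper states the corollary as a consequence of Theorem \ref{thm:arvesons_model} without spelling out the details). The key moves --- recognizing that the hypothesis $\sum T_i^* T_i = 1$ makes $T^*$, not $T$, the row contraction, so that $\Delta_{T^*}=0$ forces $n=\rank(T^*)=0$ in the model, leaving only the spherical-unitary summand $Z$; then taking adjoints to turn the compression of $Z$ to a co-invariant $K$ into the restriction of the normal tuple $Z^*$ to the invariant subspace $K$ --- are exactly right, and you correctly note where the hypothesis $d<\infty$ enters (it is needed so that $Z$ is genuinely normal).
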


\subsection{Uniqueness of the $d$-shift}\label{subsec:unique_model}

The $d$-shift serves as a universal model for pure row contractions (Theorems \ref{thm:model_pure} and \ref{thm:DruryVNineq}). For $d>1$, and in contrast to the case $d=1$, the $d$-shift turns out to be the {\em unique} model for pure row contractions in the following sense. 

\begin{theorem}[\cite{Arv98}, Lemma 7.14; see also \cite{RS10}]
Suppose $d \geq 2$, let $T$ be a $d$-contraction acting on $H$, and let $K \subseteq H$ be a subspace such that the compressed tuple $P_K TP_K$ is unitarily equivalent to the $d$-shift. Then $K$ reduces $T$. 
\end{theorem}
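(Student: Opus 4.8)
The plan is to study $T$ through its block decomposition with respect to $H=K\oplus K^\perp$. Using the hypothesis, fix a unitary identifying $K$ with $H^2_d$; write $S=(S_1,\dots,S_d)$ for the $d$-shift on this copy of $H^2_d$, write $v_0$ for the constant function $1$ (the unique, up to a scalar, unit vector killed by $S_1^*,\dots,S_d^*$), and let $P_0$ be the rank-one projection onto $\mb{C}v_0$. Then
\[
T_i=\begin{pmatrix}S_i & C_i\\ B_i & D_i\end{pmatrix},\qquad C_i\colon K^\perp\to K,\ \ B_i\colon K\to K^\perp,\ \ D_i\colon K^\perp\to K^\perp,
\]
and the statement ``$K$ reduces $T$'' means exactly that $B_i=C_i=0$ for all $i$. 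I will use repeatedly two facts, both immediate from $S$ being the $d$-shift: $\sum_i S_iS_i^*=I-P_0$ (so $\Delta_S^2=P_0$), and $\operatorname{ran}S_i\perp v_0$ for every $i$.

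The first goal is to show $K$ is co-invariant, i.e.\ $C_i=0$. Compressing $I-\sum_i T_iT_i^*\ge 0$ to $K$ gives $\sum_i C_iC_i^*\le I-\sum_i S_iS_i^*=P_0$, so each $C_iC_i^*$ has range in $\mb{C}v_0$ and therefore $C_iw=\langle w,u_i\rangle v_0$ with $u_i:=C_i^*v_0\in K^\perp$. Comparing the $(K^\perp\to K)$-corners of $T_iT_j=T_jT_i$ yields $S_iC_j+C_iD_j=S_jC_i+C_jD_i$; evaluating on $w\in K^\perp$ and using $S_iv_0=z_i$ this reads
\[
\langle w,u_j\rangle\, z_i+\langle D_jw,u_i\rangle\, v_0=\langle w,u_i\rangle\, z_j+\langle D_iw,u_j\rangle\, v_0 .
\]
This is the one place where $d\ge 2$ is used: taking $i\ne j$, the vectors $v_0,z_i,z_j$ are linearly independent, so matching $z_i$-components forces $\langle w,u_j\rangle=0$ for all $w$, i.e.\ $u_j=0$; running over all $j$ shows every $C_j=0$. (For $d=1$ this step is empty, consistently with the failure of the statement there.)

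With $C_i=0$ the block form is lower triangular. The $(K,K)$-corner of $I-\sum_i T_iT_i^*\ge 0$ is now $P_0$, and positivity of a $2\times2$ operator matrix forces its $(K^\perp\to K)$-corner, $-\sum_i S_iB_i^*$, to have range in $\mb{C}v_0$; since $\operatorname{ran}S_i\perp v_0$, this gives $\sum_i S_iB_i^*=0$. Passing to symmetric Fock space (Section~\ref{sec:sym}) and writing $(B_i^*w)_m\in E^m$ for the degree-$m$ homogeneous component of $B_i^*w$, this identity says precisely that for each $w\in K^\perp$ and each $m\ge0$ the tensor $\zeta_m^{(w)}:=\sum_i e_i\otimes(B_i^*w)_m\in E\otimes E^m$ lies in the kernel $N_m$ of the symmetrization (multiplication) map $E\otimes E^m\to E^{m+1}$; equivalently, $\zeta_m^{(w)}$ is orthogonal to the symmetric subspace $E^{m+1}\subseteq E\otimes E^m$. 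The case $m=0$ already gives $B_iv_0=0$, and I would prove $B_i|_{E^m}=0$ for all $m$ by induction.

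Assume $B_i|_{E^j}=0$ for all $j\le m$. Comparing the $(K\to K^\perp)$-corners of $T_iT_j=T_jT_i$ gives the cocycle identity $B_iS_j+D_iB_j=B_jS_i+D_jB_i$; restricting it to $E^m$ and using the inductive hypothesis ($B_iu=B_ju=0$ for $u\in E^m$) gives $B_iS_j=B_jS_i$ on $E^m$, which dualizes to $S_j^*(B_i^*w)_{m+1}=S_i^*(B_j^*w)_{m+1}$ for all $w$. Since $S_j^*$ acts on $E^{m+1}\subseteq E^{\otimes(m+1)}$ by contracting the first tensor slot against $e_j$, this identity says exactly that $\zeta_{m+1}^{(w)}$ is symmetric in its first two tensor slots; together with symmetry in the remaining slots (because $(B_i^*w)_{m+1}\in E^{m+1}$) we conclude that $\zeta_{m+1}^{(w)}$ is totally symmetric, i.e.\ $\zeta_{m+1}^{(w)}\in E^{m+2}\subseteq E\otimes E^{m+1}$. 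But $\zeta_{m+1}^{(w)}\in N_{m+1}$ by the relation $\sum_i S_iB_i^*=0$, and $N_{m+1}\cap E^{m+2}=\{0\}$ since symmetrization restricts to an isomorphism on the symmetric part. Hence $\zeta_{m+1}^{(w)}=0$, so $(B_i^*w)_{m+1}=0$ for all $i,w$, i.e.\ $B_i|_{E^{m+1}}=0$; this closes the induction, so all $B_i=0$ and $K$ reduces $T$. The step I expect to be the real obstacle is precisely this last passage from ``$K$ co-invariant'' to ``$K$ invariant'': once $d\ge2$ the single relation $\sum_i S_iB_i^*=0$ is no longer enough (the row $[S_1\cdots S_d]$ is not jointly injective), so the commutativity of $T$ must be fed in degree by degree through the cocycle, and what makes the two constraints collide is the elementary but slightly delicate fact that a nonzero tensor in $E\otimes E^m$ cannot be simultaneously totally symmetric and orthogonal to the symmetric subspace $E^{m+1}$.
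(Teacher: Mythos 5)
Your proof is correct. You establish $C_i=0$ by comparing the $(K,K)$-corner of $I-\sum_iT_iT_i^*\ge0$ against $\sum_iS_iS_i^*=I-P_0$, then use one off-diagonal corner of the commutation relation together with the linear independence of $v_0,z_i,z_j$ (the one place $d\ge2$ enters). Then with $C_i=0$, the positivity of a $2\times2$ operator matrix with $(1,1)$-entry $P_0$ forces the off-diagonal entry $-\sum_iS_iB_i^*$ to have range in $\mathbb{C}v_0\cap v_0^\perp=\{0\}$, and the degree-by-degree induction in Fock space is carried out correctly: the cocycle relation $B_iS_j+D_iB_j=B_jS_i+D_jB_i$ restricted to $E^m$ plus the inductive hypothesis dualizes to $S_j^*(B_i^*w)_{m+1}=S_i^*(B_j^*w)_{m+1}$, which is exactly symmetry of $\zeta_{m+1}^{(w)}$ in slots $1,2$; combined with symmetry in slots $2,\ldots,m+2$ (because each $(B_i^*w)_{m+1}$ is already a symmetric tensor) one gets total symmetry, and total symmetry together with $\zeta_{m+1}^{(w)}\perp E^{m+2}$ forces $\zeta_{m+1}^{(w)}=0$. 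The group-theoretic step (that the transposition $(1\,2)$ together with the stabilizer of slot $1$ generates $S_{m+2}$) is correct, and the argument works uniformly for $2\le d\le\infty$ since only strongly convergent sums appear.

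The paper itself does not give a proof; it defers to \cite{Arv98}, Lemma 7.14, and to \cite{RS10}. Arveson's argument there is phrased differently --- it proceeds from the maximality properties of the $d$-shift developed in Section 7 of \cite{Arv98} rather than from a block-matrix computation --- so your proof is a genuinely self-contained alternative. What it buys you is transparency: it isolates exactly where $d\ge2$ is used (linear independence of $v_0,z_i,z_j$), it shows that the single relation $\sum_iS_iB_i^*=0$ is not enough and that the commutator corner must be fed in degree by degree, and it pinpoints the key combinatorial fact that a tensor in $E\otimes E^{m+1}$ cannot be both totally symmetric and orthogonal to the symmetric subspace. One small stylistic remark: it is slightly cleaner to observe from the start that $N_m$ is literally the orthogonal complement of $E^{m+1}$ in $E\otimes E^m$ (since the symmetrization map is the orthogonal projection onto $E^{m+1}$ restricted to $E\otimes E^m$), which makes the final collision immediate. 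Also note that your argument nowhere uses that the compression $P_KTP_K$ is commuting beyond what is built into the hypothesis that it is unitarily equivalent to $S$; in particular you do not need to assume $K$ is semi-invariant, which matches the statement as given.
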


For additional uniqueness and maximality properties of the $d$-shift, see \cite[Section 7]{Arv98}.

\subsection{The noncommutative setting}\label{subsec:noncommutative}

The methods used above to show that $S$ is a universal model for $d$-contractions work in a greater generality, to provide various universal models for tuples of operators satisfying certain constraints. 

The key to these results is to examine what happens to the proof of Lemma \ref{lem:Poisson_kernel} when a row contraction $T$ satisfies more, or less, assumptions other than the assumption of being a commuting tuple. When $T$ satisfies no assumptions besides that it be a row contraction, then the range of $W$ appearing in the proof of the lemma might be larger than $\cF_+(E)$. Thus the commutative $d$-shift $S$ has to be replaced by the noncommutative $d$-shift $L$ on $\cF(E)$.

A tuple $V = (V_1 \ldots, V_d)$ on a Hilbert space $G$ is said to be a {\em row isometry} if $V^*_i V_j = \delta_{ij} I_G$ for all $i,j$, which means that all the $V_i$s are isometries with mutually orthogonal ranges. A row isometry is said to be of {\em Cuntz type} if $\sum V_i V_i^* = I_G$ (the convergence being understood as strong operator convergence in the case $d=\infty$). Applying the same reasoning one obtains the following theorem of Bunce \cite{Bunce}, Frazho \cite{Frazho} and Popescu \cite{Popescu89}, which is a natural generalization of the Sz.-Nagy isometric dilation theorem \cite{SzN53}.

\begin{theorem}\label{thm:model_NC}
Let $d \in \{1, 2, \ldots, \infty\}$ and let $T$ be a row contraction on a separable Hilbert space. Let $L$ be the noncommutative shift acting on $\cF(E)$, where $\dim E = d$. Then there is an $n \in \{0,1,2, \ldots, \infty\}$, a row isometry $V$ of Cuntz type acting on $H_V$, and a subspace $K \subseteq n \cdot \cF(E) \oplus H_V$ that is co-invariant and full for $n \cdot L \oplus V$, such that $T$ is unitarily equivalent to the compression of $n \cdot L \oplus V$ to $K$. 

The triple $(n,V,K)$ is determined uniquely, up to unitary equivalence, by the unitary equivalence class of $T$. Moreover, $V$ is the nil operator if and only if $T$ is pure, and $n = \rank(T)$. 
\end{theorem}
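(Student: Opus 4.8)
The plan is to imitate the proof of Theorem \ref{thm:arvesons_model} and Lemma \ref{lem:Poisson_kernel}, but carried out at the level of the full Fock space $\cF(E)$ rather than its symmetric subspace. First I would establish the noncommutative analogue of Lemma \ref{lem:Poisson_kernel}: given an arbitrary (not necessarily commuting) row contraction $T$ on $H$, define the Poisson-type map $W: H \to \cF(E) \otimes \cD_T$ by the same formula
\[
Wh = \sum_{n=0}^\infty \sum_{i_1, \ldots, i_n=1}^d e_{i_1} \otimes \cdots \otimes e_{i_n} \otimes \Delta T^*_{i_n} \cdots T^*_{i_1} h.
\]
The computation of $\|Wh\|^2$ using the completely positive map $\Theta_T$ is verbatim the one in the proof of Lemma \ref{lem:Poisson_kernel}; the only place commutativity was used there was to conclude that the range of $W$ lands in $\cF_+(E)$, so without that hypothesis one simply gets an isometry into the full Fock space $\cF(E)\otimes\cD_T$ intertwining $L\otimes I$ with $T$. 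When $T$ is \emph{pure} this immediately yields the analogue of Theorem \ref{thm:model_pure}: $T^*$ is unitarily equivalent to the restriction of $(L^*\otimes I_{\cD_T})$ to a coinvariant subspace, hence the noncommutative von Neumann inequality $\|p(T)\|\le\|p(L)\|$ holds for all noncommutative polynomials (and matrix-valued ones) and all pure row contractions $T$; the general case follows by applying it to $rT$ and letting $r\uparrow 1$.

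Next I would run the Arveson-extension / Stinespring machine exactly as in the proof of Theorem \ref{thm:arvesons_model}, with $\cA_d$ replaced by the norm-closed nonselfadjoint algebra $\cA = \overline{\mathrm{alg}}^{\|\cdot\|}(L)$ and $\cT_d$ replaced by $C^*(1,L)$, the Cuntz--Toeplitz algebra $\cE_d$. The inequality from the previous paragraph says the map $L_i\mapsto T_i$ extends to a unital completely contractive homomorphism $\Psi:\cA\to B(H)$; Arveson's extension theorem promotes it to a unital completely positive map on $\cE_d$, and Stinespring produces a Hilbert space $G\supseteq H$ (via an isometry $V$), a $*$-representation $\pi:\cE_d\to B(G)$ with $\Psi(X)=V^*\pi(X)V$ and minimality $G=[\pi(\cE_d)VH]$. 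Minimality gives exactly that $K:=VH$ is full and coinvariant for $\pi(L)$, and $V$ implements a unitary equivalence of $T$ with the compression of $\pi(L)$ to $K$; this is where the ``full and co-invariant'' conclusion of the statement comes from.

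The remaining work is to decompose $\pi(L)$ into the asserted form $n\cdot L\oplus V$. Here one uses the structure theory of representations of the Cuntz--Toeplitz algebra $\cE_d$, which is the precise analogue of the use of Theorem \ref{thm:exact_sequence} in the commutative proof: $\cE_d$ is the extension of $\cO_d$ by the compacts $\cK$, so every representation $\pi$ splits as $\pi_a\oplus\pi_s$ where $\pi_a$ is a multiple of the (unique irreducible) identity representation --- so $\pi_a(L)=n\cdot L$ for some $n\in\{0,1,\dots,\infty\}$ --- and $\pi_s$ annihilates $\cK$, hence factors through $\cO_d$, making $\pi_s(L)=:V$ a row isometry with $\sum V_iV_i^*=1$, i.e.\ of Cuntz type. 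That $n=\rank(T)$ and that $V$ is nil exactly when $T$ is pure are read off from the Poisson model of the previous paragraph (the multiplicity $n$ equals $\dim\cD_T$, and purity of $T$ corresponds to the spectral subspace of $\pi$ on which $\Theta$ iterates to zero being all of $G$). Uniqueness of the triple $(n,V,K)$ up to unitary equivalence follows from uniqueness of the minimal Stinespring dilation together with uniqueness of the Lebesgue-type decomposition $\pi=\pi_a\oplus\pi_s$.

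The main obstacle --- really the only non-formal point --- is the representation theory of $\cE_d$ that underlies the splitting $\pi=\pi_a\oplus\pi_s$ and the identification of $\pi_a$ as a multiple of a single irreducible: for $d=\infty$ the algebra $\cO_\infty$ and the convergence issues in $\sum V_iV_i^*=1$ require a little care, and one must be sure the Stinespring space $G$ is separable (granted, since $H$ is and $\cE_d$ is separable) so that the multiplicity $n$ is a well-defined element of $\{0,1,\dots,\infty\}$. Everything else is a transcription of the commutative argument with $\cF_+(E)$, $S$, $\cA_d$, $\cT_d$ replaced by $\cF(E)$, $L$, $\cA$, $\cE_d$, and with the step ``$W$ maps into $\cF_+(E)$'' deleted.
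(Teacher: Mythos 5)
Your proposal faithfully transcribes the Arveson--Stinespring argument of the commutative case, which is exactly what the paper is pointing at when it says ``applying the same reasoning one obtains\ldots''; the noncommutative Poisson kernel and the passage through Popescu's von Neumann inequality are both correct, and for $d<\infty$ the decomposition of the Stinespring representation via the extension $0 \to \cK \to \cE_d \to \cO_d \to 0$ is precisely the right analogue of the use of Theorem \ref{thm:exact_sequence}.

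The one genuine issue is your treatment of $d=\infty$, which you flag but underestimate. For $d=\infty$ the C*-algebra $\cE_\infty = C^*(1,L)$ does \emph{not} contain the compacts: since $\sum_{i=1}^{n} L_iL_i^*$ increases to $I - P_\Omega$ only in the strong topology, not in norm, $P_\Omega \notin \cE_\infty$; in fact $\cE_\infty$ is already $*$-isomorphic to the Cuntz algebra $\cO_\infty$, which is simple. So the extension $0 \to \cK \to \cE_d \to \cO_d \to 0$, and with it the representation-theoretic splitting $\pi=\pi_a\oplus\pi_s$, simply does not exist when $d=\infty$; this is not a matter of ``a little care'' but of the argument collapsing. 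The correct fix --- which also unifies the two cases and is closer to what Bunce, Frazho and Popescu actually did --- is to drop the extension-theoretic step and instead apply the Wold decomposition for row isometries directly to the row isometry $\pi(L)$ (or, even more directly, to the minimal isometric dilation of $T$ constructed by hand): the shift part is $n\cdot L$, the remaining part is a Cuntz-type row isometry $V$, and uniqueness follows from uniqueness of the Wold decomposition and of the minimal isometric dilation. For $d<\infty$ this recovers your argument, since the shift part of the Wold decomposition is exactly the piece of $\pi$ that does not annihilate the compacts.
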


\subsection{Constrained dilations}\label{subsec:constrained}

The universality of the commutative and noncommutative $d$-shifts (Theorems \ref{thm:arvesons_model} and \ref{thm:model_NC}) can be interpreted in the following way. 

Fix $d$ and let $E$ be a $d$-dimensional Hilbert spaces with fixed orthonormal basis $\{e_1, \ldots, e_d\}$, giving rise to the noncommutative $d$-shift $L = (L_1, \ldots, L_d)$.  
Let $\mb{C}\langle z \rangle = \mb{C}\langle z_1, \ldots, z_d\rangle$ denote the free algebra in $d$ variables. Let $\mathfrak{C}$ be the commutator ideal in $\mb{C}\langle z \rangle$, that is, the ideal generated by $fg - gf$, where $f,g \in \mb{C}\langle z \rangle$. Note that $\mb{C} \langle z \rangle / \mathfrak{C} = \mb{C}[z]$. 
Now consider the closed subspace $[\mathfrak{C}]$ in $\cF(E)$ (here $\mb{C}\langle z \rangle$ is identified with a dense subspace of $\cF(E)$ in the natural way). Then $[\mathfrak{C}]$ is an invariant subspace for $L$, and $\cF_+(E) = [\mathfrak{C}]^\perp$. Recall also that $S = P_{\cF_+(E)} L P_{\cF_+(E)}$. 

The noncommutative $d$-shift $L$ is a universal for row contractions, and the commutative $d$-shift $S$ is universal for {\em commuting} row contractions. Now, a row contraction $T$ is commuting if and only if it satisfies the relations in $\mathfrak{C}$, that is, $p(T) = 0$ for every $p \in \mathfrak{C}$. Thus the above discussion can be summarized in the following way: {\em the universal model for row contractions which satisfy the relations in $\mathfrak{C}$ is obtained by compressing $L$ to $\cF_+(E) = [\mathfrak{C}]^\perp$}. 

Popescu discovered that the same holds when $\mathfrak{C}$ is replaced by an arbitrary ideal $J \triangleleft \mb{C}\langle z \rangle$: using more or less the same methods as above one obtains a universal model for row contractions satisfying the relations in $J$ by compressing the noncommutative $d$-shift $L$ to the co-invariant subspace $\cF_J = [J]^\perp$. See \cite{Popescu06} for details; similar results for special classes of ideals appear in \cite{BB02,ShalitSolel}. 

\subsection{Constrained dilations in the commutative case}\label{subsec:con_comm}
The results of \cite{Popescu06} discussed in the previous paragraph can be compressed to the commutative case, yielding the following model theory for $d$-contraction satisfying polynomial relations. 

For $J \triangleleft \mb{C}[z_1, \ldots, z_d]$ an ideal in the algebra of $d$-variable (commutative) polynomials, let $[J]$ be its closure in $H^2_d$, and denote $\cF_J = [J]^\perp$ and $S^J = P_{\cF_J} S P_{\cF_J}$. The tuple $S^J$ gives $\cF_J$ the structure of a Hilbert module, and it can be identified naturally with the quotient of $H^2_d$ by the submodule $[J]$.

A row contraction $V$ is said to be {\em of Cuntz type} if $\sum V_i V_i^* = 1$. 

\begin{theorem}
Fix $d$, and let $J \triangleleft\, \mb{C}[z_1, \ldots, z_d]$ be an ideal. Let $T$ be a $d$-contraction such that $p(T) = 0$ for every $p \in J$. Then there is a cardinal $n$, a row contractions $V$ of Cuntz type on $H_V$ satisfying $p(V) = 0$ for all $p \in J$, and subspace $K \subseteq n \cdot \cF_J \oplus H_V$ that is co-invariant and full for $n \cdot S^J \oplus V$, such that $T$ is unitarily equivalent to the compression of $n \cdot S^J \oplus V$ to $K$. 
Moreover, $V$ is the nil operator if and only if $T$ is pure, and $n = \rank(T)$. 
\end{theorem}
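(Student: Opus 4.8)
The plan is to reduce the statement to Arveson's model theorem (Theorem~\ref{thm:arvesons_model}) by cutting the Arveson dilation down to the part on which the relations in $J$ are ``supported''; equivalently, one may compress the noncommutative constrained dilation theorem of Popescu \cite{Popescu06} (the analogue of Theorem~\ref{thm:model_NC} for the ideal $\hat{J} \triangleleft \mb{C}\langle z \rangle$ which is the preimage of $J$ under the abelianization map $\mb{C}\langle z\rangle \to \mb{C}[z]$), using that $\hat{J} = \mathfrak{C} + (\text{a lift of } J)$ forces $[\hat{J}] = \cF_-(E) \oplus [J]$ and hence $[\hat{J}]^\perp = \cF_J$ inside $\cF(E)$, while transitivity of compression through the co-invariant subspaces $\cF_J \subseteq \cF_+(E) \subseteq \cF(E)$ identifies the compression of $L$ to $\cF_J$ with $S^J$. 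Below I describe the first route, which is more self-contained.

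The \emph{pure} case is clean and already yields $n = \rank(T)$ and the nilpotence of $V$. If $T$ is pure, Theorem~\ref{thm:model_pure} gives an isometry $W : H \to H^2_d \otimes \cD_T$ whose range $K$ is co-invariant for $S \otimes I_{\cD_T}$ and realizes $T$ as the compression of $S \otimes I_{\cD_T}$ to $K$. The relations $p(T) = 0$ for $p \in J$ then read $(p(S) \otimes I_{\cD_T})K \perp K$; since $K^\perp$ is invariant for $S \otimes I_{\cD_T}$, hence for every $p(S) \otimes I_{\cD_T}$ with $p \in J$, one gets $(p(S)\otimes I_{\cD_T})(H^2_d \otimes \cD_T) \subseteq K^\perp$ for all $p \in J$. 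As $\overline{\spn}\{p(S)\xi : p \in J,\ \xi \in H^2_d\} = [J]$, this forces $[J] \otimes \cD_T \subseteq K^\perp$, i.e.\ $K \subseteq \cF_J \otimes \cD_T$. Consequently $T$ is unitarily equivalent to the compression of $S^J \otimes I_{\cD_T} = \rank(T) \cdot S^J$ to $K$, with $V$ the nil operator; passing to $[C^*(1, S^J \otimes I_{\cD_T})K]$ one may assume $K$ is full.

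For general $T$ I would apply Theorem~\ref{thm:arvesons_model}: there exist $\hat{n} = \rank(T)$, a spherical unitary $Z$ on $H_Z$, and a full co-invariant $K \subseteq \hat{n} \cdot H^2_d \oplus H_Z$ with $T$ unitarily equivalent to the compression of $\hat{n} \cdot S \oplus Z$ to $K$. Exactly as in the pure case, $p(T) = 0$ for $p \in J$ together with $K^\perp$ being invariant for $\hat{n}\cdot S \oplus Z$ gives $K \perp (\hat{n}\cdot [J]) \oplus M_Z$, where $M_Z := \overline{\spn}\{p(Z)h : p \in J,\ h \in H_Z\}$ is a reducing subspace for the normal tuple $Z$. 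Hence $K \subseteq \hat{n} \cdot \cF_J \oplus H_V$ with $H_V := H_Z \ominus M_Z$; writing $V := Z|_{H_V}$ --- a spherical unitary, so in particular a row contraction of Cuntz type, with $p(V) = 0$ for all $p \in J$ by the definition of $H_V$ --- transitivity of compression through the co-invariant subspace $\hat{n}\cdot\cF_J \oplus H_V$ shows that $T$ is the compression of $\hat{n} \cdot S^J \oplus V$ to $K$. Finally $V$ is nil exactly when $T$ is pure, and after trimming to $[C^*(1, \hat{n}\cdot S^J \oplus V)K]$ one keeps $K$ full and, one must check, still exactly $\rank(T)$ summands $S^J$.

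The two places where real work is required are these. First, in the pure case (and in passing from Drury's inequality, Theorem~\ref{thm:DruryVNineq}, to a functional-analytic statement about the quotient module $\cF_J$) one wants that $p(T) = 0$ for $p \in J$ lets one write $\|q(T)\| = \|(q+r)(T)\| \le \|(q+r)(S)\|$ for every matrix-valued polynomial $r$ with entries in $J$ and then identify $\inf_r \|(q+r)(S)\|$ with $\|q(S^J)\|$, i.e.\ that compression to $\cF_J$ is a complete quotient map onto the algebra generated by $S^J$ --- a Nevanlinna--Pick/commutant-lifting type assertion. Second, and this I expect to be the main obstacle, one must verify that cutting Arveson's model down to $\hat{n}\cdot\cF_J \oplus H_V$ and then to $[C^*(1,\hat{n}\cdot S^J\oplus V)K]$ does not destroy the count $n = \rank(T)$ and keeps $K$ full; this is the analogue of the multiplicity/defect bookkeeping in the proof of Theorem~\ref{thm:arvesons_model}, but it is genuinely more delicate here because $S^J$ need not be irreducible and there is no exact sequence like Theorem~\ref{thm:exact_sequence} to lean on --- the compacts on $\cF_J$ need not lie in $C^*(1,S^J)$. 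Along the route through \cite{Popescu06} this bookkeeping is absorbed into the noncommutative constrained dilation theorem, which is why compressing that result is the more economical path.
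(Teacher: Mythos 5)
The paper gives essentially no proof of this theorem beyond the sentence ``The results of \cite{Popescu06}\dots can be compressed to the commutative case'': one takes Popescu's constrained dilation theorem for the noncommutative $d$-shift and the ideal $\hat J \triangleleft \mb{C}\langle z\rangle$ generated by $J$ together with the commutators, observes $[\hat J]^\perp = \cF_J$, and compresses. Your ``second route'' is precisely the paper's route, and you are right that it is the economical one because Popescu's theorem already contains the multiplicity/minimality bookkeeping.

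Your primary route --- applying Theorem \ref{thm:arvesons_model} directly and then locating $K$ inside $\hat n\cdot\cF_J \oplus H_V$ --- is a genuinely different, more self-contained attack, and its first half is correct and nicely argued. Two remarks on the details. First, a simplification you overlook: since $K$ is \emph{full} for $\hat n\cdot S \oplus Z$ and $M_Z$ is reducing, the $H_Z$-component of $[C^*(1,\hat n\cdot S\oplus Z)K]$ lies in $H_V$, so fullness forces $M_Z = \{0\}$. In other words, $p(Z)=0$ for all $p\in J$ is automatic and $V = Z$; there is no cutting on the spherical-unitary part. Second, the ``first place where real work is required'' that you flag --- identifying $\inf_r\|(q+r)(S)\|$ with $\|q(S^J)\|$ --- is not actually used by your cut-down argument; it would be needed only if one redid Arveson's Stinespring/extension proof with $\cA(S^J)$ in place of $\cA_d$, which is a different route than the one you carry out.

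The gap you correctly identify is the real one, and it is more serious than ``bookkeeping.'' After cutting to $\hat n\cdot\cF_J \oplus H_Z$ the subspace $K$ is no longer known to be full for $\hat n\cdot S^J \oplus Z$: fullness in $\hat n\cdot H^2_d \oplus H_Z$ does not transfer because $C^*(1,S^J)$ is not the compression of $C^*(1,S)$ and need not contain the compacts, so there is no exact sequence like Theorem \ref{thm:exact_sequence} to decompose representations by. Worse, even if one passes to the full part $G = [C^*(1,\hat n\cdot S^J\oplus Z)K]$, this $G$ is merely a reducing subspace of $\hat n\cdot\cF_J \oplus H_Z$, and because $S^J$ is generally \emph{not} irreducible, $G$ need not again split as $m\cdot\cF_J \oplus H_{V'}$ --- so the restricted tuple may fail to have the asserted form at all. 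This is exactly the point that compressing Popescu's theorem sidesteps, since there minimality of the constrained dilation is established at the level of $\cF_J \subseteq \cF(E)$ before the multiplicity is counted. So the verdict is: your pure-case argument is sound, your identification of the paper's route is correct, but the fullness/multiplicity step in the general case is a genuine unfilled gap in the Arveson-model route, not a routine verification.
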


\begin{remark}
Under some additional conditions (for example, if $J$ is a homogeneous ideal) the triple $(n,V,K)$ is determined uniquely, up to unitary equivalence, by the unitary equivalence class of $T$. 
\end{remark}

\begin{remark}
For non-pure $d$-contractions the above model may not be very effective, since there is not much information on what $V$ looks like. It can be shown, however, that if $S^J$ is essentially normal (equivalently, if $\cF_J$ is an essentially normal Hilbert module) then $V$ is a normal tuple with spectrum in $\overline{V(J)} \cap \partial \mb{B}_d$. 
\end{remark}

\subsection{Other commutative models}
See the chapter on commutative dilation theory by Ambrozie and M\"{u}ller \cite{AM25} in this reference work for a systematic construction of alternative models, given either by weighted shifts or by multiplication operators on spaces of analytic functions, which include the $d$-shift as a special case.

\subsection{Noncommutative domains}

In a different direction of generalization, Popescu obtained universal models for tuples satisfying a variety of different norm constraints, which include the row contractive condition as a special case \cite{Popescu10a}. For example, under some assumptions on the coefficients $a_\alpha$, Popescu obtains a model for all tuples $T$ which satisfy 
\[
\sum_{\alpha} a_\alpha T^\alpha T^{\alpha*} \leq I. 
\]

\subsection{Commutant lifting}\label{subsec:CLT}

The classical Sz.-Nagy and Foias model theory \cite{SzNF2010} finds some of its most profound applications via the commutant lifting theorem \cite{SzNF68} (see also \cite{FoiasFrazhoBook}). It is natural therefore to expect a commutant lifting theorem in the setting of the model of \ref{subsec:uni}. The following theorem is due to Ball, Trent and Vinnikov \cite{BTV01} (see also \cite{AT02}). 
\begin{theorem}[\cite{BTV01}, Theorem 5.1]\label{thm:BTV}
Let $K_1$ and $K_2$ be Hilbert spaces. For $i=1,2$, suppose that $M_i \subseteq H^2_d \otimes K_i$ is co-invariant for $S \otimes I_{K_i}$. Suppose that $X \in B(M_1, M_2)$ satisfies
\[
X^* (S \otimes I_{K_2})^*\big|_{M_2} = (S \otimes I_{K_1})^* X^*. 
\]
Then there exists $\Phi \in \cM_d(K_1, K_2)$ such that 
\begin{enumerate}
\item $M_\Phi^*\big|_{M_2} = X^*$,
\item $\|M_\Phi\| = \|X\|$.
\end{enumerate}
\end{theorem}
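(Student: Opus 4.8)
The plan is to deduce the commutant lifting theorem (Theorem \ref{thm:BTV}) from the characterization of contractive multipliers given by the Ball--Trent--Vinnikov realization theorem and, more precisely, from the positive-kernel characterization in item (2) of that theorem, via a \emph{lurking isometry} (or Kolmogorov decomposition) argument. After rescaling we may assume $\|X\|\le 1$ and aim to produce $\Phi\in\cM_d(K_1,K_2)$ with $\|\Phi\|\le 1$ and $M_\Phi^*|_{M_2}=X^*$; the reverse inequality $\|M_\Phi\|\ge\|X\|$ is automatic since $M_\Phi^*$ restricts to $X^*$ on $M_2$.

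First I would record what the intertwining relation says on reproducing kernels. Because $M_i$ is co-invariant for $S\otimes I_{K_i}$, for each $\lambda\in\mb{B}_d$ and $v\in K_i$ the vector $k_\lambda\otimes v$ lies in $M_i$ (it is an eigenvector of $(S\otimes I_{K_i})^*$, using Theorem \ref{thm:what_multpliers_do} with $\Phi$ a constant, hence is fixed by $P_{M_i}$ up to the co-invariance; more carefully, $P_{M_i}(k_\lambda\otimes v)=k_\lambda\otimes v$ since $M_i^\perp$ is invariant for $S\otimes I_{K_i}$ and $S^*$ has $k_\lambda\otimes v$ as eigenvector). Testing $X^*(S\otimes I_{K_2})^*|_{M_2}=(S\otimes I_{K_1})^*X^*$ against these eigenvectors shows that there is an operator function $\Phi:\mb{B}_d\to B(K_1,K_2)$, defined by $\langle\Phi(\lambda)u,v\rangle := \langle u, (X^*(k_\lambda\otimes v))(\lambda)\rangle$ (appropriately interpreted), such that $X^*(k_\lambda\otimes v)=k_\lambda\otimes\Phi(\lambda)^*v$. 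This is the candidate multiplier symbol; by Theorem \ref{thm:what_multpliers_do} it suffices to show the map $k_\lambda\otimes v\mapsto k_\lambda\otimes\Phi(\lambda)^*v$ extends to a contraction, equivalently that the $B(K_2)$-valued kernel $K_\Phi(z,w)=\dfrac{I-\Phi(z)\Phi(w)^*}{1-\langle z,w\rangle}$ is positive semidefinite.

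The core computation is then to verify this positivity using $\|X\|\le 1$. Writing $P_i$ for $P_{M_i}$ and using $X^*(k_\lambda\otimes v)=k_\lambda\otimes\Phi(\lambda)^*v$ together with $\|X^*h\|\le\|h\|$ applied to finite sums $h=\sum_j P_2(k_{w_j}\otimes v_j)=\sum_j k_{w_j}\otimes v_j$, one gets, after expanding inner products with the reproducing property $\langle k_w,k_z\rangle = (1-\langle z,w\rangle)^{-1}$, that $\sum_{i,j}\langle (I-\Phi(z_i)\Phi(z_j)^*)v_j,v_i\rangle\,(1-\langle z_i,z_j\rangle)^{-1}\ge 0$. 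This is exactly the positivity of $K_\Phi$. Then Theorem on realizations (item (1)$\Leftrightarrow$(2)) gives $\Phi\in\cM_d(K_1,K_2)$ with $\|M_\Phi\|\le 1$; and by construction $M_\Phi^*$ agrees with $X^*$ on the total set $\{k_\lambda\otimes v\}$ of $M_2$, hence $M_\Phi^*|_{M_2}=X^*$, giving (1) and (2).

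The main obstacle I anticipate is the bookkeeping around co-invariance: one must be careful that $k_\lambda\otimes v$ genuinely lies in $M_i$ and that the restriction/compression of $S\otimes I_{K_i}$ to $M_i$ interacts correctly with the adjoint, so that the intertwining hypothesis transfers cleanly to the kernel identity $X^*(k_\lambda\otimes v)=k_\lambda\otimes\Phi(\lambda)^*v$ with a \emph{well-defined} holomorphic $\Phi$. A second, more structural point is that deducing the full commutant lifting statement in this way really uses the sharp form of the Ball--Trent--Vinnikov theorem (the Pick property of $H^2_d$), which is what makes $1/(1-\langle z,w\rangle)$ a complete Pick kernel; without that, positivity of $K_\Phi$ would not suffice to produce a multiplier. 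If one prefers a self-contained route, an alternative is the ``one-step extension plus weak-$*$ compactness'' argument compressed from Popescu's noncommutative commutant lifting theorem on $\cF(E)$ down to $\cF_+(E)$, as in \ref{subsec:identification2}; but the realization-theorem route above is the shortest given what has already been assembled in the excerpt.
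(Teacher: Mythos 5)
Your argument has a fatal gap at the very first step. You claim that because $M_i$ is co-invariant for $S\otimes I_{K_i}$, every kernel vector $k_\lambda\otimes v$ lies in $M_i$. That is false: co-invariance of $M_i$ (equivalently, invariance of $M_i^\perp$ under $S\otimes I_{K_i}$) tells you that $(S\otimes I_{K_i})^*$ maps $M_i$ into itself, not that every eigenvector of $(S\otimes I_{K_i})^*$ belongs to $M_i$. A concrete counterexample: take $d=1$, $K_2=\mb{C}$, $M_2=\mb{C}\cdot 1\subset H^2(\mb{D})$. The span of the constants is co-invariant for the shift (indeed $S^*1=0$), yet $k_\lambda\notin M_2$ for every $\lambda\neq 0$. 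Since your candidate symbol is defined by $X^*(k_\lambda\otimes v)=k_\lambda\otimes\Phi(\lambda)^*v$, and $X^*$ is only defined on $M_2$, the construction never gets off the ground. You flag this as ``bookkeeping around co-invariance,'' but it is not bookkeeping; it is precisely the point, and the lurking-isometry computation that follows collapses with it.

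Moreover, the obvious repair does not work either: extending $X^*$ to $H^2_d\otimes K_2$ by $\widetilde{X}^*:=X^*P_{M_2}$ destroys the intertwining, because $P_{M_2}(S\otimes I_{K_2})^*$ picks up a cross-term from $M_2^\perp$ (recall that $(S\otimes I_{K_2})^*$ need not map $M_2^\perp$ into $M_2^\perp$). This is the genuine difficulty that commutant lifting addresses: the given intertwiner lives only on the co-invariant subspace and must be extended to the ambient space without increasing the norm. The correct proofs either set up a lurking isometry / Kolmogorov decomposition on the graph of $X$ (using only vectors that really lie in $M_1$ and $M_2$), combined with a one-step extension or Parrott-type argument before invoking the realization theorem --- this is closer to how Ball--Trent--Vinnikov actually argue in \cite{BTV01} --- or they compress Popescu's noncommutative commutant lifting theorem from $\cF(E)$ to $\cF_+(E)$ as indicated in the remark after Theorem \ref{thm:DavLe}. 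The second route, which you mention only as an ``alternative,'' is a sound strategy; the main argument you develop is not.

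Two smaller points: your observation that $\|M_\Phi\|\geq\|X\|$ ``is automatic since $M_\Phi^*$ restricts to $X^*$'' is correct and worth keeping. And your remark that the result really does rely on $H^2_d$ having the complete Pick kernel is also correct --- but notice that you are now using the complete Pick property to prove the commutant lifting theorem, whereas the paper's proof of Theorem \ref{thm:comp_Pick} uses the commutant lifting theorem. If you go this route you must be careful not to be circular: the realization theorem (the characterization via positivity of $K_\Phi$) should be taken as the primitive input, proved independently, which is indeed how \cite{BTV01} structures things.
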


Theorem \ref{thm:BTV} provides a commutant lifting result for the model of \ref{subsec:uni} only in the case where $Z$ is the nil operator. The following theorem of Davidson and Le handles the non-pure case. If $T$ is a $d$-contraction and $\tilde{T} = n \cdot S \oplus Z$ is the dilation given by Theorem \ref{thm:arvesons_model} on $\tilde{H} = n \cdot H^2_d \oplus H_Z$, then one may consider $H$ as a subspace of $\tilde{H}$ and $T$ as the co-restriction of $\tilde{T}$ to $H$.  

\begin{theorem}[\cite{DL10}, Theorem 1.1]\label{thm:DavLe}
Suppose that $T = (T_1, \ldots, T_d)$ is a $d$-contraction on a Hilbert
space $H$, and that $X$ is an operator on $H$ that commutes with $T_1, \ldots, T_n$. Let $\tilde{T} = (\tilde{T}_1, \ldots, \tilde{T}_d)$ on $\tilde{H}$ denote the dilation of T on provided by Theorem \ref{thm:arvesons_model}. Then there is an operator $Y$ on $\tilde{H}$ that commutes with each $\tilde{T}_i$ for $i=1, \ldots, d$, such that 
\begin{enumerate}
\item $ Y^* \big|_H = X^*$ .
\item $\|Y\| = \|X\|$. 
\end{enumerate}
\end{theorem}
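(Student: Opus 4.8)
The plan is to isolate the two extreme constituents of the model of Theorem~\ref{thm:arvesons_model}: the pure summand, for which the assertion is exactly the Ball--Trent--Vinnikov commutant lifting theorem (Theorem~\ref{thm:BTV}), and the spherical-unitary summand, which is reductive and should yield to spectral theory. After rescaling I may assume $\|X\|\le 1$, so the task is to produce $Y$ commuting with each $\tilde T_i$, with $\|Y\|\le 1$ and $P_H Y = X P_H$ (equivalently $Y^*|_H = X^*$). I would write the model space as $\tilde H = \tilde H_p\oplus\tilde H_s$, where $\tilde H_p = n\cdot H^2_d$ carries the pure row contraction $n\cdot S$ and $\tilde H_s = H_Z$ carries the spherical unitary $Z$, both summands reducing $\tilde T$, and split the isometric inclusion $j\colon H\hookrightarrow\tilde H$ as $j = j_p\oplus j_s$. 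Here $j_p$ is the Poisson kernel built from $T$ as in Lemma~\ref{lem:Poisson_kernel} (now only a contraction, since $T$ need not be pure), with $j_p^*j_p = I - Q$ and $j_s^*j_s = Q$ for $Q := \mathrm{SOT}\text{-}\lim_m\Theta_T^m(I)$; in particular $j_p$ is isometric precisely when $T$ is pure, in which case $\tilde H_s = 0$ and the statement is Theorem~\ref{thm:BTV} applied on the co-invariant subspace $j_p(H)\subseteq H^2_d\otimes\cD_T$.

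In the general case I would first observe that one can look for $Y$ in lower-triangular form relative to $\tilde H_p\oplus\tilde H_s$: since $\tilde H_p$ reduces $\tilde T$, any $B\colon\tilde H_s\to\tilde H_p$ intertwining $Z$ with $n\cdot S$ vanishes, because $(n\cdot S)_iB = BZ_i$ forces $\Theta_{n\cdot S}(BB^*) = BB^*$ (using $\sum_iZ_iZ_i^* = I$), and then $BB^* = \Theta_{n\cdot S}^m(BB^*)\to 0$ by purity of $n\cdot S$. Thus I would seek $A\in\{n\cdot S\}' = \cM_d(\mb{C}^n)$, $D\in\{Z\}'$, and an intertwiner $C$ of $n\cdot S$ into $Z$, with the block matrix $\left(\begin{smallmatrix}A&0\\C&D\end{smallmatrix}\right)$ of norm at most $1$; commutation of $Y$ with $\tilde T$ is then automatic, while $P_H Y = X P_H$ decouples into the spherical identity $j_s^*D = Xj_s^*$ and the pure identity $j_p^*A + j_s^*C = Xj_p^*$.

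The spherical identity I would handle through the joint spectral decomposition $Z = \int^{\oplus}_{\partial\mb{B}_d}\zeta\,dE(\zeta)$: then $\{Z\}'$ is the algebra of decomposable operators, the relation $Z^*j_s = j_sT^*$ disintegrates $j_s$ over the joint spectrum, and since $X$ leaves each joint eigenspace $\bigcap_i\ker(T_i-\zeta_iI)$ of $T$ invariant (it commutes with each $T_i$) one can solve $D^*j_s = j_sX^*$ fibrewise with $\|D\|\le\|X\|$; this is the more routine half. The pure identity is where Theorem~\ref{thm:BTV} enters, but not verbatim: because $T$ is not pure, $j_p$ is a merely contractive intertwiner rather than a co-isometric restriction to a co-invariant subspace, and the coupling term $C$ must be chosen simultaneously with $A$ both to solve $j_p^*A + j_s^*C = Xj_p^*$ and to keep the norm of the block matrix at most $1$. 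I expect this reconciliation --- extending the Ball--Trent--Vinnikov commutant lifting across the non-pure part of Arveson's model while controlling the norm of the lift --- to be the main obstacle; once it is carried out, assembling $A$, $C$, $D$ produces $Y$ with $\|Y\|\le 1 = \|X\|$, the reverse inequality being automatic from $Y^*|_H = X^*$.
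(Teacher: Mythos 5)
The survey does not actually prove Theorem~\ref{thm:DavLe}; it states it and cites \cite{DL10}, whose argument runs through the noncommutative theory: one passes to the Frazho--Bunce--Popescu minimal row-isometric dilation of $T$, applies Popescu's commutant lifting theorem there, and then shows that the commutant element so produced compresses correctly onto the co-invariant subspace carrying the Arveson dilation $\tilde T$. The nontrivial point in that route is precisely that the relevant co-invariant subspace is respected by the lifted commutant. Your proposal instead works entirely inside the commutative model and decomposes $\tilde H = \tilde H_p\oplus\tilde H_s$ directly. That is a genuinely different strategy.

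Several pieces of your sketch are correct and worth keeping. The verification that any $Y\in\{\tilde T\}'$ is lower-triangular with respect to $\tilde H_p\oplus\tilde H_s$ is right: if $B\colon\tilde H_s\to\tilde H_p$ satisfies $B Z_i = (n\cdot S)_i B$ then $\Theta_{n\cdot S}(BB^*)=B\bigl(\sum_i Z_iZ_i^*\bigr)B^*=BB^*$, and since $\Theta_{n\cdot S}$ is positive and $\Theta_{n\cdot S}^m(I)\to 0$ strongly, $BB^*=\Theta_{n\cdot S}^m(BB^*)\le\|B\|^2\,\Theta_{n\cdot S}^m(I)\to 0$, forcing $B=0$. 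Likewise the reduction of the constraint $Y^*\big|_H = X^*$ to the pair of equations $A^*j_p + C^*j_s = j_pX^*$ and $D^*j_s = j_sX^*$ is correct, as is the remark that $\|Y\|\ge\|X\|$ comes for free from $Y^*\big|_H=X^*$.

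However, there is a genuine gap, which you yourself flag as ``the main obstacle.'' The construction of $A$ and $C$ satisfying $A^*j_p + C^*j_s = j_pX^*$ is not carried out, and it cannot be decoupled from the norm constraint: the requirement $\bigl\|\begin{smallmatrix} A&0\\C&D\end{smallmatrix}\bigr\|\le 1$ links $A$, $C$ and $D$ (concretely, one needs $A^*A+C^*C\le I$ together with the cross terms with $D$), so the three blocks cannot be found independently and then assembled. You also cannot fall back on Theorem~\ref{thm:BTV} with $C=0$: that theorem requires $j_p(H)$ to be a closed co-invariant subspace on which $j_p$ is a unitary identification, whereas here $j_p^*j_p = I - Q$ with $Q$ the asymptotic limit of $\Theta_T$, so $j_p$ is only a contraction with possibly nontrivial kernel, and it is not clear that $\ker j_p$ is $X^*$-invariant. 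In addition, the argument you sketch for the spherical block $D$ is not quite on target: what is available is the intertwining $Z_i^*j_s = j_sT_i^*$, and one must disintegrate $j_s$ against the spectral measure of $Z$ and solve for $D$ fibrewise, not appeal to joint eigenspaces of $T$ (which may well be trivial). As written, the proposal therefore reduces the theorem to two completion problems --- one routine but only sketched, one explicitly unresolved --- and the unresolved one carries essentially all of the content of the theorem beyond the pure case. To close the gap one would either have to supply the coupled $(A,C)$ completion argument (for example via a Parrott-type lemma adapted to the intertwining constraints) or switch to the noncommutative dilation route of \cite{DL10}.
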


\begin{remark}
There is also a commutant lifting theorem in the setting of \ref{subsec:noncommutative} (see \cite[Theorem 3.2]{Popescu89}), and this commutant lifting theorem can be ``compressed" down to co-invariant subspaces of $L$, giving rise to a commutant lifting theorem (for pure row contractions) in the constrained setting of \ref{subsec:constrained}. In particular one can obtain Theorem \ref{thm:BTV} above as a bi-product of the noncommutative theory in this way (see  \cite[Section 3]{DL10} or \cite[Theorem 5.1]{Popescu06}). 
\end{remark}

\section{Interpolation theory and function theory on subvarieties}

\subsection{Zero sets and varieties}\label{sec:zero_sets}

\begin{definition}
Let $\cF$ be a space of functions on a set $X$. Then a set $Y \subseteq X$ is said to be a {\em zero set for $\cF$} if there is an $f \in \cF$ such that $Y = \{x \in X : f(x) = 0\}$. $Y$ is said to be a {\em weak zero set} if it is the intersection of zero sets. 
\end{definition}

As $\cM_d \subseteq H^2_d$, every zero set of $\cM_d$ is a zero set of $H^2_d$. 
The converse also holds; see Theorem \ref{thm:Smirnov_factor} and the succeeding paragraph. 
For now we need the following partial result. 

\begin{theorem}\label{thm:sero_set}
If $V \subseteq \mb{B}_d$ is a zero set for $H^2_d$, then it is a weak zero set for $\cM_d$. 
\end{theorem}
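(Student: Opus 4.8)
The plan is to reduce the statement to a separation argument: given a point $w \in \mb{B}_d \setminus V$, I want to produce a single multiplier $\psi_w \in \cM_d$ that vanishes on all of $V$ but satisfies $\psi_w(w) \neq 0$. If I can do that for every $w \notin V$, then $V = \bigcap_{w \notin V} \{z : \psi_w(z) = 0\}$ is an intersection of zero sets of $\cM_d$, which is exactly the assertion that $V$ is a weak zero set for $\cM_d$. Note that one direction of the inclusion $V \subseteq \bigcap_w \{\psi_w = 0\}$ is automatic since each $\psi_w$ vanishes on $V$; the reverse inclusion is what the separating property buys us.

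To build $\psi_w$, I would start from the hypothesis: there is $f \in H^2_d$ with $V = \{z \in \mb{B}_d : f(z) = 0\}$, and in particular $f(w) \neq 0$. The problem is that $f$ need not be a multiplier. The natural fix is to multiply $f$ by a well-chosen multiplier that does not kill the value at $w$ but tames $f$ into $H^2_d$-times-a-multiplier, i.e.\ into $\cM_d$. A clean way to get multipliers out of an $H^2_d$-function is via the reproducing kernel: recall from the excerpt that $M_f^* k_\lambda = \overline{f(\lambda)} k_\lambda$ for $f \in \cM_d$, and more to the point the kernel functions $k_\lambda$ themselves are bounded multipliers only when... actually they are not multipliers in general either. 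So instead I would use the observation $\cM_d \subseteq H^\infty(\mb{B}_d)$ together with the fact that the functions $(1 - \langle z, a\rangle)^{-1}$, and finite linear combinations and products thereof, give a rich supply of genuine multipliers — indeed any rational function with poles off $\overline{\mb{B}_d}$ is in $\cA_d \subseteq \cM_d$. The key mechanism, which I expect the author to use, is: if $g \in H^2_d$ then $g$ can be multiplied into $\cM_d$ by composing with the complete Pick property — specifically, by the column/row interpolation results. Concretely, $f \in H^2_d$ means $f$ is a (contractive, after scaling) element of the Hilbert module, so the operator $H^2_d \to H^2_d$, $h \mapsto \langle h, \cdot\rangle$...

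The cleanest route, and the one I would actually write, uses the structure of $H^2_d$ as a complete Pick space: for any $f \in H^2_d$ the function
\[
\varphi(z) = \frac{f(z)}{c}\cdot \frac{1}{1 + \|f\|^2_{H^2_d}}\cdot(\text{a Schur multiplier of the form } 1 - k_0/k_w)
\]
is not quite it; rather, I would invoke that in a complete Pick RKHS, for every $f$ in the space there is a multiplier $\psi$ with the same zero set (or at least with zero set contained in that of $f$ and not containing a prescribed off-$V$ point). The honest key step: given $f \in H^2_d$ and $w$ with $f(w)\neq 0$, consider the one-variable-type argument — restrict attention to the slice, or use that $f/\,(\text{something})$ — no. I think the real proof observes that $\{z : f(z) = 0\} = \bigcap \{z : \psi(z) = 0\}$ where $\psi$ ranges over multipliers of the form $\psi = M_f^* (\text{kernel combination})$ pulled back; i.e.\ the functions $z \mapsto \langle f, k_z \otimes \xi\rangle$-type expressions. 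The main obstacle, and where I would concentrate effort, is precisely manufacturing \emph{one} genuine multiplier vanishing on $V$ but not at $w$ out of the mere $H^2_d$-function $f$; once that single separation is available the weak-zero-set conclusion is a two-line intersection argument. I would handle the obstacle by exploiting the complete Pick property of $H^2_d$: the pair of points-values data "vanish on $V$, be nonzero at $w$" is an interpolation problem, and the Pick-type solvability (together with $f$ witnessing that $V$ is contained in a genuine zero set, hence the corresponding Pick matrix degenerates appropriately) yields the multiplier.
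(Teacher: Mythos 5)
Your reduction is correct: for each $w \notin V$ one wants a single multiplier $\psi_w \in \cM_d$ vanishing on $V$ with $\psi_w(w) \neq 0$, and then $V = \bigcap_{w \notin V}\{z : \psi_w(z)=0\}$ exhibits $V$ as a weak zero set. You also correctly identify the crux (upgrading the $H^2_d$-function $f$ to multipliers) and the right tool (the complete Pick property). The paper itself gives no argument beyond citing \cite[Theorem 9.27]{AM02}, and that result is proved along exactly these lines. However, your proposal never executes the one step that actually uses the hypothesis; a reader cannot tell from ``the corresponding Pick matrix degenerates appropriately'' whether you see why the interpolation problem is solvable, and several of the earlier ideas you float (kernels as multipliers, a multiplier with the same zero set as $f$) are dead ends that make the proposal read as a search rather than a proof.

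Here is the missing computation. For the data ``$\psi \equiv 0$ on $V$, $\psi(w)=c$,'' the Pick matrix on any finite subset $\{v_1,\ldots,v_n,w\}$ is the Gram matrix $\bigl[k(x_i,x_j)\bigr]$ of the kernel functions with $|c|^2 k(w,w)$ subtracted from the $(w,w)$ entry. Positivity of all such matrices, uniformly over finite subsets of $V$, is equivalent to $|c|^2 k(w,w) \leq \dist\bigl(k_w, \cH_V\bigr)^2$ where $\cH_V = \overline{\spn}\{k_v : v \in V\}$. The hypothesis enters precisely here: $f(v)=\langle f,k_v\rangle =0$ for all $v\in V$ gives $f\perp \cH_V$, while $\langle f,k_w\rangle = f(w)\neq 0$ shows $k_w$ is not orthogonal to $\cH_V^\perp$, hence $k_w\notin\cH_V$, hence $\dist(k_w,\cH_V)>0$; so a nonzero $c$ works, and the complete Pick property (plus a routine normal-families step to pass from the finitely-many-node case of Definition \ref{def:CP} to the infinite node set $V\cup\{w\}$) produces $\psi_w$. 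Without this you have only an outline, since if $\dist(k_w,\cH_V)$ were $0$ no such multiplier could exist. An alternative that avoids normal families altogether is to invoke Theorem \ref{thm:BLH}: the cyclic invariant subspace $[f]$ equals the closure of $\sum_n \phi_n H^2_d$ for multipliers $\phi_n$ with $\sum_n M_{\phi_n}M_{\phi_n}^* = P_{[f]}$; each $\phi_n\in[f]$ vanishes on $V$, and if every $\phi_n(w)=0$ then $P_{[f]}k_w=0$, forcing $f(w)=\langle f,P_{[f]}k_w\rangle=0$, so $V=\bigcap_n\{z:\phi_n(z)=0\}$.
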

\begin{proof}
See \cite[Theorem 9.27]{AM02}, where this result is proved for any complete Pick Hilbert function space and its multiplier algebra. 
\end{proof}

\begin{definition}
Say that $V$ is a {\em variety} in $\mb{B}_d$ if it is is a weak zero set of $\cM_d$, that is, if it is defined as 
\[
V = V(F) :=\{\lambda \in \mb{B}_d :  f(\lambda) = 0 \textrm{ for all } f \in F\}, 
\]
for some $F\subseteq \cM_d$.
\end{definition} 

\begin{remark}
By Theorem \ref{thm:sero_set}, replacing $H^2_d$ by $\cM_d$ would lead to an equivalent definition. 
\end{remark}

\begin{remark}
This is not the usual definition of {\em analytic variety}, as only subsets $F\subseteq \cM_d$ are allowed. Considering the familiar case $d=1$ shows that the above definition is more restrictive than the usual one: any discrete set in $\mb{D}$ is an analytic variety, but only sequences satisfying the {\em Blaschke condition} can be zero sets of functions in $H^\infty(\mb{D}) = \cM_1$ \cite[Section II.2]{Garnett}). 
\end{remark}

It is immediate that if $J$ is the \wot-closed ideal generated by $F$, then $V(F) = V(J)$. 
Given $X \subseteq \mb{B}_d$, denote by $J_X$ the \wot-closed ideal
\[
J_X = \{f \in \cM_d : f(x) = 0 \textrm{ for all } x \in X\}. 
\]
Then $J_{X} = J_{V(J_X)}$.

For $X \subseteq \mb{B}_d$, denote by $\cH_X = \overline{\spn}\{k_x : x \in X\}$. 

\begin{lemma}[\cite{DRS11}, Lemma 5.5]
If $J$ is a radical homogeneous ideal in $\mb{C}[z]$, then 
\[
\cH_{V(J)} = \cF_J := H^2_d \ominus J.  
\]
\end{lemma}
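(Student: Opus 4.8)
The plan is to prove the two inclusions $\cH_{V(J)} \subseteq \cF_J$ and $\cF_J \subseteq \cH_{V(J)}$ separately, using the homogeneity and radicality of $J$ together with the Nullstellensatz stated earlier in the excerpt. Throughout I would work with the orthogonal complement formulation: $\cF_J = H^2_d \ominus [J]$, so that $\cF_J = [J]^\perp$, where $[J]$ is the closure of $J$ in $H^2_d$.

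First I would establish $\cH_{V(J)} \subseteq \cF_J$, i.e.\ $k_\lambda \perp [J]$ for every $\lambda \in V(J)$. This is essentially immediate: for $f \in J$ we have $\langle f, k_\lambda \rangle = f(\lambda) = 0$ since $\lambda \in V(J)$, and hence $k_\lambda \perp J$, so $k_\lambda \perp [J]$; taking closed spans gives $\cH_{V(J)} \subseteq [J]^\perp = \cF_J$. This direction uses nothing about $J$ being radical or homogeneous.

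For the reverse inclusion $\cF_J \subseteq \cH_{V(J)}$, equivalently $\cH_{V(J)}^\perp \subseteq [J]$, I would take $h \in \cH_{V(J)}^\perp = \{k_\lambda : \lambda \in V(J)\}^\perp$. Then $h(\lambda) = \langle h, k_\lambda \rangle = 0$ for all $\lambda \in V(J)$, so $h$ vanishes on the homogeneous variety $V(J)$. Now I would invoke the approximation theorem for homogeneous ideals (Theorem following the Nullstellensatz, \cite{DRS11}, Corollary 6.13; \cite{RamseyThesis}, Corollary 2.1.31) in the case $\cB = H^2_d$ — or rather the remark noting that the same results hold with $H^2_d$ in place of $\cA_d$ or $\cM_d$ — which asserts that a function in $H^2_d$ vanishing on $V(I)$ for a radical homogeneous ideal $I \triangleleft \mb{C}[z]$ lies in the $H^2_d$-closure of $I$. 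Applying this with $I = J$ gives $h \in \overline{J} = [J]$. This yields $\cH_{V(J)}^\perp \subseteq [J]$, hence $\cF_J = [J]^\perp \subseteq \cH_{V(J)}$, completing the proof.

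The main obstacle is locating the right version of the homogeneous-Nullstellensatz/approximation result: the theorems as stated in the excerpt are phrased for $\cB = \cA_d$ or $\cB = \cM_d$, whereas here we need the statement at the level of the Hilbert space $H^2_d$ itself (a function $h \in H^2_d$ vanishing on $V(J)$ lies in $[J]$). The cleanest route is to appeal to the parenthetical remark in the excerpt, which explicitly says the results extend to Hilbert function spaces with circular symmetry (and in particular the homogeneous-ideal case carries over to $H^2_d$), citing \cite{DRS11,RamseyThesis}. Alternatively, one could deduce it directly: the radial-derivative/homogeneous-decomposition machinery of Section~\ref{subsec:homogeneous_dec} together with the fact that vanishing on a homogeneous variety is inherited by homogeneous components reduces the claim to the polynomial Nullstellensatz $\sqrt{J} = I_{\mb{C}[z]}(V(J))$ for radical homogeneous $J$, after which an Abel/Poisson-type summation argument places $h$ in $[J]$. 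I expect the former (citing the remark) to suffice, so the proof is short; the only real care needed is making sure the radicality hypothesis on $J$ is genuinely used (it is what makes $I_{\mb{C}[z]}(V(J)) = J$ rather than merely $\supseteq J$) and that homogeneity is what allows the reduction to the polynomial setting.
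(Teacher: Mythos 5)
Your first inclusion $\cH_{V(J)} \subseteq \cF_J$ is fine, and your starting move for the reverse inclusion (show that $h\perp k_\lambda$ for all $\lambda \in V(J)$ forces $h\in[J]$) is exactly right. The problem is how you propose to close it.

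Your ``cleanest route'' misreads the parenthetical remark. That remark says the Nullstellensatz/approximation results extend to other \emph{algebras of multipliers} on a circularly symmetric Hilbert function space (for instance $A(\mb{B}_d)$ or $H^\infty(\mb{B}_d)$); it does not assert anything about the Hilbert space $H^2_d$ itself, which is not an algebra and certainly not a multiplier algebra, so there is no ``$\cB = H^2_d$'' case of Corollary 6.13 to invoke. There is also a logical-ordering issue: in \cite{DRS11}, Lemma 5.5 precedes Theorem 6.12/Corollary 6.13 and is an ingredient in their proof (via the identification $\cM_{V(J)} \cong P_{\cH_{V(J)}}\cM_d P_{\cH_{V(J)}}$), so using the Section 6 corollary to prove the Section 5 lemma would be circular.

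Your fallback is the right argument and is essentially what the paper does, but it is even simpler than you indicate. Take $h\in\cH_{V(J)}^\perp$ with homogeneous decomposition $h=\sum_n h_n$. Since $V(J)$ is a cone and $h$ vanishes on $V(J)\cap\mb{B}_d$, each homogeneous polynomial $h_n$ vanishes on $V(J)$ (look at the power series in $r$ of $h(r\lambda)$). By the algebraic Nullstellensatz, $h_n \in I_{\mb{C}[z]}(V(J)) = \sqrt{J} = J$, using radicality. Now there is no need for any Abel/Poisson summation: the $h_n$ are mutually orthogonal in $H^2_d$ and $\sum_n\|h_n\|^2 = \|h\|^2 < \infty$, so the partial sums $\sum_{n\leq N}h_n \in J$ converge in $H^2_d$-norm to $h$, giving $h\in[J]$ directly. (Poisson/Ces\`{a}ro summation is what one needs when working in $\cA_d$ or $\cM_d$, where the homogeneous decomposition need not converge in norm --- not in $H^2_d$.) With this substitution for the circular citation, your argument matches the paper's.
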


\begin{lemma}[\cite{DRS15}, Section 2]
If $V \subseteq \mb{B}_d$ is a variety and $X$ is a set, then $V = V(J_V)$ and $\cH_X = \cH_{V(J_X)}$. 
\end{lemma}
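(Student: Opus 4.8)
The plan is to prove the two assertions of the lemma separately, each by a short double-inclusion argument, using only the definitions of $V(\cdot)$, $J_{(\cdot)}$ and $\cH_{(\cdot)}$ together with the already-established identities $J_X = J_{V(J_X)}$ and the general fact (mentioned just above the lemma) that the multiplier algebra $\cM_d$ acts on $H^2_d$ with $M_f^* k_\lambda = \overline{f(\lambda)} k_\lambda$.

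\textbf{Step 1: $V = V(J_V)$ for a variety $V$.} First I would note the trivial inclusion $V \subseteq V(J_V)$: every $f \in J_V$ vanishes on $V$ by definition of $J_V$, so every point of $V$ lies in $V(J_V)$. For the reverse inclusion I would use that $V$ is a variety, i.e. $V = V(F)$ for some $F \subseteq \cM_d$; since each $f \in F$ vanishes on $V$, we have $F \subseteq J_V$, hence $V(J_V) \subseteq V(F) = V$ because enlarging the defining set can only shrink the common zero set. Combining the two inclusions gives $V = V(J_V)$. (Equivalently one can invoke the displayed identity $J_X = J_{V(J_X)}$ with $X = V$ together with the fact that $V$ is already of the form $V(J)$ for the \wot-closed ideal $J$ generated by $F$.)

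\textbf{Step 2: $\cH_X = \cH_{V(J_X)}$ for an arbitrary set $X$.} Here the key input is the already-proved identity $J_X = J_{V(J_X)}$, and the observation that $\cH_Y$ depends on $Y$ only through the ideal $J_Y$. Concretely I would show that for any set $Y$, the subspace $\cH_Y = \overline{\spn}\{k_y : y \in Y\}$ has orthogonal complement exactly $J_Y$: indeed $h \perp k_y$ for all $y \in Y$ iff $h(y) = 0$ for all $y \in Y$ — and although $J_Y$ is defined as the vanishing ideal inside $\cM_d$, the point is that $\cF_Y := H^2_d \ominus \cH_Y = \{h \in H^2_d : h|_Y \equiv 0\}$ is the $H^2_d$-vanishing space, while $J_Y = \cF_Y \cap \cM_d$; since $\cM_d$ is \wot-dense in... — wait, this needs care. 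The cleaner route: $\cH_Y = \cH_{J_Y}$ in the sense that $\cH_Y$ is the reproducing kernel subspace cut out by $J_Y$, which one checks is $\overline{\spn}\{k_\lambda : \lambda \in V(J_Y)\}$ by the complete Pick property (or by Theorem \ref{thm:sero_set} type density of kernels). Thus applying this with $Y = X$ and then with $Y = V(J_X)$ — both having the same vanishing ideal $J_X = J_{V(J_X)}$ — yields $\cH_X = \cH_{V(J_X)}$.

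\textbf{Main obstacle.} The genuine content is Step 2, and specifically the claim that $\cH_X$ is determined by the multiplier ideal $J_X$ rather than by the (a priori larger, if one uses $H^2_d$-vanishing) information in $X$. The subtle point is reconciling "$h \perp k_x$ for all $x \in X$" (which gives $h$ vanishing on $X$, an $H^2_d$-condition) with the definition of $J_X$ via $\cM_d$-functions; one resolves it by noting $V(J_X) \supseteq X$ always, $\cH_{V(J_X)} \supseteq \cH_X$ trivially, and for the reverse that any $\lambda \in V(J_X)$ has $k_\lambda \in \cH_X$ because otherwise one could (using the complete Pick structure of $H^2_d$, or Theorem \ref{thm:sero_set}) produce a multiplier vanishing on $X$ but not at $\lambda$, contradicting $\lambda \in V(J_X)$. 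I expect this density/separation argument — invoking the complete Pick property to pass from the $H^2_d$ picture to the $\cM_d$ picture — to be the one step requiring the most care, whereas everything in Step 1 is purely formal manipulation of the operators $V(\cdot)$ and $J_{(\cdot)}$.
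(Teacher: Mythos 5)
The paper states this lemma with a citation to \cite{DRS15} and gives no proof of its own, so I can only assess your argument on its merits. It is correct, and its essential ingredients (the trivial inclusion $X\subseteq V(J_X)$ plus Theorem \ref{thm:sero_set} to close the gap in the other direction) are the right ones.

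Step 1 is clean and complete: $V\subseteq V(J_V)$ is immediate from the definitions, and the reverse follows since $V=V(F)$ for some $F\subseteq\cM_d$, which forces $F\subseteq J_V$ and hence $V(J_V)\subseteq V(F)=V$. Nothing to add.

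Step 2 is also correct, but you should delete the meandering middle paragraph (``wait, this needs care\ldots'') and run only the argument you sketch under ``Main obstacle,'' made precise as follows. Since $X\subseteq V(J_X)$, one has $\cH_X\subseteq\cH_{V(J_X)}$ trivially. For the reverse, fix $\lambda\in V(J_X)$ and suppose $k_\lambda\notin\cH_X$. Then there is $h\in\cH_X^\perp$ with $h(\lambda)=\langle h,k_\lambda\rangle\neq 0$; since $h\perp k_x$ for all $x\in X$, the function $h\in H^2_d$ vanishes on $X$. By Theorem \ref{thm:sero_set} the zero set $Z(h)$ is a weak zero set for $\cM_d$, say $Z(h)=\bigcap_\alpha Z(f_\alpha)$ with $f_\alpha\in\cM_d$. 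As $\lambda\notin Z(h)$, some $f_\alpha$ satisfies $f_\alpha(\lambda)\neq 0$; but $X\subseteq Z(h)\subseteq Z(f_\alpha)$ forces $f_\alpha\in J_X$, contradicting $\lambda\in V(J_X)$. Hence $k_\lambda\in\cH_X$ for every $\lambda\in V(J_X)$, giving $\cH_{V(J_X)}\subseteq\cH_X$. This is exactly the separation argument you gesture at, and it requires only Theorem \ref{thm:sero_set} — you need not invoke the full complete Pick machinery directly.
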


$V(J_X)$ is the smallest variety containing $X$, thus the final assertion of the above lemma can be rephrased to say that the space $\cH_X$ does not change when one replaces $X$ by its ``Zariski closure".

\subsection{The complete Pick property}

\begin{definition}\label{def:CP}
Let $\cH$ be a Hilbert function space on $X$, and let $K^\cH$ be its kernel. Then $\cH$ said to have the {\em complete Pick property} if the following two conditions are equivalent: 
\begin{enumerate} 
\item For all $m,n \in \mb{N}$, all $n$ points $x_1, \ldots, x_n \in X$ and all matrices $W_1, \ldots, W_n \in M_m(\mb{C})$, there is a contractive operator valued multiplier $\Phi \in \Mult(\cH) \otimes M_m(\mb{C})$ such that $\Phi(x_i) = W_i$ for all $i=1, \ldots, n$, 
\item The following $mn \times mn$ matrix is positive semi-definite:
\be\label{eq:CompPick}
\left[(I - W_j W_i^*)K^\cH(x_j,x_i) \right]_{i,j=1}^n \geq 0 .
\ee
\end{enumerate}
\end{definition}

If $\cH$ has the complete Pick property then it is said to be a {\em complete Pick space}, the kernel $K^\cH$ is said to be a {\em complete Pick kernel}, and the multiplier algebra $\Mult(\cH)$ is said to be a {\em complete Pick algebra}. Some researchers use the term {\em complete Nevanlinna-Pick kernel} instead of complete Pick kernel, etc. The terminology comes from the fact that, if $m=1$, $\cH$ is the Hardy space on the disc $H^2(\mb{D})$ and $K^\cH$ is the Szeg\H{o} kernel $s(z,w) = \frac{1}{1-z\overline{w}}$, then (\ref{eq:CompPick}) is the necessary and sufficient condition given by Pick's classical interpolation theorem \cite[Theorem I.2.2]{Garnett}. 

The reader is referred to \cite{AM02} for background and complete treatment of interpolation problems of this sort.  

\begin{remark}
One may also consider the {\em operator valued Pick property}, where the matrices $W_1, \ldots, W_n \in M_m(\mb{C})$ in the above definition are replaced with an $n$-tuple of operators on some Hilbert space $K$, and the required $\Phi$ is a $B(K)$ valued function on $X$ multiplying $\cH\otimes K$ into itself. However, it can be shown that the operator valued Pick property is equivalent to the complete Pick property. 
\end{remark}

In any Hilbert function space (\ref{eq:CompPick}) is a necessary condition for the existence of a contractive multiplier $\Phi$ that satisfies $\Phi(x_i) = W_i$ for all $i = 1, \ldots, n$ \cite[Theorem 5.8]{AM02}. Complete Pick spaces are the spaces in which (\ref{eq:CompPick}) is also a sufficient condition. 

\begin{theorem}\label{thm:comp_Pick}
The Drury-Arveson space $H^2_d$ has the complete Pick property. 
\end{theorem}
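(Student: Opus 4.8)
The plan is to exploit the identification of $H^2_d$ with symmetric Fock space and the universality of the $d$-shift among pure $d$-contractions (Theorem \ref{thm:model_pure}), together with the Ball--Trent--Vinnikov characterization of contractive multipliers (Theorem \ref{thm:what_multpliers_do} and the subsequent realization theorem). The core observation is the standard criterion (see \cite{AM02}, Theorem 7.31, or the McCullough--Quiggin theory) that a Hilbert function space $\cH$ on $X$ with kernel $K^\cH$ is a complete Pick space precisely when $1/K^\cH$ has a ``one positive square'' expansion: there is a function $b : X \to \cD$ into some auxiliary Hilbert space $\cD$ (in fact a Hilbert space of dimension one works here) such that $K^\cH(x,y)^{-1} = 1 - \langle b(x), b(y)\rangle$, equivalently such that $K^\cH(x,y)(1 - \langle b(x),b(y)\rangle)$ is again a positive kernel. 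First I would verify that the Drury-Arveson kernel $k(z,w) = (1-\langle z,w\rangle)^{-1}$ manifestly has this form with $b(z) = z \in \mb{B}_d \subset \mb{C}^d$: indeed $1/k(z,w) = 1 - \langle z,w\rangle$ on the nose.

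Second, I would deduce the complete Pick property from this. Given interpolation data $x_1,\ldots,x_n \in \mb{B}_d$ and matrices $W_1,\ldots,W_n \in M_m(\mb{C})$ with $\big[(I - W_jW_i^*)k(x_j,x_i)\big] \geq 0$, multiply through by the positive semidefinite ``denominator'' Gram matrix $\big[(1-\langle x_i,x_j\rangle)\big]$ restricted to the relevant tensor; a Schur-product argument shows the resulting $mn \times mn$ matrix $\big[(I - W_jW_i^*) - \langle x_i, x_j\rangle(I - W_jW_i^*)k(x_j,x_i)(1-\langle x_i,x_j\rangle)\big]$ is positive, which is exactly the positivity hypothesis needed to run the Ball--Trent--Vinnikov ``lurking isometry'' construction. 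Concretely: the hypothesis $[(I-W_jW_i^*)k(x_j,x_i)] \geq 0$ says that the map $k_{x_i}\otimes v \mapsto k_{x_i}\otimes W_i^* v$ is a contraction from $\spn\{k_{x_i}\otimes v\} \subseteq H^2_d\otimes \mb{C}^m$ to itself; extend it arbitrarily to a contraction $T$ on all of $H^2_d\otimes\mb{C}^m$; then the commutant lifting theorem (Theorem \ref{thm:BTV}, applied with $M_1 = M_2 = \spn\{k_{x_i}\otimes v\}$, which is co-invariant for $S\otimes I$) produces $\Phi \in \cM_d(\mb{C}^m)$ with $\|M_\Phi\|\le 1$ and $M_\Phi^*(k_{x_i}\otimes v) = k_{x_i}\otimes W_i^* v$, whence $\Phi(x_i) = W_i$ by Theorem \ref{thm:what_multpliers_do}. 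The converse direction (positivity of \eqref{eq:CompPick} is necessary) holds in every Hilbert function space and requires nothing special.

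An alternative, more self-contained route avoids quoting commutant lifting: one checks directly that $K_\Phi(z,w) = (I - \Phi(z)\Phi(w)^*)/(1-\langle z,w\rangle)$ being a positive kernel is equivalent, by the factorization $1/k = 1 - \langle z,w\rangle$, to the hypothesis matrix being positive, and then invokes condition (2)$\Leftrightarrow$(1) of the Ball--Trent--Vinnikov theorem directly — but this is really the same argument packaged differently. The main obstacle, and the only genuinely substantive input, is the Ball--Trent--Vinnikov realization theorem (or equivalently the commutant lifting Theorem \ref{thm:BTV}): everything else is the elementary denominator-factorization of the Szeg\H{o}-type kernel plus Schur-product bookkeeping. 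Since both of those results are already available in the excerpt, the proof reduces to assembling them, and I would present it in the short form: exhibit $1/k(z,w) = 1-\langle z,w\rangle$, observe that hypothesis \eqref{eq:CompPick} is then exactly the positivity of the kernel $K_\Phi$ needed, and conclude by the realization theorem / commutant lifting.
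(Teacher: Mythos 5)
Your concrete argument is exactly the commutant-lifting proof given in the paper: define $X^*(k_{x_i}\otimes v)=k_{x_i}\otimes W_i^* v$ on the (co-invariant, finite-dimensional) span $N$, check that the Pick positivity $\left[(I-W_jW_i^*)k(x_j,x_i)\right]\ge 0$ is precisely $\|X^*\|\le 1$, verify the intertwining $X^*(S\otimes I)^*|_{N}=(S\otimes I)^*X^*$ from $S_k^*k_\lambda=\bar\lambda_k k_\lambda$, and invoke Theorem \ref{thm:BTV} together with Theorem \ref{thm:what_multpliers_do} to produce $\Phi\in\cM_d(\mb{C}^m)$ with $\|M_\Phi\|\le 1$ and $\Phi(x_i)=W_i$. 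This is sound, and it coincides with the paper's second proof. Two remarks: (a) "extend it arbitrarily to a contraction on all of $H^2_d\otimes\mb{C}^m$" is superfluous and, if you did extend, you would lose the intertwining; Theorem \ref{thm:BTV} is stated for the operator on $M_1, M_2$ and you should apply it directly there. (b) The parenthetical "a Hilbert space of dimension one works here" is a slip: for $H^2_d$ you need $b(z)=z\in\mb{C}^d$, so the auxiliary space is $d$-dimensional, as your next sentence in fact says.

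The middle paragraph, however, contains a genuine error. You assert that the "denominator" matrix $\left[1-\langle x_i,x_j\rangle\right]$ is positive semidefinite and propose a Schur-product argument. It is not positive semidefinite in general; indeed $1-\langle x_i,x_j\rangle = 1/k(x_j,x_i)$, and the entire content of the McCullough--Quiggin--Agler--M\raise.45ex\hbox{c}Carthy criterion is that this matrix has \emph{exactly one} positive eigenvalue (equivalently, $1/k$ admits a "one positive square" decomposition $1-\langle b(z),b(w)\rangle$), not that it is PSD. The Schur product of $\left[(I-W_jW_i^*)k(x_j,x_i)\right]$ with $\left[1-\langle x_i,x_j\rangle\right]$ therefore need not be positive, and the displayed matrix you wrote is not what any of the standard arguments require. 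Since that paragraph is not used by your "Concretely:" argument, the proof as a whole still goes through via commutant lifting, but the Schur-product step as written should be discarded (or replaced by the correct lurking-isometry bookkeeping, which rests on the one-positive-square factorization, not on positivity of $\left[1-\langle x_i,x_j\rangle\right]$).
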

\begin{proof}
This theorem has several proofs. 

A Hilbert function space theoretic proof was given by Agler and M\raise.45ex\hbox{c}Carthy \cite{AM00} (following works of McCullough \cite{McCullough} and Quiggin \cite{Quiggin}). In fact \cite{AM00} characterizes all complete Pick kernels, showing that an irreducible kernel $K^\cH$ is a complete Pick kernel if and only if for any finite set $x_1, \ldots, x_n$, the matrix 
\[
\left[\frac{1}{K^\cH(x_j,x_i)}\right]_{i,j=1}^n
\]
has exactly one positive eigenvalue. The kernel (\ref{eq:kernel}) is easily seen to satisfy this property. 

A proof based on the commutant lifting theorem \ref{thm:BTV} was given by Ball, Trent and Vinnikov \cite[p. 118]{BTV01} (see also \cite{AriasPopescu00} for a proof via noncommutative commutant lifting). The proof, based on a deep idea which goes back to \cite{Sarason}, runs as follows. 

Let $x_1, \ldots, x_n \in \mb{B}_d$ and $W_1, \ldots, W_n \in M_m(\mb{C})$ be as in Definition \ref{def:CP}. Put $H = \mb{C}^m$, and define
\[
N_1 = \overline{\spn}\{k_{x_i} \otimes h : i=1, \ldots, n; \, h \in H\}
\]
and 
\[
N_2 = \overline{\spn}\{k_{x_i} \otimes W_i^*h : i=1, \ldots, n; \, h \in H\}.
\]
By (\ref{eq:what_mutlpliers_do}), $N_1$ and $N_2$ are co-invariant. Now define $X : N_2 \rightarrow N_1$ to be the adjoint of the operator $X^*: N_1 \rightarrow N_2$ defined by 
\[
X^*(k_{x_i} \otimes h) = k_{x_i} \otimes W_i^*h \quad, \quad i=1, \ldots, n; \,  h \in H. 
\]
It is clear that $X^* (S \otimes I)^*\big|_{N_1} = (S \otimes I)^* X^*$, and the condition (\ref{eq:CompPick}) implies that $\|X^*\|\leq 1$. By Theorem \ref{thm:BTV} there exists a contractive multiplier $\Phi \in \cM_d(H)$ satisfying $M_\Phi^* \big|_{N_1} = X^*$. Since 
\[
M_\Phi^* k_{x_i} \otimes h = k_{x_i} \otimes \Phi(x_i)^* h
\]
for all $h \in H$, it follows that $\Phi(x_i) = W_i$. 

An alternative proof is provided in \cite[p. 108]{BTV01} (see also \cite{EP02}) using what is sometimes called ``the lurking isometry" argument. The main idea is that (\ref{eq:CompPick}) is used to construct directly a unitary as in (\ref{eq:lurking}) which realizes the interpolating multiplier by formula (\ref{eq:realization}).

Finally, there is also a proof that passes through the noncommutative setting via a distance formula, found independently by Davidson and Pitts \cite{DavPittsPick} and by Arias and Popescu \cite{AriasPopescu00}. The roots of this proof can also be traced back to \cite{Sarason}. Here are a few details of the proof, compressed to the commutative setting. 

Suppose that (\ref{eq:CompPick}) holds, and for simplicity assume that $W_1, \ldots, W_n$ are all in $\mb{C}$. It is easy to see that there is {\em some} function $f \in \cM_d$ that satisfies $f(x_i) = W_i$ for $i=1, \ldots, n$. The norm of $f$ could be anything, but it can be modified by adding a function vanishing on $\{x_1, \ldots, x_n\}$. Let $J$ be the ideal
\[
J = \{g \in \cM_d : g(x_i) = 0, \,\, i= 1, \ldots, n\}.
\] 
If $h$ is another multiplier satisfying $h(x_i) = W_i$ for $i=1, \ldots, n$, then there is some $g \in J$ such that $h = f+g$. Thus, there is a multiplier $h \in \cM_d$ satisfying $\|h\|\leq 1$ and $h(x_i) = W_i$ for $i=1, \ldots, n$ if and only if $\inf_{g \in J} \|f+g\| =  \dist(f,J) \leq 1$. By the Arias-Popescu/Davidson-Pitts distance formula alluded to above (\cite[Proposition 1.3]{AriasPopescu00} and \cite[Theorem 2.1]{DavPittsPick}),  
\be\label{eq:dist_form}
\dist(f,J) = \|P_{N} M_f P_N\| ,
\ee
where $N = [J]^\perp = \spn\{k_{x_i} : i=1, \ldots, n\}$. A computation now shows that $\|P_N M_f P_N\| \leq 1$ is equivalent to (\ref{eq:CompPick}). 
\end{proof}

\begin{remark}
The second and fourth proofs described above (using commutant lifting or the  distance formula) generalize easily to give additional interpolation theorems for the algebra $\cM_d$, such as Carath\'{e}odory interpolation (see \cite{AriasPopescu00,DavPittsPick}). The third proof (the ``lurking isometry" argument) can be used to obtain interpolation results in other algebras of functions (for example $H^\infty(\mb{D}^2)$), and further results as well (see \cite{AM99,BT98}). The first proof is based on the characterization of complete Pick kernels, from which it follows that the kernel (\ref{eq:kernel}) of the space $H^2_d$ plays a universal role; this is discussed in the next paragraph.  
\end{remark}

\subsection{The universal kernel}
For $d \in \{1, 2, \ldots, \infty\}$, the notation $k^d$ will be used below to denote the kernel (\ref{eq:kernel}) of $H^2_d$, to emphasize the dependence on $d$. 

\begin{definition}[\cite{AM02}, Definition 7.1]\label{def:irreducible}
Let $\cH$ be a Hilbert function space on a set $X$ with kernel $K^\cH$. The kernel $K^\cH$ is said to be {\em irreducible} if 
\begin{enumerate}
\item For every $x \neq y$ in $X$, $K^\cH_x = K^\cH(\cdot,x)$ and $K^\cH_y = K^\cH(\cdot,y)$ are linearly independent. 
\item For all $x,y \in X$, $K^\cH(x,y) \neq 0$. 
\end{enumerate}
\end{definition}
The reader should be aware that some authors prefer to use the term ``irreducible" for kernels that satisfy the second condition but not the first. 
It is a fact that for every Pick kernel $k$ on a set $X$ there is a partition $X = \uplus_i X_i$ such that $k(x,y) \neq 0$ if and only if $x$ and $y$ belong to the same $X_i$ \cite[Lemma 7.2]{AM02}. 
Thus, in principle, one may reduce the study of general Pick spaces to the  study of those spaces for which the second condition in the definition holds. 
The first condition is satisfied in the cases of most interest, but there are significant examples of spaces where it is not; see Remark \ref{rem:injective}.

\begin{definition}
If $\cH$ is a Hilbert function space on $X$ with kernel $K^\cH$ and $\mu : X \rightarrow \mathbb{C}$ is a non-vanishing function, then one denotes by $\mu \cH$ the Hilbert function space $\{\mu f : f \in \cH\}$. 
\end{definition}

\begin{remark}
The kernel of $\mu \cH$ is given by 
\[
K^{\mu\cH}(x,y) = \mu(x) \overline{\mu(y)} K^\cH(x,y) .
\]
It follows from this that $\cH$ and $\mu \cH$ have identical multiplier algebras, meaning that the set of multipliers is the same and that the multiplier norm is also the same. 
\end{remark}

Agler and M\raise.45ex\hbox{c}Carthy showed that $H^2_d$ is a universal complete Pick space in the sense of the following theorem. 

\begin{theorem}[\cite{AM00}, Theorem 4.2]\label{thm:compPickUniversal}
Let $\cH$ be a separable Hilbert function space with an irreducible kernel $K^\cH$. Then $K^\cH$ is a complete Pick kernel if and only if there is a cardinal number $d \leq \aleph_0$, an injective function $f: X \rightarrow \mb{B}_d$ and a non-vanishing function $\delta : X \rightarrow \mathbb{C}$ such that 
\be\label{eq:form_of_kernel}
K^\cH(x,y) = \delta(x) \overline{\delta(y)} k^d(f(x), f(y)) = \frac{\delta(x) \overline{\delta(y)}}{1 - \langle f(x), f(y) \rangle}.
\ee
Moreover, if this happens, then the map $K^\cH_x \mapsto \delta(x) k^d_{f(x)}$ is an isometry from $\cH$ onto a subspace of $\delta\circ f^{-1} H^2_d$. 
\end{theorem}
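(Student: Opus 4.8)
The plan is to prove the two implications separately; the converse direction---producing the embedding of $X$ into $\mb{B}_d$---carries all the weight. For \emph{sufficiency}, suppose $K^\cH$ has the displayed form. This is immediate from the complete Pick property of $H^2_d$ (Theorem \ref{thm:comp_Pick}) once two elementary stability facts are recorded. First, rescaling a kernel by $\delta(x)\overline{\delta(y)}$ with $\delta$ non-vanishing conjugates every Pick matrix $\big[(I-W_jW_i^*)K(x_j,x_i)\big]_{i,j}$ by the invertible diagonal matrix $\diag\!\big(\delta(x_1)I,\dots,\delta(x_n)I\big)$ and leaves both the interpolation data $\Phi(x_i)=W_i$ and the multiplier norm unchanged (the remark preceding the theorem), so the complete Pick property is insensitive to such a rescaling. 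Second, the complete Pick property passes to restrictions: if $L$ is a complete Pick kernel on a set $Y$ and $X\subseteq Y$, then Pick-positive data at finitely many points of $X$ can be interpolated in the ambient space by a contractive operator-valued multiplier $\Phi$, and by Theorem \ref{thm:what_multpliers_do} the subspace $\overline{\spn}\{K_x\otimes v:x\in X\}$ is co-invariant for $(M_\Phi\otimes I)^*$, so compressing $M_\Phi\otimes I$ to it gives a contractive interpolant for the restricted problem. Applying the second fact with $Y=\mb{B}_d$ and $L=k^d$, transporting along the bijection $f\colon X\to f(X)$, and then applying the first fact with the given $\delta$, turns $k^d$ into the kernel $\delta(x)\overline{\delta(y)}\,k^d(f(x),f(y))=K^\cH(x,y)$; hence $K^\cH$ is a complete Pick kernel.

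For \emph{necessity}, assume $K:=K^\cH$ is an irreducible complete Pick kernel and fix a base point $x_0\in X$. By irreducibility $K(x,x_0)\neq0$ for every $x$ and $K(x_0,x_0)>0$, so $\mu(x):=K(x_0,x_0)^{1/2}/K(x,x_0)$ is non-vanishing, and the rescaled space $\mu\cH$ has kernel $\tilde K(x,y)=\mu(x)\overline{\mu(y)}K(x,y)$ normalized so that $\tilde K(\cdot,x_0)\equiv1\equiv\tilde K(x_0,\cdot)$; by the stability facts just used, $\tilde K$ is again an irreducible complete Pick kernel, and it suffices to prove the conclusion for $\tilde K$ and then take $\delta:=\mu^{-1}$. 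The crux is that, after this normalization, the kernel
\[
F(x,y):=1-\frac{1}{\tilde K(x,y)}
\]
is positive semidefinite on $X\times X$. I would obtain this from the characterization recalled in the proof of Theorem \ref{thm:comp_Pick}: for each finite set $x_1,\dots,x_n$ the matrix $[1/\tilde K(x_i,x_j)]$ has exactly one positive eigenvalue. Adjoining $x_0$ to the set and using $F(\cdot,x_0)\equiv F(x_0,\cdot)\equiv0$, the $(n{+}1)\times(n{+}1)$ matrix $M=[1/\tilde K(x_i,x_j)]_{i,j=0}^{n}$ equals $\mathbf{1}\mathbf{1}^*$ minus the block matrix whose $x_0$-row and $x_0$-column vanish and whose remaining block is $F_n:=[F(x_i,x_j)]_{i,j=1}^{n}$; were $F_n$ to have a negative eigenvalue with eigenvector $w$, the vector $e_0$ at $x_0$ and the extension $\tilde w=(0,w)$ would span a two-dimensional subspace on which the Hermitian form of $M$ is positive definite---the $2\times2$ Gram matrix has determinant equal to the modulus of that eigenvalue---contradicting that $M$ has only one positive eigenvalue. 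Hence $F_n\ge0$ for all finite subsets, i.e.\ $F\ge0$.

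Granting that $F$ is positive semidefinite, factor it through a feature map: there is a Hilbert space and a map $g\colon X\to$ that space (for instance $g(x)$ the kernel function of $F$ at $x$ in its reproducing kernel space) with $F(x,y)=\langle g(x),g(y)\rangle$, and $\|g(x)\|^2=F(x,x)=1-1/\tilde K(x,x)\in[0,1)$, so $\|g(x)\|<1$. Let $d\le\aleph_0$ (using separability of $\cH$) be the dimension of $\overline{\spn}\,g(X)$, identify that span with a subspace of $\mb{C}^d$, and set $f:=g\colon X\to\mb{B}_d$. Then
\[
\tilde K(x,y)=\frac{1}{1-F(x,y)}=\frac{1}{1-\langle f(x),f(y)\rangle}=k^d(f(x),f(y)),
\]
so $K^\cH(x,y)=\delta(x)\overline{\delta(y)}\,k^d(f(x),f(y))$ with $\delta=\mu^{-1}$. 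Injectivity of $f$: if $f(x)=f(y)$ then $F(x,\cdot)=F(y,\cdot)$, hence $\tilde K_x$ and $\tilde K_y$ are proportional, hence so are $K^\cH_x$ and $K^\cH_y$, which forces $x=y$ by irreducibility. Finally, for the ``moreover'' clause the assignment $K^\cH_x\mapsto\delta(x)k^d_{f(x)}$ preserves inner products of kernel functions---this is precisely the kernel identity $K^\cH(x,y)=\delta(x)\overline{\delta(y)}k^d(f(x),f(y))$ read on the spanning vectors---so it extends to an isometry of $\cH=\overline{\spn}\{K^\cH_x\}$ onto $\overline{\spn}\{k^d_{f(x)}:x\in X\}\subseteq H^2_d$, which under the function-space identifications is the asserted embedding into $\delta\circ f^{-1}H^2_d$.

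The one genuinely hard step is the positivity of $F=1-1/\tilde K$: this is where the complete Pick hypothesis is used in its nontrivial (sufficiency) direction, packaged as the McCullough--Quiggin/Agler--McCarthy ``one positive eigenvalue'' criterion. Everything downstream---the feature-map realization, the injectivity, and the isometry---is routine bookkeeping once that positivity is in hand.
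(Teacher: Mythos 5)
Your proof is correct and is essentially the Agler--McCarthy argument that the paper cites but does not reproduce. The skeleton---normalize at a base point $x_0$ so $\tilde K(\cdot,x_0)\equiv 1$, prove $F=1-1/\tilde K\ge 0$, factor $F$ through a feature map $g$ with $\|g(x)\|<1$, and invert the geometric series to recover the Szeg\H{o}-type formula---is exactly the standard route. Your derivation of $F\ge 0$ from the McCullough--Quiggin ``exactly one positive eigenvalue'' criterion is a clean packaging of the key step: adjoining $x_0$ and observing that a negative eigenvalue of $F_n$ would produce a two-dimensional positive-definite compression of $M=[1/\tilde K(x_i,x_j)]_{i,j=0}^n$ (the $2\times 2$ Gram matrix has determinant $|\lambda|\|w\|^2>0$ and positive trace), contradicting the inertia bound. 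The sufficiency direction via restriction and cocycle rescaling is also right, as is the injectivity argument from irreducibility.

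One step deserves an extra line: the assertion that separability of $\cH$ forces $\dim\overline{\spn}\,g(X)\le\aleph_0$. The feature space (the reproducing kernel Hilbert space of $F$) is not a priori a subspace or quotient of $\cH$, so this is not automatic. A short argument closes the gap: expanding $\tilde K=\sum_{n\ge 0}F^n$ yields an isometric embedding of $\mu\cH$ into the symmetric Fock space over the feature space, sending $\tilde K_x$ to $\bigoplus_n g(x)^{\otimes n}$; projecting onto the degree-one summand sends $\tilde K_x\mapsto g(x)$, so the closed span of $g(X)$ is the degree-one projection of a separable space and hence separable. Relatedly, the theorem as quoted omits the hypothesis that $\cH$ be separable (it appears explicitly in Theorem \ref{thm:universal_Pick_algebra} of the paper and in \cite{AM00}), and without it the conclusion $d\le\aleph_0$ fails, e.g.\ for $\cH=H^2_d$ with $d$ uncountable.
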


\begin{remark}\label{rem:separable}
Theorem \ref{thm:compPickUniversal} was proved in \cite{AM00} in the greater (yet hardly ever considered) generality when $\cH$ is not assumed separable, 
in which case $d$ must be taken to be an uncountable cardinal number (the definition of $H^2_d$ --- either as the completion of polynomials or as a Hilbert function space --- makes sense for any cardinality $d$). 
The fact that $d$ can be taken to be countable when $\cH$ is separable is noted in \cite{AM00}, but the details were not spelled out. 
Let us give a few details, assuming for brevity that $\delta \equiv 1$. 

Suppose that we have the conclusion of Theorem \ref{thm:compPickUniversal} with $d$ some cardinal number.  
Then by making use the isometric map $K^\cH(\cdot, x) \mapsto k^d(\cdot,f(x))$, we can naturally identify $\cH$ with the Hilbert function space on the set $Y = f(X)$ with kernel $k = k^d$. 
If $\cH$ is separable then there exists a sequence of points $\{y_n\}_n \subseteq Y$ such that $\overline{\spn}\{k_{y_n}\}_n = \cH$. 
We claim that the set $Y$ is contained in the separable subspace $\overline{\spn}\{y_n\}_n$. 
Indeed, if a point $z$ is not in this subspace, then the kernel function $k_z$ is not in the closed linear span of the $\{k_{y_n}\}_n$. 
On the other hand, if $z \in Y$ then $k_z \in \cH = \overline{\spn}\{k_{y_n}\}_n$. 
We conclude that $f(X) = Y \subset \overline{\spn}\{y_n\}_n$, and this means that the embedding function $f$ maps into the open unit ball of a separable Hilbert space. 
Thus $d$ can be chosen to be countable when $\cH$ is separable. 
\end{remark}

\begin{remark}\label{rem:injective}
Suppose that $K^\cH$ is a complete Pick kernel for which $K^\cH(x,y) \neq 0$ for all $x,y \in X$, but for which the kernel functions are not necessarily linearly independent. 
In this case the kernel $K^\cH$ is not irreducible as defined in Definition \ref{def:irreducible}. 
However, the conclusion (as well as the proof) of Theorem \ref{thm:compPickUniversal} still holds, with the exception that now the map $f$ is no longer necessarily injective. 
Interesting examples can be obtained by letting $f \colon X \to \mathbb{B}_d$ be any map and defining $\cH$ to be Hilbert function space on $X$ determined by the pullback kernel $K(x,y) = \frac{1}{1 - \langle f(x), f(y) \rangle}$. 
\end{remark}

\subsection{Generalized interpolation problems} For further results on interpolation in $H^2_d$ see \cite{BB08} and the reference therein; for interpolation in a broader framework including Drury-Arveson space see \cite{BtH10}.

\section{Submodules, quotient modules and quotient algebras}

\subsection{Submodules and quotients}\label{subsec:sub_n_quot}
Let $K$ be  Hilbert space. A subspace $L \subseteq H^2_d \otimes K$ that is invariant under $S \otimes I_K$ is a Hilbert module over $\mb{C}[z]$ in its own right, and is referred to as a {\em submodule} of $H^2_d \otimes K$. Algebraically, this determines a quotient module $H^2_d \otimes K / L$. The quotient module can be normed using the quotient norm, making it a Hilbert module. 

Put $N = L^\perp$. As $N$ is co-invariant for $S\otimes I_K$, it is also a Hilbert module determined by the action of $T = P_N S P_N$. The Hilbert modules $H^2_d \otimes K / L$ and $N$ are unitarily equivalent. 

A natural problem is to determine all submodules and all quotients of $H^2_d \otimes K$. This is a fundamental problem, since, by Theorem \ref{thm:model_pure}, every pure contractive Hilbert module is a quotient of $H^2_d \otimes K$ for some $K$. The case $K = \mb{C}$ is the best understood.

\subsection{Invariant subspaces of $H^2_d$ and ideals}\label{subsec:invNideals}

In \cite[Theorem 2.1]{DavPitts2} it was shown that there is a bijective correspondence between two sided \wot-closed ideals in $\cL_d$ and subspaces of $\cF_d$ which are invariant under $L$ and also under the right shift. The bijective correspondence is the map sending an ideal $J$ to its range space $[J\cF_d] = [J\cdot 1]$. The following two theorems concerning ideals and invariant subspaces in $\cM_d$ follow from this bijective correspondence together with \ref{subsec:invNidealsNC} and \ref{subsec:identification2} (see \cite[Section 2]{DRS15} for some details). 

Denote by $\Lat(\cM_d)$ and $\Id(\cM_d)$ the lattices of the closed invariant subspaces of $\cM_d$ and the \wot-closed ideals in $\cM_d$, respectively.  

\begin{theorem}\label{thm:complete_lattice_iso}
Define a map $\alpha : \Id(\cM_d) \rightarrow \Lat(\cM_d)$ by $\alpha(J) = [J \cdot 1]$. Then $\alpha$ is a complete lattice isomorphism whose inverse $\beta$ is given by 
\[
\beta(K) = \{f \in \cM_d : f \cdot 1 \in K\}. 
\]
\end{theorem}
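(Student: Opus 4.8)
The plan is to establish the bijective correspondence by transporting the known result for $\cL_d$ from \cite[Theorem 2.1]{DavPitts2} down to the symmetric Fock space via compression, exactly as indicated in the excerpt. First I would recall that by \ref{subsec:identification2}, $\cM_d$ is completely isometrically and weak-$*$ homeomorphically identified with the compression $P_{\cF_+(E)} \cL_d P_{\cF_+(E)}$, equivalently with the quotient $\cL_d / \mathfrak{C}'$, where $\mathfrak{C}'$ is the weak-$*$ closed commutator ideal of $\cL_d$ (the one with range space $[\mathfrak{C}] = \cF_-(E)$). Under the quotient map $q : \cL_d \to \cM_d$, the theorem from \cite{DavPitts2} on weak-$*$ closed two-sided ideals of $\cL_d$ (those correspond bijectively via $J \mapsto [J \cF_d]$ to the subspaces invariant under both $L$ and the right shift) descends: weak-$*$ closed ideals of $\cM_d$ correspond to weak-$*$ closed ideals of $\cL_d$ containing $\mathfrak{C}'$, which under $J \mapsto [J\cF_d]$ correspond to $(L,\text{right shift})$-invariant subspaces of $\cF_d$ containing $[\mathfrak{C}]$, and compressing to $\cF_+(E)$ sends such a subspace $M$ to $M \ominus [\mathfrak{C}] = P_{\cF_+(E)} M$, which one checks is precisely the closed $\cM_d$-invariant subspace $[q(J) \cdot 1]$. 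So $\alpha(J) = [J\cdot 1]$ is the induced map, and it is a bijection onto $\Lat(\cM_d)$.

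The key steps, in order, are: (1) Identify $\cM_d \cong \cL_d/\mathfrak{C}'$ and note the lattice of weak-$*$ closed two-sided ideals of $\cM_d$ is order-isomorphic to the sublattice of weak-$*$ closed two-sided ideals of $\cL_d$ lying above $\mathfrak{C}'$. (2) Invoke \cite[Theorem 2.1]{DavPitts2} to get a lattice isomorphism between those ideals of $\cL_d$ and the doubly-invariant subspaces of $\cF_d$ above $[\mathfrak{C}] = \cF_-(E)$; observe that every such subspace, being above $\cF_-(E)$, is determined by its intersection with $\cF_+(E)$ and that compression to $\cF_+(E)$ gives a lattice isomorphism onto the closed $S$-invariant subspaces of $\cF_+(E) \cong H^2_d$ (this uses that the right shift, after compression, together with $\cM_d = \overline{\alg}^{\wot}(S)$ and its commutant being $\cM_d$ itself, pins down exactly the $\cM_d$-invariant subspaces). (3) Chase the composite to see it is $J \mapsto [J\cdot 1]$: if $\tilde{J} \subseteq \cL_d$ is the preimage of $J$, then $[\tilde{J}\cF_d]$ compresses to $P_{\cF_+(E)}[\tilde{J}\cdot 1]$, which equals $[q(\tilde{J})\cdot 1] = [J\cdot 1]$ because $q$ intertwines the actions and $1 \in \cF_+(E)$. (4) Verify that the inverse is $\beta(K) = \{f \in \cM_d : f\cdot 1 \in K\}$: clearly $\beta(K)$ is a weak-$*$ closed ideal (it is the preimage of $K$ under the bounded weak-$*$ continuous evaluation-at-$1$ map composed appropriately, and closed under multiplication since $K$ is $\cM_d$-invariant), one has $\alpha(\beta(K)) \subseteq K$ trivially and $\supseteq K$ because $\cM_d\cdot 1 = \cM_d$ is dense in $H^2_d$ so $K = [\cM_d\cdot 1 \cap K] \subseteq [\beta(K)\cdot 1]$; and $\beta(\alpha(J)) = J$ follows from the injectivity established via the $\cL_d$ correspondence. (5) Note that "complete lattice isomorphism" — preservation of arbitrary joins and meets — is inherited from the corresponding property in \cite{DavPitts2}, since compression and quotient by a fixed ideal preserve arbitrary intersections and closed sums.

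The main obstacle I expect is step (2): carefully justifying that compression $M \mapsto M \cap \cF_+(E)$ (equivalently $P_{\cF_+(E)}M$, using $\cF_-(E) \subseteq M$) is genuinely a bijection between the $\cL_d$-side doubly-invariant subspaces above $\cF_-(E)$ and the $\cM_d$-invariant subspaces of $H^2_d$, with the right-shift invariance on the full Fock space translating correctly. One must check that an arbitrary closed $S$-invariant (equivalently $\cM_d = \overline{\alg}^{\wot}(S)$-invariant) subspace $K$ of $\cF_+(E)$ "lifts" to a doubly-invariant subspace $M$ of $\cF_d$ with $M \cap \cF_+(E) = K$ — the natural candidate $M = [\cL_d K] = [\cL_d R_{\cL_d} K]$ needs to be shown to return $K$ upon compression, which is where the structure of $\mathfrak{C}'$ and the fact that $\cF_+(E)$ is co-invariant for $L$ do the work. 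Once this single correspondence is nailed down cleanly, the remainder is bookkeeping about lattice operations and the explicit formula for $\beta$. An alternative to running everything through \cite{DavPitts2} would be a direct argument: show $\alpha$ and $\beta$ as defined are mutually inverse order-preserving maps using only $\cM_d' = \cM_d$, density of $\cM_d \cdot 1$ in $H^2_d$, and the double commutant / reflexivity of $\cM_d$; this is cleaner for well-definedness of $\beta$ but still needs surjectivity of $\alpha$, for which the $\cL_d$ input (or an equivalent direct invariant-subspace argument) seems unavoidable.
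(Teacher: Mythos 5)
Your overall route -- transporting the Davidson--Pitts correspondence for $\cL_d$ down through the quotient by the commutator ideal, using the identifications in \ref{subsec:invNidealsNC} and \ref{subsec:identification2} -- is exactly the approach the paper indicates (the paper itself defers the details to \cite[Section 2]{DRS15}). You also correctly identify the genuine technical content: showing that every $\cM_d$-invariant subspace of $\cF_+(E)$ lifts to a doubly-invariant subspace of $\cF_d$ lying above $\cF_-(E)$.

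One localized flaw worth flagging: in step (4) you justify $\alpha(\beta(K)) \supseteq K$ by asserting $K = [\cM_d\cdot 1 \cap K]$ ``because $\cM_d$ is dense in $H^2_d$.'' That is a non sequitur. Density of a set $D$ in a Hilbert space $H$ does not imply $D\cap K$ is dense in a closed subspace $K$, and $\cM_d$-invariance of $K$ does not repair this by itself; the true fact $K = [\cM_d\cap K]$ is precisely the surjectivity of $\alpha$, which is the nontrivial content you are importing from the $\cL_d$ correspondence (equivalently from the Beurling-type theorem \ref{thm:BLH}, which shows $K = M_\Phi(H^2_d\otimes K_*)$, and then $\Phi p \in \cM_d\cap K$ for polynomials $p$ gives density). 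Since your steps (1)--(3) already yield bijectivity, step (4)'s density claim is redundant; but as stated it is a circular or invalid shortcut, and you should either delete it or replace it with the Beurling-type argument just sketched.
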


\begin{theorem}\label{thm:complete_quotient}
If $J$ is \wot-closed ideal in $\cM_d$ with $\alpha(J)^\perp = N$, then $\cM_d/J$ is completely isometrically isomorphic and weak-$*$ homeomorphic to $P_N \cM_d P_N$. 
\end{theorem}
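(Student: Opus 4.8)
The plan is to derive this as a consequence of the results already assembled, rather than reprove anything from scratch. Fix a \wot-closed ideal $J \triangleleft \cM_d$, set $K = \alpha(J) = [J\cdot 1] \in \Lat(\cM_d)$ and $N = K^\perp$. The strategy is in three movements: (i) transport the problem to the quotient algebra picture $\cF_J = H^2_d \ominus [J]$ and the compression $S^J = P_{\cF_J} S P_{\cF_J}$ from Section \ref{subsec:con_comm}; (ii) identify $N$ with $\cF_J$ and $P_N \cM_d P_N$ with $\overline{\alg}^\wot(S^J)$; (iii) invoke the structural facts about quotients of $\cL_d$ from \ref{subsec:invNidealsNC}, which were already noted in \ref{subsec:identification2} to descend to $\cM_d$, to get the complete isometry and weak-$*$ homeomorphism.

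First I would unwind the definitions to see that $K = [J\cdot 1]$ is precisely the closure of $J$ as a subspace of $H^2_d$: since $\cM_d \subset H^2_d$ via $f \mapsto f\cdot 1$, and since $J$ is an ideal of the multiplier algebra, $[J\cdot 1]$ is an $S$-invariant subspace, and one checks (using that $\cM_d$ is \wot-dense in a suitable sense and that $1$ is cyclic) that it coincides with $[J]$ in the notation of \ref{subsec:con_comm}. Hence $N = [J]^\perp = \cF_J$. Next, the map $\pi : \cM_d \to P_N \cM_d P_N$, $\pi(M_f) = P_N M_f P_N$, is a homomorphism (because $N$ is co-invariant, so compression to $N$ is multiplicative on the compression of an algebra of the form $P_N(\text{algebra})P_N$ when one side is co-invariant — the standard ``Sarason'' observation), and it annihilates exactly $J$: indeed $M_f P_N = 0$ iff $f\cdot h \in [J]$ for all $h \in \cF_J^\perp$... more precisely, $P_N M_f P_N = 0$ iff $M_f N \subseteq [J]$, and since $1 \in N$ up to the obvious caveat (if $1\in[J]$ the quotient is trivial), this forces $f \in J$ by \wot-closedness. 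So $\pi$ descends to an injective homomorphism $\bar\pi : \cM_d/J \to P_N\cM_d P_N$, which is clearly surjective.

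The heart of the matter — and the step I expect to be the main obstacle if one wants full rigor — is upgrading $\bar\pi$ from an algebraic isomorphism to a \emph{completely isometric} isomorphism and a \emph{weak-$*$ homeomorphism}. For this I would appeal to the noncommutative template: by \ref{subsec:invNidealsNC}, for a \wot-closed two-sided ideal $\hat J \triangleleft \cL_d$ with $\hat N = [\hat J \cF_d]^\perp$, the compression $A \mapsto P_{\hat N} A P_{\hat N}$ promotes to a completely isometric isomorphism and weak-$*$ homeomorphism $\cL_d/\hat J \cong P_{\hat N}\cL_d P_{\hat N}$, and the quotient has property $\mathbf{A}_1(1)$. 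As explained in \ref{subsec:identification2}, $\cM_d$ is itself the compression of $\cL_d$ to $\cF_+(E)$, i.e. the quotient of $\cL_d$ by the \wot-closed commutator ideal; a \wot-closed ideal $J$ of $\cM_d$ pulls back to a \wot-closed ideal $\hat J$ of $\cL_d$ containing the commutator ideal, with $[\hat J \cF_d]^\perp = N$ under the identification $H^2_d \cong \cF_+(E)$. Therefore $P_N \cM_d P_N = P_N \cL_d P_N$, and the complete isometry / weak-$*$ homeomorphism for $\cL_d/\hat J$ restricts to give the same for $\cM_d / J = (\cL_d/\hat J)$ modulo the image of $J$. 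The completely isometric statement uses that $M_k(\cM_d/J) = M_k(\cM_d)/M_k(J)$ with the quotient operator-space norm and that the same compression argument applies at every matrix level; the weak-$*$ homeomorphism uses property $\mathbf{A}_1(1)$ of the quotient, which guarantees the weak-$*$ topology agrees with the point-\wot topology and that $\bar\pi$ and $\bar\pi^{-1}$ are both weak-$*$ continuous. The one genuinely delicate point is checking that the pullback ideal $\hat J$ is again \emph{\wot-closed} in $\cL_d$ and two-sided — this follows because the quotient map $\cL_d \to \cM_d$ is weak-$*$ continuous with weak-$*$ closed kernel, so preimages of \wot-closed (hence weak-$*$ closed) ideals are weak-$*$ closed, hence \wot-closed by the coincidence of topologies on $\cL_d$ (Corollary after Theorem \ref{thm:LnA1}).
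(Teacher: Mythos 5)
Your approach is essentially the one the paper itself outlines: the paper does not give a proof but states that Theorems \ref{thm:complete_lattice_iso} and \ref{thm:complete_quotient} ``follow from this bijective correspondence together with \ref{subsec:invNidealsNC} and \ref{subsec:identification2},'' and your argument is precisely a fleshing-out of that remark --- pull $J$ back to a \wot-closed two-sided ideal $\hat J \supseteq \mathfrak{C}$ in $\cL_d$, check $[\hat J\cF_d]^\perp = N$, and apply item (4) of the theorem in \ref{subsec:invNidealsNC} together with the completely isometric, weak-$*$ identification $\cM_d \cong \cL_d/\mathfrak{C} \cong P_{\cF_+(E)}\cL_d P_{\cF_+(E)}$.

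One small blemish worth flagging: when you argue that $\pi$ annihilates exactly $J$, you reduce to the statement that $f\cdot 1 \in [J\cdot 1]$ with $f \in \cM_d$ forces $f \in J$, and you attribute this to ``\wot-closedness.'' That alone is not enough --- what you need is $\beta\circ\alpha = \mathrm{id}$, i.e.\ Theorem \ref{thm:complete_lattice_iso}, which is the nontrivial content. Alternatively you can bypass this entirely by observing that, since $\hat J = q^{-1}(J)$ and the kernel of $A\mapsto P_N A P_N$ on $\cL_d$ is exactly $\hat J$, the kernel of $\pi$ on $\cM_d = q(\cL_d)$ is exactly $q(\hat J) = J$; this keeps the whole proof inside the noncommutative-compression machinery and avoids any separate appeal to \ref{thm:complete_lattice_iso}. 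Either way the argument closes up. Also a minor notational caution: the symbol $\cF_J$ in \ref{subsec:con_comm} is reserved for polynomial ideals $J\triangleleft \mb{C}[z]$, so for a general \wot-closed ideal $J\triangleleft\cM_d$ it is cleaner to just write $N = [J\cdot 1]^\perp$ throughout rather than borrow $\cF_J$.
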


\subsection{Quotients of $H^2_d$ and quotients of $\cM_d$ associated to varieties}\label{subsec:quot_var}

Let $V\subseteq \mb{B}_d$ be a variety (see Section \ref{sec:zero_sets}). The space $\cH_V$ can be considered as a Hilbert function space on $V$, and its multiplier algebra $\Mult(\cH_V)$ is an algebra of functions on $V$. Denote $\cM_V = \{g : V \rightarrow \mb{C} : \exists f \in \cM_d .\, f\big|_V = g\}$. 
Using Theorems \ref{thm:comp_Pick} and \ref{thm:complete_quotient} the following theorem is deduced. 

\begin{theorem}\label{thm:restriction_isomorphism}
Let $V\subseteq \mb{B}_d$ be a variety. Then $\cH_V = [J_V \cdot 1]^\perp$, and 
\[
\Mult(\cH_V) = \cM_V \cong \cM_d / J_V \cong P_{\cH_V} \cM_d P_{\cH_V}
\]
where $\cong$ denotes completely isometric and \wot-continuous isomorphisms, given by
\[
f\big|_V \longleftrightarrow f + J_V \longleftrightarrow P_{\cH_V} M_f P_{\cH_V}.
\]
\end{theorem}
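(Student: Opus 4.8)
The plan is to handle the three identities in the chain one at a time. First I would establish $\cH_V=[J_V\cdot 1]^\perp$ using the lattice isomorphism of Theorem \ref{thm:complete_lattice_iso}; this then makes Theorem \ref{thm:complete_quotient} directly applicable and gives $\cM_d/J_V\cong P_{\cH_V}\cM_dP_{\cH_V}$; the remaining work is to recognize this compression algebra as $\cM_V$, and to see that $\cM_V$ is all of $\Mult(\cH_V)$. To get $\cH_V=[J_V\cdot 1]^\perp$, put $K=\{h\in H^2_d: h|_V\equiv 0\}$. Since point evaluations are bounded, $K$ is closed, and since $(fh)|_V=(f|_V)(h|_V)$ for $f\in\cM_d$, it is invariant under every $M_f$, so $K\in\Lat(\cM_d)$. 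Because $h(x)=\langle h,k_x\rangle$ one has $K=(\overline{\spn}\{k_x:x\in V\})^\perp$, hence $K^\perp=\cH_V$. In the notation of Theorem \ref{thm:complete_lattice_iso}, $\beta(K)=\{f\in\cM_d:f\cdot 1\in K\}=J_V$, and since $\alpha,\beta$ are mutually inverse, $[J_V\cdot 1]=\alpha(\beta(K))=K$, so $\cH_V=K^\perp=[J_V\cdot 1]^\perp$. (Note $J_V\in\Id(\cM_d)$: it is the intersection of the kernels of the \wot-continuous functionals $M_f\mapsto\langle M_f 1,k_x\rangle=f(x)$, $x\in V$, so Theorem \ref{thm:complete_quotient} indeed applies to $J_V$.)

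Since $\alpha(J_V)^\perp=\cH_V$, Theorem \ref{thm:complete_quotient} gives a completely isometric, weak-$*$ homeomorphic isomorphism $\cM_d/J_V\cong P_{\cH_V}\cM_dP_{\cH_V}$ sending $f+J_V$ to $P_{\cH_V}M_fP_{\cH_V}$. Next I would identify this compression with multiplication by $f|_V$ on $\cH_V$. The space $\cH_V$ is co-invariant for each $M_f$, because $M_f^*k_x=\overline{f(x)}k_x\in\cH_V$ for $x\in V$; hence, as an operator on $\cH_V$, $P_{\cH_V}M_fP_{\cH_V}$ has adjoint $M_f^*|_{\cH_V}$, which sends $k_x\mapsto\overline{f(x)}k_x$. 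Viewing $\cH_V$ as a Hilbert function space on $V$ (whose reproducing kernel is the restriction of $k$ to $V\times V$) and invoking the multiplier criterion of Theorem \ref{thm:what_multpliers_do}, this says precisely that $f|_V\in\Mult(\cH_V)$ and that $P_{\cH_V}M_fP_{\cH_V}$ is the multiplication operator $M_{f|_V}$. So $\cM_V\subseteq\Mult(\cH_V)$, the restriction map $\cM_d\to\cM_V$ has kernel exactly $J_V$, and $f|_V\leftrightarrow f+J_V\leftrightarrow P_{\cH_V}M_fP_{\cH_V}$ is an algebra isomorphism; moreover, since compression is completely contractive, $\|f|_V\|_{\Mult(\cH_V)}\le\|f+J_V\|_{\cM_d/J_V}$ at every matrix level.

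It remains to prove the reverse inequality, equivalently $\Mult(\cH_V)=\cM_V$: every $\psi\in\Mult(\cH_V)$ with $\|M_\psi\|\le 1$ (and, for the \emph{complete} isometry, every matrix-valued such $\psi$) extends to some $\Phi\in\cM_d$ with $\Phi|_V=\psi$ and $\|M_\Phi\|\le 1$. This is where the complete Pick property of $H^2_d$ enters. For any finite $\{x_1,\dots,x_n\}\subseteq V$, the hypothesis that $\psi$ is a contractive multiplier of the reproducing kernel space $\cH_V$ forces the Pick matrix $[(I-\psi(x_j)\psi(x_i)^*)k(x_j,x_i)]_{i,j=1}^n$ to be positive semidefinite; since the kernel of $\cH_V$ is the restriction of $k^d$, this is exactly condition (\ref{eq:CompPick}) for $H^2_d$ at $x_1,\dots,x_n$, so Theorem \ref{thm:comp_Pick} produces a contractive $\Phi_F\in\cM_d$ (resp.\ $\cM_d\otimes M_m(\mb{C})$) with $\Phi_F(x_i)=\psi(x_i)$ for all $i$. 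Letting $F$ run over the finite subsets of $V$ directed by inclusion and using weak-$*$ compactness of the unit ball of $\cM_d$, pass to a weak-$*$ convergent subnet $\Phi_F\to\Phi$; by the net-convergence lemma used in the proof of Theorem \ref{thm:wot_gen}, $\Phi(x)=\lim_F\Phi_F(x)=\psi(x)$ for every $x\in V$. Hence $\psi=\Phi|_V\in\cM_V$ with $\|\psi\|_{\cM_d/J_V}\le 1=\|M_\psi\|$, which reverses the inequality of the previous paragraph. Therefore $\cM_V=\Mult(\cH_V)$, the restriction map $\cM_d/J_V\to\Mult(\cH_V)$ is a complete isometry, and the whole chain $\cM_V\cong\cM_d/J_V\cong P_{\cH_V}\cM_dP_{\cH_V}$ consists of completely isometric isomorphisms implemented by the stated correspondences; they are weak-$*$ (equivalently \wot-) homeomorphisms since this holds for $\cM_d$ and its quotients.

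I expect the third step to be the only real obstacle: the first two are essentially bookkeeping on top of Theorems \ref{thm:complete_lattice_iso} and \ref{thm:complete_quotient}, whereas the equality $\Mult(\cH_V)=\cM_V$ — that multipliers of $\cH_V$ extend to $H^2_d$ with no norm growth — genuinely requires the universal Pick property of the kernel $k^d$. One could alternatively obtain the extension directly from the commutant lifting theorem \ref{thm:BTV}, applied to the intertwiner $M_\psi^*$ on the co-invariant subspace $\cH_V$; this bypasses the compactness argument but uses an equivalent ingredient.
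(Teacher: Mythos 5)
Your proof is correct and fills in exactly the deduction the paper indicates: Theorem \ref{thm:complete_lattice_iso} and \ref{thm:complete_quotient} handle the submodule identification and the quotient $\cong$ compression step, and the complete Pick property (Theorem \ref{thm:comp_Pick}, or equivalently commutant lifting \ref{thm:BTV} as you note) supplies the extension $\Mult(\cH_V)\subseteq\cM_V$ with no norm growth. The only cosmetic remark is that the inequality $\|f|_V\|_{\Mult(\cH_V)}\le\|f+J_V\|_{\cM_d/J_V}$ is already an equality once you have identified $P_{\cH_V}M_fP_{\cH_V}$ with $M_{f|_V}$ and invoked Theorem \ref{thm:complete_quotient}, so the genuine remaining content is the surjectivity of restriction rather than a norm inequality.
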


\subsection{The universal complete Pick algebra}

Theorems \ref{thm:compPickUniversal} and \ref{thm:restriction_isomorphism} imply the following result. 

\begin{theorem}\label{thm:universal_Pick_algebra}
Let $\cH$ be a separable, irreducible complete Pick Hilbert function space on a set $X$. Then there is a cardinal $d \in \{1, 2, \ldots, \aleph_0\}$ and a variety $V \subseteq \mb{B}_d$ such that $\Mult(\cH)$ is completely isometrically isomorphic to $\cM_V$. 
The variety $V$ can be chosen to be the smallest variety containing $f(X)$, where $f$ is the as in Theorem \ref{thm:compPickUniversal}. 
\end{theorem}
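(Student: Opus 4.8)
The plan is to assemble the statement directly from Theorem~\ref{thm:compPickUniversal} and Theorem~\ref{thm:restriction_isomorphism}, glued together by the two lemmas of Section~\ref{sec:zero_sets} and the elementary fact that rescaling a reproducing kernel by a non-vanishing function of modulus depending on a single variable does not change the multiplier algebra.

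First I would apply Theorem~\ref{thm:compPickUniversal} to $H$. Since $H$ is separable and $K^\cH$ is irreducible and a complete Pick kernel, one obtains a cardinal $d \in \{1,2,\ldots,\aleph_0\}$, an injective map $f : X \to \mb{B}_d$, and a non-vanishing $\delta : X \to \mb{C}$ with $K^\cH(x,y) = \delta(x)\overline{\delta(y)}\, k^d(f(x),f(y))$, and moreover the assignment $K^\cH_x \mapsto \delta(x) k^d_{f(x)}$ extends to an isometry of $H$ onto $\overline{\spn}\{k^d_{f(x)} : x \in X\}$. Next I strip the weight $\delta$ and relabel the points: by the remark following the definition of $\mu\cH$, multiplying the kernel by $\delta(x)\overline{\delta(y)}$ leaves the multiplier algebra unchanged, and unchanged completely isometrically (the matrix-level assertion is the same computation). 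Hence $\Mult(H) = \Mult(\tilde H)$ completely isometrically, where $\tilde H$ is the Hilbert function space on $X$ with kernel $(x,y)\mapsto k^d(f(x),f(y))$. Since $f$ is injective, relabelling $X$ by $f$ identifies $\tilde H$ with the Hilbert function space on $f(X)\subseteq \mb{B}_d$ whose kernel is the restriction of $k^d$; restricting a reproducing kernel Hilbert space to a subset of its domain yields the reproducing kernel Hilbert space of the restricted kernel, realized isometrically as the co-restriction of the co-invariant subspace $\cH_{f(X)} = \overline{\spn}\{k^d_w : w\in f(X)\}\subseteq H^2_d$. So $\Mult(H) \cong \Mult(\cH_{f(X)})$ completely isometrically.

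Finally I identify the variety. Put $V = V(J_{f(X)})$, which by the discussion in Section~\ref{sec:zero_sets} is the smallest variety in $\mb{B}_d$ containing $f(X)$; by the second lemma of Section~\ref{sec:zero_sets} (from \cite{DRS15}), $\cH_{f(X)} = \cH_{V(J_{f(X)})} = \cH_V$. Theorem~\ref{thm:restriction_isomorphism} then gives $\Mult(\cH_V) = \cM_V$, completely isometrically. Chaining the identifications, $\Mult(H) \cong \Mult(\tilde H) \cong \Mult(\cH_{f(X)}) = \Mult(\cH_V) = \cM_V$, all completely isometric algebra isomorphisms, with $V$ the smallest variety containing $f(X)$; this is exactly the assertion.

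The only delicate point is to confirm that the first two identifications really are \emph{completely isometric algebra} isomorphisms, not merely isometric linear ones. This is the standard fact that a unitary between reproducing kernel Hilbert spaces carrying each kernel function to a scalar multiple of a kernel function conjugates the multiplier algebra of one onto that of the other completely isometrically: using the eigenvector characterization of multiplication operators (Theorem~\ref{thm:what_multpliers_do}), one checks that conjugating $M_\varphi$ by such a unitary sends $T^* k_x$ to an eigenvector of the conjugated operator, so the result is again a multiplication operator, with the same (matrix) norm. I expect no genuinely new argument to be required here; all the substance is in the cited theorems, and the remaining work is bookkeeping between functions on $X$, on $f(X)$, and on $V$.
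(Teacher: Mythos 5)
Your proof is correct and is exactly the assembly the paper intends (the paper gives no proof beyond the one-liner ``Theorems \ref{thm:compPickUniversal} and \ref{thm:restriction_isomorphism} imply the following result''); you have simply spelled out the bookkeeping. One tiny point worth flagging: the step $\Mult(\cH_{f(X)}) = \Mult(\cH_V)$ is not literal equality of function algebras (the domains $f(X)$ and $V$ may differ), but it is the canonical isomorphism given by restriction, which is a bijection because $J_{f(X)} = J_V$ and is (completely) isometric because $\cH_{f(X)} = \cH_V$ as closed subspaces of $H^2_d$, so the associated multiplication operators are literally the same operators.
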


\begin{remark}
As in Remark \ref{rem:injective}, the conclusion of the above theorem holds also if $K^\cH$ is a complete Pick kernel for which $K^\cH(x,y) \neq 0$ for all $x,y \in X$, but for which the kernel functions are not necessarily linearly independent. 
\end{remark}

\subsection{Maximal ideal spaces} 

Being commutative Banach algebras, the algebras $\cM_V$ are determined to a very a large extent by their maximal ideal space $\mathfrak{M}(\cM_V)$, that is, the space of nonzero complex homomorphism from $\cM_V$ to $\mb{C}$. Elements of $\mathfrak{M}(\cM_V)$ are also referred to as {\em characters}. The full maximal ideal space is too big to be tractable --- it is the space of \wot-continuous characters that is amenable to analysis (see Section \ref{sec:isomorphism}). 

\begin{theorem}[\cite{DavPitts2}, Theorem 3.3; \cite{DRS15}, Proposition 3.2]\label{thm:max_id}
Let $V \subseteq \mb{B}_d$ be a variety. There is a continuous projection $\pi : \mathfrak{M}(\cM_V) \rightarrow \overline{\mb{B}_d}$ given by 
\[
\pi(\rho) = (\rho(S_1), \ldots, \rho(S_d)) \,\,\, , \,\,\, \rho \in \mathfrak{M}(\cM_V). 
\] 
For each $\lambda \in V$ there is a character $\rho_\lambda \in \pi^{-1}(\lambda)$ given by
\[
\rho_\lambda(f) = f(\lambda) = \langle M_f k_\lambda , k_\lambda \rangle / \|k_\lambda\|^2 \, , \,\,\, f \in \cM_V. 
\]
$\rho_\lambda$ is \wot-continuous and every \wot-continuous character arises this way. 

If $d<\infty$, then $\pi(\mathfrak{M}(\cM_V)) \cap \mb{B}_d = V$, $\pi^{-1}(v) = \{\rho_v\}$ for all $v \in V$, and $\pi\big|_{\pi^{-1}(V)}$ is a homeomorphism. 

In the case $\cM_V = \cM_d$ (i.e., the case $V = \mb{B}_d$), $\pi$ is onto $\overline{\mb{B}_d}$, and for every $\lambda \in \partial \mb{B}_d$ the fiber $\pi^{-1}(\lambda)$ is canonically homeomorphic to the fiber over $1$ in $\mathfrak{M}(H^\infty(\mb{D}))$. 
\end{theorem}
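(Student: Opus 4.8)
The plan is to establish the five assertions in turn, using throughout the identifications $\cM_V\cong\cM_d/J_V\cong P_{\cH_V}\cM_d P_{\cH_V}$ of Theorems~\ref{thm:restriction_isomorphism} and~\ref{thm:complete_quotient}; in particular every character $\rho$ of $\cM_V$ pulls back along the quotient map $q\colon\cM_d\to\cM_d/J_V$ to a character $\tilde\rho=\rho\circ q$ of $\cM_d$, $q$ is a weak-$*$ homeomorphism, and on $\cM_d$ and its quotients the \wot\ and weak-$*$ topologies coincide. That $\pi$ lands in $\overline{\mb{B}_d}$ is the standard row-contraction estimate: for scalars $(c_i)$, $\big|\sum_i c_i\rho(S_i)\big|=\big|\rho(\sum_i c_iS_i)\big|\le\|\sum_i c_iS_i\|\le(\sum_i|c_i|^2)^{1/2}$, and taking $c_i=\overline{\rho(S_i)}$ gives $\sum_i|\rho(S_i)|^2\le1$; continuity of $\pi$ is immediate in the Gelfand topology. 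For $\lambda\in V$ the point evaluation $\rho_\lambda\colon f\mapsto f(\lambda)$ on $\cM_V=\Mult(\cH_V)$ is a unital homomorphism, hence a character, with $\pi(\rho_\lambda)=\lambda$; since $k_\lambda\in\cH_V$ and $M_f^*k_\lambda=\overline{f(\lambda)}k_\lambda$ by Theorem~\ref{thm:what_multpliers_do}, we get $\langle M_fk_\lambda,k_\lambda\rangle=f(\lambda)\|k_\lambda\|^2$, which proves the stated formula and displays $\rho_\lambda$ as a \wot-basic, hence \wot-continuous, functional on $P_{\cH_V}\cM_dP_{\cH_V}$.

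Next I identify the \wot-continuous characters. Let $\rho$ be such a character of $\cM_d$ and $\lambda=\pi(\rho)$. Put $f=\langle\,\cdot\,,\lambda\rangle\in\cM_d$; expanding $f^{\,n}$ into monomials, each matrix coefficient $\langle M_{f^{\,n}}z^\alpha,z^\beta\rangle$ vanishes unless $|\beta|=|\alpha|+n$, hence vanishes for all large $n$, and since $\|M_{f^{\,n}}\|=\|M_f^{\,n}\|\le|\lambda|^n\le1$ it follows that $M_{f^{\,n}}\to0$ in \wot. Therefore $|\lambda|^{2n}=\rho(M_f)^n=\rho(M_{f^{\,n}})\to0$, so $\lambda\in\mb{B}_d$; then $\rho_\lambda$ is also weak-$*$ continuous, and $\rho$ and $\rho_\lambda$ agree on the polynomials, which are weak-$*$ dense in $\cM_d$ by Theorem~\ref{thm:wot_gen}, so $\rho=\rho_\lambda$. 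For a \wot-continuous character of a general $\cM_V$ the pullback $\tilde\rho$ is a \wot-continuous character of $\cM_d$, hence evaluation at $\lambda:=\pi(\rho)\in\mb{B}_d$; as $\tilde\rho$ annihilates $J_V$ we get $\lambda\in V(J_V)=V$ and $\rho=\rho_\lambda$. Thus every \wot-continuous character is some $\rho_v$ with $v\in V$.

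Now assume $d<\infty$; the point is that a character over an interior point is automatically \wot-continuous. Composing $\rho$ with the automorphism of $\cM_d$ induced by the involution of $\mb{B}_d$ interchanging $0$ and $\lambda=\pi(\rho)$ (the automorphism group of $\mb{B}_d$ acts on $\cM_d$ by completely isometric isomorphisms; its coordinate functions are rational and holomorphic near $\overline{\mb{B}_d}$, hence multipliers), and using $\rho(M_p)=p(\lambda)$ on polynomials together with the affine identity $\psi_i\cdot(1-\langle\,\cdot\,,\lambda\rangle)$ equals a linear function vanishing at $\lambda$, one computes $\pi(\rho\circ C_\psi)=0$ and so reduces to: a character $\sigma$ of $\cM_d$ with $\pi(\sigma)=0$ satisfies $\sigma(M_f)=f(0)$. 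If $f(0)=0$ then $M_f^*1=0$, whence $M_fM_f^*\le\|M_f\|^2(I-P_{\mb{C}})=\|M_f\|^2\sum_iM_{z_i}M_{z_i}^*$ by property (2) of the $d$-shift, and the Toeplitz-corona theorem (a consequence of the commutant lifting Theorem~\ref{thm:BTV}) yields $h_1,\dots,h_d\in\cM_d$ with $f=\sum_i z_ih_i$. Hence $\{f\in\cM_d:f(0)=0\}$ is the algebraic ideal generated by $z_1,\dots,z_d$; as $\ker\sigma$ is a norm-closed codimension-one ideal containing each $z_i$, it contains this ideal and therefore equals it, so $\sigma$ is evaluation at $0$. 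Pulling back, any character $\rho$ of $\cM_V$ with $\pi(\rho)=\lambda\in\mb{B}_d$ has $\tilde\rho=$ evaluation at $\lambda$, so $\lambda\in V$ and $\rho=\rho_\lambda$; this gives $\pi(\mathfrak{M}(\cM_V))\cap\mb{B}_d=V$ and $\pi^{-1}(v)=\{\rho_v\}$, and the continuous bijection $\pi|_{\pi^{-1}(V)}\colon\{\rho_v:v\in V\}\to V$ has continuous inverse $v\mapsto\rho_v$ (since $v\mapsto f(v)$ is continuous on $\mb{B}_d$ for each fixed $f$), so it is a homeomorphism. When $\cM_V=\cM_d$, the space $\mathfrak{M}(\cM_d)$ is compact and $\pi$ continuous, so $\pi(\mathfrak{M}(\cM_d))$ is a compact set containing $\mb{B}_d$, hence equals $\overline{\mb{B}_d}$.

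The remaining, and hardest, part is the fiber over a boundary point $\zeta\in\partial\mb{B}_d$. Restriction to the disc $L_\zeta=\{w\zeta:w\in\mb{D}\}$ defines a homomorphism $C_\zeta\colon\cM_d\to H^\infty(\mb{D})$, $C_\zeta f=(w\mapsto f(w\zeta))$; since the Drury--Arveson kernel restricted to $L_\zeta$ is the Szeg\H{o} kernel, Theorem~\ref{thm:restriction_isomorphism} identifies $\cM_{L_\zeta}$ with $H^\infty(\mb{D})$ completely isometrically, so $C_\zeta$ is a complete quotient map. Its adjoint $C_\zeta^*\colon\mathfrak{M}(H^\infty(\mb{D}))\hookrightarrow\mathfrak{M}(\cM_d)$, $\phi\mapsto\phi\circ C_\zeta$, is a homeomorphic embedding, and because $\pi(\phi\circ C_\zeta)=\phi(w)\,\zeta$ it carries the fiber of $\mathfrak{M}(H^\infty(\mb{D}))$ over $1$ into $\pi^{-1}(\zeta)$. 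What must still be shown is that $C_\zeta^*$ maps that fiber \emph{onto} $\pi^{-1}(\zeta)$, i.e.\ that every character $\rho$ of $\cM_d$ with $\pi(\rho)=\zeta$ annihilates $\ker C_\zeta=J_{L_\zeta}$ and so factors through $C_\zeta$. This is where the interior-point technique breaks down: after rotating so that $\zeta=e_1$, the ideal $J_{L_\zeta}$ of multipliers with vanishing pure $z_1$-part is strictly larger than the algebraic ideal generated by $z_2,\dots,z_d$ (the operator $\sum_{j\ge2}M_{z_j}M_{z_j}^*$ is not bounded below off $\cH_{L_\zeta}$, so Toeplitz corona no longer applies), and one must instead use the finer $C^*$-algebraic localization of Davidson and Pitts \cite{DavPitts2}: via the exact sequence of Theorem~\ref{thm:exact_sequence} and the factorization properties of $\cM_d$ one shows that the behaviour of $\rho$ near $\zeta$ is governed entirely by the one-variable algebra $H^\infty(\mb{D})$, yielding both the vanishing of $\rho$ on $J_{L_\zeta}$ and the asserted canonical homeomorphism of $\pi^{-1}(\zeta)$ with the fiber over $1$ in $\mathfrak{M}(H^\infty(\mb{D}))$. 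This last localization is the main obstacle of the whole argument; everything else is a matter of assembling the structural results cited above.
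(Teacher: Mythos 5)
The paper itself gives no proof of this theorem---it is cited from Davidson--Pitts and Davidson--Ramsey--Shalit---so I am judging your argument on its own terms. Your handling of the first four assertions is correct and efficiently organized: the row-contraction estimate putting $\pi(\rho)$ in $\overline{\mb{B}_d}$; the formula for $\rho_\lambda$ via Theorem~\ref{thm:what_multpliers_do} and its \wot-continuity; the identification of the \wot-continuous characters by observing $M_{\langle\cdot,\lambda\rangle}^n\to 0$ in \wot\ (hence $|\lambda|<1$) and then invoking weak-$*$ density of the polynomials; and, for $d<\infty$, the ball-automorphism reduction (Theorem~\ref{thm:aut_unitary}) to $\pi(\sigma)=0$ followed by a factorization of $\{f:f(0)=0\}$ through the ideal $(z_1,\dots,z_d)$ and a codimension-one argument. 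Two points should be sharpened. First, the factorization you use is a Leech-type result (from $M_fM_f^*\le C\sum_iM_{z_i}M_{z_i}^*$ deduce $M_f=\sum_iM_{z_i}M_{h_i}$), which is a consequence of commutant lifting (Theorem~\ref{thm:BTV}) but is not literally the Toeplitz corona statement~\ref{thm:TCT}; name it accurately. Second, given the Remark after the theorem stating that the $d<\infty$ hypothesis is essential, you should say explicitly where you use it: the Leech factorization for $d=\infty$ yields an SOT-convergent (not norm-convergent) series $\sum_i z_ih_i$, so $f$ need not lie in the algebraic ideal $(z_1,z_2,\dots)$ nor even its norm closure, and the norm-continuous character $\sigma$ cannot be applied term-by-term; for $d<\infty$ the sum is finite and the argument closes.

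There is, however, a genuine gap in the final assertion. For $\zeta\in\partial\mb{B}_d$ you construct the homeomorphic embedding $C_\zeta^*$ of the fiber over $1$ in $\mathfrak{M}(H^\infty(\mb{D}))$ into $\pi^{-1}(\zeta)$, which is the easy direction, but you do not prove surjectivity: that every character $\rho$ with $\pi(\rho)=\zeta$ annihilates $J_{L_\zeta}$ and therefore factors through restriction to the analytic disc $L_\zeta$. You acknowledge this yourself, gesturing at the Davidson--Pitts localization machinery without carrying it out. Since the interior-point technique (ball automorphism plus finite factorization) has no analogue at the boundary, this surjectivity is the genuine content of the last claim, and without it the proof is incomplete. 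Everything up to that point stands.
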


\begin{remark}
It was previously believed that the part ``$\pi(\mathfrak{M}(\cM_V)) \cap \mb{B}_d = V$, $\pi^{-1}(v) = \{\rho_v\}$ for all $v \in V$, and $\pi\big|_{\pi^{-1}(V)}$ is a homeomorphism" holds also for $d = \infty$, because in the statement of Theorems 3.2 and 3.3 from \cite{DavPitts2} the condition $d<\infty$ does not appear. 
However, those theorems are false for $d=\infty$, and there exist counterexamples showing that, in general $\pi(\mathfrak{M}(\cM_V)) \cap \mb{B}_d$ may strictly contain $V$, and that $\pi^{-1}(v)$ might be bigger than $\{\rho_v\}$ \cite{DHS13Err}. 
\end{remark}

\subsection{Beurling type theorems} 

In \cite{MT00} McCullough and Trent obtained the following generalization of the classical Beurling-Lax-Halmos theorem \cite{Beurling,Halmos,Lax}. 

\begin{theorem}[\cite{MT00}]\label{thm:BLH}
Let $L$ be a subspace of $H^2_d \otimes K$. The following are equivalent. 
\begin{enumerate}
\item $L$ is invariant under $S \otimes I_K$. 
\item $L$ is invariant under $\cM_d\otimes I_K$. 
\item There is an auxiliary Hilbert space $K_*$ and $\Phi \in \cM_d(K_*,K)$ such that $M_\Phi M_\Phi^*$ is the projection onto $L$ and $L = M_\Phi(H_2 \otimes K_*)$. 
\end{enumerate}
\end{theorem}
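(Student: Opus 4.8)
The plan is to close the cycle of implications, treating the two easy directions first and then concentrating on $(1)\Rightarrow(3)$. For $(2)\Rightarrow(1)$ one only needs $z_1,\dots,z_d\in\cM_d$, so invariance under $\cM_d\otimes I_K$ forces invariance under $S\otimes I_K$. For $(1)\Rightarrow(2)$, invoke Theorems~\ref{thm:wot_gen} and~\ref{thm:Poisson}: for $f\in\cM_d$ the dilates $f_r$ lie in the norm-closed algebra generated by $S$, and $M_{f_r}\to M_f$ in the strong (hence weak) operator topology; since $L$ is invariant under every polynomial in $S\otimes I_K$ and is weakly closed, it is invariant under $M_{f_r}\otimes I_K$, and then under $M_f\otimes I_K$. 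For $(3)\Rightarrow(1)$, the intertwining $(S_i\otimes I_K)M_\Phi=M_{z_i\Phi}=M_\Phi(S_i\otimes I_{K_*})$ shows the range $L=M_\Phi(H^2_d\otimes K_*)$ is invariant under $S\otimes I_K$. So the real content is $(1)\Rightarrow(3)$.

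For $(1)\Rightarrow(3)$, set $P=P_L$ and let $\mathcal{P}(z,w)\in B(K)$ be the $B(K)$-valued reproducing kernel of $L$, i.e. $\langle\mathcal{P}(z,w)u,v\rangle=\langle P(k_w\otimes u),k_z\otimes v\rangle$. The goal is to construct $\Phi\in\cM_d(K_*,K)$ with $\|M_\Phi\|\le1$ and $M_\Phi M_\Phi^*=P$. Granting this, $M_\Phi M_\Phi^*=P$ forces $M_\Phi$ to be a partial isometry with $\overline{\operatorname{ran}}\,M_\Phi=\operatorname{ran}P=L$, and since partial isometries have closed range, $M_\Phi(H^2_d\otimes K_*)=L$, which is~(3). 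By Theorem~\ref{thm:what_multpliers_do}, $M_\Phi^*(k_w\otimes u)=k_w\otimes\Phi(w)^*u$, whence $\langle M_\Phi M_\Phi^*(k_w\otimes u),k_z\otimes v\rangle=k(z,w)\langle\Phi(z)\Phi(w)^*u,v\rangle$. Since the $k_w\otimes u$ are total, $M_\Phi M_\Phi^*=P$ is equivalent to the functional equation $\Phi(z)\Phi(w)^*=(1-\langle z,w\rangle)\,\mathcal{P}(z,w)$, so the problem reduces to showing that the right-hand side is a positive $B(K)$-valued kernel admitting a holomorphic factorization $\Phi(z)\Phi(w)^*$.

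This is where invariance and the special kernel of $H^2_d$ enter. Using $M_{z_i}^*k_w=\overline{w_i}k_w$, one checks that $(1-\langle z,w\rangle)\mathcal{P}(z,w)$ is the kernel of the operator $Q:=P-\sum_{i=1}^d(S_i\otimes I_K)P(S_i\otimes I_K)^*$, meaning $\langle(1-\langle z,w\rangle)\mathcal{P}(z,w)u,v\rangle=\langle Q(k_w\otimes u),k_z\otimes v\rangle$. Invariance of $L$ is equivalent to $P(S_i^*\otimes I_K)(I-P)=0$ for all $i$, which gives $Q(I-P)=0$, so $Q=PQP$; and for $x\in L$, using $\sum_i(S_i\otimes I_K)(S_i\otimes I_K)^*=(I-P_{\mb{C}})\otimes I_K\le I$ (see~\ref{subsec:thedshift}), $\langle Qx,x\rangle=\|x\|^2-\sum_i\|P(S_i^*\otimes I_K)x\|^2\ge\|x\|^2-\sum_i\|(S_i^*\otimes I_K)x\|^2\ge0$. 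Hence $Q\ge0$. Factor $Q=CC^*$ with $C:K_*\to H^2_d\otimes K$ and $K_*=\overline{\operatorname{ran}}\,C$, and define $\Phi(z):K_*\to K$ by $\langle\Phi(z)\eta,v\rangle=\langle C\eta,k_z\otimes v\rangle$; then $\Phi$ is holomorphic and $\Phi(z)\Phi(w)^*=(1-\langle z,w\rangle)\mathcal{P}(z,w)$. Contractivity of $M_\Phi$ then follows from Theorem~\ref{thm:what_multpliers_do}: for finite families $\{z_j\},\{v_j\}$ one computes $\big\|\sum_jk_{z_j}\otimes\Phi(z_j)^*v_j\big\|^2=\big\|P\big(\sum_jk_{z_j}\otimes v_j\big)\big\|^2\le\big\|\sum_jk_{z_j}\otimes v_j\big\|^2$, so $k_z\otimes v\mapsto k_z\otimes\Phi(z)^*v$ extends to a contraction, which is $M_\Phi^*$; the same identity gives $M_\Phi M_\Phi^*=P$. (Alternatively, the Ball--Trent--Vinnikov characterization of contractive multipliers of $H^2_d$ applies directly, since $\frac{I_K-\Phi(z)\Phi(w)^*}{1-\langle z,w\rangle}=k(z,w)I_K-\mathcal{P}(z,w)$ is the reproducing kernel of $L^\perp$, hence positive.)

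I expect the main obstacle to be the positivity $Q\ge0$: the identification of $(1-\langle z,w\rangle)\mathcal{P}(z,w)$ with $Q$ must be carried out carefully, and, more conceptually, one must see that the invariance of $L$ is precisely what decouples $Q$ along $L\oplus L^\perp$, while the contractivity of the row operator $[\,S_1\ \cdots\ S_d\,]$ --- a feature tied to the Szeg\H{o}-type kernel $1/(1-\langle z,w\rangle)$, i.e. to $H^2_d$ being a complete Pick space (Theorem~\ref{thm:comp_Pick}) --- is what makes $Q$ positive on $L$. In a reproducing kernel Hilbert space whose kernel is not a complete Pick kernel this step, and with it the theorem, breaks down.
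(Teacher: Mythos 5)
The paper states Theorem~\ref{thm:BLH} without proof, citing only \cite{MT00}, so there is no in-text argument to compare against; on its own merits your proof is correct and is, in essence, the standard McCullough--Trent argument. The easy cycle $(2)\Rightarrow(1)$, $(1)\Rightarrow(2)$ (via Poisson summability and closedness of $L$), $(3)\Rightarrow(1)$ is handled correctly, and the core $(1)\Rightarrow(3)$ step is sound: the identity $\langle Q(k_w\otimes u),k_z\otimes v\rangle=(1-\langle z,w\rangle)\langle\mathcal{P}(z,w)u,v\rangle$ with $Q=P-\sum_i(S_i\otimes I)P(S_i^*\otimes I)$, the observation that invariance gives $Q=PQP$, the positivity of $Q$ on $L$ via row contractivity of $S$ (property (2) of \ref{subsec:thedshift}), the factorization $Q=CC^*$ and the construction of $\Phi$, and the use of Theorem~\ref{thm:what_multpliers_do} to conclude $\|M_\Phi\|\le1$ and $M_\Phi M_\Phi^*=P$ all check out, as does the inference $\operatorname{ran}M_\Phi=L$ from $M_\Phi M_\Phi^*$ being the projection onto $L$. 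Your closing remark locating the essential use of the complete Pick property (the row contraction $[S_1\ \cdots\ S_d]$, i.e. $1/k=1-\sum b_i(z)\overline{b_i(w)}$) is also the right conceptual diagnosis of why the argument does not transfer to arbitrary reproducing kernel Hilbert spaces, and your alternative route via the Ball--Trent--Vinnikov criterion, observing that $k(z,w)I_K-\mathcal{P}(z,w)$ is the kernel of $L^\perp$, is likewise correct.
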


Actually, a version of this theorem holds in any complete Pick space \cite{MT00}, thus in particular it holds for quotients of $H^2_d$ of the type $\cH_V$ considered in Section \ref{subsec:quot_var}. In \cite{AK00,AK02,BR02} finite dimensional invariant subspaces of $S^* \otimes I_K$ were studied, and further information was obtained. 

Since $M_\Phi M_\Phi^*$ is a projection, $M_\Phi$ is a partial isometry. A multiplier $\Phi$ for which $M_\Phi$ is a partial isometry is called an {\em inner function}. When $d = \dim K = 1$, it can be shown that (unless $L$ is trivial) $K_*$ can be chosen to be one dimensional and $\Phi$ can be chosen so that $M_\Phi$ is an isometry. In this case $\Phi$ is a scalar valued function on the disc which has absolute value $1$ a.e. on the circle, i.e. an {\em inner function} in the classical sense, and one recovers Beurling's theorem \cite{Beurling} (see \cite[Chapter II]{Garnett}). 

Theorem \ref{thm:BLH} was obtained by Arveson in the case where $\dim K = 1$ \cite[Section 2]{Arv00}. In this case $\Phi \in \cM_d(K_*, \mb{C})$, and this means that there is a sequence $\{\phi_n\}_{n=0}^{\dim K_*}$ such that $P_L = \sum M_{\phi_n} M_{\phi_n}^*$ and $L = \sum M_{\phi_n} H^2_d$ (just put $\phi_n = \Phi_n(1 \otimes e_n)$ where $\{e_n\}$ is an orthonormal basis for $K_*$). Now 
\[
\sum |\phi_n(z)|^2 \|k_z\|^2 =  \langle \sum M_{\phi_n}M_{\phi_n}^* k_z, k_z \rangle \leq \|k_z\|^2, 
\]
so $\sup_{\|z\|< 1} \sum |\phi_n(z)|^2 \leq 1$. In particular, for every $n$, $\phi_n \in H^\infty(\mb{B}_d)$, and therefore the radial limit $\tilde{\phi_n}(w) = \lim_{r\nearrow 1} \phi_n(rw)$ exists for a.e. $w \in \partial \mb{B}_d$ (in fact the limit exists through much larger regions of convergence, see \cite[Theorem 5.6.4]{RudinBall}). Arveson raised the problem of whether or not $\sum_n |\tilde{\phi}_n(w)|^2 = 1$ for a.e. $w \in \partial \mb{B}_d$. This problem was solved by Greene, Richter and Sundberg \cite{GRS02}. 

For every $\lambda \in \mb{B}_d$, let $E_\lambda : H^2_d \otimes K \rightarrow K$ denote the point evaluation functional $E_\lambda f \otimes k = f(\lambda)k$. 

\begin{theorem}[\cite{GRS02}]\label{thm:GRS}
Let $K$ be a separable Hilbert space, let $L$ be an invariant subspace of $H^2_d \otimes K$, and let $K_*$ and $\Phi$ be as in Theorem \ref{thm:BLH}. If $d<\infty$, then for a.e. $w \in \partial \mb{B}_d$, the radial limit $\tilde{\Phi}(w) :=\lim_{r\nearrow 1}\Phi(rw)$ exists and is a partial isometry with 
\be\label{eq:GRS}
\rank\Phi(w) = \sup_{ \lambda \in \mb{B}_d} \dim (E_\lambda L) .
\ee
In particular, if $\dim K = 1$, then for a.e. $w \in \partial \mb{B}_d$
\be\label{eq:inner}
\sum_n |\tilde{\phi}_n(w)|^2 = 1 . 
\ee
\end{theorem}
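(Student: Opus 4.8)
The plan is to separate the statement into bookkeeping, an elementary part requiring only boundary‑value theory, and a single hard point. I would first record that for every $\lambda\in\mb{B}_d$ one has $E_\lambda L=\operatorname{ran}\Phi(\lambda)$: a function in $L=M_\Phi(H^2_d\otimes K_*)$ is $z\mapsto\Phi(z)g(z)$ with $g\in H^2_d\otimes K_*$, and already the constant sections $g=1\otimes v$ make $g(\lambda)$ sweep all of $K_*$; hence $\dim E_\lambda L=\rank\Phi(\lambda)$ and the right‑hand side of (\ref{eq:GRS}) is $N:=\sup_{\lambda\in\mb{B}_d}\rank\Phi(\lambda)$, a cardinal $\le\dim K_*$.

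Next, the elementary part. Fixing orthonormal bases of $K_*$ and $K$, each scalar entry of $\Phi$ lies in $H^\infty(\mb{B}_d)$ with sup‑norm $\le\|M_\Phi\|=1$ (the argument from the proof of Theorem \ref{thm:MnotHinfty}, applied to $\Phi$ via (\ref{eq:what_mutlpliers_do})), so Fatou's theorem on the ball \cite[Theorem 5.6.4]{RudinBall} gives a single $\sigma$‑null set off which all of these countably many entries have admissible limits; there $\Phi(rw)\to\tilde\Phi(w)$ entrywise with $\|\tilde\Phi(w)\|\le1$. Two consequences follow, neither using the inner hypothesis: rank is lower semicontinuous under entrywise limits, so $\rank\tilde\Phi(w)\le\liminf_{r\nearrow1}\rank\Phi(rw)\le N$; and conversely, for each $k\le N$ pick $\lambda_0$ with $\rank\Phi(\lambda_0)\ge k$ and a $k\times k$ minor of $\Phi$ (in the fixed bases) that is nonzero at $\lambda_0$ — this minor is a nonzero bounded holomorphic function on $\mb{B}_d$, so by the uniqueness theorem for $H^\infty(\mb{B}_d)$ its boundary value is nonzero a.e., and passing the radial limit through the determinant identifies that value with the corresponding minor of $\tilde\Phi(w)$, whence $\rank\tilde\Phi(w)\ge k$ a.e. Letting $k\to N$ gives $\rank\tilde\Phi(w)=N$ for a.e.\ $w$.

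The remaining point — that $\tilde\Phi(w)$ is a partial isometry a.e., equivalently, given the rank statement, that its $N$ nonzero singular values all equal $1$, i.e.\ no contraction mass is lost radially — is the crux, and it is here that $d<\infty$ is essential. The hypothesis $M_\Phi M_\Phi^{*}=P_L$ reads $M_\Phi^{*}M_\Phi=P_{\mathcal I}$, the projection onto $\mathcal I:=(\ker M_\Phi)^{\perp}$; writing $\ker M_\Phi=M_\Theta(H^2_d\otimes K_{**})$ for an inner $\Theta$ (Theorem \ref{thm:BLH}), so $\Phi(z)\Theta(z)\equiv0$, a short reproducing‑kernel computation gives $\|M_\Phi(k_w\otimes\xi)\|^{2}=\langle K_\Theta(w,w)\xi,\xi\rangle=\bigl(\|\xi\|^{2}-\|\Theta(w)^{*}\xi\|^{2}\bigr)/(1-|w|^{2})$; in the scalar case $\dim K=1$ this is the Berezin‑transform identity $\sum_n|\phi_n(z)|^{2}=\|P_L\widehat{k_z}\|^{2}=1-\|P_{L^{\perp}}\widehat{k_z}\|^{2}$, where $\widehat{k_z}=k_z/\|k_z\|$. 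Thus (\ref{eq:inner}), and its operator analogue, reduce to $\|P_{L^{\perp}}\widehat{k_{rw}}\|\to0$ as $r\nearrow1$ for a.e.\ $w$ — the normalized kernels must leave the model space $L^{\perp}$ asymptotically — and this cannot be extracted from the $H^2_d$‑inner product alone: for $d\ge2$ the Berezin transform attached to $(1-\langle z,w\rangle)^{-1}$ is blind to the boundary, since the mass of $(1-|w|^{2})\,|k_w(\zeta)|^{2}\,d\sigma(\zeta)$ escapes as $|w|\to1$. The way through is to trade the $H^2_d$‑norm for an honest $L^{2}(\partial\mb{B}_d,\sigma)$‑norm using Theorem \ref{thm:equiv_def} — that $f\in H^2_d\iff R^{(d-1)/2}f\in H^2(\mb{B}_d)$ — and then run a Fatou / F.\,and\,M.\,Riesz‑type argument on the sphere to show that the boundary values of $\Phi(\cdot)\xi$ carry the full mass $\|\xi\|^{2}$ for $\xi$ in a dense subset of $\mathcal I$; combined with the pointwise bound $\|\tilde\Phi(w)\xi\|\le\|\xi\|$, this forces $\tilde\Phi(w)$ to be isometric on its $N$‑dimensional initial space a.e., which is (\ref{eq:GRS}), and the case $\dim K=1$ specializes at once to (\ref{eq:inner}). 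I expect this transfer to the boundary, together with the control of the fractional radial derivative it requires, to be the main technical obstacle.
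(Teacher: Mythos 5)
The survey states this theorem without proof, attributing it to \cite{GRS02}, so there is no in-paper argument to match against; I am therefore assessing your proposal on its own terms. The bookkeeping and the rank statement are handled correctly: $E_\lambda L=\operatorname{ran}\Phi(\lambda)$ via constant sections, Fatou applied to the countably many scalar entries of $\Phi$, lower semicontinuity of rank for $\rank\tilde\Phi(w)\le N$, and the $k\times k$-minor argument with the a.e.\ nonvanishing of boundary values of a nonzero $H^\infty(\mb{B}_d)$ function for the reverse inequality. The Berezin identity $\sum_n|\phi_n(z)|^2=\|P_L\hat k_z\|^2$ is also correct, and it correctly reduces (\ref{eq:inner}) to the assertion that no contraction mass leaks radially.

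The gap is exactly the step you flag yourself and it is not merely a matter of filling in detail: the proposed route does not close. Theorem \ref{thm:equiv_def} gives only an \emph{equivalence} of norms, and it relates $\|f\|_{H^2_d}$ to the Hardy-space norm of $R^{(d-1)/2}f$, not to the $L^2(\sigma)$ mass of boundary values of $f$ itself, so the identity $\|M_\Phi(1\otimes\xi)\|_{H^2_d}^2=\|\xi\|^2$ does not transfer to $\int_{\partial\mb{B}_d}\|\tilde\Phi(w)\xi\|^2\,d\sigma(w)=\|\xi\|^2$, and no F.\ and M.\ Riesz argument is available at the level of $f$ rather than $R^{(d-1)/2}f$. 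There is also a type confusion in your final sentence: $\mathcal I$ is a subspace of $H^2_d\otimes K_*$ while the initial space of $\tilde\Phi(w)$ is a $w$-dependent subspace of $K_*$, so ``a dense subset of $\mathcal I$'' does not supply a fixed test family of vectors $\xi\in K_*$ on which isometry can be verified a.e. Moreover, even granting the Berezin reduction, Fatou's lemma yields only $\sum_n|\tilde\phi_n(w)|^2\le\liminf_{r\to1}\sum_n|\phi_n(rw)|^2\le 1$; the reverse inequality (the interchange of $\lim_r$ with $\sum_n$, plus the limit of the Berezin transform being $1$ a.e.) is precisely the content of the theorem and requires the separate analytic input of \cite{GRS02}. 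As written, your proposal establishes $\rank\tilde\Phi(w)=N$ a.e.\ but not that $\tilde\Phi(w)$ is a partial isometry, nor (\ref{eq:inner}).
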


\subsection{Rigidity phenomena}

Recall that Beurling's theorem says that every submodule of $H^2(\mb{D})$ has the form $\phi H^2(\mb{D})$, where $\phi$ is an inner function. Theorems \ref{thm:BLH} and \ref{thm:GRS} show that a very similar result holds for submodules of $H^2_d$, thereby providing a strong analogy between the submodule theories of $H^2(\mb{D})$ and $H^2_d$. On the other hand, there are some big differences as well. For example, a consequence of Beurling's theorem is that every two submodules of $H^2(\mb{D})$ are unitarily equivalent; the following theorem of Guo, Hu and Xu shows that for $d\geq2$, the situation with submodules of $H^2_d$ is almost the opposite. 

\begin{theorem}[\cite{GHX04}, Section 5; \cite{ChenGuo}, Section 6]
Let $M$ and $N$ be submodules of $H^2_d$, $d \geq 2$. Consider the following conditions. 
\begin{enumerate}
\item $M$ or $N$ is the closure of a polynomial ideal,
\item $M \subseteq N$.
\end{enumerate}
Under the assumption that one of the above conditions holds, if $M$ is unitarily equivalent to $N$, then $M = N$. 
\end{theorem}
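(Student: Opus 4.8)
The plan is to use the Beurling-type parametrization of submodules (Theorem~\ref{thm:BLH}) together with the rank formula of Greene--Richter--Sundberg (Theorem~\ref{thm:GRS}), and to extract from a unitary module map an equality of the multiplier-inner functions representing $M$ and $N$. First I would record the standard fact that a unitary $U : M \to N$ intertwining the module actions, i.e. $U(S_i\big|_M) = (S_i\big|_N)U$ for all $i$, automatically intertwines the compressions $P_M S_i^* P_M$ and $P_N S_i^* P_N$; this lets one transport reproducing-kernel data. In particular, since point evaluations on $M$ and $N$ are (up to the module structure) encoded by the co-invariant vectors $P_M k_\lambda$, $P_N k_\lambda$, a module unitary forces, for each $\lambda \in \mb{B}_d$, the dimensions $\dim(E_\lambda M)$ and $\dim(E_\lambda N)$ to agree. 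By the rank formula \eqref{eq:GRS}, the inner multipliers $\Phi_M$, $\Phi_N$ furnished by Theorem~\ref{thm:BLH} then have the same (a.e.\ constant, on the sphere) rank, so one may take them to act between Hilbert spaces $K_*$ of the same dimension.

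The heart of the argument is the case where one of the modules, say $M$, is the closure $[I]$ of a polynomial ideal $I\triangleleft\mb{C}[z]$. Here I would exploit two features simultaneously: first, $[I]$ contains $1$ only if $I = \mb{C}[z]$, and more generally the ``evaluation at $0$'' behavior of $[I]$ is rigidly tied to the vanishing order of $I$ at the origin; second, a module unitary $U : [I] \to N$ must send the wandering/generating structure of $[I]$ to that of $N$ in a way compatible with grading when $I$ is homogeneous, and compatible with the order of vanishing in general. Concretely, if $U$ intertwines the module actions then $U^* P_N M_f P_N U = P_M M_f P_M$ for every $f\in\cM_d$, hence $U$ carries the ideal $\beta(N)$ (in the notation of Theorem~\ref{thm:complete_lattice_iso}) onto $\beta([I]) = \overline{I}$; combined with $M\subseteq N$ (the second hypothesis) or with the ideal hypothesis this pins down the varieties $V(\beta(M))$ and $V(\beta(N))$, and then a dimension-count on $E_\lambda$ near a generic point of the common variety, using \eqref{eq:GRS} again, forces $M=N$. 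In the inclusion case $M\subseteq N$ one additionally uses that $U$ being unitary between nested subspaces, together with $\|P_N M_f P_N\| = \|P_M M_f P_M\|$ for all $f$, leaves no room for $M$ to be a proper subspace once the inner functions are shown to differ by a unitary constant.

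The step I expect to be the main obstacle is converting the abstract intertwining $U$ into a genuine relation $\Phi_N = \Phi_M W$ for a constant unitary (or partial isometry) $W$ between the coefficient spaces --- i.e.\ showing that a module isomorphism of submodules is implemented by right multiplication by a constant. In the classical $d=1$ setting this is automatic because $\Phi$ is scalar inner, but for $d\geq 2$ one must rule out ``genuinely operator-valued'' twists. The way I would handle this is to use the curvature/rank invariant: compare the defect-type data $I - \Phi_M(z)\Phi_M(w)^*$ and $I - \Phi_N(z)\Phi_N(w)^*$ via the fact that $M_{\Phi_M} M_{\Phi_M}^*$ and $M_{\Phi_N}M_{\Phi_N}^*$ are the projections onto $M$ and $N$, so that a module unitary $U$ with $UM=N$ (or the inclusion, giving $M_{\Phi_M}M_{\Phi_M}^* \le M_{\Phi_N}M_{\Phi_N}^*$) forces an intertwining $\Phi_N^* \Phi_N = \Phi_M^* \Phi_M$ on the nose after normalizing; under the ideal-closure or inclusion hypothesis the extra homogeneity (or the ordering) then upgrades this to $\Phi_N = \Phi_M W$ with $W$ constant unitary, whence $M = M_{\Phi_M}(H^2_d\otimes K_*) = M_{\Phi_N}(H^2_d\otimes K_*) = N$. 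For full details one consults \cite{GHX04} and \cite{ChenGuo}.
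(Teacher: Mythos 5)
The paper states this theorem without proof, citing [GHX04] and [ChenGuo], so there is no in-paper argument to compare against; I will assess your outline on its own terms. As written it has several gaps, and the underlying strategy --- Beurling--Lax--Halmos plus the Greene--Richter--Sundberg rank formula --- does not appear capable of reaching the conclusion.

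The first problem is that the GRS rank invariant is vacuous in this setting. The modules $M,N$ are submodules of $H^2_d$ itself (coefficient space $K=\mb{C}$), so $E_\lambda M\subseteq\mb{C}$ has dimension $0$ or $1$, and $\sup_\lambda\dim(E_\lambda M)=1$ for every nonzero submodule. Hence $\rank\tilde\Phi_M(w)=\rank\tilde\Phi_N(w)=1$ a.e.\ for \emph{any} two nonzero submodules, regardless of unitary equivalence; this says nothing about the size of $K_*$. Likewise, once $K_*$ is chosen minimally, $M_{\Phi}$ has dense range in the submodule and trivial kernel, so $\Phi_M^*\Phi_M=I=\Phi_N^*\Phi_N$ automatically; the equality of Grammians you propose to "normalize" to carries no information. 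Second, the assertion that $U$ "carries the ideal $\beta(N)$ onto $\beta([I])$" does not follow from $U^*(M_f|_N)U=M_f|_M$: the set $\beta(N)=\cM_d\cap N$ consists of multipliers, and there is no reason a module unitary should send multipliers to multipliers. (If it did, Theorem~\ref{thm:complete_lattice_iso} would instantly give $M=N$ for every pair of unitarily equivalent submodules, which is false already at $d=1$.) Third, and decisively, your outline never invokes $d\geq 2$. Both clauses of the theorem fail at $d=1$: every nontrivial submodule of $H^2(\mb{D})$ is unitarily equivalent to every other, and $BH^2\subsetneq H^2$ gives nested, unitarily equivalent, unequal submodules. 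You correctly identify "converting the abstract $U$ to $\Phi_N=\Phi_M W$ with $W$ constant" as the crux, but the curvature/defect comparison offered cannot do this: a module unitary always yields $\Phi_N(z)\Phi_N(w)^* = g(z)\overline{g(w)}\Phi_M(z)\Phi_M(w)^*$ for some holomorphic $g$, and nothing in the proposal forces $g$ to be constant --- that is precisely what breaks for $d=1$ and must be extracted from $d\geq 2$.

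For the record, the arguments in [GHX04] and [ChenGuo] are local-algebraic in flavor rather than Beurling-theoretic: one works with Guo's characteristic spaces (the constant-coefficient differential operators annihilating the submodule at each $\lambda\in\mb{B}_d$), shows that for $d\geq 2$ a module unitary between submodules of $H^2_d$ preserves these, and then uses that the closure of a polynomial ideal is recovered from its characteristic spaces. The $d\geq 2$ hypothesis does genuine work in showing that the holomorphic intertwiner cannot twist. I would suggest reading Chapter 6 of Chen--Guo to see that mechanism before attempting a self-contained write-up.
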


\section{The curvature invariant of a contractive Hilbert module}

\subsection{The curvature invariant} 

In \cite{Arv00} Arveson introduced a numerical invariant for contractive Hilbert modules of finite rank, the {\em curvature invariant}. 

Let $d<\infty$, and fix a contractive Hilbert module of finite rank $d$. Recall that this means that there is a $d$ contraction $T$ on $H$ such
that $\rank H := \rank(T)  = \dim \Delta H < \infty$, where $\Delta = \Delta_T = \sqrt{I - \sum T_iT_i^*}$. For $z \in \mb{B}_d$, define the operator valued functions
\[
T(z) = \overline{z}_1 T_1 + \ldots  \overline{z}_d T_d , 
\]
and 
\[
F(z) = \Delta (1 - T(z)^*)^{-1}(1-T(z))^{-1} \Delta. 
\]
For $z$, $F(z)$ is an operator on the finite dimensional space $\Delta H$, hence has a trace. 

\begin{lemma}[\cite{Arv00}, Theorem A]
For almost every $w \in \partial \mb{B}_d$ the limit 
\be\label{eq:curv_def}
\kappa_0(w) = \lim_{r\nearrow 1} (1-r^2)  \operatorname{trace}F(rw) 
\ee
exists and satisfies $0 \leq \kappa_0(w) \leq \rank(H)$. 
\end{lemma}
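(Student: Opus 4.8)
The plan is to analyze the trace of $F(rw)$ directly in terms of the defect operator and the model provided by Theorem \ref{thm:model_pure}, reducing the existence of the limit \eqref{eq:curv_def} to a known fact about Poisson-type integrals of positive measures on the boundary sphere. First I would expand $F(z)$ as a series: since $T$ is a row contraction, $\|T(z)\|<1$ for $z\in\mb{B}_d$, so $(1-T(z))^{-1}=\sum_{k\geq0}T(z)^k$ converges in norm, and hence
\[
\operatorname{trace}F(rw)=\sum_{j,k\geq 0}r^{j+k}\operatorname{trace}\!\big(\Delta\, (T(w)^*)^j (T(w))^k\, \Delta\big).
\]
Using the model $T^\alpha\Delta g = W^*(z^\alpha\otimes g)$ from Lemma \ref{lem:Poisson_kernel}, each term $\operatorname{trace}(\Delta (T(w)^*)^j(T(w))^k\Delta)$ can be rewritten as an inner product in $H^2_d\otimes\cD_T$ against the partial-sum pieces of the kernel function $k_{rw}$; in fact one recognizes $(1-r^2)\operatorname{trace}F(rw)$ as an integral of a positive $M_{\rank(H)}(\mb{C})$-valued (or scalar, after taking trace) function against the ordinary Poisson kernel $P_r$ on the sphere, i.e. as $(P_r * \mu)(w)$ for a finite positive measure $\mu$ on $\partial\mb{B}_d$ with total mass $\leq\rank(H)$. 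The key computation here is the standard identity $(1-r^2)\sum_{n\geq0}r^{2n}\,(\text{something homogeneous of degree }2n)$ being a Poisson average, which follows from the formula $\sum_{n}r^n\langle z,w\rangle^n$-type expansions and the normalization $\int_{\partial\mb{B}_d}1\,d\sigma=1$.

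Once $(1-r^2)\operatorname{trace}F(rw)$ is exhibited as $(P_r*\mu)(w)$ for a finite positive measure $\mu$ on $\partial\mb{B}_d$, the existence of the radial limit for almost every $w$ is exactly the classical Fatou theorem for Poisson integrals on the ball (see \cite[Section 5.6]{RudinBall} or \cite[Section 1.4]{RudinBall}): for a.e. $w$, $\lim_{r\nearrow1}(P_r*\mu)(w)$ exists and equals the Radon–Nikodym derivative of the absolutely continuous part of $\mu$ with respect to surface measure $\sigma$. The bound $0\leq\kappa_0(w)$ is immediate from positivity; the bound $\kappa_0(w)\leq\rank(H)$ follows because the a.e. radial limit of a Poisson integral of a positive measure is dominated (a.e.) by $\|\mu\|/\sigma(\partial\mb{B}_d)$ only after normalizing, so more carefully one argues that the density of the a.c. part is finite a.e. and, by the total-mass bound $\|\mu\|\leq\rank(H)$ together with the fact that $F(z)$ is an operator on a space of dimension $\rank(H)$, one gets the pointwise estimate $\operatorname{trace}F(z)\leq\rank(H)\cdot\|(1-T(z))^{-1}\|^2$, which upon multiplication by $(1-r^2)$ and passing to the limit yields $\kappa_0(w)\leq\rank(H)$. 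Alternatively, and more cleanly, the bound drops out of the model: $(1-r^2)\operatorname{trace}F(rw)=(1-r^2)\|(1-T(rw)^*)^{-1}\Delta\|_{HS}^2$ where the Hilbert–Schmidt norm is computed in $\cD_T$, and comparing with the scalar case (rank one) term by term gives the estimate against $\rank(H)$ times the scalar Poisson kernel, whose radial limit is $1$ a.e.

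The main obstacle I anticipate is making the identification ``$(1-r^2)\operatorname{trace}F(rw) = (P_r*\mu)(w)$'' completely rigorous — specifically, producing the measure $\mu$ and verifying it is positive with the correct total mass. This requires carefully organizing the double sum over $j,k$, collecting terms of each total homogeneous degree, and recognizing the resulting sequence of positive operators on $\cD_T$ as the ``moments'' of a matrix-valued measure on the sphere (a matricial Herglotz/Bochner-type representation). Once the function $w\mapsto(1-r^2)\operatorname{trace}F(rw)$ is known to be a genuine Poisson integral of a finite positive measure, everything else is an appeal to Fatou's theorem and a trace estimate; the delicate point is purely the representation step, together with checking that the interchange of trace, limit, and summation is justified (which it is, since all terms are nonnegative and the partial sums are monotone). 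The details of these integral computations are exactly the kind of ``basic integral formulas on the ball'' invoked repeatedly in Section \ref{sec:BS}, so I would cite \cite[Section 1.4]{RudinBall} for them rather than reproduce them.
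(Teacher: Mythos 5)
Your step ``one recognizes $(1-r^2)\operatorname{trace}F(rw)$ as $(P_r*\mu)(w)$ for a finite positive measure $\mu$ on $\partial\mb{B}_d$'' is a genuine gap, and the representation you are trying to produce does not exist. A minimal counterexample: take $T=0$ on $H=\mb{C}$, so that $\Delta=1$, $F\equiv 1$, and $(1-r^2)\operatorname{trace}F(rw)=1-r^2$; this is a strictly superharmonic function equal to $1$ at the origin and tending to $0$ at $\partial\mb{B}_d$, and no Poisson integral of a positive measure behaves this way. Structurally, the density that your ``collect terms by homogeneous degree'' scheme produces from the Drury--Arveson kernel is $\frac{1-|z|^2}{|1-\langle z,\zeta\rangle|^2}$, which coincides with the Poisson kernel only when $d=1$; for $d\geq 2$ it is not the Poisson--Szeg\H{o} kernel $\frac{(1-|z|^2)^d}{|1-\langle z,\zeta\rangle|^{2d}}$ of the ball and is not even a probability density on $\partial\mb{B}_d$ (its total $\sigma$-mass in fact tends to $0$ as $r\to 1$). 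So there is no ``matricial Herglotz/Bochner representation'' to be had, and Fatou's theorem for Poisson integrals cannot be applied to $(1-r^2)\operatorname{trace}F(rw)$ directly.

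The ingredient your proposal is missing is a Beurling-type factorization. From the model (Lemma \ref{lem:Poisson_kernel}), writing $\hat k_z := (1-|z|^2)^{1/2}k_z$ and $L=(WH)^\perp$, one obtains
\[
(1-|z|^2)\operatorname{trace}F(z)\;=\;\sum_i \bigl\| WW^*(\hat k_z\otimes e_i)\bigr\|^2 \;=\;\rank(H)-\sum_i\bigl\|P_L(\hat k_z\otimes e_i)\bigr\|^2 ,
\]
which already yields the pointwise bound $0\leq (1-|z|^2)\operatorname{trace}F(z)\leq\rank(H)$ far more cheaply than your estimate via $\|(1-T(z))^{-1}\|^2$. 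The essential step is to factor the projection $P_L=\sum_k M_{\Phi_k}M_{\Phi_k}^*$ with contractive multipliers $\Phi_k$ (cf.\ Theorem \ref{thm:BLH}); this turns the subtracted term into $\sum_{i,k}|\phi_{ki}(z)|^2$ where each $\phi_{ki}\in H^\infty(\mb{B}_d)$ and $\sum_{i,k}|\phi_{ki}(z)|^2\leq\rank(H)$. Radial convergence a.e.\ of $\sum_{i,k}|\phi_{ki}(rw)|^2$ then follows from a two-sided squeeze: plurisubharmonicity of each $|\phi_{ki}|^2$ bounds the sum from above by the Poisson--Szeg\H{o} integral of the boundary density $\sum|\tilde\phi_{ki}|^2$, while Fatou's lemma applied to the radial $\liminf$ supplies the matching lower bound. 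The function is trapped between two quantities with the same a.e.\ radial limits; it is not itself a Poisson integral. This factorization step is the core of the proof in \cite{Arv00}, and without it the argument does not get started.
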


\begin{definition}
The {\em curvature invariant} of $H$ is defined to be
\be
\kappa(H) = \int_{\partial \mb{B}_d} \kappa_0(w) d\sigma(w), 
\ee
where $\sigma$ is normalized area measure on the sphere. 
\end{definition}

One also writes $\kappa(T)$ for the curvature of $H$. From (\ref{eq:curv_def}), $\kappa(H)$ is a real number between $0$ and $\rank(H)$. 
\begin{theorem}[\cite{Arv00}, Theorem 2.1]
Suppose that $H$ is a pure contractive Hilbert module of finite rank. Then $\kappa(H) = \rank(H)$ if and only if $H$ is unitarliy equivalent to the free Hilbert module $H^2_d \otimes \Delta H$ of rank $\rank(H)$.  
\end{theorem}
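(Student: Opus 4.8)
The plan is to prove the two implications separately: the forward direction is a direct computation with the $d$-shift, and the converse is extracted from the functional model of Theorem \ref{thm:model_pure} together with the Beurling-type structure of submodules. For the ``if'' direction, suppose $T = S\otimes I_{\mb{C}^r}$ on $H^2_d\otimes\mb{C}^r$ with $r = \rank(H)$. Then $\Delta^2 = I - \sum_i(S_iS_i^*)\otimes I = P_{\mb{C}}\otimes I_{\mb{C}^r}$, so $\Delta$ is the orthogonal projection onto the $r$-dimensional space $\mb{C}\otimes\mb{C}^r$. For $z\in\mb{B}_d$ and $v\in\mb{C}^r$, using $S_i 1 = z_i$ and $S_i^*k_z = \overline{z_i}k_z$ one computes $(I-T(z))^{-1}(1\otimes v) = k_z\otimes v$, then $(I-T(z)^*)^{-1}(k_z\otimes v) = (1-|z|^2)^{-1}k_z\otimes v$; applying $\Delta$ and using $P_{\mb{C}}k_z = 1$ gives $F(z)(1\otimes v) = (1-|z|^2)^{-1}(1\otimes v)$. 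Hence $\operatorname{trace}F(z) = r(1-|z|^2)^{-1}$, so for $|w| = 1$, $\kappa_0(w) = \lim_{s\nearrow1}(1-s^2)\,r(1-s^2)^{-1} = r$, and $\kappa(H) = \int_{\partial\mb{B}_d}r\,d\sigma = r = \rank(H)$.

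For the ``only if'' direction, assume $H$ is pure of finite rank $r$ and $\kappa(H) = r$. By Theorem \ref{thm:model_pure} we may take $H = K$, a subspace of $H^2_d\otimes\mb{C}^r$ invariant for $S^*\otimes I$, with $T$ the compression of $S\otimes I$; then $L := K^\perp$ is a submodule, and by Theorem \ref{thm:BLH} there are a Hilbert space $K_*$ and an inner multiplier $\Phi\in\cM_d(K_*,\mb{C}^r)$ with $P_L = M_\Phi M_\Phi^*$, hence $P_K = I - M_\Phi M_\Phi^*$. The crux is the curvature formula
\[
\kappa(H) = \int_{\partial\mb{B}_d}\operatorname{trace}\bigl(I_{\mb{C}^r} - \tilde{\Phi}(w)\tilde{\Phi}(w)^*\bigr)\,d\sigma(w),
\]
where $\tilde{\Phi}(w) = \lim_{s\nearrow1}\Phi(sw)$ is the radial boundary value, which exists a.e.\ and is a partial isometry by Theorem \ref{thm:GRS}; this is where $d<\infty$ is used. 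Granting this, by \eqref{eq:GRS} the rank of $\tilde{\Phi}(w)$ equals a.e.\ the constant $c := \sup_{\lambda\in\mb{B}_d}\dim(E_\lambda L)$, so the integrand equals $r - c$ a.e.\ and $\kappa(H) = r - c$. Thus $\kappa(H) = r$ forces $c = 0$, i.e.\ $E_\lambda L = \{0\}$ for every $\lambda\in\mb{B}_d$; since a vector-valued function in $H^2_d$ is determined by its point values, $L = \{0\}$, so $K = H^2_d\otimes\mb{C}^r$ and $H$ is unitarily equivalent to $H^2_d\otimes\Delta H$, the free Hilbert module of rank $r = \rank(H)$.

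The main obstacle is the curvature formula itself. One would first express $\operatorname{trace}F_T(z)$ via $\Phi$: from $\sum_iT_iT_i^* = P_K\bigl((I-P_{\mb{C}})\otimes I\bigr)P_K$ one gets $\Delta_T^2 = P_K(P_{\mb{C}}\otimes I)P_K$; combining this with $(I-T(z)^*)^{-1} = \bigl((I-\Sigma(z)^*)^{-1}\otimes I\bigr)\big|_K$ (valid since $K$ is co-invariant, where $\Sigma(z) := \sum_i\overline{z_i}S_i$), with $P_K = I - M_\Phi M_\Phi^*$, and with the identity $M_\Phi^*(k_z\otimes v) = k_z\otimes\Phi(z)^*v$ of Theorem \ref{thm:what_multpliers_do}, one obtains a closed form for $(1-|z|^2)\operatorname{trace}F_T(z)$ whose radial boundary limit, evaluated with the help of Theorem \ref{thm:GRS}, is $\operatorname{trace}(I_{\mb{C}^r} - \tilde{\Phi}(w)\tilde{\Phi}(w)^*)$; integrating against $\sigma$ and invoking $\kappa(H) = \int\kappa_0\,d\sigma$ with \eqref{eq:curv_def} gives the formula. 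I expect the delicate points to be the bookkeeping that reduces the compressed curvature density to the inner multiplier, and, above all, the passage to the radial boundary, which genuinely relies on the Greene--Richter--Sundberg theorem; an alternative route through the model isometry $W$ of Lemma \ref{lem:Poisson_kernel} --- showing directly that $\kappa(H) = r$ forces $I - WW^* = 0$ --- seems to run into the same boundary analysis.
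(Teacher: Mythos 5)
Your forward direction is correct and self-contained: the computations $\Delta = P_{\mb{C}}\otimes I_{\mb{C}^r}$, $(I-T(z))^{-1}(1\otimes v)=k_z\otimes v$, $(I-T(z)^*)^{-1}(k_z\otimes v)=(1-|z|^2)^{-1}k_z\otimes v$, and $P_{\mb{C}}k_z = 1$ all check out, giving $\operatorname{trace}F(z) = r(1-|z|^2)^{-1}$ and hence $\kappa = r$. Note that the survey itself gives no proof of this theorem --- it only cites \cite[Theorem 2.1]{Arv00} --- so there is nothing to compare against line by line; I am only assessing correctness.

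The converse direction, however, has a genuine gap that you yourself flag. Everything after the displayed curvature formula
\[
\kappa(H) = \int_{\partial\mb{B}_d}\operatorname{trace}\bigl(I_{\mb{C}^r}-\tilde{\Phi}(w)\tilde{\Phi}(w)^*\bigr)\,d\sigma(w)
\]
is fine (combined with Theorem \ref{thm:GRS} it gives $\kappa = r - c$ with $c=\sup_\lambda\dim E_\lambda L$, and $c=0$ forces $L=\{0\}$), but the formula itself is never established. Your sketch of how ``one would'' derive it correctly sets up $\Delta_T^2 = P_K(P_{\mb{C}}\otimes I)|_K$ and $(I-T(z)^*)^{-1}=((I-\Sigma(z)^*)^{-1}\otimes I)|_K$, but the passage to the radial boundary --- exchanging the limit $s\nearrow1$ with the trace and the integral, and identifying the limit with $\operatorname{trace}(I-\tilde\Phi\tilde\Phi^*)$ --- is precisely the hard analytic content of the Greene--Richter--Sundberg theorem. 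In effect you are assuming the conclusion of the later-stated result $\kappa(H)=\rank(H)-\sup_\lambda\dim(E_\lambda L)$ from \cite{GRS02}, which immediately implies the theorem you are trying to prove; citing that directly would make your converse a one-liner, but it uses machinery that is both deeper than and historically subsequent to Arveson's Theorem 2.1 (2000 vs.\ 2002). Arveson's original proof necessarily avoids the GRS boundary analysis --- it proceeds instead through asymptotic trace estimates tied to the model isometry $W$ of Lemma \ref{lem:Poisson_kernel}, showing that $\kappa(H)=\rank(H)$ forces $W$ to be surjective --- so even if you could close the boundary-limit gap, your route would be genuinely different from (and heavier than) the one in \cite{Arv00}. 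As written, the ``only if'' direction is an outline that defers the decisive estimate, not a proof.
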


The curvature invariant is evidently invariant under unitary equivalence. The above theorem shows that the curvature invariant contains non-trivial operator theoretic information. Arveson used the curvature invariant to prove the $\dim K =1$ case of Theorem \ref{thm:GRS} for the case where the submodule $L$ contains a polynomial \cite[Theorem E]{Arv00}.

\subsection{The Euler characteristic}
The analytically defined curvature invariant is closely tied to an algebraic invariant called the {\em Euler characteristic}. 

If $H$ is a finite rank contractive Hilbert module, then the linear space
\[
M_H = \{p \cdot \xi | \,p \in \mb{C}[z],\,  \xi \in \Delta H\}
\]
is a finitely generated Hilbert module over the ring $\mb{C}[z]$. By Hilbert's syzygy theorem \cite[Corollary 19.8]{Eisenbud} there is a finite free resolution 
\be\label{eq:free}
0 \rightarrow F_n  \rightarrow \ldots \rightarrow F_2 \rightarrow F_1 \rightarrow M_H \rightarrow 0
\ee
where each $F_k$ is the (algebraic) module direct sum of $\beta_k$ copies of $\mb{C}[z]$. 

\begin{definition}
The {\em Euler characteristic} of $H$ is defined by 
\[
\chi(H) = \sum_{k=1}^n (-1)^{k+1}\beta_k . 
\]
\end{definition}

\begin{remark}
One can show that $\chi(H)$ does not depend on the choice of free resolution (\ref{eq:free}). 
\end{remark}

\begin{theorem}
$0 \leq \kappa(H) \leq \chi(H) \leq \rank(H)$. 
\end{theorem}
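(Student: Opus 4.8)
I would treat the statement as three separate assertions: the algebraic bounds $0\le\chi(H)\le\rank(H)$, the trivial inequality $0\le\kappa(H)$, and the central inequality $\kappa(H)\le\chi(H)$; only the last needs real work. For the algebraic bounds, I would tensor the finite free resolution (\ref{eq:free}) over $\mb{C}[z]$ with the fraction field $\mb{C}(z)=\mb{C}(z_1,\dots,z_d)$. Localization is flat, so the sequence stays exact, and equating alternating sums of dimensions in this finite complex of finite-dimensional $\mb{C}(z)$-spaces yields $\dim_{\mb{C}(z)}\!\big(M_H\otimes_{\mb{C}[z]}\mb{C}(z)\big)=\sum_k(-1)^{k+1}\beta_k=\chi(H)$. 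Thus $\chi(H)=\rank_{\mb{C}[z]}(M_H)\ge0$, and since $M_H$ is the $\mb{C}[z]$-submodule of $H$ generated by $\Delta H$, hence by $\dim\Delta H=\rank(H)$ elements, its torsion-free rank is at most $\rank(H)$. The inequality $0\le\kappa(H)$ is immediate from the lemma of \cite{Arv00} recalled above, by integrating $0\le\kappa_0(w)$ over $\partial\mb{B}_d$.

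For $\kappa(H)\le\chi(H)$ the plan is to exhibit $\kappa$ as a growth rate and compare it with the Hilbert--Samuel function of $M_H$. Using the dilation of Theorem \ref{thm:model_pure}, realize $H$ as a subspace of $H^2_d\otimes\cD_T$ co-invariant for $S\otimes I$, so that $T^{*\alpha}h=(S^{*\alpha}\otimes I)h$ for $h\in H$. For $n\ge1$ put $\mathfrak{m}^nH:=\overline{\spn}\{T^\alpha h:|\alpha|\ge n,\ h\in H\}$; regrouping words shows $\mathfrak{m}^nH=\overline{\operatorname{ran}\Theta_T^n(I)}$, so
\[
H\ominus\mathfrak{m}^nH=\ker\Theta_T^n(I)=\bigcap_{|\alpha|=n}\ker T^{*\alpha}=H\cap P_{\le n-1}(H^2_d\otimes\cD_T),
\]
which has dimension at most $\rank(H)\binom{n+d-1}{d}<\infty$. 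The essential analytic step is the Tauberian reformulation
\[
\kappa(H)=\lim_{n\to\infty}\frac{d!}{n^{d}}\,\dim\big(H/\mathfrak{m}^nH\big),
\]
obtained by expanding $\operatorname{trace}F(rw)$ as a power series in $r$, matching its coefficients with the traces of the defect operators $\Theta_T^k(\Delta_T^2)$, and passing from the radial (Abel) limit to a Cesàro mean by a Hardy--Littlewood type Tauberian theorem, in the spirit of \cite{Arv00}.

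On the algebraic side, the identity $W^{*}(z^\alpha\otimes g)=T^\alpha\Delta g$ of Lemma \ref{lem:Poisson_kernel} shows that $M_H=\mb{C}[z]\!\cdot\!\Delta H$ is dense in $H$. Since $H/\mathfrak{m}^nH$ is finite-dimensional, the induced map $M_H\to H/\mathfrak{m}^nH$ is therefore onto, and it annihilates $\mathfrak{m}^nM_H$, so it factors through $M_H/\mathfrak{m}^nM_H$; hence $\dim\big(H/\mathfrak{m}^nH\big)\le\dim_{\mb{C}}\big(M_H/\mathfrak{m}^nM_H\big)$. Multiplying by $d!/n^{d}$ and letting $n\to\infty$: the left side tends to $\kappa(H)$ by the previous step, while the right side tends to the Hilbert--Samuel multiplicity $e(\mathfrak{m},M_H)$ of $M_H$ with respect to $\mathfrak{m}=(z_1,\dots,z_d)$ (the limit being $0$ when $\dim M_H<d$). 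By the associativity formula for multiplicities --- $(0)$ being the unique minimal prime of $\mb{C}[z]$ of coheight $d$ and $e(\mathfrak{m},\mb{C}[z])=1$ --- one gets $e(\mathfrak{m},M_H)=\rank_{\mb{C}[z]}(M_H)=\chi(H)$ when this rank is positive, and $e(\mathfrak{m},M_H)=0=\chi(H)$ otherwise (see \cite{Eisenbud}). Either way $\kappa(H)\le\chi(H)$, and combined with the first paragraph this gives $0\le\kappa(H)\le\chi(H)\le\rank(H)$.

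The main obstacle is the displayed asymptotic formula for $\kappa(H)$: turning the boundary integral of radial limits of $\operatorname{trace}F(rw)$ into the combinatorial count $\dim(H/\mathfrak{m}^nH)$ requires the Abelian/Tauberian machinery together with a precise identification of the power-series coefficients of $\operatorname{trace}F(rw)$ with the defect data $\operatorname{trace}\Theta_T^k(\Delta_T^2)$ (and, if one prefers the form of the formula stated with $\operatorname{trace}(I-\Theta_T^n(I))$ in place of $\dim(H/\mathfrak{m}^nH)$, a further check that these differ only by $o(n^{d})$). By contrast, flatness of localization, density of $M_H$ in $H$, and the elementary theory of Hilbert--Samuel multiplicities are all standard.
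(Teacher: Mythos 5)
The survey states this theorem without giving a proof (it is from \cite{Arv00}), so the comparison is with your argument alone.

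Your reductions of $0\le\chi(H)\le\rank(H)$ to localization at the fraction field, and of $0\le\kappa(H)$ to integrating $\kappa_0\ge 0$, are correct. The gap lies in the central step $\kappa(H)\le\chi(H)$, in the claimed ``Tauberian reformulation'' $\kappa(H)=\lim_n \frac{d!}{n^d}\dim(H/\mathfrak{m}^nH)$. Expanding $\operatorname{trace}F(rw)$, integrating over the sphere and applying a Hardy--Littlewood Tauberian theorem to the non-negative Taylor coefficients does give $\kappa(H)$ as a normalized limit of partial sums of $\operatorname{trace}\,\Theta_T^k(\Delta_T^2)$, i.e.
\[
\kappa(H)=d!\lim_{n\to\infty}\frac{\operatorname{trace}\bigl(I-\Theta_T^n(I)\bigr)}{n^d},\qquad I-\Theta_T^n(I)=\sum_{k<n}\Theta_T^k(\Delta_T^2).
\]
But this trace is \emph{not} $\dim(H/\mathfrak{m}^nH)$: the quotient $H/\mathfrak{m}^nH\cong\ker\Theta_T^n(I)$ is precisely the eigenspace for the eigenvalue $1$ of the positive contraction $I-\Theta_T^n(I)$, so one only knows
\[
\dim(H/\mathfrak{m}^nH)\;\le\;\operatorname{trace}\bigl(I-\Theta_T^n(I)\bigr),
\]
which points the wrong way for the chain $\kappa=\lim\frac{d!}{n^d}\dim(H/\mathfrak{m}^nH)\le\lim\frac{d!}{n^d}\dim(M_H/\mathfrak{m}^nM_H)=\chi$. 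The equality you assert is in fact true, but it is a substantial theorem (essentially the identification of Arveson's curvature with a Samuel multiplicity, requiring the machinery of \cite{GRS02,GRS05,XFang03}); it cannot be extracted from Abel/Cesàro manipulations alone, and the deferred ``$o(n^d)$ check'' is the heart of the matter, not a footnote.

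The argument is repaired cleanly by comparing the trace with the filtration of $M_H$ directly, avoiding $H/\mathfrak{m}^nH$ altogether. Using $W^*(z^\alpha\otimes g)=T^\alpha\Delta g$ from Lemma \ref{lem:Poisson_kernel}, one checks that $\Theta_T^k(\Delta_T^2)=W^*(P_k\otimes I_{\cD_T})W$ with $P_k$ the projection of $H^2_d$ onto homogeneous polynomials of degree $k$; hence $I-\Theta_T^n(I)=W^*(P_{<n}\otimes I)W$. This is a positive contraction whose range is $W^*\bigl((\deg<n)\otimes\cD_T\bigr)=\spn\{T^\alpha\Delta\xi : |\alpha|<n,\ \xi\in\cD_T\}=:M_H^{(n)}$, so
\[
\operatorname{trace}\bigl(I-\Theta_T^n(I)\bigr)\;\le\;\operatorname{rank}\bigl(I-\Theta_T^n(I)\bigr)\;=\;\dim M_H^{(n)}.
\]
Now $\{M_H^{(n)}\}$ is a degree filtration of $M_H$ whose associated graded module is a finitely generated graded $\mb{C}[z]$-module generated in degree $0$ by $\Delta H$, so the Hilbert--Samuel argument you already invoke gives $\lim_n d!\,n^{-d}\dim M_H^{(n)}=\chi(H)$. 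Combining the two displays yields $\kappa(H)\le\chi(H)$. This is essentially Arveson's own route in \cite{Arv00}; your version is conceptually on track but the pivot through $\dim(H/\mathfrak{m}^nH)$ introduces an unjustified (and wrongly oriented) conversion from traces to kernel dimensions.
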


\subsection{Graded modules and Arveson's ``Gauss-Bonnet" theorem}

A Hilbert module $H$ is said to be {\em graded} if there exists a strongly continuous unitary representation $\Gamma$ of the circle $\mb{T} = \{z \in \mb{C} : |z|=1\}$ on $H$ such that 
\[
\Gamma(\lambda) T_k \Gamma(\lambda)^{-1} = \lambda T_k \,\, k=1, \ldots, d, \, \, \lambda \in \mb{T}. 
\]
Denoting $H_n = \{h \in H : \Gamma(\lambda)h = \lambda^n h\}$, one obtains the decomposition 
\be\label{eq:decomposition}
H = \ldots H_{-1} \oplus H_0 \oplus H_1 \oplus H_2 \oplus \ldots, 
\ee
and every operator $T_k$ is of degree one in the sense that $T_k H_n \subseteq H_{n+1}$. The existence of a representation of $\mb{T}$ should be thought of as a kind of minimal symmetry that $H$ possesses. 

The Hilbert module $H^2_d \otimes K$ is a graded Hilbert module, and the decomposition (\ref{eq:decomposition}) is the natural one induced by the degree of polynomials (there are no negatively indexed summands in the grading of $H^2_d \otimes K$). If $I \triangleleft \mb{C}[z]$ is a homogeneous ideal, then its closure in $H^2_d$ is also a graded contractive Hilbert module. 

\begin{theorem}[\cite{Arv00}, Theorem B]\label{thm:curv_euler}
Let $H$ be a contractive, pure, finite rank and graded Hilbert module. Then 
\be\label{eq:curv_euler}
\kappa(H) = \chi(H). 
\ee
In particular, the curvature invariant is an integer. 
\end{theorem}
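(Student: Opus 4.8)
The plan is to show that, when $H$ is graded, both $\kappa(H)$ and $\chi(H)$ coincide with $\lim_{n\to\infty}\dim H_n/\binom{n+d-1}{d-1}$, the asymptotic density of the homogeneous dimensions of $H$; since $\chi(H)\in\mb{Z}$ by definition, integrality of $\kappa(H)$ is then automatic. First I would fix a convenient model. By Theorem~\ref{thm:model_pure}, $H$ is unitarily equivalent to a coinvariant subspace of $H^2_d\otimes\cD_T$; because the intertwining isometry $W$ is assembled from $\Delta_T$ and the $T_i^*$, each homogeneous for the circle action (see (\ref{eq:Poisson_kernel})), $W$ intertwines the grading of $H$ with the natural grading of $H^2_d\otimes\cD_T$ in which $\cD_T$ sits in degree $0$. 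Thus $H\cong(H^2_d\otimes\mb{C}^\rho)\ominus M$ with $M$ a homogeneous submodule having no degree-$0$ part, where $\rho=\dim\cD_T=\dim H_0=\rank(H)$, and the compression $T=P_H(S\otimes I)P_H$ satisfies $\Delta_T^2=P_{H_0}$ (from $\sum_i(S_i\otimes I)(S_i\otimes I)^*=I-P_{H_0}$ on $H^2_d\otimes\mb{C}^\rho$ together with co-invariance). In this picture the algebraic module $M_H=\mb{C}[z]\cdot\Delta_T H=\mb{C}[z]\cdot H_0$ is exactly the algebraic sum $\bigoplus_n H_n$ of the homogeneous subspaces, so its Hilbert function is $n\mapsto\dim H_n<\infty$.

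Second, the analytic side. For $|w|=1$ and $r\in(0,1)$ one has $T(rw)=rT(w)$ with $\|T(w)\|\le1$, so I would expand the Neumann series for $(I-rT(w)^*)^{-1}(I-rT(w))^{-1}$, use the multinomial identity $T(w)^n=\sum_{|\alpha|=n}\frac{n!}{\alpha!}\overline{w}^\alpha T^\alpha$ (valid since $T$ commutes), and integrate term by term over $\partial\mb{B}_d$ using $\int_{\partial\mb{B}_d}w^\beta\overline{w}^\alpha\,d\sigma(w)=\delta_{\alpha\beta}\frac{(d-1)!\,\alpha!}{(|\alpha|+d-1)!}$. Only diagonal terms survive, and with $\Theta_T^{\dagger}(A):=\sum_i T_i^*AT_i$ one gets
\[
\int_{\partial\mb{B}_d}\operatorname{trace}F(rw)\,d\sigma(w)=\sum_{n\ge0}\frac{r^{2n}}{\binom{n+d-1}{d-1}}\operatorname{trace}\big[(\Theta_T^{\dagger})^n(I)\,\Delta_T^2\big].
\]
By cyclicity of the (finite-rank) trace and the telescoping identity $\sum_{|\alpha|=n}\frac{n!}{\alpha!}T^\alpha\Delta_T^2(T^\alpha)^*=\Theta_T^n(I)-\Theta_T^{n+1}(I)=\Theta_T^n(\Delta_T^2)$, the $n$th summand equals $\operatorname{trace}\Theta_T^n(P_{H_0})$. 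Here the grading enters decisively: each $T_i$ raises degree by one, so $\Theta_T^n(P_{H_0})$ is supported on $H_n$; telescoping $I-\Theta_T^k(I)=\sum_{j<k}\Theta_T^j(P_{H_0})$, restricting to $H_n$ (only the $j=n$ term survives), and letting $k\to\infty$ with purity ($\Theta_T^k(I)\to0$) forces $\Theta_T^n(P_{H_0})|_{H_n}=I_{H_n}$, hence $\Theta_T^n(P_{H_0})=P_{H_n}$ and the $n$th summand is $\dim H_n$. Granting that the radial limit may be interchanged with integration over $\partial\mb{B}_d$ — which is part of the analysis underlying the lemma quoted above (Theorem A of \cite{Arv00}) — this yields $\kappa(H)=\lim_{r\to1^-}(1-r^2)\sum_{n\ge0}\frac{\dim H_n}{\binom{n+d-1}{d-1}}r^{2n}$.

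Third, the algebraic side. Taking a finite graded free resolution $0\to F_s\to\cdots\to F_1\to M_H\to0$ with $F_k$ a direct sum of $\beta_k$ shifted copies of $\mb{C}[z]$, additivity of the Hilbert series over exact sequences gives $\sum_n(\dim H_n)t^n=P(t)/(1-t)^d$, where $P(t)=\sum_k(-1)^{k+1}(\text{monomials recording the degree shifts in }F_k)$, so $P(1)=\sum_k(-1)^{k+1}\beta_k=\chi(H)$. A partial-fractions expansion of $P(t)/(1-t)^d$ then shows $\dim H_n/\binom{n+d-1}{d-1}\to P(1)=\chi(H)$ as $n\to\infty$, and by an Abelian limit theorem the weighted mean in the displayed formula has the same limit; hence $\kappa(H)=\chi(H)$, an integer. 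As a partial consistency check, Fatou applied to the displayed integral formula recovers the inequality $\kappa(H)\le\chi(H)$ already recorded above.

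The step I expect to be the main obstacle is precisely the interchange of $\lim_{r\to1}$ with $\int_{\partial\mb{B}_d}(\cdot)\,d\sigma$ in the passage from the radial-limit definition of $\kappa$ to its integrated form: there is no evident dominating function, and supplying this justification is the genuine analytic content hidden in the cited lemma. Once it is in hand, the circle symmetry has done its work — everything else reduces to the two elementary pieces above, the telescoping trace computation and the rationality of the Hilbert series — and the theorem, including the integrality assertion, follows.
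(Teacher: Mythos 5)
Your overall strategy --- reduce $\kappa(H)$ via the sphere integral to an Abel average of $t_n:=\operatorname{trace}\Theta_T^n(\Delta_T^2)$, and identify that average with the leading coefficient of the Hilbert polynomial of $M_H$, hence with $\chi(H)$ --- is in substance Arveson's route in \cite{Arv00}, and you have correctly isolated the principal analytic obstacle: interchanging the radial limit with integration over $\partial\mb{B}_d$, which is precisely the ``asymptotic formula'' supplied by Theorem A of that paper. The term-by-term integral computation is correct, and so is the Hilbert-series argument giving $\dim H_n/\binom{n+d-1}{d-1}\to\chi(H)$.

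There is, however, a genuine gap in the middle. The chain of claims --- that $W$ intertwines the grading of $H$ with the grading of $H^2_d\otimes\cD_T$ ``with $\cD_T$ in degree $0$,'' hence $\rank(H)=\dim H_0$, hence $\Delta_T^2=P_{H_0}$, hence $\Theta_T^n(\Delta_T^2)=P_{H_n}$ and $t_n=\dim H_n$ --- is false. Take $H=\mb{C}e_0\oplus\mb{C}e_1\oplus\mb{C}f_1$, $T_1e_0=e_1$, $T_1e_1=T_1f_1=0$, $T_2=\cdots=T_d=0$, graded by $H_0=\mb{C}e_0$, $H_1=\mb{C}e_1\oplus\mb{C}f_1$. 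This is a graded, pure, finite-rank $d$-contraction, yet $\Delta_T^2=P_{\mb{C}e_0\oplus\mb{C}f_1}$, $\rank(H)=2\neq1=\dim H_0$, and $t_0=2\neq\dim H_0$, $t_1=1\neq\dim H_1$. The source of the error is that $\Delta_T^2=I-\Theta_T(I)$ is degree-preserving, so $\cD_T$ is a graded subspace of $H$, but it need not sit in degree $0$: degree-$n$ vectors not reached by $T$ from degree $n-1$ also contribute to $\cD_T$, and $W$ intertwines the grading on $H$ with the grading on $H^2_d\otimes\cD_T$ in which $\cD_T$ carries this inherited (nontrivial) grading, not the one concentrating $\cD_T$ in degree $0$. (Likewise $M_H=\mb{C}[z]\cdot\cD_T$ need not equal $\mb{C}[z]\cdot H_0$, although the identity $M_H=\bigoplus_n H_n$ you actually use remains valid.) What survives, and what the proof really needs, is the asymptotic statement: since $\cD_T$ is finite-dimensional, $\Delta_T^2$ is supported on finitely many graded pieces, from which one gets $\sum_{j\le n}t_j=\sum_{m\le n}\dim H_m+O(1)$; this bounded discrepancy vanishes after normalizing by $n^d$, so the Ces\`{a}ro/Abel averages of $t_n$ and of $\dim H_n$ coincide and $\kappa(H)=\chi(H)$ still follows. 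Arveson's proof works at this asymptotic level throughout; yours becomes correct once the pointwise identity $t_n=\dim H_n$ is replaced by the asymptotic one.
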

In \cite[Theorem 18]{XFang03} the above theorem was generalized to quotients of $H^2_d \otimes \mb{C}^r$ by polynomially generated submodules. 

\subsection{Integrality of the curvature invariant}

Theorem \ref{thm:curv_euler} naturally raised the question whether the curvature invariant is always an integer. Using Theorem \ref{thm:GRS}, Greene, Richter and Sundberg proved that this is so. 

Recall that if $H$ is a pure, contractive Hilbert module then, by Theorem \ref{thm:model_pure}, $H$ can be identified with the quotient of $H^2_d \otimes K$ by a submodule $L$, where $\dim K = \rank(H)$. 

\begin{theorem}[\cite{GRS02}]
Let $H$ be a pure, contractive Hilbert module of finite rank. Then 
\[ 
\kappa(H) = \rank (H) - \sup_{\lambda \in \mb{B}_d} \dim( E_\lambda L).
\]
In particular, $\kappa(H)$ is an integer. 
\end{theorem}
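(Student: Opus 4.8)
The plan is to pass to the universal model of Theorem~\ref{thm:model_pure}, obtain a closed formula for $\operatorname{trace} F(z)$, and then read off its boundary behaviour using the Greene--Richter--Sundberg Theorem~\ref{thm:GRS}. First I would use Theorem~\ref{thm:model_pure} to realize $H$ as a co-invariant subspace $N = W(H) \subseteq H^2_d \otimes K$ for $S \otimes I_K$, where $K := \cD_T$ has $\dim K = r := \rank(H) < \infty$; writing $L = N^\perp$ for the associated submodule, one has $H \cong (H^2_d \otimes K)/L$ and the governing tuple is $T = P_N(S \otimes I_K)P_N$. With this identification the statement to be proved becomes $\kappa(H) = r - \sup_{\lambda \in \mb{B}_d}\dim(E_\lambda L)$.

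The computational heart is the identity
\[
\operatorname{trace} F(z) = \sum_{e}\|P_N(k_z \otimes e)\|^2 \qquad (z \in \mb{B}_d),
\]
the sum running over an orthonormal basis $\{e\}$ of $K$. Everything here rests on the observation that, $N$ being co-invariant and $L$ invariant for $S \otimes I_K$, we have $P_N(S_j \otimes I_K)P_L = 0$ for every $j$. Writing $A(z) = \sum_j \overline{z}_j(S_j \otimes I_K)$ on $H^2_d \otimes K$, this relation gives at once $\Delta_T^2 = P_N(P_{\mb{C}} \otimes I_K)P_N$ and $P_N A(z) = P_N A(z)P_N$, hence $(I - T(z))^{-1} = P_N(I - A(z))^{-1}|_N$; and since $A(z)^*$ preserves $N$ we also get $(I - T(z)^*)^{-1} = (I - A(z)^*)^{-1}|_N$. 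Inserting these into $F(z) = \Delta_T(I - T(z)^*)^{-1}(I - T(z))^{-1}\Delta_T$, cycling the trace, discarding a redundant boundary projection via $P_N(I - A(z))^{-1}P_L = 0$, and using the elementary evaluation $(I - A(z))^{-1}(1 \otimes e) = k_z \otimes e$, yields the displayed formula. Since $\|P_N(k_z \otimes e)\|^2 = \|k_z\|^2 - \|P_L(k_z \otimes e)\|^2$ and $\|k_z\|^2 = (1 - |z|^2)^{-1}$, this gives $(1 - |z|^2)\operatorname{trace} F(z) = r - \|k_z\|^{-2}\sum_e \|P_L(k_z \otimes e)\|^2$.

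Next I would identify the remaining term. By the Beurling-type Theorem~\ref{thm:BLH} there are an auxiliary Hilbert space $K_*$ and an inner multiplier $\Phi \in \cM_d(K_*, K)$ with $P_L = M_\Phi M_\Phi^*$, and Theorem~\ref{thm:what_multpliers_do} gives $M_\Phi^*(k_z \otimes e) = k_z \otimes \Phi(z)^* e$, so that $\|P_L(k_z \otimes e)\|^2 = \|k_z\|^2 \|\Phi(z)^* e\|^2$ and the sum over $e$ equals $\|k_z\|^2 \operatorname{trace}(\Phi(z)\Phi(z)^*)$ (a finite quantity since $\dim K = r$ and $\|\Phi(z)\| \le 1$). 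Thus $(1 - |z|^2)\operatorname{trace} F(z) = r - \operatorname{trace}(\Phi(z)\Phi(z)^*)$. Taking $z = \rho w$ and letting $\rho \nearrow 1$, Theorem~\ref{thm:GRS} supplies, for almost every $w \in \partial\mb{B}_d$, a partial isometry $\tilde\Phi(w) = \lim_{\rho \nearrow 1}\Phi(\rho w)$ with $\operatorname{trace}(\tilde\Phi(w)\tilde\Phi(w)^*) = \rank\tilde\Phi(w) = \sup_{\lambda \in \mb{B}_d}\dim(E_\lambda L)$. Hence $\kappa_0(w) = r - \sup_\lambda\dim(E_\lambda L)$ for almost every $w$, and integrating over $\partial\mb{B}_d$ yields $\kappa(H) = \rank(H) - \sup_\lambda\dim(E_\lambda L)$, visibly a non-negative integer.

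I expect the only real obstacle to be Theorem~\ref{thm:GRS} itself, which is where the integrality is secretly manufactured: the trace bookkeeping in the middle step is routine once $P_N(S_j \otimes I_K)P_L = 0$ is used systematically, and the multiplier step is short, but without Theorem~\ref{thm:GRS} one only obtains $\kappa(H) = \rank(H) - \int_{\partial\mb{B}_d}\operatorname{trace}(\tilde\Phi \tilde\Phi^*)\,d\sigma$, with no reason for the integrand to be almost everywhere constant. A minor technical point is the convergence $\operatorname{trace}(\Phi(\rho w)\Phi(\rho w)^*) \to \operatorname{trace}(\tilde\Phi(w)\tilde\Phi(w)^*)$, which is unproblematic because $\dim K = r < \infty$ and the radial limits in Theorem~\ref{thm:GRS} hold in the appropriate (norm) sense.
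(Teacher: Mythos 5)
Your proof is correct, and it reproduces the argument of the cited reference \cite{GRS02} (the paper states the theorem with a citation only): one passes to the model $N = L^\perp \subseteq H^2_d \otimes \cD_T$, derives the finite-rank trace identity $(1-|z|^2)\operatorname{trace}F(z) = \rank(H) - \operatorname{trace}(\Phi(z)\Phi(z)^*)$ via the block-triangularity of $A(z)$ with respect to $N \oplus L$ together with $(I-A(z))^{-1}(1\otimes e) = k_z\otimes e$ and the Beurling representation $P_L = M_\Phi M_\Phi^*$, and then reads off the boundary value using the Greene--Richter--Sundberg radial-limit theorem. The two spots requiring care --- the redundant projection dropped via $P_N(I-A(z))^{-1}P_L = 0$, and upgrading the radial limit of $\Phi$ to convergence of the (finite) trace because $\dim \cD_T < \infty$ --- are both handled correctly.
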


\subsection{The curvature invariant as index}\label{sec:curv_ind}

The following theorem of Gleason, Richter and Sundberg exhibits the curvature invariant as the index of a Fredholm tuple (for more details on spectral theory and Fredholm theory of commuting $d$-tuples, see the chapter on Taylor functional calculus by M\"{u}ller \cite{Mu25} in this reference work).

\begin{theorem}[\cite{GRS05}, Theorem 4.5]\label{thm:GRS_index}
Let $T$ be a pure $d$-contraction of finite rank. Denote the essential spectrum of $T$ by $\sigma_e(T)$. Then $\sigma_e(T) \cap \mb{B}_d$ is contained in an analytic variety, and for all $\lambda \in \mb{B}_d \setminus \sigma_e(T)$ the tuple $T - \lambda = (T_1 - \lambda_1, \ldots, T_d - \lambda_d)$ is Fredholm, and 
\[
\kappa(T) = (-1)^d \operatorname{ind}(T - \lambda). 
\]
\end{theorem}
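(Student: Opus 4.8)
The plan is to pass to the dilation model, to encode $T$ by a Beurling‑type inner multiplier, and then to compute the index of $T-\lambda$ through the Koszul (Taylor) complex, reducing the whole statement to the behaviour of the free module $H^2_d\otimes\cD_T$ together with the curvature dimension formula $\kappa(H)=\rank(H)-\sup_\lambda\dim(E_\lambda L)$ proved by Greene, Richter and Sundberg in the previous subsection.

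First I would fix the setup. By Theorem \ref{thm:model_pure}, $H$ is the quotient module $(H^2_d\otimes\cD_T)/L$ for a submodule $L\subseteq H^2_d\otimes\cD_T$, so there is a short exact sequence of $\mb{C}[z]$-modules
\[
0\longrightarrow L\longrightarrow H^2_d\otimes\cD_T\longrightarrow H\longrightarrow 0,
\]
with $\rank(H)=\dim\cD_T<\infty$. By the McCullough--Trent theorem (Theorem \ref{thm:BLH}) there are a Hilbert space $K_*$ and an inner multiplier $\Phi\in\cM_d(K_*,\cD_T)$ with $L=M_\Phi(H^2_d\otimes K_*)$; evaluating at $\lambda$ gives $E_\lambda L=\Phi(\lambda)K_*$, hence $\dim E_\lambda L=\rank\Phi(\lambda)$. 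Since $\Phi$ is holomorphic, $\lambda\mapsto\rank\Phi(\lambda)$ attains a maximal value $r$ on a dense open set, and the rank-drop set $V:=\{\lambda\in\mb{B}_d:\rank\Phi(\lambda)<r\}$ is an analytic subvariety of $\mb{B}_d$ (locally the common zero set of the $r\times r$ minors of $\Phi$, which are themselves multipliers). By the dimension formula quoted above, $r=\sup_\lambda\dim E_\lambda L$ and $\kappa(H)=\dim\cD_T-r$; in particular $r\le\dim\cD_T<\infty$.

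Next I would run the Koszul computation. The starting point is the standard fact that for every $\lambda\in\mb{B}_d$ the tuple $S-\lambda$ on $H^2_d$ is Fredholm with Koszul cohomology $\mb{C}$ in the top degree $d$ (realized by point evaluation $E_\lambda$) and $0$ in all other degrees --- the top part being immediate from the relation $\sum_iS_iS_i^*=I-P_{\mb{C}}$, and the vanishing in intermediate degrees being classical. Tensoring with $\cD_T$, the tuple $S\otimes I_{\cD_T}-\lambda$ is Fredholm with Koszul cohomology $\cD_T$ in degree $d$ and $0$ elsewhere, so $\operatorname{ind}(S\otimes I_{\cD_T}-\lambda)=(-1)^d\dim\cD_T$. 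A short exact sequence of modules yields a short exact sequence of Koszul complexes, hence a long exact sequence of Koszul cohomologies, and with additivity of the index this gives, whenever the relevant tuples are Fredholm,
\[
\operatorname{ind}(T-\lambda)=(-1)^d\dim\cD_T-\operatorname{ind}\big((S\otimes I_{\cD_T})|_L-\lambda\big).
\]
Thus the theorem reduces to showing: for $\lambda\in\mb{B}_d\setminus V$ the tuple $(S\otimes I_{\cD_T})|_L-\lambda$ is Fredholm, its Koszul cohomology is concentrated in the top degree, and there it is isomorphic to $E_\lambda L$, so that $\operatorname{ind}((S\otimes I_{\cD_T})|_L-\lambda)=(-1)^d r$. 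Granting this, $\operatorname{ind}(T-\lambda)=(-1)^d(\dim\cD_T-r)=(-1)^d\kappa(H)$ for all $\lambda\in\mb{B}_d\setminus V$, so in particular $\sigma_e(T)\cap\mb{B}_d\subseteq V$. Finally, $\mb{B}_d\setminus\sigma_e(T)$ is open and connected (the complement of a proper analytic subvariety of the ball is connected), the index is locally constant there, and it equals $(-1)^d\kappa(H)$ on the dense subset $\mb{B}_d\setminus V$; hence it equals $(-1)^d\kappa(H)$ throughout $\mb{B}_d\setminus\sigma_e(T)$.

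The hard part is the displayed reduction: controlling the Koszul complex of $(S\otimes I_{\cD_T})|_L-\lambda$ for a general invariant subspace $L$. One cannot in general take $M_\Phi$ to be an isometry with $K_*$ finite-dimensional, so $L$ need not be isomorphic to a free module, and one must genuinely analyse the local structure of $L$ near a point of maximal fibre dimension. The natural route is to iterate Theorem \ref{thm:BLH}, resolving $H$ by free modules $H^2_d\otimes K_j$ (with $K_0=\cD_T$, $K_1=K_*$, and $K_{j+1}$ obtained from $\ker M_{\Phi_j}$), so that the Koszul cohomology of $T-\lambda$ is computed by the evaluated complex $\cdots\to K_1\xrightarrow{\Phi(\lambda)}K_0\to 0$ (the associated hyper-cohomology spectral sequence degenerates because each free term has its Koszul cohomology in a single degree); one then has to show that for $\lambda\notin V$ this evaluated complex has finite-dimensional homology, equal to $\operatorname{coker}\Phi(\lambda)$ in degree $0$ and $0$ in positive degrees, i.e.\ that the syzygies of $L$ are locally free away from $V$. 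This is where the Drury-Arveson-specific input --- closed range of the Koszul differentials and the boundary behaviour of inner multipliers as in Theorem \ref{thm:GRS} --- enters, and it is the technical core of \cite{GRS05}.
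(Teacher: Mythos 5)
The paper states this theorem without proof, citing \cite{GRS05}, so there is no in-paper argument to compare against; I am assessing your sketch on its own terms. The scaffolding is correct and well-chosen: realizing $H=(H^2_d\otimes\cD_T)/L$ via Theorem \ref{thm:model_pure}, writing $L=M_\Phi(H^2_d\otimes K_*)$ via Theorem \ref{thm:BLH}, the identification $E_\lambda L=\Phi(\lambda)K_*$ (so $\dim E_\lambda L=\rank\Phi(\lambda)\le\dim\cD_T<\infty$), the rank-drop variety $V$, additivity of the Koszul Euler characteristic over $0\to L\to H^2_d\otimes\cD_T\to H\to 0$, the top-concentrated Koszul cohomology of $S\otimes I_{\cD_T}-\lambda$, the curvature formula $\kappa(H)=\dim\cD_T-r$ from \cite{GRS02}, and the connectedness argument off $V$. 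All of this correctly reduces the theorem to proving Fredholmness of $(S\otimes I_{\cD_T})|_L-\lambda$ and the count $\operatorname{ind}\big((S\otimes I_{\cD_T})|_L-\lambda\big)=(-1)^d r$ for $\lambda\notin V$.

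The genuine gap is the step you yourself flag, and the free-resolution route you sketch for it has real obstructions. Iterating Theorem \ref{thm:BLH} gives a (typically infinite) resolution $\cdots\to H^2_d\otimes K_2\to H^2_d\otimes K_1\to H^2_d\otimes\cD_T\to H\to 0$ whose auxiliary spaces $K_j$, $j\ge1$, are generally infinite-dimensional and need not stabilize. The assertion that the evaluated complex $\cdots\to K_1\xrightarrow{\Phi(\lambda)}K_0\to 0$ has homology only in degree $0$ for $\lambda\notin V$ amounts to $\ker\Phi(\lambda)=E_\lambda(\ker M_\Phi)$ and its analogues one step up; only the containment $E_\lambda(\ker M_\Phi)\subseteq\ker\Phi(\lambda)$ is automatic, and the reverse is a nontrivial lifting problem for which there is no coherence or generic-local-freeness theorem for submodules of $H^2_d\otimes K$ to invoke. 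With infinite-dimensional $K_j$ and an unbounded double complex, the hypercohomology spectral-sequence argument also needs a separate convergence step before one can read off a finite Euler characteristic. Finally, the boundary regularity of inner multipliers from Theorem \ref{thm:GRS}, which you propose as the Drury--Arveson input, controls radial limits on $\partial\mb{B}_d$ and is not obviously the right tool for the interior Fredholm computation. In short, the reduction is right and the missing step is honestly flagged, but the completion you propose is not one that visibly closes; the interior local analysis of the Koszul complex of $(S\otimes I_{\cD_T})|_L-\lambda$ near a point of maximal fiber dimension, which is what the cited proof actually carries out, would have to be supplied.
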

This theorem implies that the curvature invariant is stable under compact perturbations:

\begin{corollary}
Let $T$ and $T'$ be two pure $d$-contractions of finite rank. If $T_i - T_i'$ is compact for all $i$, then $\kappa(T) = \kappa(T')$. 
\end{corollary}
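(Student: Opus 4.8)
The plan is to reduce everything to Theorem \ref{thm:GRS_index}, using only the stability of the essential (Taylor) spectrum and of the multivariable Fredholm index under compact perturbations. First I would observe that since $T_i - T_i'$ is compact for every $i$, the commuting tuples $T$ and $T'$ have the same essential Taylor spectrum, $\sigma_e(T) = \sigma_e(T')$: the Koszul complexes of $T-\lambda$ and $T'-\lambda$ differ by compact operators, so the cohomology of one is finite dimensional in every degree precisely when that of the other is, i.e.\ $\lambda \notin \sigma_e(T) \iff \lambda \notin \sigma_e(T')$. Write $\Sigma$ for this common set. By Theorem \ref{thm:GRS_index}, $\Sigma \cap \mb{B}_d$ is contained in an analytic variety, hence is a proper, nowhere dense subset of $\mb{B}_d$, so in particular $\mb{B}_d \setminus \Sigma \neq \emptyset$.

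Next I would fix any $\lambda \in \mb{B}_d \setminus \Sigma$. Then Theorem \ref{thm:GRS_index}, applied to the two pure finite-rank $d$-contractions $T$ and $T'$, gives that $T-\lambda$ and $T'-\lambda$ are both Fredholm tuples and
\[
\kappa(T) = (-1)^d \operatorname{ind}(T-\lambda), \qquad \kappa(T') = (-1)^d \operatorname{ind}(T'-\lambda).
\]
Since $(T-\lambda) - (T'-\lambda) = T - T'$ is a tuple of compact operators, the associated Koszul complexes differ by compact operators, and the Euler-characteristic index of a Fredholm Koszul complex is invariant under such perturbations; equivalently, the index of a Fredholm tuple depends only on the image of the tuple in the Calkin algebra. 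Hence $\operatorname{ind}(T-\lambda) = \operatorname{ind}(T'-\lambda)$, and therefore $\kappa(T) = \kappa(T')$, as claimed.

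The only point requiring genuine care is the compact-perturbation invariance of the multivariable Fredholm index; for a single operator ($d=1$) this is classical, and in general it is a known fact of multi-parameter spectral theory (see, e.g., the chapter on multi-parameter spectral theory by V.\ M\"{u}ller in this Handbook), proved by passing the Koszul complex to the Calkin category and using additivity of the index in short exact sequences. One should be mildly cautious not to attempt a naive homotopy $t \mapsto T + t(T'-T)$, since the intermediate tuples need not be commuting; the argument must instead be carried out in the Calkin algebra, where $T$ and $T'$ have the same image. Granting this standard input, the corollary is immediate from Theorem \ref{thm:GRS_index}.
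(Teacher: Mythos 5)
Your proposal is correct and is precisely the derivation that the paper intends when it says the corollary follows from Theorem \ref{thm:GRS_index} (the paper states the corollary without spelling out a proof). The two essential inputs you isolate --- that the Taylor essential spectrum and the Fredholm index of a commuting tuple depend only on its image in the Calkin algebra, and that the theorem supplies a nonempty common set of Fredholm points in $\mb{B}_d$ --- are exactly what is needed, and your caution about avoiding the naive straight-line homotopy (whose intermediate tuples need not commute) is well taken.
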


\subsection{Generalizations} 

The curvature invariant has also been defined for row contractions which are not necessarily commutative \cite{Kribs,Popescu01}, and this notion has been generalized further for completely positive maps \cite{MS03}. An extension to the setting where row-contractivity is replaced by a more general condition $p(T,T^*) \geq 0$ (for some polynomial $p$) is considered in \cite{Englis05}.

\section{Essential normality and the conjectures of Arveson and Douglas}

In \cite{Arv00} it was shown that the curvature invariant (as well as the Euler characteristic) is stable under finite rank perturbations, but left open whether it is invariant under compact perturbations. This problem was taken up in \cite{Arv02} for graded Hilbert modules. By exhibiting the curvature invariant of $H$ as the index of a certain operator --- the {\em Dirac operator} of the $d$-contraction $T$ associated with $H$ --- it was shown that if $H$ is essentially normal then $\kappa(T) = \kappa(T')$ whenever $T'$ is unitarily equivalent to $T$ modulo compacts. 
Based on these considerations Arveson raised the question whether every pure graded contractive Hilbert module of finite rank is essentially normal \cite[Problem 2]{Arv02}. 
The curvature invariant was eventually shown to be equal to the index of a Fredholm tuple --- hence invariant under  compact perturbations --- by Gleason, Richter and Sundberg (see \ref{sec:curv_ind} above), but Arveson's question remained a subject of growing interest for other reasons, see \cite{Dou06b}. 
In fact, following the examination of several classes of examples, Arveson conjectured that every pure graded contractive Hilbert module of finite rank is $p$-essentially normal for all $p>d$. By Theorem \ref{thm:model_pure} this can be reformulated as follows. 

\begin{conjecture}
Let $K$ be a finite dimensional Hilbert space, and let $L \subseteq H^2_d \otimes K$ be a graded submodule. Then $(H^2_d \otimes K)/L$ is $p$-essentially normal for all $p>d$. 
\end{conjecture}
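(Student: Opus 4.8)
The plan is to split each commutator of the quotient tuple into a piece that is automatically controlled by the (already known) essential normality of the $d$-shift, plus a single family of ``cross terms'', and then to estimate these cross terms degree by degree using the Hilbert function of the ideal.

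\emph{Reductions.} Since $L$ is graded, it is the closure of a homogeneous submodule of $\mb{C}[z]\otimes K$. Standard reductions allow one to take $K=\mb{C}$ and $L=[I]$ for a homogeneous ideal $I\triangleleft\mb{C}[z]$; the essential difficulty is already present in this case. Write $\cF_I=H^2_d\ominus[I]$, $P=P_{\cF_I}$, $Q=I-P=P_{[I]}$, and $S^I_i=PS_iP$; by Theorem~\ref{thm:model_pure} the module in question is $\cF_I$ equipped with the tuple $S^I=(S^I_1,\dots,S^I_d)$. Because $[I]$ is $S$-invariant and $\cF_I$ is $S^*$-invariant, one has $PS_iQ=0$, and a short computation gives, on $\cF_I$,
\[
[S^I_i,(S^I_j)^*]=P[S_i,S_j^*]|_{\cF_I}+D_j^*D_i,\qquad D_i:=QS_iP=P_{[I]}S_iP_{\cF_I}.
\]
By Section~\ref{subsec:ess_norm_d_shift}, $[S_i,S_j^*]\in\mathcal{L}^p$ for all $p>d$, hence so is its compression $P[S_i,S_j^*]|_{\cF_I}$. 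Therefore \emph{the conjecture for $(H^2_d\otimes K)/L$ is equivalent to the assertion that $D_i\in\mathcal{L}^{2p}$ for all $p>d$ and all $i$} --- by H\"older, $D_j^*D_i\in\mathcal{L}^p$ for all $i,j$ if and only if each $D_i\in\mathcal{L}^{2p}$.

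\emph{Degree-by-degree estimate.} With respect to the grading $H^2_d=\bigoplus_nE^n$, $[I]=\bigoplus_nI_n$, $\cF_I=\bigoplus_n(\cF_I)_n$ with $(\cF_I)_n\cong(\mb{C}[z]/I)_n$, each of $S_i$, $P$, $[S_i,S_j^*]$ and $D_i$ is graded, and $D_i$ is block-diagonal with $n$-th block a finite-rank map $(\cF_I)_n\to I_{n+1}$ of rank at most $h_I(n):=\dim(\cF_I)_n$. For large $n$, $h_I(n)$ equals the Hilbert polynomial of $\mb{C}[z]/I$, of degree $\dim V(I)-1\le d-1$, so $h_I(n)=O(n^{d-1})$. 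Likewise, the formula $[S_i^*,S_j]=(1+N)^{-1}(\delta_{ij}1-S_jS_i^*)$ gives $\|[S_i,S_j^*]|_{E^n}\|=O(1/n)$, so $P[S_i,S_j^*]|_{\cF_I}$ already lies in $\mathcal{L}^p$ for every $p>\dim V(I)$. Thus the whole problem reduces to the uniform bound
\[
\|D_i|_{(\cF_I)_n}\|=O(1/n)\qquad(n\to\infty),
\]
i.e.\ a uniform-in-$n$ lower bound on the angle between $S_i(\cF_I)_n$ and $I_{n+1}$: granting it, $\|D_i\|_{\mathcal{L}^q}^q\lesssim\sum_nh_I(n)n^{-q}=O\bigl(\sum_nn^{\dim V(I)-1-q}\bigr)<\infty$ for $q>\dim V(I)\le d$, which proves (indeed over-proves, giving Douglas's sharper form of the conjecture) the statement.

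\emph{The main obstacle.} The entire content of the argument is this last estimate $\|D_i|_{(\cF_I)_n}\|=O(1/n)$, and no proof valid for an arbitrary homogeneous ideal is known. It is an explicit computation for monomial ideals, and it has been established for principal ideals, for ideals in a small number of variables, and for radical ideals whose zero variety is smooth and meets $\partial\mb{B}_d$ transversally --- in the latter cases by localizing the question to a neighbourhood of $V(I)\cap\partial\mb{B}_d$ and comparing $\cF_I$, modulo compacts, with Toeplitz-type operators on the variety. Two natural routes to the general case are: (i) a flat (Gr\"obner) degeneration of $I$ to its initial monomial ideal, together with a stability statement asserting that Schatten-class membership of the $D_i$ passes to the limit --- the stability being the open point; and (ii) a direct reproducing-kernel estimate bounding $\|D_i|_{(\cF_I)_n}\|$ in terms of norms of the reproducing kernels of $\cF_I$ and their derivatives. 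In every approach the same difficulty recurs: the algebra of a general homogeneous ideal --- primary decomposition, embedded components, singularities of $V(I)$ along $\partial\mb{B}_d$ --- can be arbitrarily complicated, and the available analytic control of $D_i$ degrades accordingly. This is precisely why the statement remains only a conjecture.
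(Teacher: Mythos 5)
The statement you were given is not a theorem in the paper but a \emph{conjecture} --- Arveson's essential normality conjecture --- and the paper offers no proof, only a survey of partial results, the sharper reformulation of Douglas (Conjecture~\ref{conj:AD}), and a note added in proof recording later progress. You have correctly recognized this: your submission is not a proof but a reduction to an open estimate, stated candidly as such. The only thing to review, then, is whether that reduction is sound and whether it matches what is known, and the answer is yes on both counts. The commutator identity is correct: writing $P=P_{\cF_I}$, $Q=I-P$, $D_i=QS_iP$, co-invariance of $\cF_I$ gives $PS_j^*P=S_j^*P$, from which
\[
[S^I_i,(S^I_j)^*]=PS_iS_j^*P-PS_j^*S_iP+D_j^*D_i=P[S_i,S_j^*]P+D_j^*D_i
\]
on $\cF_I$. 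Since the $n$-th graded block of $[S_i,S_j^*]$ has norm $O(1/n)$ while the rank of its compression to $(\cF_I)_n$ is at most $h_I(n)=O(n^{\dim V(I)-1})$, the compressed commutator already lies in $\mathcal{L}^p$ for $p>\dim V(I)$, and the whole difficulty is absorbed into Schatten membership of the $D_i$. This is exactly the scheme of Arveson \cite{Arv05,Arv07} (he called the relevant condition on the $D_i$ ``stability''), of Shalit \cite{Sh10}, and of Kennedy--Shalit \cite{KenSha12} for the special cases listed in the paper.

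Two small imprecisions are worth fixing. First, the conjecture is not \emph{equivalent} to $\|D_i|_{(\cF_I)_n}\|=O(1/n)$; that bound is sufficient and gives Douglas's stronger $p>\dim V(I)$, but for Arveson's $p>d$ any decay $\|D_i|_{(\cF_I)_n}\|=O(n^{-\alpha})$ with $\alpha>\tfrac12$ already suffices: with rank $\le h_I(n)$ one gets $\|D_i\|_{\mathcal{L}^q}^q\lesssim\sum_n n^{\dim V(I)-1-\alpha q}<\infty$ for $q>\dim V(I)/\alpha$, and $\dim V(I)/\alpha<2d$ whenever $\alpha>\tfrac12$ (since $\dim V(I)\le d$). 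Second, the preliminary reduction from an arbitrary graded $L\subseteq H^2_d\otimes K$ to $K=\mb{C}$, $L=[I]$ is standard but, as the paper notes via \cite{Sh10}, is \emph{not} lossless in the range of Schatten exponents; if one wants the sharp $p>d$ for the original module one must keep track of this loss rather than dismissing it with ``standard reductions.'' Neither point changes the substance: your reduction is correct, is the standard one, and your identification of the open step is accurate.
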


This conjecture attracted a lot of attention \cite{Arv05,Arv07,Dou06a,Dou06b,DS11,DW11,DW12,E10,FX12,GuoWang,GuoWangII,GZ13,Ken13,KenSha12,Sh11}, where the conjecture was proved in particular classes of submodules, but it is still far from being solved . 
In all cases where the conjecture was verified, the following stronger conjecture due to Douglas was also shown to hold.

\begin{conjecture}\label{conj:AD}
Let $K$ be a finite dimensional Hilbert space, and let $L \subseteq H^2_d \otimes K$ be a graded submodule. Then $(H^2_d \otimes K)/L$ is $p$-essentially normal for all $p>\dim (L)$. 
\end{conjecture}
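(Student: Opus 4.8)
The plan is to route the operator-theoretic statement through the geometry of the zero variety of $L$, reducing it step by step to an integrability estimate near that variety on the sphere. First I would pass to the scalar case: for a graded submodule $L \subseteq H^2_d \otimes K$ with $\dim K < \infty$ the essential difficulty is already present when $K = \mb{C}$, in which case $L = \overline{I}$ for a homogeneous ideal $I \triangleleft \mb{C}[z]$ and $\dim(L)$ is the dimension of the affine cone $V(I)$, equivalently the Krull dimension of $\mb{C}[z]/I$. By Theorem \ref{thm:model_pure} the quotient module is the co-invariant space $N = \overline{I}^\perp$ carrying the compressed tuple $T = P_N S P_N$, and a standard argument --- using the identity $[S_i^*, S_j] = (1+N)^{-1}(\delta_{ij} 1 - S_j S_i^*)$ from Section \ref{subsec:ess_norm_d_shift} --- reduces $p$-essential normality of $T$ to the membership of $[P_{\overline{I}}, M_{z_i}]$ in the Schatten class $\mathcal{L}^{2p}$ for all $i$. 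The number $\dim(L)$ enters already here, as the growth rate $\dim(N \cap E^n) \sim n^{\dim(L)-1}$ of the degree-$n$ part of $\mb{C}[z]/I$, which is what pins the attainable Schatten exponents.

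Next I would localize near $V(I) \cap \partial \mb{B}_d$. Away from the variety the reproducing kernels $k_\lambda$ of $H^2_d$ do not concentrate and the relevant operators are smoothing, so the off-variety part of $[P_{\overline{I}}, M_{z_i}]$ is trace-class and harmless; the whole problem is the behavior in a shrinking neighborhood of the boundary zero set. In the model situation where $V(I)$ is \emph{smooth} and meets $\partial \mb{B}_d$ transversally, a tubular-neighborhood parametrization together with the known asymptotics of the Drury-Arveson kernel restricted to the subvariety makes the Schatten $\mathcal{L}^{2p}$ norm of the commutator comparable to an integral of the shape $\int_{\mb{B}_d} \dist(z, V(I))^{-a}(1-|z|^2)^{b}\, d\lambda(z)$, and the fact that $V(I) \cap \partial \mb{B}_d$ has real codimension $2(d - \dim V(I))$ in the sphere is exactly what makes this integral converge precisely when $p > \dim(L)$. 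This ``geometric'' form of the conjecture is by now reasonably well understood in the smooth transversal case. Finally, the singular case would be attacked by Hironaka's resolution of singularities: choose a proper birational $\pi : \widetilde{X} \to V(I)$ with $\widetilde{X}$ smooth, pull the estimates back along $\pi$, and absorb the exceptional divisor into the error term, using that a blow-up only lowers the relevant dimension while contributing controlled Jacobian factors.

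The hard part is this last step, and I do not expect it to go through without genuinely new input. Two obstructions stand out. First, $(H^2_d \otimes K)/L$ depends on $I$ rather than on the set $V(I)$: when $I$ is not radical, the scheme-theoretic multiplicity must be matched against a geometric estimate that resolution of singularities only supplies for the reduced variety, so one needs a version of the argument that carries the scheme structure along. Second, the Drury-Arveson kernel does not transform well under a blow-up --- the pulled-back geometry degenerates along the exceptional divisor --- so transporting membership in $\mathcal{L}^{2p}$ through $\pi$ demands uniform estimates that are not presently available in the generality required. A tempting shortcut is to flatly degenerate $I$ to its initial monomial ideal $\operatorname{in}(I)$ by a Gr\"obner degeneration, since for monomial ideals the statement is well understood (already in Arveson's work); but this stalls on the fact that $p$-essential normality, unlike the Hilbert function, is not known to be semicontinuous in the favorable direction along a flat family, so the commutator Schatten norms may jump up in the limit. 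For these reasons the conjecture remains open in general, even though this plan, in various incarnations, already yields it for smooth transversal varieties and for a range of singular sub-cases such as principal ideals, low-dimensional varieties, and monomial ideals.
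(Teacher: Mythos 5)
This statement is Conjecture \ref{conj:AD}, not a theorem: the paper offers no proof of it, and in fact explicitly says the conjecture ``is still far from being solved,'' listing only partial results (monomial ideals, principal ideals, $\dim I\leq 1$, $d\leq 3$, unions of subspaces, and --- in the note added in proof --- varieties smooth away from the origin). So there is no proof in the paper to compare your attempt against. The honest conclusion at the end of your write-up, that the strategy stalls and the conjecture remains open, is therefore exactly right, and you should be credited for not manufacturing a bogus proof.

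As to the substance of your sketch: it is broadly consistent with what the paper reports from the literature. The reduction to $K=\mb{C}$ and a homogeneous scalar ideal is real but not free --- the paper cites \cite{Sh10} for it and notes it costs you something in the range of admissible $p$ --- so you should not present ``the essential difficulty is already present when $K=\mb{C}$'' as if the reduction were automatic. Your identification of $\dim(L)$ with the Krull dimension of $\mb{C}[z]/I$ and with the degree of the Hilbert polynomial plus one matches the paper's definition, and the codimension count on the sphere that makes $p>\dim(L)$ the right threshold in the smooth transversal case is correct. Your two named obstructions (scheme-theoretic multiplicity for non-radical $I$, and the breakdown of kernel estimates along the exceptional divisor of a resolution) are genuine and are essentially why the problem is hard; likewise the remark that $p$-essential normality is not known to behave semicontinuously along a Gr\"obner degeneration is a real gap, not a technicality. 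In short: this is a reasonable survey of the state of the art and of the obstructions, not a proof, and the paper itself makes no claim that a proof exists.
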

Here $\dim (L)$ is defined as follows. Let $H = H_0 \oplus H_1 \oplus \ldots$ be the grading of $(H^2_d \otimes K)/L$. It is known that there is a polynomial $p_L(x)$ such that $p_L(n) = \dim H_n$ for sufficiently large $n$. Then $\dim L$ is defined to be $\deg p_L(x) + 1$.

\subsection{$K$-homology}

Let $I \triangleleft \mb{C}[z]$ be an ideal of infinite co-dimension. Denote $S^I = P_{I^\perp} S P_{I^\perp}$. Thus, $S^I$ is the $d$-contraction acting on the quotient Hilbert module $H^2_d/ [I]$. 
Define $\cT_I = C^*(S^I,1)$, and let $\cK$ denote the compact operators on $H^2_d \ominus I$. 

\begin{lemma}
$\cK \subseteq \cT_I$. 
\end{lemma}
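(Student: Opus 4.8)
The plan is to show that $\cT_I = C^*(S^I, 1)$ contains enough ``obviously compact'' operators to generate $\cK$, the ideal of all compacts on $H^2_d \ominus [I]$. The key structural fact is that $S^I = P_{[I]^\perp} S P_{[I]^\perp}$ is a compression of the $d$-shift to a co-invariant subspace, and the $d$-shift is $p$-essentially normal for $p > d$ (Section \ref{subsec:ess_norm_d_shift}); in particular the cross-commutators $[S_i, S_j^*]$ are compact on $H^2_d$, and it is a standard fact that such relations pass to compressions onto co-invariant subspaces, so $[S^I_i, (S^I_j)^*]$ is compact on $H^2_d \ominus [I]$ as well. Thus $\cT_I$ maps onto a commutative C*-algebra modulo $\cK \cap \cT_I$, and the real task is to produce at least one nonzero compact operator inside $\cT_I$ and then bootstrap to all of $\cK$ by irreducibility.

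First I would establish irreducibility of $\cT_I$ acting on $H^2_d \ominus [I]$. The vacuum vector (the constant function $1$) is a cyclic vector for $S^I$: since $[I]$ has infinite codimension, $1 \notin [I]$, and applying polynomials $p(S^I)$ to $1$ gives the image of $\mb{C}[z]$ in the quotient, which is dense. Moreover $1$ is also cyclic for $(S^I)^*$ in the appropriate sense, or more directly one checks that any closed subspace reducing all $S^I_i$ and containing no scalar multiple of $1$ must be $\{0\}$ — this is the usual argument for the Toeplitz algebra: a reducing subspace $M$ decomposes $H^2_d \ominus [I]$, the projection $P_M$ commutes with each $S^I_i$ and each $(S^I_i)^*$, hence commutes with $P_{\mb C}= I - \sum S^I_i (S^I_i)^*$ (using the analogue of property (2) in Section \ref{subsec:thedshift}, $\sum S_i S_i^* = I - P_{\mb C}$, which descends to the quotient), so $P_M$ fixes the one-dimensional space $\mb C \cdot 1$; combined with cyclicity this forces $P_M \in \{0, I\}$. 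Hence $\cT_I$ is irreducible.

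Next I would exhibit a nonzero compact operator in $\cT_I$. The natural candidate is a rank-one projection onto the vacuum. From $\sum_i S_i S_i^* = I - P_{\mb C}$ on $H^2_d$, compressing gives $\sum_i S^I_i (S^I_i)^* = I - Q$ where $Q$ is the projection of $H^2_d \ominus [I]$ onto $(\mb C \cdot 1)$ — note $1$ is orthogonal to $[I]$, so this rank-one projection lives inside the quotient space. Rearranging, $Q = I - \sum_i S^I_i (S^I_i)^* \in \cT_I$, and $Q$ is a rank-one (hence compact) nonzero element of $\cT_I$. Finally, since $\cT_I$ is irreducible and contains a nonzero compact operator, a classical theorem (essentially Kadison's transitivity / the fact that an irreducible C*-algebra containing one nonzero compact contains all of them — see \cite[Corollary I.10.4]{Davidson's C* book}, or argue directly) yields $\cK \subseteq \cT_I$.

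The main obstacle, such as it is, is the passage from ``$[S_i, S_j^*]$ compact on $H^2_d$'' to ``$[S^I_i, (S^I_j)^*]$ compact on the quotient'' — this needs the identity expressing the compression of a product in terms of products of compressions plus a correction term supported on the ``defect,'' together with the fact that $S$ is a row contraction so the relevant correction operators are themselves controlled; but in fact for this particular lemma one does not even need essential normality of the quotient — the self-contained route above through $Q = I - \sum_i S^I_i(S^I_i)^* $ and irreducibility suffices. I would therefore present the irreducibility argument and the identification of $Q$ as the two substantive steps, and invoke the standard irreducible-algebra-with-a-compact theorem to conclude.
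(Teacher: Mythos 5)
Your strategy --- exhibit a nonzero compact inside $\cT_I$ and combine with irreducibility --- is the right one, but one sub-claim is wrong and needs repair. You assert that $1$ is orthogonal to $[I]$, so that $Q := I_N - \sum_i S^I_i(S^I_i)^*$ is the rank-one \emph{projection} onto $\mb{C}\cdot 1$, where $N := H^2_d \ominus [I]$. This is false for general ideals of infinite codimension: it holds precisely when $I$ is contained in the maximal ideal at the origin (e.g.\ when $I$ is homogeneous), but the lemma is stated without that restriction. For instance $I = (z_1 - 1/2) \triangleleft \mb{C}[z_1,z_2]$ has infinite codimension yet contains polynomials not vanishing at $0$, so $1 \not\perp [I]$. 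Correspondingly the cyclic vector should be $\xi := P_N 1$, not $1$.

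The error is harmless because what one actually gets is still a nonzero finite-rank operator. Since $[I]$ is $S$-invariant, $N$ is co-invariant, so $S_j^* P_N = P_N S_j^* P_N$, hence $S^I_i (S^I_j)^* = P_N S_i S_j^* P_N\big|_N$, and compressing $\sum_i S_i S_i^* = I - P_{\mb{C}}$ gives
\[
Q = I_N - \sum_i S^I_i (S^I_i)^* = P_N P_{\mb{C}} P_N\big|_N ,
\]
the rank-one positive operator $h \mapsto \langle h, \xi\rangle\, \xi$. Since $[I]$ has infinite codimension, $1 \notin [I]$ (else $[I]$ would contain all polynomials and be dense), so $\xi \neq 0$ and $Q$ is a nonzero compact in $\cT_I$ --- though a projection only if $\|\xi\|=1$. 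Your irreducibility argument then goes through verbatim with $\xi$ in place of $1$: a reducing projection $P_M$ commutes with $Q$, hence preserves $\operatorname{ran} Q = \mb{C}\xi$, so $\xi \in M$ or $\xi \perp M$; and $\xi$ is cyclic because $p(S^I)\xi = P_N p(S) 1 = P_N p$ for every polynomial $p$ (again using $S$-invariance of $[I]$), which are dense in $N$. The dichotomy forces $M = N$ or $M = \{0\}$, and the standard fact that an irreducible C*-algebra containing a nonzero compact contains all of $\cK$ finishes the proof. Your opening remarks about essential normality of the quotient are, as you yourself note, not needed for this lemma.
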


If $H^2_d / I$ is essentially normal, then by the Lemma one has the following exact sequence
\be
0 \longrightarrow \cK \longrightarrow \cT_I \longrightarrow C(X) \longrightarrow 0.
\ee
It can be shown (see, e.g., \cite[Section 5]{GuoWang}) that if $I$ is homogeneous then $X = V(I) \cap \partial \mb{B}_d$, where $V(I) = \{z \in \mb{C}^d : p(z) = 0 \textrm{ for all } p \in I\}$. Thus one obtains an element in the odd $K$-homology group of the space $V(I) \cap \partial \mb{B}_d$. Douglas raises in \cite[Section 3]{Dou06b} the problem of determining which element of $K_1(V(I) \cap \partial \mb{B}_d)$ this extension gives rise to, and conjectures that it is a certain specific element, the so-called {\em fundamental class} of $V(I) \cap \partial \mb{B}_d$. Guo and Wang have found some evidence for this conjecture, see \cite{GuoWang, GuoWangII}.

\subsection{Some positive results}

In this section some positive  results in the direction of Conjecture \ref{conj:AD} are listed. For simplicity, only the case $L = [I]$, where $I$ is a homogeneous ideal in $\mb{C}[z]$,  is treated. There is not much loss in this; \cite[Corollary 8.4]{Arv07} reduces the problem to the case where the submodule $L \subseteq H^2_d \otimes K$ is generated by linear homogeneous polynomials, and \cite[Section 5]{Sh11} reduces the problem further to the case where $\dim K = 1$ and $L = [I]$ is the closure of a homogeneous ideal $I$ that is generated by scalar valued polynomials of degree $2$ (the second reduction changes the range of $p$'s for which $p$-essential normality holds).  

\begin{theorem}
Let $I$ be a homogeneous ideal in $\mb{C}[z]$, and let $L = [I]$ be its closure in $H^2_d$. If $I$ satisfies any one of the following assumptions, then $H^2_d/L$ is $p$-essentially normal for all $p>\dim I$. 
\begin{enumerate}
\item $I$ is generated by monomials. 
\item $I$ is principal. 
\item $\dim I \leq 1$.
\item $d \leq 3$.  
\item $I$ is the radical ideal corresponding to a union of subspaces. 
\end{enumerate}
\end{theorem}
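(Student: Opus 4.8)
The plan is to handle all five cases by a single reduction. Write $L = [I]$, $N = L^{\perp}$ and $T_{i} = P_{N} S_{i} P_{N}$, so that, by Theorem \ref{thm:model_pure}, $H^{2}_{d}/L$ is the Hilbert module determined by $T$. Since $N$ is co-invariant one has $T_{i}^{*} = S_{i}^{*}|_{N}$, and a short computation yields
\[
[T_{i}, T_{j}^{*}] = P_{N}[S_{i}, S_{j}^{*}]P_{N} + [P_{L}, S_{j}]^{*}\,[P_{L}, S_{i}], \qquad [P_{L}, S_{i}] = P_{L} S_{i} P_{N}.
\]
Because $I$ is homogeneous, $L$ and $N$ are graded, every operator here is block diagonal for $H^{2}_{d} = \bigoplus_{n} H_{n}$, and the $n$th block acts on a space of dimension $\dim(N \cap H_{n}) = \dim(\mb{C}[z]/I)_{n} \sim c\,n^{\dim I - 1}$. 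For the first term, the identity $[S_{i}^{*}, S_{j}] = (1+N)^{-1}(\delta_{ij}1 - S_{j}S_{i}^{*})$ from Section \ref{subsec:ess_norm_d_shift} gives $\|[S_{i}, S_{j}^{*}]|_{H_{n}}\| = O(1/n)$, so the sum of the $p$th powers of the $\mathcal{C}_{p}$-norms of its blocks is dominated by $\sum_{n} n^{\dim I - 1}\,n^{-p}$, which converges for \emph{every} $p > \dim I$, with no hypothesis on $I$. Hence the theorem reduces, in each case, to the ``defect estimate''
\[
\big\| P_{L} S_{i} P_{N \cap H_{n}} \big\| = O(n^{-1/2}),
\]
since then the second term also lies in $\mathcal{C}_{p}$ for $p > \dim I$ by the same counting. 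Proving this estimate --- that is, measuring how far the submodule $L$ is from reducing in each degree --- is the real work, and each of the five hypotheses is precisely a tool for that.

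Cases (1)--(3) I would do almost directly. For \emph{monomial} $I$ the projections $P_{L}, P_{N}$ are the obvious coordinate projections, so $P_{L} S_{i} P_{N} z^{\alpha}$ is a multiple of $z^{\alpha + e_{i}}$, nonzero only if $z^{\alpha} \notin I$ but $z^{\alpha + e_{i}} \in I$; in that case some generating monomial has $z_{i}$-degree exactly $\alpha_{i} + 1$, so $\alpha_{i}$ is bounded by the largest generator degree and the weight $\|z^{\alpha + e_{i}}\|/\|z^{\alpha}\| = ((\alpha_{i}+1)/(n+1))^{1/2}$ is $O(n^{-1/2})$ uniformly. If $\dim I = 0$ then $H^{2}_{d}/L$ is finite dimensional and there is nothing to prove; if $\dim I = 1$ then, since finite-codimensional changes (in particular passing to the radical) do not affect essential normality, $I$ reduces to the vanishing ideal of finitely many lines through the origin, a case of (5) with one-dimensional subspaces. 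For \emph{principal} $I = (q)$, $q$ homogeneous of degree $t$, one has $L = \overline{qH^{2}_{d}}$ and $N = \ker M_{q}^{*}$; the idea is to express $P_{L}$ through $M_{q}$ and to expand $P_{L} S_{i} P_{N}$ using the commutators $[M_{z_{i}}^{*}, M_{q}]$, which are $O(1/n)$ in each degree since $M_{q}$ is a degree-$t$ polynomial in the $S_{j}$'s. The subtlety is that $M_{q}$ need not have closed range (for $d \ge 2$, $t \ge 1$, $q$ vanishes on a boundary set), so one cannot literally invert $M_{q}^{*}M_{q}$; the careful analysis of the defect of $M_{q}$ is the Douglas / Guo--Wang argument, and it produces $p > \dim I = d - 1$.

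Case (5), with $I = I(V)$ and $V = V_{1} \cup \cdots \cup V_{m}$ a union of linear subspaces, I would prove by induction on $m$. For $m = 1$: a unitary change of ambient coordinates carries a $k$-dimensional $V_{1}$ to a coordinate subspace and identifies $\cH_{V_{1}}$, up to a bounded and boundedly invertible map, with $H^{2}_{k}$, which is $p$-essentially normal for $p > k = \dim I$. For the inductive step, write $V = V' \cup V_{m}$ with $V' = V_{1} \cup \cdots \cup V_{m-1}$ and use a Mayer--Vietoris relation linking $\cH_{V}$ to $\cH_{V'} \oplus \cH_{V_{m}}$ and to $\cH_{V' \cap V_{m}}$, the last being the vanishing-ideal quotient of a union of fewer subspaces of lower dimension, covered by the inductive hypothesis. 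Essential normality passes through such an extension provided the connecting maps are controlled, and the analytic heart is an angle estimate: $\cH_{V'}$ and $\cH_{V_{m}}$ inside $\cH_{V}$ become parallel only as one approaches $\partial\mb{B}_{d}$, and at a rate that keeps the relevant difference of projections in the correct Schatten ideal; propagating exponents through the induction leaves $p > \max_{j} \dim V_{j} = \dim I$. (For $m = 2$ this is the Guo--Wang argument.)

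Case (4), $d \le 3$, is where I expect the real obstacle. For $d = 1$ it is trivial; for $d = 2$, every homogeneous ideal, after passing to its radical and modulo finite-codimensional changes, is a product of powers of linear forms, reducing it to (1) and (5). For $d = 3$ there is no such normal form: one must use the classification of homogeneous ideals and their primary decompositions in three variables, reduce an arbitrary such ideal component by component to a combination of the principal and union-of-subspaces cases plus a correction term, and --- this is the hard part --- show that each reduction step (taking primary or saturated components, and reassembling) changes the defect operator $P_{L} S_{i} P_{N \cap H_{n}}$ only by a term whose $\mathcal{C}_{2p}$ block-norms stay summable, so that the Schatten exponent remains pinned at $p > \dim I$ rather than degrading at each step. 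This exponent-bookkeeping through a chain of algebraic reductions is the genuine difficulty, and it is also exactly what has so far blocked the passage to $d \ge 4$.
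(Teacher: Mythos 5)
The paper does not give a proof of this theorem: its ``proof'' is a pointer to the literature (item (1) to \cite{Arv05,Dou06a}, item (5) to \cite{KenSha12}, items (2)--(4) to \cite{GuoWang}), so your sketch has to be measured against those papers rather than against an argument in the source. Your common framework is exactly the one all of them use: the identity $[T_i,T_j^*]=P_N[S_i,S_j^*]P_N+(P_LS_jP_N)^*(P_LS_iP_N)$ for a graded submodule $L=[I]$, the rank bound $\dim(N\cap H_n)\sim c\,n^{\dim I-1}$ coming from the Hilbert polynomial, and the $O(1/n)$ uniform estimate on the blocks of $[S_i,S_j^*]$, which together localize the conjecture in the Schatten behaviour of the defect $P_LS_iP_{N\cap H_n}$. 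Your verification of the $O(n^{-1/2})$ block-norm estimate in the monomial case, via $\alpha_i\le\max_k(\beta_k)_i-1$ and the weight $\bigl((\alpha_i+1)/(n+1)\bigr)^{1/2}$, is essentially the Arveson--Douglas argument, and your readings of case (2) (the obstacle is the non-closed range of $M_q$) and case (5) (Mayer--Vietoris over the components with a Schatten angle estimate propagated through the induction, as in Kennedy--Shalit) capture the structure of those proofs correctly.

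The genuine gap is in case (3), and it then infects your $d=2$ step inside case (4). You assert that ``finite-codimensional changes (in particular passing to the radical) do not affect essential normality'' and conclude that $\dim I=1$ reduces to the vanishing ideal of finitely many lines, i.e.\ to a sub-case of (5). But passing from $I$ to $\sqrt I$ is \emph{not} a finite-codimensional change in general. When $\dim I=1$, both $\dim(\mb{C}[z]/I)_n$ and $\dim(\mb{C}[z]/\sqrt I)_n$ are eventually constant, so $\dim\bigl([\sqrt I]\ominus[I]\bigr)\cap H_n$ is eventually a constant $\ge 0$ that is typically strictly positive --- e.g.\ for $I=(z_1^2)\subset\mb{C}[z_1,z_2]$ the gap has dimension $1$ in every degree $n\ge2$ --- so $[\sqrt I]\ominus[I]$ is infinite-dimensional and $[\sqrt I]$ is not a finite-rank perturbation of $[I]$. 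You therefore cannot reduce to the radical for free; handling the non-radical part is a real ingredient of the Guo--Wang proof of (3) and cannot be waved away. The same lacuna recurs in your $d=2$ reduction, and for $d=3$ there is a further point you do not flag: the remaining slice $\dim I=2$ has codimension-one zero locus, so $\sqrt I$ is principal but $I$ itself need not be, and so it does not simply fall under (2) together with (5) via ``classification plus exponent bookkeeping''. These two points --- the non-radical part in dimension one, and non-principal codimension-one ideals in $d=3$ --- are precisely where a genuine proof has to do work that your sketch elides.
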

\begin{proof}
The first item is proved in \cite{Arv05,Dou06a,Sh11} and the last one is proved in \cite{KenSha12}; the rest are proved in \cite{GuoWang}.
\end{proof}

In \cite{KenSha15} a more operator-algebraic approach was initiated to detect the essential normality of quotient modules. 
While progress has regularly been made by verifying that a certain class of ideals satisfies the essential normality conjecture, \cite{KenSha15} supplied evidence for the conjecture by confirming that some of its consequences hold for all homogeneous ideals\footnote{The homogeneous ideal needs to be ``sufficiently non-trivial"; see \cite{KenShaCorr}.}. 
For example, it was shown that the operator algebra generated by the image of $S^I$ in $\cT_I/\cK$ is equal to the closure of the polynomials in the supremum norm on $X=V(I) \cap \partial \mb{B}_d$, and hence that this operator algebra has the C*-envelope $C(X)$ as predicted by the conjecture (whereas essential normality of $S^I$ is equivalent to the stronger requirement that the C*-algebra generated by the image of $S^I$ in the quotient is equal to $C(X)$). 
Further, it was shown that $I$ satisfies the conjecture if and only if the tuple $S^I$ is \emph{hyperrigid} in the sense of Arveson. 
The connection to hyperrigidity was further pursued in \cite{CH18,CT21}, however, new classes of ideals for which this approach leads to a confirmation of the essential normality conjecture remain to be found. 

\subsection{Further positive results and a non-graded counter example} 

Conjecture \ref{conj:AD} is stated for quotients of $H^2_d \otimes K$ by a graded submodule $L$. 
There is reason to believe that the conclusion is true also for the case where $L$ is generated by $K$-valued polynomials, indeed some positive results have been obtained for quasi-homogeneous submodules \cite{BS18, DS11,GuoWangII,GZ13} or in the case of principal submodules \cite{DW11,FX12,FX18}.  
The conjecture was verified for varieties smooth away from the origin, by Engli\v{s} and Eschmeier \cite{EE15} and independently by Douglas, Tang and Yu \cite{DTY16}, and later for more complex configurations \cite{WD18}. 
In recent years progress has been made in several different directions; the reader is referred to the survey \cite{GW_Surv} for a more detailed account than can be given here. 

The positive results notwithstanding, it is important to note that the conjecture cannot be stretched further to arbitrary submodules. In \cite[p. 72]{GRS05} an example is given of a submodule $L \subset H^2_2$ such that $L$ (and therefore also the quotient $H^2_2 / L$) is not essentially normal. Thus, in general, a pure $d$-contraction of finite rank need not be essentially normal.

\section{The isomorphism problem for complete Pick algebras}\label{sec:isomorphism}

Let $V\subseteq \cM_d$ be a variety as in Section \ref{subsec:quot_var}. A natural problem is to study how the structures of $V$ and $\cM_V$ are related, and to try to classify the algebras $\cM_V$ in terms of the varieties. 
Theorem \ref{thm:universal_Pick_algebra} gives this problem additional motivation. 
The circle of problems related to this theme is referred to as \emph{the isomorphism problem}. 
We refer the reader to the survey \cite{SalSh16} for a detailed treatment. 

\subsection{Isometric and completely isometric isomorphism}

Let $\Aut(\mb{B}_d)$ denote the group of automorphisms of the ball, that is, the biholomorphisms of $\mb{B}_d$ onto itself. 
\begin{theorem}[\cite{FX11} Section 2; see also \cite{DavPitts2} Section 4;\cite{DRS11}, Theorem 9.2; \cite{Popescu10b}, Theorems 3.5 and 3.10]\label{thm:aut_unitary}
For every $\phi \in Aut(\mb{B}_d)$ there exists a unitary $U: H^2_d \rightarrow H^2_d$ given by 
\be\label{eq:unitary}
Uh(z) = (1-|\phi^{-1}(0)|^2)^{1/2}k_{\phi^{-1}(0)}(z) h(\phi(z)) .
\ee
Conjugation with $U$ is an automorphism $\Phi$ of $\cM_d$ and implements  composition with $\phi$,
\[
\Phi(f) = UfU^* = f\circ \phi. 
\]
\end{theorem}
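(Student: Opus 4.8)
The plan is to produce $U$ by its action on the reproducing kernels, using the classical transformation law for the kernel of $H^2_d$ under automorphisms of the ball, and then to extract the action on $\cM_d$ directly from the multiplier characterization of Theorem \ref{thm:what_multpliers_do}. The one substantive input is the identity
\[
1 - \langle \phi(z),\phi(w) \rangle = \frac{(1-|a|^2)(1-\langle z,w\rangle)}{(1-\langle z,a\rangle)(1-\langle a,w\rangle)}, \qquad z,w \in \mb{B}_d ,
\]
where $a = \phi^{-1}(0)$; I would deduce it by factoring $\phi = U_0\varphi_a$ with $U_0$ a linear unitary and $\varphi_a$ the standard involutive automorphism interchanging $0$ and $a$ (the unitary factor changes neither side, and for $\varphi_a$ this is the standard identity of \cite[Section 2.2]{RudinBall}). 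Writing $c(z) = (1-|a|^2)^{1/2}(1-\langle z,a\rangle)^{-1} = (1-|a|^2)^{1/2}k_a(z)$, which is analytic and zero-free on $\mb{B}_d$, the identity rearranges into
\[
k(z,w) = c(z)\,\overline{c(w)}\, k(\phi(z),\phi(w)), \qquad z,w \in \mb{B}_d .
\]

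Next I would define $W$ on the dense subspace $\spn\{k_w : w \in \mb{B}_d\}$ by $Wk_w = \overline{c(w)}k_{\phi(w)}$, extended linearly. The displayed kernel identity is exactly the statement that $W$ preserves the inner products among kernel functions (this also gives well-definedness of $W$), so $W$ extends to an isometry of $H^2_d$; since $\phi$ maps $\mb{B}_d$ onto $\mb{B}_d$, the range of $W$ contains every kernel function and hence is dense, and being closed it must be all of $H^2_d$, so $W$ is unitary. I would then set $U = W^*$; the defining relation for $W$ together with $\langle h, k_\lambda\rangle = h(\lambda)$ gives at once $(Uh)(z) = \langle h, Wk_z\rangle = c(z)\,h(\phi(z))$, which is (\ref{eq:unitary}). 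This route also sidesteps having to check a priori that the formula (\ref{eq:unitary}) lands in $H^2_d$.

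For the statement about $\cM_d$, I would fix $f \in \cM_d$ and evaluate $UM_f^*U^* = W^*M_f^*W$ on a kernel function, using $M_f^*k_\lambda = \overline{f(\lambda)}k_\lambda$, the relation $Wk_z = \overline{c(z)}k_{\phi(z)}$, and $W^*W = I$ (which yields $W^*k_{\phi(z)} = \overline{c(z)}^{-1}k_z$); the outcome is $W^*M_f^*Wk_z = \overline{(f\circ\phi)(z)}\,k_z$ for every $z \in \mb{B}_d$. Since $UM_f^*U^*$ is bounded, the scalar case of Theorem \ref{thm:what_multpliers_do} then forces $f\circ\phi \in \cM_d$ with $M_{f\circ\phi}^* = UM_f^*U^*$, i.e. $UM_fU^* = M_{f\circ\phi}$. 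Running the same computation for the automorphism $\phi^{-1}$ and for $U^*$ shows that conjugation by $U^*$ carries $\cM_d$ into $\cM_d$ as $g\mapsto g\circ\phi^{-1}$, so $\Phi(f) = UfU^* = f\circ\phi$ is a bijective homomorphism of $\cM_d$, automatically completely isometric because it is spatially implemented by a unitary.

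The only part that is not pure bookkeeping is the kernel transformation identity in the first step; the mild obstacle there is simply arranging the automorphism group of $\mb{B}_d$ so that the identity is available for an arbitrary $\phi$, which the factorization $\phi = U_0\varphi_a$ handles. Everything after that — unitarity of $W$, the formula for $U$, and the conjugation computation — is routine once Theorem \ref{thm:what_multpliers_do} is invoked.
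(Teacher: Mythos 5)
The paper states this result without proof, citing \cite{DavPitts2,DRS11,Popescu10b}; there is therefore no in-paper argument to compare against. Your proof is correct and is essentially the standard argument appearing in those references, in particular it is the line taken in \cite[Theorem 9.2]{DRS11}: one uses the Rudin identity $1 - \langle \phi(z),\phi(w)\rangle = (1-|a|^2)(1-\langle z,w\rangle)/\bigl((1-\langle z,a\rangle)(1-\langle a,w\rangle)\bigr)$ (reduced via the factorization $\phi = U_0\varphi_a$) to obtain $k(z,w) = c(z)\overline{c(w)}k(\phi(z),\phi(w))$, defines $W$ on kernel functions by $Wk_w = \overline{c(w)}k_{\phi(w)}$, checks inner-product preservation and density of the range to get a unitary, and then sets $U = W^*$ to recover (\ref{eq:unitary}). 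Deriving $UM_fU^* = M_{f\circ\phi}$ from $M_f^* k_\lambda = \overline{f(\lambda)}k_\lambda$ and the converse direction of Theorem \ref{thm:what_multpliers_do}, and getting surjectivity of $\Phi$ by running the argument for $\phi^{-1}$, is exactly the intended bookkeeping. Two small remarks worth making explicit in a write-up: first, well-definedness of $W$ on the (non-orthogonal but total) kernel set is exactly the inner-product-preservation computation, as you note; second, constructing $U$ as the adjoint of an isometry with dense range is what guarantees that the formula $(Uh)(z) = c(z)h(\phi(z))$ produces a function that actually lies in $H^2_d$, which would not be obvious if one tried to verify (\ref{eq:unitary}) directly.
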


The following theorem due to Davidson, Ramsey and Shalit completely solves the classification problem of the algebras $\cM_V$ up to completely isometric isomorphism. 
\begin{theorem}[\cite{DRS15}, Theorems 4.4 and 5.10. See also \cite{AL11}]\label{thm:cis_aut}
The algebras $\cM_V$ and $\cM_W$ are completely isometrically isomorphic if and only if there exists $\phi \in \Aut(\mb{B}_d)$ such that $\phi(W) = V$. Every completely isometric isomorphism $\Phi : \cM_V \rightarrow \cM_W$ arises as $\Phi(f) = f \circ \phi$ where $\phi$ is such an automorphism.
When $d<\infty$ the algebras $\cM_V$ and $\cM_W$ are isometrically isomorphic if and only if they are completely isometrically isomorphic. 
\end{theorem}

\begin{remark}\label{rem:iso_uni}
Given a variety $V \subseteq \mb{B}_d$ one can consider the Hilbert function space $\cH_V$ as in Section \ref{sec:zero_sets}. 
Recall that $\cH_V$ can be considered as a Hilbert function space on $V$ and by Theorem \ref{thm:restriction_isomorphism} $\Mult(\cH_V) = \cM_V$. 
Given another variety $W \subseteq \mb{B}_d$, it is natural to ask in what way are $\cH_V$ and $\cH_W$ related. 
By using (the adjoint of) the unitary $U$ from Theorem \ref{thm:aut_unitary}, one can show that every $\phi \in \Aut(\mb{B}_d)$ such that $\phi(W) = V$ gives rise to an \emph{isometric isomorphism of Hilbert function spaces}, that is, a unitary map $V : \cH_W \to \cH_V$ determined by $V : k_w \mapsto \delta(w) k_{\phi(w)}$ where $\delta$ is a non-vanishing function on $W$. 
In turn, given an isometric isomorphism of Hilbert function spaces $V : \cH_W \to \cH_V$, one can show that $\Phi(M_f) = V^* M_f V = M_{f\circ \phi}$ for all $f \in \Mult(\cH_V)$, thus $f \mapsto f \circ \phi$ is a completely isometric isomorphism of $\cM_V$ onto $\cM_W$. 
Finally, if $\Phi$ is a completely isometric isomorphism from $\cM_V$ onto $\cM_W$, then Theorem \ref{thm:cis_aut} tells us that there is $\phi \in \Aut(\mb{B}_d)$ such that $\phi(W) = V$. 
We conclude that $V$ is an automorphic image of $W$ if and only if $\cH_V$ is isometrically isomorphic to $\cH_W$ as Hilbert function spaces, and this happens if and only if $\cM_V$ and $\cM_W$ are completely isometrically isomorphic. 
\end{remark}

\subsection{Algebraic isomorphism}
A more delicate question is when two algebras $\cM_V$ and $\cM_W$ are algebraically isomorphic (since these algebras are semi-simple, this is equivalent to existence of a bounded isomorphism). 

\begin{theorem}[\cite{DRS15}, Theorem 5.6; \cite{DHS13}]\label{thm:alg_iso}
Suppose that $V,W$ are both subvarieties of $\mb{B}_d$, $d<\infty$, which are comprised of a finite union of irreducible components and a sequence of points. Let $\Phi: \cM_V \rightarrow \cM_W$ be an isomorphism. Then there exist holomorphic maps $\phi,\psi: \mb{B}_d \rightarrow \mb{C}^d$ such that 
\begin{enumerate}
\item $\phi(W) = V$ and $\psi(V) = W$,
\item $\phi \circ \psi\big|_V = {\bf id}\big|_V$ and $\psi \circ  \phi|_W = {\bf id}\big|_W$,
\item $\Phi(f) = f \circ \phi$ for all $f \in \cM_V$ and $\Phi^{-1}(f) = f \circ \psi$ for all $f \in \cM_W$. 
\item The restrictions of $\psi$ to $V$ and of $\phi$ to $W$ are bi-Lipschitz maps with respect to the pseudohyperbolic metric. 
\end{enumerate} 
\end{theorem}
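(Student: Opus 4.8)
The plan is to recover a composition structure for $\Phi$ by tracking its effect on the evaluation characters, using the description of $\mathfrak{M}(\cM_V)$ from Theorem~\ref{thm:max_id}. Since $\cM_V$ and $\cM_W$ are semisimple commutative Banach algebras, any algebra isomorphism between them is automatically bounded, so one may assume $\Phi$ and $\Phi^{-1}$ are bounded; consequently $\rho\mapsto\rho\circ\Phi$ is a homeomorphism $\Phi^{\vee}\colon\mathfrak{M}(\cM_W)\to\mathfrak{M}(\cM_V)$ of maximal ideal spaces. Set $\phi=(\Phi(z_1|_V),\dots,\Phi(z_d|_V))$ and $\psi=(\Phi^{-1}(z_1|_W),\dots,\Phi^{-1}(z_d|_W))$. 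Each component of $\phi$ lies in $\cM_W$, hence is the restriction to $W$ of a bounded holomorphic function on $\mb{B}_d$; fixing extensions, $\phi$ and $\psi$ become bounded holomorphic maps $\mb{B}_d\to\mb{C}^d$. Because $\Phi,\Phi^{-1}$ are homomorphisms, $\Phi(q|_V)=(q\circ\phi)|_W$ and $\Phi^{-1}(q|_W)=(q\circ\psi)|_V$ for every polynomial $q$.

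The crux is to show that $\Phi^{\vee}$ carries interior evaluation characters to interior evaluation characters. Fix $w\in W$. Since $\Phi(z_i|_V)=\phi_i$, one has $(\rho_w\circ\Phi)(z_i|_V)=\phi_i(w)$, so Theorem~\ref{thm:max_id} gives $\pi(\rho_w\circ\Phi)=\phi(w)\in\overline{\mb{B}_d}$ a priori; the point is to upgrade this to $\phi(w)\in\mb{B}_d$, i.e.\ that $\Phi^{\vee}$ does not fold an interior point of $W$ onto the sphere. This is precisely where the hypothesis that $V,W$ are finite unions of irreducible components together with a sequence of points is needed: along the positive-dimensional components a maximum-principle/analyticity argument applies, while the isolated points must be handled separately, and together these exclude that $\rho_w\circ\Phi$ lies in a boundary fiber of $\mathfrak{M}(\cM_V)$ --- equivalently, they show $\Phi$ is weak-$*$ continuous, so that $\rho_w\circ\Phi$ is a weak-$*$ continuous character and hence, by Theorem~\ref{thm:max_id}, equals $\rho_v$ for some $v\in V$, necessarily $v=\phi(w)$. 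Once $\phi(w)\in\mb{B}_d$ is known, Theorem~\ref{thm:max_id} (case $d<\infty$) forces $\phi(w)\in\pi(\mathfrak{M}(\cM_V))\cap\mb{B}_d=V$ and, $\pi^{-1}(\phi(w))$ being a singleton, $\rho_w\circ\Phi=\rho_{\phi(w)}$; symmetrically $\psi(v)\in W$ and $\rho_v\circ\Phi^{-1}=\rho_{\psi(v)}$ for all $v\in V$. I expect this paragraph to be the main obstacle; the rest is bookkeeping with characters.

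Granting the key step, the stated assertions follow by chasing characters. For $w\in W$ and $1\le j\le d$, $w_j=\rho_w(z_j|_W)=\rho_w(\Phi(\psi_j))=(\rho_w\circ\Phi)(\psi_j)=\rho_{\phi(w)}(\psi_j)=\psi_j(\phi(w))$, so $\psi\circ\phi|_W=\mathbf{id}_W$, and symmetrically $\phi\circ\psi|_V=\mathbf{id}_V$ --- this is $(2)$. Since $\phi(W)\subseteq V$ and $V=\phi(\psi(V))\subseteq\phi(W)$, one gets $\phi(W)=V$, and symmetrically $\psi(V)=W$ --- this is $(1)$. For $(3)$, if $f\in\cM_V$ and $w\in W$ then $\Phi(f)(w)=\rho_w(\Phi(f))=(\rho_w\circ\Phi)(f)=\rho_{\phi(w)}(f)=f(\phi(w))$, so $\Phi(f)=f\circ\phi$ on $W$, and the formula for $\Phi^{-1}$ is symmetric.

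For $(4)$, recall that the pseudohyperbolic metric on $V$ is the restriction of the pseudohyperbolic metric $\varrho$ of $\mb{B}_d$, which is the metric $\varrho(x,y)^2=1-|k^d(x,y)|^2/(k^d(x,x)k^d(y,y))$ attached to the reproducing kernel $k^d|_{V\times V}$ of $\cH_V$. Given distinct $w,w'\in W$, the complete Pick property of $\cH_V$ (Theorem~\ref{thm:comp_Pick}) produces $f\in\cM_V$ with $\|f\|_{\cM_V}\le1$, $f(\phi(w))=0$ and $f(\phi(w'))=\varrho(\phi(w),\phi(w'))$; then $g:=\|\Phi\|^{-1}(f\circ\phi)=\|\Phi\|^{-1}\Phi(f)$ is a multiplier of $\cH_W$ of norm at most $1$, and the $2\times2$ Pick inequality (a necessary condition in any Hilbert function space) applied to $g$ at $w,w'$ gives $\varrho(\phi(w),\phi(w'))\le\|\Phi\|\,\varrho(w,w')$; symmetrically $\varrho(\psi(v),\psi(v'))\le\|\Phi^{-1}\|\,\varrho(v,v')$. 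Combining these with $\psi\circ\phi|_W=\mathbf{id}_W$ and $\phi\circ\psi|_V=\mathbf{id}_V$ yields $\|\Phi^{-1}\|^{-1}\varrho(w,w')\le\varrho(\phi(w),\phi(w'))\le\|\Phi\|\,\varrho(w,w')$ for $w,w'\in W$, so $\phi|_W$ --- and symmetrically $\psi|_V$ --- is bi-Lipschitz for the pseudohyperbolic metric, establishing $(4)$.
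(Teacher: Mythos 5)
Your framework is right, and the ``bookkeeping'' you do is exactly what the paper does: automatic continuity via semisimplicity, defining $\phi_i = \Phi(z_i|_V)$ and $\psi_i = \Phi^{-1}(z_i|_W)$, the character chase that recovers $(1)$--$(3)$ once you know $\Phi^\vee(\rho_w)=\rho_{\phi(w)}$, and the Schwarz--Pick style argument for $(4)$, which correctly combines the pseudohyperbolic distance formula $\varrho^2 = 1 - |k^d(x,y)|^2/(k^d(x,x)k^d(y,y))$, the complete Pick property of the restricted kernel, and the necessity direction of the $2\times 2$ Pick inequality applied to $\|\Phi\|^{-1}\Phi(f)$.

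The problem is exactly where you say it is, but naming the obstacle is not the same as overcoming it. Establishing that $\phi(w)\in\mathbb{B}_d$ (hence, via the $d<\infty$ clause of Theorem~\ref{thm:max_id}, that $\phi(w)\in V$ and $\rho_w\circ\Phi = \rho_{\phi(w)}$) is the essential content of the theorem, and your paragraph about it does not contain an argument. Two specific issues. First, the phrase ``equivalently, they show $\Phi$ is weak-$*$ continuous'' is not justified: weak-$*$ continuity of $\Phi$ certainly implies that each $\rho_w\circ\Phi$ is a weak-$*$ continuous character, but the converse is not obvious (a bounded map between dual spaces is weak-$*$ continuous iff it has a preadjoint, which is not automatic from continuity on a family of functionals). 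Second, the sketched maximum-principle argument for the positive-dimensional components does not close by itself: if $\phi$ were constant on an irreducible component $W_0$ with value $\lambda\in\partial\mathbb{B}_d$, you would conclude that $\Phi(q|_V)$ is constant on $W_0$ for polynomials $q$, but to push this to a contradiction with $\Phi$ being an isomorphism you need to pass from polynomials to general multipliers, which requires the very weak-$*$ continuity you are trying to establish --- so the argument as sketched is circular. The paper (in \cite{DRS15}, and with further refinements in \cite{DHS13}) devotes the bulk of the proof to exactly this lemma, using the dual-algebra structure of $\cM_V$ (property $\mathbb{A}_1(1)$, unique predual) together with the hypothesis on the decomposition of $V$ and $W$ into irreducible components plus a sequence of points. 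To complete the proof you need to actually produce that argument, not just flag where it belongs.

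One smaller point worth stating explicitly: when you invoke the complete Pick property of $\cH_V$ in part $(4)$, you should note that this follows from Theorem~\ref{thm:comp_Pick} together with the observation that the Agler--M\raise.45ex\hbox{c}Carthy one-positive-eigenvalue criterion depends only on the kernel, and hence is inherited by the restricted kernel $k^d|_{V\times V}$.
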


The following corollary follows from the above theorem and Theorem \ref{thm:aut_unitary}. 

\begin{corollary}
Every algebraic automorphism of $\cM_d$ is given by composition with an automorphism of the ball, hence is completely isometric and unitarily implemented. 
\end{corollary}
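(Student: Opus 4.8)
The plan is to derive this corollary as the special case $V = W = \mb{B}_d$ of Theorem \ref{thm:alg_iso}, and then to invoke Theorem \ref{thm:aut_unitary} to upgrade the resulting algebra isomorphism to a unitarily implemented, completely isometric one. Before applying Theorem \ref{thm:alg_iso} I would record two preliminary points. First, $\mb{B}_d$ is itself a variety in the sense of Section \ref{sec:zero_sets}: it equals $V(\{0\})$ (the weak zero set of the family consisting of the zero function), and the associated restriction algebra $\cM_{\mb{B}_d}$ is just $\cM_d$. Second, as an irreducible complex manifold $\mb{B}_d$ consists of a single irreducible component, with no accompanying sequence of points, so it meets the structural hypothesis of Theorem \ref{thm:alg_iso}. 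I would also note that an ``algebraic'' automorphism of $\cM_d$ is automatically bounded: $\cM_d$ is a semisimple commutative Banach algebra, so every homomorphism into it is continuous, and hence an algebraic automorphism is an isomorphism in the sense required by Theorem \ref{thm:alg_iso}.

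With these observations in place, let $\Phi$ be an algebraic automorphism of $\cM_d$. Applying Theorem \ref{thm:alg_iso} with $V = W = \mb{B}_d$ yields holomorphic maps $\phi, \psi : \mb{B}_d \to \mb{C}^d$ with $\phi(\mb{B}_d) = \mb{B}_d$, $\psi(\mb{B}_d) = \mb{B}_d$, $\phi \circ \psi = \mathrm{id}_{\mb{B}_d}$, $\psi \circ \phi = \mathrm{id}_{\mb{B}_d}$, and $\Phi(f) = f \circ \phi$ for every $f \in \cM_d$. Since $\phi(\mb{B}_d) = \mb{B}_d$ the map $\phi$ takes values in $\mb{B}_d$, and the relations $\phi \circ \psi = \mathrm{id}$ and $\psi \circ \phi = \mathrm{id}$ show that it is a bijection of $\mb{B}_d$ onto itself whose inverse $\psi$ is holomorphic; that is, $\phi \in \Aut(\mb{B}_d)$.

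Finally I would feed this $\phi$ into Theorem \ref{thm:aut_unitary}: it produces a unitary $U$ on $H^2_d$, given explicitly by (\ref{eq:unitary}), such that conjugation by $U$ is the automorphism $f \mapsto f \circ \phi$ of $\cM_d$. Since $f \mapsto f \circ \phi$ is precisely $\Phi$, we obtain $\Phi(f) = U M_f U^*$ for all $f \in \cM_d$, so $\Phi$ is unitarily implemented; being conjugation by a unitary it is in particular completely isometric, which is the assertion. The only real obstacle here is the bookkeeping of the first paragraph — verifying that $\mb{B}_d$ genuinely falls under the hypotheses of Theorem \ref{thm:alg_iso} and that purely algebraic automorphisms are automatically bounded; once these are granted the statement is an immediate concatenation of the two cited theorems and involves no further analysis.
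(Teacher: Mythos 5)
Your proof is correct and follows exactly the route the paper indicates (the paper merely states that the corollary ``follows from the above theorem and Theorem \ref{thm:aut_unitary}'' without further detail): specialize Theorem \ref{thm:alg_iso} to $V = W = \mb{B}_d$, conclude $\phi \in \Aut(\mb{B}_d)$, and invoke Theorem \ref{thm:aut_unitary} to get the unitary implementation. The only caveat worth keeping in mind is that Theorem \ref{thm:alg_iso} is stated under the hypothesis $d < \infty$, so this argument (like the paper's) only establishes the corollary for finite $d$; your preliminary remarks about $\mb{B}_d$ being a single irreducible component and about automatic continuity of homomorphisms into the semisimple algebra $\cM_d$ are the right things to check and are both correct.
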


Two varieties $V,W$ for which there are maps $\phi,\psi$ as in Theorem \ref{thm:alg_iso} are said to be {\em biholomorphic}, and the maps $\psi$ and $\phi$ are said to be {\em biholomorphisms} from $V$ to $W$ or vice versa. In light of the above result, it is natural to ask: {\em given a biholomorphism $\phi : W \rightarrow V$, does it induce an algebraic isomorphism $\cM_V \rightarrow \cM_W$?} If $f \in \cM_V$ and $\phi \in W \rightarrow V$ is holomorphic then evidently $f \circ \phi \in H^\infty(W)$; the crux of the matter is whether or not it is a multiplier. 
The answer is negative in general \cite{DHS13,DRS15}. 
The first positive result in this direction was obtained by Alpay, Putinar and Vinnikov.

\begin{theorem}[\cite{APV03}, Proposition 2.1]\label{thm:APV}
Let $d<\infty$, and let $\phi : \overline{\mb{D}} \rightarrow \overline{\mb{B}}_d$ be a proper injective $C^2$ function that is a biholomorphism of $\mb{D}$ onto $V = \phi(\mb{D}) \subset \mb{B}_d$. Then the map 
\[\Phi : \cM_V \rightarrow H^\infty(\mb{D})\,\, , \,\, \Phi(f) = f \circ \phi, \]
is a bounded isomorphism. In particular, $\cM_V = H^\infty(V)$. 
\end{theorem}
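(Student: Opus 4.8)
The plan is to transport the entire question to the disc by means of $\phi$ and reduce it to one kernel–positivity statement. Since $\phi$ is a biholomorphism of $\mb{D}$ onto $V$, composition with $\phi$ is a unitary from $\cH_V$ onto the reproducing kernel Hilbert space $\cH_\phi$ on $\mb{D}$ with kernel
\[
\ell(z,w)=k^d(\phi(z),\phi(w))=\frac{1}{1-\langle\phi(z),\phi(w)\rangle},
\]
and it carries $\Mult(\cH_V)=\cM_V$ (Theorem \ref{thm:restriction_isomorphism}) isometrically onto $\Mult(\cH_\phi)$ via $g\mapsto g\circ\phi$; by Theorem \ref{thm:compPickUniversal} the kernel $\ell$ is again a complete Pick kernel (it has the stated form, with $\delta\equiv 1$ and $f=\phi$). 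Hence it is enough to show $\Mult(\cH_\phi)=H^\infty(\mb{D})$ with equivalent norms, for then $\Phi(f)=f\circ\phi$ is the asserted isomorphism and, composing with the trivial identification $H^\infty(V)\cong H^\infty(\mb{D})$, one obtains $\cM_V=H^\infty(V)$.

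One inclusion is free: $1\in\cH_\phi$ and every member of $\cH_\phi$ is holomorphic on $\mb{D}$ (being the pullback of a holomorphic function on $V$), so the argument behind (\ref{eq:ineq_norm}) gives $\Mult(\cH_\phi)\subseteq H^\infty(\mb{D})$ with $\|\psi\|_\infty\le\|M_\psi\|$. For the reverse inclusion, let $s(z,w)=(1-z\bar w)^{-1}$ be the Szeg\H{o} kernel and set
\[
m(z,w):=\frac{\ell(z,w)}{s(z,w)}=\frac{1-z\bar w}{1-\langle\phi(z),\phi(w)\rangle},
\]
which is well defined and sesqui-holomorphic on $\mb{D}\times\mb{D}$ since $|\langle\phi(z),\phi(w)\rangle|\le|\phi(z)||\phi(w)|<1$ there. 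It suffices to prove that $m$ is a positive semi-definite kernel: for $g\in H^\infty(\mb{D})$ with $\|g\|_\infty\le 1$ the kernel $(1-g(z)\overline{g(w)})s(z,w)$ is positive semi-definite, hence so is its Schur product with $m$, namely $(1-g(z)\overline{g(w)})\ell(z,w)$, and since $\cH_\phi$ has the complete Pick property the ball of $\Mult(\cH_\phi)$ is characterised by exactly this positivity (Definition \ref{def:CP}), so $g\in\Mult(\cH_\phi)$ with $\|M_g\|\le 1$. Together with the previous paragraph this yields $\Mult(\cH_\phi)=H^\infty(\mb{D})$ isometrically. Equivalently, what remains is to show that $\phi^{-1}$, and more generally $g\circ\phi^{-1}$ for $g$ in the unit ball of $H^\infty(\mb{D})$, extends from $V$ to a contractive multiplier of $H^2_d$.

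The crux is the positivity of $m$, and this is where the hypotheses on $\phi$ enter. The aim is to show $m$ extends to a bounded, sesqui-continuous, positive semi-definite kernel on $\overline{\mb{D}}\times\overline{\mb{D}}$. Because $\phi$ is proper, $\rho(z)=1-|\phi(z)|^2$ is $C^2$ on $\overline{\mb{D}}$, strictly positive on $\mb{D}$ and identically zero on $\partial\mb{D}$; as $|\phi|^2=\sum_j|\phi_j|^2$ is subharmonic, $\rho$ is superharmonic, so Hopf's lemma shows $\rho$ vanishes to exactly first order at each boundary point with outward normal derivative bounded away from $0$, while the $C^1$ estimate $\rho(z)\le\|\nabla\rho\|_\infty(1-|z|^2)$ bounds it above; hence $0<\epsilon\le\rho(z)/(1-|z|^2)\le M<\infty$ on $\mb{D}$, which controls the diagonal of $m$. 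The harder ingredient is full positive definiteness: working near each point $(\zeta,\zeta)$, $\zeta\in\partial\mb{D}$, one uses the $C^2$-smoothness and the non-vanishing of $d\phi$ (an immersion, by injectivity of the biholomorphism together with smoothness up to the boundary) to factor $1-\langle\phi(z),\phi(w)\rangle=(1-z\bar w)\,q(z,w)$ with $q$ continuous, positive semi-definite, and bounded above and below by positive constants on a neighbourhood of $(\zeta,\zeta)$; a compactness argument on $\partial\mb{D}$ glues these local factorizations to the interior estimate and gives that $m$ is globally positive semi-definite and bounded on $\overline{\mb{D}}\times\overline{\mb{D}}$.

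I expect this last step to be the main obstacle. The diagonal bound on $\rho/(1-|z|^2)$ is routine, but upgrading it to positive-definiteness of the whole ratio kernel $m$ — equivalently, to the statement that $\phi^{-1}$ and every bounded holomorphic function of it restrict from contractive multipliers of $H^2_d$ — genuinely needs $\phi$ to be an injective immersion that is smooth up to the boundary. Dropping injectivity the conclusion fails already for $\phi(z)=z^2$ on $\mb{D}$: there $m(z,w)=(1+z\bar w)^{-1}$ is not positive semi-definite, and $\Mult(\cH_\phi)$ is the proper subalgebra of $H^\infty(\mb{D})$ consisting of even functions.
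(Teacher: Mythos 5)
The key step of your argument is not salvageable: the ratio kernel $m(z,w)=\dfrac{1-z\bar w}{1-\langle\phi(z),\phi(w)\rangle}$ is in general \emph{not} positive semi-definite under the hypotheses of the theorem, even though the theorem itself is true. Take $d=2$ and $\phi(z)=(z/\sqrt 2,\,z^2/\sqrt 2)$. This $\phi$ is a polynomial (hence $C^\infty$ on $\overline{\mb D}$), is injective on $\overline{\mb D}$, has $\phi'(z)=(1/\sqrt 2, \sqrt 2\, z)\neq 0$, and is proper since $|\phi(z)|^2=\tfrac12(|z|^2+|z|^4)=1$ exactly when $|z|=1$; so it satisfies every hypothesis of the theorem. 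Writing $u=z\bar w$ one computes
\[
1-\langle\phi(z),\phi(w)\rangle=1-\tfrac{u}{2}-\tfrac{u^2}{2}=\tfrac12(2+u)(1-u),\qquad m(z,w)=\frac{2}{2+u}=\sum_{n\ge 0}\Bigl(-\tfrac12\Bigr)^n u^n,
\]
whose $n=1$ Taylor coefficient is $-\tfrac12<0$, so $m$ is not a positive kernel. Thus the statement you spend the last two paragraphs trying to prove is simply false, and the difficulty is not (as you suspected) with the boundary analysis but with the target itself. Note also that your last paragraph contains a separate logical slip: you aim to produce a local factorization $1-\langle\phi(z),\phi(w)\rangle=(1-z\bar w)q(z,w)$ with $q$ positive semi-definite, and then assert that this makes $m=1/q$ positive semi-definite. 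That inference is invalid; the example above has $q(z,w)=1+\tfrac12 z\bar w$ which \emph{is} PSD, yet $m=1/q$ is not.

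There is a second, structural signal that the target is wrong: if $m$ were PSD then, by exactly the Schur-product argument you give, every contractive $g\in H^\infty(\mb D)$ would be a contractive multiplier of $\cH_\phi$, i.e.\ $\Mult(\cH_\phi)=H^\infty(\mb D)$ \emph{isometrically}. But the theorem (and Corollary~\ref{cor:APV_ext}) only asserts a \emph{bounded} isomorphism with a constant $C$ that may exceed $1$, and indeed in the example above the isomorphism is not isometric (the diagonal Taylor coefficients of $\ell=2/((2+u)(1-u))$, namely $a_n=\tfrac23+\tfrac13(-\tfrac12)^n$, oscillate strictly between $1/2$ and $1$). What one should aim for instead is the two-sided kernel comparison
\[
c_1\,\frac{1}{1-z\bar w}\ \preceq\ \frac{1}{1-\langle\phi(z),\phi(w)\rangle}\ \preceq\ c_2\,\frac{1}{1-z\bar w}\qquad\text{as positive kernels on }\mb D\times\mb D,
\]
for some $0<c_1\le c_2<\infty$; this is precisely what gives $\cH_\phi=H^2(\mb D)$ as sets with equivalent norms, which in turn forces $\Mult(\cH_\phi)=H^\infty(\mb D)$ with equivalent (not equal) multiplier norms, and is where the $C^2$ smoothness, properness, injectivity and the nonvanishing of $d\phi$ up to the boundary are actually used. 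Your first two paragraphs (the reduction to $\cH_\phi$ and the easy inclusion $\Mult(\cH_\phi)\subseteq H^\infty(\mb D)$) are fine and are the right set-up; it is the positivity-of-$m$ step that needs to be replaced by this weaker but correct two-sided estimate, whose proof is the real content of \cite{APV03}.
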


Combining this theorem with \ref{thm:restriction_isomorphism} one obtains the following variant of a hard-analytic extension theorem of Henkin \cite{Henkin}. 
\begin{corollary}[\cite{APV03}, Theorem 2.2]\label{cor:APV_ext}
Let $V$ be as in Theorem \ref{thm:APV}. Then there is a constant $C$ such that for any bounded analytic function $f$ on $V$ there is a multiplier $F \in \cM_d$ (in particular, $F \in H^\infty(\mb{B}_d)$) such that $f = F\big|_V$ and 
\[
\|F\|_\infty \leq \|F\|_{\cM_d} \leq C \|f\|_\infty .
\]
\end{corollary}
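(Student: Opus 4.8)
The plan is to deduce the Corollary formally from Theorem \ref{thm:APV} together with the quotient description of $\cM_V$ furnished by Theorem \ref{thm:restriction_isomorphism}; essentially all of the analytic content already resides in Theorem \ref{thm:APV}, and what remains is bookkeeping with norms. First I would record that, since $\phi$ maps $\mb{D}$ biholomorphically onto $V$, the composition map $g \mapsto g \circ \phi$ is an isometric algebra isomorphism of $H^\infty(V)$ onto $H^\infty(\mb{D})$, because $\sup_{z \in V}|g(z)| = \sup_{w \in \mb{D}}|g(\phi(w))|$; recall also that $H^\infty(\mb{D}) = \cM_1$ with identical norms. By Theorem \ref{thm:APV} the map $\Phi : \cM_V \to H^\infty(\mb{D})$, $\Phi(f) = f \circ \phi$, is a bounded algebra isomorphism, so by the open mapping theorem its inverse $\Phi^{-1}$ (which is just composition with $\phi^{-1}$) is bounded as well. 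Putting $C := \|\Phi^{-1}\|$ and composing $\Phi^{-1}$ with the isometry $H^\infty(V) \to H^\infty(\mb{D})$ above, one obtains that every $f \in H^\infty(V)$ lies in $\cM_V$ with $\|f\|_{\cM_V} \le C \|f\|_\infty$.

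Next I would invoke Theorem \ref{thm:restriction_isomorphism}: the restriction map $F \mapsto F\big|_V$ induces a completely isometric isomorphism $\cM_d / J_V \cong \cM_V$, so the multiplier norm of $f \in \cM_V$ equals $\inf\{\|F\|_{\cM_d} : F \in \cM_d, \ F\big|_V = f\}$. Since $J_V$ is weak-$*$ closed and the unit ball of $\cM_d$ is weak-$*$ compact, this infimum is attained, yielding $F \in \cM_d$ with $F\big|_V = f$ and $\|F\|_{\cM_d} = \|f\|_{\cM_V} \le C \|f\|_\infty$ (if one prefers to avoid the weak-$*$ compactness point, simply take $F$ with $\|F\|_{\cM_d} \le \|f\|_{\cM_V} + \|f\|_\infty$ and enlarge $C$ by $1$). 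The remaining inequality $\|F\|_\infty \le \|F\|_{\cM_d}$ is the general bound (\ref{eq:ineq_norm}) — the first assertion of Theorem \ref{thm:MnotHinfty} — which closes the displayed chain and completes the argument; that $F \in H^\infty(\mb{B}_d)$ then follows since $F = F \cdot 1 \in H^2_d$ is analytic.

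I do not anticipate a genuine obstacle here: the substantive work — showing that $f \circ \phi$ is a multiplier with controlled norm — is precisely Theorem \ref{thm:APV}, and the Corollary is just its reformulation through the dictionary relating $\cM_V$, the quotient $\cM_d / J_V$, and the restriction algebra. The one place demanding a moment's care is the passage from a bound on $\|f\|_{\cM_V}$ to an \emph{extension} $F$ on all of $\mb{B}_d$ with a bound on $\|F\|_{\cM_d}$; this is exactly where Theorem \ref{thm:restriction_isomorphism} (i.e.\ the fact that restriction $\cM_d \to \cM_V$ is a complete quotient map) is used, and the value of $C$ one obtains is $\|\Phi^{-1}\|$, manifestly independent of $f$.
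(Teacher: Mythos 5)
Your argument is correct and follows the same route the paper indicates: all the substance is in Theorem \ref{thm:APV}, and Theorem \ref{thm:restriction_isomorphism} supplies the dictionary between the intrinsic multiplier norm on $\cM_V$ and the quotient norm $\inf\{\|F\|_{\cM_d} : F|_V = f\}$, with the final inequality $\|F\|_\infty \le \|F\|_{\cM_d}$ being the generic bound (\ref{eq:ineq_norm}). The explicit appeal to the open mapping theorem (to bound $\Phi^{-1}$) and the remark about weak-$*$ compactness attaining the infimum are both sound; neither changes the substance of the proof.
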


Theorem \ref{thm:APV} and Corollary \ref{cor:APV_ext} were extended to the case where $\mb{D}$ is replaced by a planar domain by Arcozzi, Rochberg and Sawyer \cite[Section 2.3.6]{ARS08} or a finite Riemann surface  by Kerr, M\raise.45ex\hbox{c}Carthy and Shalit \cite[Section 4]{KMS13}, and in these extensions $\phi$ was allowed to be a finitely ramified holomap. In the three papers mentioned an additional assumption about $V$ meeting the boundary of $\mb{B}_d$ transversally were imposed, but this assumption was later shown to be satisfied automatically \cite{DHS13}. 
The case of a bihlomorphic embedding of a disc into $\mb{B}_\infty$ was studied in \cite{DHS13,DRS15}, and in particular it was shown that a continuum of non-isomorphic algebras can arise this way.

In \cite[Theorem 5.1]{DHS13} it is shown that for an embedding map $\phi : \overline{\mb{D}} \to \overline{\mb{B}}_d$ that satisfies all the conditions in Theorem \ref{thm:APV} except that $\phi(-1) = \phi(1)$, the corresponding multiplier algebra $\cM_V$ is not equal to $H^\infty(V)$ hence not isomorphic to $H^\infty(\mb{D})$, even though $V$ is biholomorphic to $\mb{D}$. 
The question arises what kind of multiplier algebras arise as $\cM_V$ for such embedded discs with self intersections on the boundary. 
Does the isomorphism class depend only on the ``intersection pattern"? 
In \cite{MironovPaper} Mironov considered such discs and found that, although the intersection pattern is an invariant of the multiplier algebras, it is not a complete invariant: there do exist uncountably many embedded discs with the same intersection pattern (for example, $\phi(\exp(\pm\frac{2\pi i}{3})) = \phi(1)$) that give rise to mutually non-isomorphic algebras. 

\begin{remark}\label{rem:iso_via_nc}
From the proof of Theorem \ref{thm:alg_iso} (see \cite[Theorem 5.6]{DRS15}) it follows that $\phi$ and $\psi$ are more than just bounded holomorphic maps --- they are vector valued multipliers. 
In this situation we say that $V$ and $W$ are {\em multiplier biholomorphic}. 
Thus, in the setting of the theorem, we obtain that if $\cM_V$ and $\cM_W$ are isomorphic then $V$ and $W$ are multiplier biholomorphic. 
The question arises whether multiplier biholomorphism is also a sufficient condition for $\cM_V$ and $\cM_W$ to be isomorphic. 
This turns out to be false in general (see \cite[Example 6.6]{DHS13}), but it is unknown whether it holds under the assumptions of Theorem \ref{thm:alg_iso}. 
In fact, multiplier biholomorphism is not an equivalence relation (see \cite[Remark 6.7]{DHS13}). 
This raises the problem of describing an equivalence relation on varieties that encodes when the corresponding multiplier algebras are isomorphic. 
Such a relation was found in \cite{SSS20} by considering the isomorphism problem within a noncommutative framework: it was shown that $\cM_V$ and $\cM_W$ are isomorphic if and only if there exists a certain kind of noncommutative holomorphic map between appropriate quantizations of $V$ and $W$ (see Remark 6.9, loc. cit.). 
\end{remark}

\subsection{Homogeneous varieties}\label{subsec:homogeneous_var}
\begin{definition}
A variety $V \subseteq \mb{B}_d$ is said to be {\em homogeneous} if for all $v \in V$ and all $\lambda \in \mb{D}$ it holds that $\lambda v \in V$. 
\end{definition}

A variety is homogeneous if and only if it is the zero set of a homogeneous ideal. There are some satisfactory results for the isomorphism problem in the case where $V$ and $W$ are homogeneous varieties. The following theorem was obtained by Davidson, Ramsey and Shalit in \cite{DRS15} under some technical assumptions, which were removed by Hartz in \cite{Har12}. 

\begin{theorem}[\cite{DRS11}, Theorems 8.5 and 11.7;  \cite{Har12}, Theorem 5.9]\label{thm:iso_homo}
Let $V$ and $W$ be two homogeneous varieties in $\mb{B}_d$, with $d<\infty$. Then $\cM_V$ and $\cM_W$ are isomorphic if and only if there exist linear maps $A,B : \mb{C}^d \rightarrow \mb{C}^d$ such that $A(W) = V$, $B(V) = W$, $AB\big|_V = {\bf id}\big|_V$ and $BA\big|_W = {\bf id}\big|_W$. 
\end{theorem}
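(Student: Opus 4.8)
The plan is to treat the two implications separately. The forward ("only if") implication I would deduce from the general algebraic isomorphism theorem, and the reverse implication is where I expect the real work.

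\medskip

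\noindent\textbf{The "only if" direction.} Suppose $\Phi\colon\cM_V\to\cM_W$ is an algebra isomorphism. A homogeneous variety consists of finitely many irreducible components, so Theorem~\ref{thm:alg_iso} applies and produces holomorphic $\phi,\psi\colon\mb{B}_d\to\mb{C}^d$ with $\phi(W)=V$, $\psi(V)=W$, $\phi\circ\psi|_V=\mathbf{id}_V$, $\psi\circ\phi|_W=\mathbf{id}_W$, $\Phi(f)=f\circ\phi$, and with $\phi|_W,\psi|_V$ bi-Lipschitz for the pseudohyperbolic metric. The task is to replace $\phi,\psi$ by linear maps. First I would show $\phi(0)=0=\psi(0)$: the origin is the vertex of the cone(s) $V,W$ and is distinguished among the points of $V$ (e.g.\ by the local algebraic structure of the variety at that point, or — using that by Theorem~\ref{thm:max_id} the wot-continuous characters of $\cM_V$ are exactly the point evaluations on $V$ — by the position of the corresponding character relative to the gauge action), so a bi-Lipschitz holomorphic bijection $\phi\colon W\to V$ carries vertex to vertex. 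Then expand $\phi=\sum_{n\ge1}\phi_n$, $\psi=\sum_{n\ge1}\psi_n$ into homogeneous components. For $w\in W$ the whole disc $\mb{D}w$ lies in $W$, so $\zeta\mapsto\phi(\zeta w)=\sum_n\zeta^n\phi_n(w)$ takes values in $V$; substituting into each homogeneous generator $p$ of the homogeneous ideal of $V$ and reading off the lowest power of $\zeta$ gives $p(\phi_1(w))=0$, whence $\phi_1(w)$ lies in the cone $V(I_V)$ and $\phi_1(W)\subseteq V$, and symmetrically $\psi_1(V)\subseteq W$. Comparing the identity $\phi(\psi(\zeta v))=\zeta v$ ($v\in V$) at order $\zeta^1$ gives $\phi_1(\psi_1(v))=v$, so $\phi_1\circ\psi_1|_V=\mathbf{id}_V$ and likewise $\psi_1\circ\phi_1|_W=\mathbf{id}_W$. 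Taking $A:=\phi_1$, $B:=\psi_1$ we then get $V=AB(V)\subseteq A(W)\subseteq V$, so $A(W)=V$, and symmetrically $B(V)=W$: exactly the four required conditions.

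\medskip

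\noindent\textbf{The "if" direction.} Given linear $A,B$ with the four properties, set $\Phi(f)=f\circ A|_W$ for $f\in\cM_V$ (an honest function on $W$ since $A(W)=V$) and $\Psi(g)=g\circ B|_V$. These are unital algebra homomorphisms, and $AB|_V=\mathbf{id}_V$, $BA|_W=\mathbf{id}_W$ make them mutually inverse; since $\cM_V,\cM_W$ are semisimple, once $\Phi,\Psi$ are known to map into $\cM_W,\cM_V$ respectively, $\Phi$ is automatically a bounded isomorphism. So the content is the membership statement $f\in\cM_V\Rightarrow f\circ A|_W\in\cM_W$. The key preliminary remark is that homogeneity forces $A$ to be \emph{isometric along the cone}: for $0\ne w\in\widehat{W}$ (the cone generated by $W$) and $t<1/|w|$ we have $tw\in W$, hence $|A(tw)|<1$; letting $t\uparrow1/|w|$ gives $|Aw|\le|w|$ on $\widehat{W}$, and applying this also to $B$ and composing yields $|Aw|=|w|$ for all $w\in\widehat{W}$. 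Using Theorem~\ref{thm:restriction_isomorphism} ($\cM_W=\cM_d|_W=\Mult(\cH_W)$) together with the reproducing-kernel description of multipliers (the $\cH_W$-version of Theorem~\ref{thm:what_multpliers_do}), the membership then reduces to a comparison of kernels on $W$: that for some constant $\lambda$,
\[
\bigl(1-\langle Aw,Aw'\rangle\bigr)^{-1}\ \le\ \lambda\,\bigl(1-\langle w,w'\rangle\bigr)^{-1}
\]
as kernels on $W$ (equivalently, that $k^W_w\mapsto k^{H^2_d}_{Aw}$ extends to a bounded operator $\cH_W\to H^2_d$), with the reverse comparison coming for free from the same statement applied to $B$. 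Pulling a representing multiplier $F\in\cM_d$ of $f$ back through this comparison exhibits $f\circ A|_W$ as a multiplier of $\cH_W$ with controlled norm, and the associated bounded invertible "weighted composition" similarity $\cH_V\to\cH_W$ realises $\Phi$.

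\medskip

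\noindent\textbf{Main obstacle.} I expect the hard point to be the kernel comparison in the "if" direction. One would like to reduce to a \emph{contractive} linear $A$ (for which composition is completely contractive on $\cM_d$, via the realization formula or the Schur-product positivity of $(1-\langle Az,Aw\rangle)/(1-\langle z,w\rangle)$), but this is impossible in general: already for $W,V$ a union of two lines through the origin there may be no contractive linear $A$ carrying $W$ onto $V$ when the inter-line angles differ, while $\cM_V\cong\cM_W$ nonetheless. Thus the comparison must be established using only that $A$ is isometric along the (possibly singular) cone $\widehat{W}$, and uniformly over all finite subsets of $W$; this is precisely where the structure of homogeneous varieties enters, and it is essentially the point at which the technical hypotheses of \cite{DRS11} were removed in \cite{Har12}. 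A secondary delicate point is the identification of the origin as canonical base point in the "only if" direction, and the various degenerate cases ($A$ invertible only between $\operatorname{span}W$ and $\operatorname{span}V$, or $V$ lying in a proper subspace) need to be tracked throughout.
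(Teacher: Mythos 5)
Your architecture matches the actual strategy: reduce the ``only if'' direction to linearizing a biholomorphism of homogeneous varieties, and reduce the ``if'' direction to a multiplier (equivalently, kernel) comparison for the composition operator induced by a linear map. You also correctly locate the hard step of the ``if'' direction in the kernel comparison, which is precisely what the technical hypotheses in \cite{DRS11} were used for and what \cite{Har12} resolved in general. However, the proposal as written has genuine gaps beyond the ones you flag as ``to be done.''

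In the ``only if'' direction, the step $\phi(0)=\psi(0)=0$ is not merely delicate: as stated, both of the arguments you gesture at are incomplete, and one of them fails outright. The gauge-action argument assumes that the isomorphism $\Phi$ intertwines the circle actions on $\cM_V$ and $\cM_W$, which is not given --- an arbitrary bounded algebra isomorphism need not respect the grading --- so the distinguished fixed character is not automatically carried to the distinguished fixed character. The ``local algebraic structure'' argument (the origin is the deepest stratum of the singular locus, and biholomorphisms preserve the singular filtration) is the one that works, but it breaks down entirely when $V$ or $W$ is smooth, i.e.\ a linear subspace intersected with the ball: then there is no distinguished point at all, and indeed the general biholomorphism $\phi$ produced by Theorem~\ref{thm:alg_iso} can be a M\"obius map with $\phi(0)\neq 0$. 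That case must be treated separately (compute the invariant, here the dimension, and produce the linear $A,B$ directly), and then the singular case handled by the stratification argument. Relatedly, your passage from $\phi_1(W)\subseteq\widehat V$ (the cone) to $\phi_1(W)\subseteq V$ needs a Schwarz-lemma step: for $w\in W$ with $u=w/|w|$, apply the Schwarz lemma to $\zeta\mapsto\phi(\zeta u)$, which maps $\mb{D}\to\mb{B}_d$ and sends $0$ to $0$, to get $|\phi_1(u)|\le 1$ and hence $|\phi_1(w)|\le|w|<1$; this again uses $\phi(0)=0$ in an essential way. Without it the chain $V=AB(V)\subseteq A(W)\subseteq V$ does not parse, because you only know $A(W)\subseteq\widehat V$. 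There is also a mild anachronism in invoking Theorem~\ref{thm:alg_iso} from \cite{DRS15} to prove the earlier \cite{DRS11} result; logically this is harmless (the general theorem does not rely on the homogeneous one), but the historical proof of the homogeneous case did not have it available and established linearity by a more direct Cartan-type argument on circled varieties.

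In the ``if'' direction, the observation that $A$ is isometric on $\widehat W$ is correct and is the right starting point, but the kernel comparison you write down is the entire content of the theorem and you give no argument for it. As you yourself note with the two-lines example, $A$ need not be extendable to a contraction (let alone an isometry or unitary) of $\mb{C}^d$, so one cannot reduce to the easy positivity $\left[(1-\langle Az,Aw\rangle)^{-1}\right]\le\left[(1-\langle z,w\rangle)^{-1}\right]$ coming from a ball contraction. Establishing the two-sided comparison using only that $A$ is a linear bijection between the two (possibly singular, possibly reducible) cones, isometric along them, with norm bounds uniform over all finite subsets of $W$, is exactly Hartz's contribution and cannot be left as a one-line reduction. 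Your proposal is therefore a correct and well-motivated plan, but both directions stop short precisely where the real work lies.
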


\begin{remark}\label{rem:iso_linear}
Let $V$ and $W$ be varieties. 
The Hilbert function spaces $\cH_V$ and $\cH_W$ are said to be \emph{isomorphic as Hilbert function spaces} if there is a bijective map $\phi : W \to V$ and a non-vanishing function $\delta$ on $W$ such that $k_w \mapsto \delta(w) k_{\phi(w)}$ extends to a bounded invertible linear map $T : \cH_W \to \cH_V$ (cf. Remark \ref{rem:iso_uni}). 
In all cases where some kind of converse to Theorem \ref{thm:alg_iso} was shown to hold (that is, when it can be shown that the existence of a biholomorphism $\phi : W \to V$ gives rise to an isomorphism $\cM_V \to \cM_W$) the proof goes through first constructing an isomorphism of Hilbert function spaces $\cH_W \to \cH_V$ and then obtaining the isomorphism $\cM_V \to \cM_W$ via conjugation. 
Thus, for example, in Theorem \ref{thm:iso_homo} one can add that the two equivalent conditions are equivalent to $\cH_V$ and $\cH_W$ being isomorphic as Hilbert function spaces. 
\end{remark}

\subsection{The isomorphism problem for norm closed algebras of multipliers} 

The algebras $\cA_V:=\cA_d \big|_V = \{f\big|_V : f \in \cA_d\}$ and $\cA_d / I$ (where $I$ is a closed ideal in $\cA_d$) have also been considered, but in this setting less is known. The case of homogeneous varieties is completely settled by results of \cite{DRS11} and \cite{Har12}. Some partial results are contained in \cite{DHS13,DRS15,KMS13}. 

\begin{theorem}
Let $V$ and $W$ be two homogeneous varieties in $\mb{B}_d$. $\cA_V$ and $\cA_W$ are completely isometrically isomorphic if and only if there is a unitary $U$ such that $U(W) = V$. If $d<\infty$, then $\cM_V$ and $\cM_W$ are isomorphic if and only if there exist linear maps $A,B : \mb{C}^d \rightarrow \mb{C}^d$ such that $A(W) = V$, $B(V) = W$, $AB\big|_V = {\bf id}\big|_V$ and $BA\big|_W = {\bf id}\big|_W$. 
\end{theorem}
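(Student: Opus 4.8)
The plan is to prove both equivalences by a common scheme: the ``if'' directions are soft consequences of the change-of-variables principle of Theorem~\ref{thm:aut_unitary}, while in both ``only if'' directions one first extracts a biholomorphism of the varieties out of the algebra isomorphism and then invokes a rigidity statement forcing that biholomorphism to be linear (and, in the completely isometric case, unitary).

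For the ``if'' directions: if $U$ is a unitary on $\mb{C}^d$ with $U(W)=V$, then $U$ is an automorphism of $\mb{B}_d$ fixing $0$, so by Theorem~\ref{thm:aut_unitary} the map $f\mapsto f\circ U$ is a completely isometric automorphism of $\cM_d$; since it sends each $z_i$ to a linear combination of the $z_j$ it restricts to a completely isometric automorphism of $\cA_d$, and because $(f\circ U)|_W$ depends only on $f|_{U(W)}=f|_V$ it descends to a completely isometric isomorphism $\cA_V\to\cA_W$, $f|_V\mapsto (f\circ U)|_W$ (using the homogeneous analogue of Theorem~\ref{thm:restriction_isomorphism}, that restriction to a variety is a complete quotient map). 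For the algebraic statement one reduces to the case where $V$ and $W$ span $\mb{C}^d$, so that $A,B$ are mutually inverse elements of $GL_d(\mb{C})$ with $A(W)=V$; then composition with $A$ is bounded on the scale of algebras attached to homogeneous varieties --- a lemma of \cite{DRS11}, proved via the homogeneous (Poisson) expansion of multipliers --- so $f\mapsto f\circ A$ is a bounded isomorphism $\cA_V\to\cA_W$ with bounded inverse $g\mapsto g\circ B$.

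For the ``only if'' directions, let $\Phi$ be the given isomorphism (completely isometric, resp.\ bounded, which by semisimplicity and automatic continuity is no loss). Point evaluations give characters $\rho_v$ of $\cA_V$ for $v\in V$, and by the maximal-ideal-space description of Theorem~\ref{thm:max_id} (applied to $\cA_V$, using $d<\infty$) these are precisely the characters lying over $\mb{B}_d$ under the projection $\pi$; likewise for $W$. One checks $\Phi^{*}$ carries interior characters to interior characters, whence $\Phi^{*}\rho_w=\rho_{\gamma(w)}$ for a bijection $\gamma:W\to V$ with $\Phi(f)=f\circ\gamma$. Letting $f$ run through the coordinate restrictions shows $\gamma=(\Phi(z_1)|_W,\dots,\Phi(z_d)|_W)$ has components in $\cA_W$, in particular bounded and holomorphic on $\mb{B}_d$; the same applies to $\gamma^{-1}$ via $\Phi^{-1}$. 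Thus $\gamma:W\to V$ is a biholomorphism whose components, and those of its inverse, extend to bounded holomorphic functions on the ball.

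The crux is then the rigidity step: such a biholomorphism between \emph{homogeneous} varieties must be the restriction of a linear $A\in GL_d(\mb{C})$ with $\gamma^{-1}=A^{-1}|_V$, which yields $A(W)=V$, $B:=A^{-1}$, and the stated relations $AB|_V=\mathrm{id}_V$, $BA|_W=\mathrm{id}_W$. I expect this to be the main obstacle; the difficulty is that a homogeneous variety need not be smooth, or even reduced, at the origin, so one cannot linearize simply by differentiating $\gamma$ at $0$ (indeed one must first argue $\gamma(0)=0$). The device, following \cite{DRS11} under auxiliary hypotheses and \cite{Har12} in general, is to exploit the grading: the gauge automorphisms $\alpha_\lambda:f\mapsto f(\lambda\,\cdot)$ are completely isometric automorphisms of $\cA_V$ and $\cA_W$, and one shows $\Phi$ may be replaced by a \emph{graded} isomorphism, i.e.\ one intertwining the two gauge actions; for a graded isomorphism $\gamma(\lambda w)=\lambda\gamma(w)$, and a holomorphic map with this homogeneity can only have a degree-one term, hence is linear. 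Finally, in the completely isometric case one upgrades $A$ to a unitary: such an isomorphism is spatially implemented by a unitary $U:\cH_V\to\cH_W$, which must send each kernel vector $k^V_v$ to a scalar multiple $c_v\,k^W_{A^{-1}v}$; substituting $v=0$ shows $c_v$ is a constant of modulus one, and then matching $\langle k^V_{v_1},k^V_{v_2}\rangle=(1-\langle v_2,v_1\rangle)^{-1}$ with its image forces $\langle A^{-1}v_1,A^{-1}v_2\rangle=\langle v_1,v_2\rangle$ on a spanning subset of $\mb{C}^d$, so $A$ is unitary and serves as the required $U$.
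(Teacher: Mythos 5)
The paper does not actually prove this theorem; it is a survey statement with pointers to \cite{DRS11} and \cite{Har12}, so the only meaningful comparison is to those sources. Your reconstruction matches their overall architecture accurately: the ``if'' directions via Theorem~\ref{thm:aut_unitary} together with the \cite{DRS11} lemma that composition with an invertible linear map is bounded between multiplier algebras of homogeneous varieties; the ``only if'' directions by pulling an algebra isomorphism back to the character space (Theorem~\ref{thm:max_id}, $d<\infty$) to extract a biholomorphism $\gamma$ of the varieties; a rigidity step forcing $\gamma$ to be linear; and, in the completely isometric case, an upgrade to a unitary by reading off the implementing unitary's action on kernel functions, which is exactly the kernel computation you wrote.

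The one place where the sketch is materially imprecise --- and it is the very step you flag as the crux --- is the assertion that ``one shows $\Phi$ may be replaced by a graded isomorphism.'' Read literally this suggests averaging against the gauge action, but an average of homomorphisms is not a homomorphism, so that is not what happens. What \cite{DRS11} (under a ``tractability'' hypothesis) and \cite{Har12} (in general) actually do is: prove directly that $\gamma(0)=0$, via a Schwarz/Cartan-type argument exploiting that the components of $\gamma$ and $\gamma^{-1}$ are bounded multipliers; then consider the rescalings $r^{-1}\gamma(r\,\cdot\,)$ as $r\to 0$ and show they converge to $D\gamma(0)$, which inherits the biholomorphism and multiplier properties and provides the required linear map. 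This is consistent with, but not the same as, ``replacing $\Phi$ by a graded isomorphism,'' and the $\gamma(0)=0$ step is precisely what makes the problem hard; your sketch would benefit from making that the explicit sub-goal rather than the gradedness of $\Phi$. Two minor further points: Theorem~\ref{thm:max_id} is stated for $\cM_V$, not $\cA_V$, so identifying the interior characters of $\cA_V$ needs a separate (though standard) argument; and the first clause of the theorem carries no $d<\infty$ restriction while your character-space argument does use $d<\infty$ --- for the completely isometric $\cA_V$ statement in the case $d=\infty$ one must instead go directly through spatial implementation (the $C^*$-envelope and the uniqueness part of the model theory), bypassing the maximal-ideal-space description.
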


\subsection{The quantitative isomorphism problem} 
Let $V$ and $W$ be two finite subsets of $\mb{B}_d$ with the same number of points. 
It is clear in this case that $V$ is biholomorphic to $W$, that $\cH_V$ is isomorphic as a Hilbert function space to $\cH_W$, and that $\cM_V$ is isomorphic to $\cM_W$. 
However, by \ref{thm:cis_aut}, $\cM_V$ is (completely) isometrically isomorphic to $\cM_W$ if and only if there is a $\phi \in \Aut(\mb{B}_d)$ such that $\phi(W) = V$. 
One may ask what happens if $V$ is not quite, but \emph{almost} the image of $W$ under an automorphism; can we then say that $\cM_V$ and $\cM_W$ are \emph{almost} unitarily equivalent? 

To make sense of this question Ofek, Pandey and Shalit introduced in \cite{OPS21} distance functions that measure how far Hilbert function spaces or their multiplier algebras are from one another. 
If $\cH_1$ and $\cH_2$ are Hilbert function spaces, the \emph{reproducing kernel Banach-Mazur distance} is defined to be 
\[
\rho_{RK}(\cH_1, \cH_2) = \log \left( \delta_{RK}(\cH_1, \cH_2) \right)
\]
where 
\[
\delta_{RK}(\cH_1, \cH_2) = \inf\left \{ \|T\| \|T^{-1}\| : T : \cH_1 \to \cH_2 \textrm{ is a Hilbert function space isomorphism}\right\} .
\]
(As usual, we interpret the infimum of the empty set to be $\infty$). 
Two spaces $\cH_1$ and $\cH_2$ are isometrically isomorphic as Hilbert function spaces if and only if $\rho_{RK}(\cH_1,\cH_2) = 0$. 
If $\rho_{RK}(\cH_1,\cH_2)$ is positive but finite then $\cH_1$ and $\cH_2$ are isomorphic as Hilbert function spaces and $\rho_{RK}(\cH_1,\cH_2)$ is a measure of how far this isomorphism is from being isometric. 
Similarly, letting $\cM_i = \Mult(\cH_i)$ for $i=1,2$,  the \emph{multiplier Banach-Mazur distance} is defined to be 
\[
\rho_{M}(\cM_1, \cM_2) = \log \left( \delta_{M}(\cM_1, \cM_2) \right)
\]
where 
\[
\delta_{M}(\cM_1, \cM_2) = \inf\left \{ \|\Phi\|_{cb} \|\Phi^{-1}\|_{cb} : \Phi : \cM_1 \to \cM_2 \textrm{ is an isomorphism}\right\} .
\]
The main result of \cite{OPS21} is a quantitative version of the isomorphism results mentioned earlier in this section. 
Roughly, \cite[Theorem 5.4]{OPS21} says that for two finite sets $V,W \subset \mb{B}_d$ the following are equivalent: 
\begin{enumerate}
\item Some image of $V$ under an automorphism of $\Aut(\mb{B}_d)$ is close to $W$ in the Hausdorff metric, 
\item $\cH_V$ and $\cH_W$ are close in the reproducing kernel Banach-Mazur distance, 
\item $\cM_V$ and $\cM_W$ are close in the multiplier Banach-Mazur distance. 
\end{enumerate}

Examples show that these results cannot be extended verbatim to arbitrary varieties $V$ and $W$. 
However, recent work by Watted successfully treated homogeneous varieties \cite{Watted}.

\section{Some harmonic analysis in $H^2_d$}

The $d=1$ instance of $\cM_d$, which is simply the algebra $H^\infty(\mb{D})$ of bounded analytic functions on the disc, has been the arena of a long-standing, beautiful and fruitful interaction between function theory and functional analysis \cite{Garnett}. Among the most profound results in this setting are Carleson's interpolation and corona theorems \cite{Carleson58,Carleson62}, and a technical tool which Carleson introduced --- now called {\em Carleson measures} --- has been of lasting significance. This section surveys some recent results in the case $1<d<\infty$ regarding these three topics: interpolating sequences, Carleson measures, and the corona theorem. For a survey with emphasis on the harmonic analysis side of $H^2_d$ see \cite{Arcozzi13}.

\subsection{Carleson measures for $H^2_d$}

Recall the Besov-Sobolev spaces $B^\sigma_p(\mb{B}_d)$ from Section \ref{sec:BS}. 
\begin{definition}
A positive measure $\mu$ on $\mb{B}_d$ is said to be a {\em Carleson measure for $B^\sigma_p(\mb{B}_d)$} if there exists a constant $C$ such that for all $f \in B^\sigma_p(\mb{B}_d)$, 
\be\label{eq:Carleson}
\|f\|_{L^p(\mu)} \leq C \|f\|_{B^\sigma_p(\mb{B}_d)} \,.
\ee
The space of all Carleson measures on $B^\sigma_p(\mb{B}_d)$ is denoted $CM(B^\sigma_p(\mb{B}_d))$. The infimum of $C$'s appearing in the right hand side of (\ref{eq:Carleson}) is the {\em Carleson measure norm} of $\mu$, denoted $\|\mu\|_{CM(B^\sigma_p(\mb{B}_d))}$.
\end{definition}
An understanding of Carleson measures has turned out to be a key element in the analysis of the spaces $B^\sigma_p(\mb{B}_d)$. The focus of this survey is $H^2_d = B^{1/2}_2(\mb{B}_d)$, but in the literature one often finds a treatment for an entire range of $p$'s or $\sigma$'s. A characterization of the Carleson measures of $B^\sigma_p(\mb{B}_d)$ for ranges of $p$ and $\sigma$ that include $p=2, \sigma = 1/2$ was obtained in \cite{ARS08},\cite{Tch08} and \cite{VW12}. The reader is referred to these papers for additional details. 

\begin{remark}
Consider the scale of spaces $B^\sigma_2(\mb{B}_d)$. 
It is interesting that the value $\sigma = 1/2$ seems to play a critical role in some approaches, while in others it does not. For example, the characterization of Carleson measures given in \cite[Theorem 23]{ARS08} holds for $0 \leq \sigma < 1/2$, the case $\sigma = 1/2$ is handled differently. 
On the other hand, the methods of Tchoundja \cite{Tch08} work for the range $\sigma \in (0,1/2]$, but not for $\sigma > 1/2$. However, using different techniques, Volberg and Wick give in \cite[Theorem 2]{VW12} a characterization of Carleson measures for $B^\sigma_2(\mb{B}_d)$ for all $\sigma > 0$. 
\end{remark}

\subsection{Characterization of multipliers}

The strict containment (\ref{eq:strict}) and the incomparability of the multiplier norm and the sup norm lead to the problem of characterizing multipliers in function theoretic terms. One of the applications of Carleson measures is such a characterization. A geometric characterization of Carleson measures such as the one given in \cite[Theorem 34]{ARS08} then enables, in principle, to determine in intrinsic terms whether a function is multiplier. 

\begin{theorem}[Theorem 2, \cite{ARS08}; Theorem 3.7, \cite{OF00}]
  \label{thm:mult_char}
Let $d<\infty$, let $f$ be a bounded analytic function on $\mb{B}_d$, and fix $m > (d-1)/2$. Then $f \in \cM_d$ if and only if the measure
\[
d\mu_{f,k} = \sum_{|\alpha|=m} \left|\frac{\partial^\alpha f}{\partial z^\alpha} (z)\right|^2 (1 - |z|^2)^{2m - d} d\lambda(z)
\]
is a Carleson measure for $H^2_d$. In this case one has the following equivalence of norms
\be\label{eq:Mult_norm_carl}
\|f\|_{\cM_d} \sim \|f\|_{\infty} + \|\mu_{f,m}\|_{CM(H^2_d)}.
\ee
\end{theorem}

The equivalence of norms (\ref{eq:Mult_norm_carl}) together with Theorem \ref{thm:DruryVNineq} (Drury's von Neumann inequality) gives a version of von Neumann's inequality for $d$-contractions that avoids mention of the $d$-shift, but is valid only up to equivalence of norms.

\begin{corollary}
Let $T$ be a $d$-contraction ($d<\infty$), and fix $m>(d-1)/2$. Then there exists a constant $C$ such that for every polynomial $p \in \mb{C}[z]$, 
\[
\|p(T)\| \leq C \left(\sup_{z\in \mb{B}_d} |p(z)| + \|\mu_{p,m}\|_{CM(H^2_d)} \right) .
\]
\end{corollary}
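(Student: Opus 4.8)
The plan is to obtain the estimate by composing two facts already available above: Drury's von Neumann inequality and the Carleson-measure characterization of the multiplier norm. First I would apply Theorem \ref{thm:DruryVNineq} to the scalar polynomial $p$ (the special case $k=1$ of the matrix-valued statement there), which gives $\|p(T)\| \le \|p(S)\|$, where $S$ is the $d$-shift on $H^2_d$. Since $\mb{C}[z] \subseteq \cM_d$ (Section \ref{subsec:dshift}), the operator $p(S)$ is precisely the multiplication operator $M_p$, and by the definition (\ref{eq:mult_norm}) of the multiplier norm one has $\|p(S)\| = \|M_p\| = \|p\|_{\cM_d}$. Hence $\|p(T)\| \le \|p\|_{\cM_d}$ for every $p \in \mb{C}[z]$.

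Next I would feed this into the equivalence of norms (\ref{eq:Mult_norm_carl}). A polynomial is continuous on the closed ball, hence bounded and analytic on $\mb{B}_d$, so it falls within the scope of the theorem preceding this corollary, which characterizes membership in $\cM_d$ through the Carleson measure $\mu_{f,m}$. That theorem supplies a constant $C = C(d,m)$, independent of $p$, with
\[
\|p\|_{\cM_d} \le C\!\left( \|p\|_\infty + \|\mu_{p,m}\|_{CM(H^2_d)} \right),
\qquad \|p\|_\infty = \sup_{z \in \mb{B}_d} |p(z)| .
\]
Chaining this with the estimate from the previous paragraph yields the claimed inequality with the same constant $C$.

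The only points that need a moment's attention — rather than genuine obstacles — are the identification $\|p(S)\| = \|p\|_{\cM_d}$, which is immediate once one recalls from Section \ref{subsec:structure} that $\cM_d = \overline{\alg}^{\wot}(M_z)$ and that the multiplier norm is by definition the operator norm of the associated multiplication operator, and the fact that the constant in (\ref{eq:Mult_norm_carl}) is universal (depending only on $d$ and $m$, not on the particular function), which is exactly what makes the final $C$ uniform over all polynomials. It is worth stressing that Drury's inequality is the essential ingredient here: it is what replaces the uncontrollable quantity $\|p(T)\|$ by something intrinsic to the Drury-Arveson space, after which the Carleson-measure description of $\cM_d$ does the rest. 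No estimate beyond these two cited results is required.
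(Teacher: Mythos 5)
Your argument is correct and is exactly the one the paper intends: the sentence preceding the corollary says that the equivalence of norms (\ref{eq:Mult_norm_carl}) together with Drury's von Neumann inequality (Theorem \ref{thm:DruryVNineq}) gives the statement, and you chain those two results in precisely that way. The only minor remark is that the identification $\|p(S)\| = \|p\|_{\cM_d}$ you pause over is indeed immediate from the definition of the multiplier norm; no appeal to the \wot-closure result of Section \ref{subsec:structure} is needed.
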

For an explicit description of the right hand side see \cite[Theorem 4]{ARS08}. 
A function theoretic version of von Neumann's inequality for $d$-contractions resulting from the above corollary was also noted by Chen \cite[Corollary 3]{Chen03}. 

\subsection{Interpolating sequences} 
\label{ss:is}

\begin{definition}
Let $Z = \{z_n\}_{n=1}^\infty$ be a sequence of points in $\mb{B}_d$. $Z$ is said to be an {\em interpolating sequence for $\cM_d$} if the map
\[
\cM_d \ni f \mapsto (f(z_n))_{n=1}^\infty \in \ell^\infty
\] 
maps $\cM_d$ onto $\ell^\infty$. 
\end{definition}

There is also a notion of interpolating sequence for $H^2_d$, but since $H^2_d$ contains unbounded functions, the definition has to be modified. 

\begin{definition}
Let $Z = \{z_n\}_{n=1}^\infty$ be a sequence of points in $\mb{B}_d$. Define a sequence $\{w_n\}_{n=1}^\infty$ of weights by $w_n = (1-\|z_n\|)^{1/2}$. $Z$ is said to be an {\em interpolating sequence for $H^2_d$} if the map
\[
H^2_d \ni h \mapsto (w_n h(z_n))_{n=1}^\infty
\] 
maps $H^2_d$ into and onto $\ell^2$. 
\end{definition} 

\begin{remark}
There exists a similar notion of interpolating sequence for an arbitrary Hilbert function space $\cH$ with kernel $K^\cH$, where the weights are given by $w_n = \|K_{z_n}^\cH\|^{-1}$. 
\end{remark}

\begin{theorem}
  \label{thm:ms_is}
Let $Z = \{z_n\}_{n=1}^\infty$ be a sequence of points in $\mb{B}_d$ ($1\leq d \leq \infty$). Then $Z$ is an interpolating sequence for $\cM_d$ if and only if $Z$ is an interpolating sequence for $H^2_d$.
\end{theorem}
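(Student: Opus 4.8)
The plan is to reduce both statements to one linear-algebraic fact about the normalized kernels $\hat k_{z_n}:=k_{z_n}/\|k_{z_n}\|$: that $Z$ is an interpolating sequence for $H^2_d$ \emph{if and only if} $(\hat k_{z_n})_n$ is a Riesz sequence, i.e. there are $0<c\le C$ with $c\sum_n|\lambda_n|^2\le\big\|\sum_n\lambda_n\hat k_{z_n}\big\|^2\le C\sum_n|\lambda_n|^2$ for every finitely supported scalar sequence. This is immediate once one observes that the weights $w_n=(1-\|z_n\|)^{1/2}$ in the definition satisfy $w_n\le\|k_{z_n}\|^{-1}\le\sqrt2\,w_n$: the map $h\mapsto(w_nh(z_n))_n$ is then bounded into $\ell^2$ exactly when $(\hat k_{z_n})_n$ is Bessel, and (its adjoint being $e_n\mapsto w_nk_{z_n}$, so that by the closed range theorem surjectivity is equivalent to the adjoint being bounded below) it maps onto $\ell^2$ exactly when the lower Riesz estimate holds.

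For the implication ``$\cM_d$ interpolating $\Rightarrow$ $H^2_d$ interpolating'' the plan does not use the Pick property. Put $\cH_Z=\overline{\spn}\{k_{z_n}:n\}$; by Theorem~\ref{thm:what_multpliers_do}, $M_f^*k_{z_n}=\overline{f(z_n)}\,k_{z_n}$, so $\cH_Z$ is co-invariant for $\cM_d$. Since $f\mapsto(f(z_n))_n$ is a bounded surjection $\cM_d\to\ell^\infty$, the open mapping theorem yields a constant $K$ such that every $a=(a_n)\in\ell^\infty$ equals $f_a|_Z$ for some $f_a\in\cM_d$ with $\|f_a\|_{\cM_d}\le K\|a\|_\infty$. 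The operator $D_a:=\big(P_{\cH_Z}M_{f_a}P_{\cH_Z}\big)^*=M_{f_a}^*|_{\cH_Z}$ on $\cH_Z$ then satisfies $D_a\hat k_{z_n}=\overline{a_n}\hat k_{z_n}$ for every $n$ (hence is independent of the choice of $f_a$) and $\|D_a\|\le\|M_{f_a}\|\le K\|a\|_\infty$. Thus all ``diagonal'' operators in the system $(\hat k_{z_n})_n$ with bounded symbols are uniformly bounded; applying this to indicators of finite sets and to their sign modifications shows that the partial-sum projections of $(\hat k_{z_n})_n$ are uniformly bounded and converge strongly to $I_{\cH_Z}$, so $(\hat k_{z_n})_n$ is an unconditional basis of $\cH_Z$. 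As its vectors have norm $1$, the classical fact that an unconditional basis of a Hilbert space whose elements have norms bounded above and below is a Riesz basis gives the Riesz bounds, i.e. $Z$ is interpolating for $H^2_d$.

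For the converse ``$H^2_d$ interpolating $\Rightarrow$ $\cM_d$ interpolating'' the plan exploits the complete Pick property of $H^2_d$ (Theorem~\ref{thm:comp_Pick}). Let $cI\le G\le CI$ be the Riesz bounds of $(\hat k_{z_n})_n$, where $G=\big[\langle\hat k_{z_j},\hat k_{z_i}\rangle\big]$, and set $M^2=C/c$. Fix $a\in\ell^\infty$ with $\|a\|_\infty\le1$ and a finite set $F$. Since positivity is invariant under conjugation by $\diag(\|k_{z_i}\|)_{i\in F}$ and under transposition, the Pick matrix $\big[(M^2-a_i\overline{a_j})\,k(z_i,z_j)\big]_{i,j\in F}$ associated via Theorem~\ref{thm:comp_Pick} with the data $(z_i,a_i)_{i\in F}$ and the bound $M$ is positive semidefinite precisely when $M^2G_F\ge D_aG_FD_a^*$, where $G_F$ is the corresponding section of $G$ and $D_a=\diag(a_i)_{i\in F}$; and this holds because $D_aG_FD_a^*\le C\,D_aD_a^*\le CI\le(C/c)G_F=M^2G_F$. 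Hence there is $\varphi_F\in\cM_d$ with $\|\varphi_F\|_{\cM_d}\le M$ and $\varphi_F(z_i)=a_i$ for all $i\in F$. It remains to pass to the limit: the multiplier-norm ball of radius $M$ is weak-$*$ compact (Section~\ref{subsec:identification2}), so a subnet of $(\varphi_F)_F$ (over finite $F$, directed by inclusion) converges weak-$*$ --- equivalently \wot, as the two topologies coincide on $\cM_d$ --- to some $\varphi\in\cM_d$ with $\|\varphi\|_{\cM_d}\le M$; by the lemma of Section~\ref{subsec:structure} characterizing \wot-convergence of bounded nets of multipliers as pointwise convergence on $\mb{B}_d$, we get $\varphi(z_n)=a_n$ for every $n$. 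Rescaling shows $f\mapsto(f(z_n))_n$ is onto $\ell^\infty$, i.e. $Z$ is interpolating for $\cM_d$.

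The substantive ingredient is the complete Pick property: it is what turns a mere Gram-matrix estimate into the existence of interpolating multipliers, and hence it is exactly why the equivalence holds for $H^2_d$ (and for complete Pick spaces generally) while failing in an arbitrary reproducing kernel Hilbert space. The step needing the most care is the passage from the finite interpolants $\varphi_F$ to a genuine interpolant of the whole sequence with the same norm bound, but the tools for this --- weak-$*$ compactness of multiplier balls, the coincidence of the weak-$*$ and \wot\ topologies, and the \wot-versus-pointwise lemma --- have all been recorded above; in particular the argument is uniform in $d$, including $d=\infty$.
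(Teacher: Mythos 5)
The paper offers no argument of its own here: the cited proof is Agler--McCarthy \cite[Theorem 9.19]{AM02} or Marshall--Sundberg \cite[Corollary 7]{MarSun94}, and your write-up is essentially that argument made explicit, so this counts as the same approach. Your reduction of both interpolation properties to the Riesz-sequence condition on the normalized kernels $\hat k_{z_n}=k_{z_n}/\|k_{z_n}\|$ is correct (the equivalence $w_n\asymp\|k_{z_n}\|^{-1}$ with constants $1$ and $\sqrt2$ is exactly what makes the paper's weights the right ones), your forward direction correctly uses only reproducing-kernel generalities (co-invariance of $\cH_Z$, open mapping, uniform boundedness of the diagonal compressions $D_a$), and the converse correctly isolates the complete Pick property (Theorem \ref{thm:comp_Pick}) as the substantive input, with the weak-$*$ compactness of multiplier balls and the \wot/pointwise lemma of Section \ref{subsec:structure} handling the passage from finite to infinite interpolation. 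The one place you lean on a black box is ``a normalized unconditional basis of a Hilbert space is a Riesz basis''; it is classical but worth a line, and in your setting it is immediate: averaging $\bigl\|\sum_n \epsilon_n a_n \hat k_{z_n}\bigr\|^2$ over independent unimodular $\epsilon_n$ gives exactly $\sum_n|a_n|^2$, while the uniform bound $\|D_\epsilon\|\le K$ traps every such quantity between $K^{-2}\bigl\|\sum_n a_n\hat k_{z_n}\bigr\|^2$ and $K^{2}\bigl\|\sum_n a_n\hat k_{z_n}\bigr\|^2$, yielding the Riesz bounds at once. With that spelled out, the argument is complete and, as you note, uniform in $d$ up to and including $d=\infty$.
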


\begin{proof}
The theorem, due to Marshall and Sundberg, holds for arbitrary Hilbert function spaces with the Pick property. See \cite[Theorem 9.19]{AM02} or \cite[Corollary 7]{MarSun94} for a proof. 
\end{proof}

The thrust of the above theorem is that it allows to approach the problem of understanding interpolating sequences for the algebra $\cM_d$ by understanding the interpolating sequences for the (presumably more tractable) Hilbert space $H^2_d$. A characterization of interpolating sequences in $B^\sigma_p(\mb{B}_d)$ and $\Mult(B^\sigma_p(\mb{B}_d))$ for $\sigma \in [0,1/2)$ was found by Arcozzi, Rochberg and Sawyer \cite[Section 2.3.2]{ARS08} based on work of B\o e \cite{Boe}. For $\sigma = 1/2$ (i.e., Drury-Arveson space) see Theorem \ref{thm:interpolating_DA}.

\subsection{The corona theorem for multipliers of $H^2_d$}

Lennart Carleson's corona theorem \cite{Carleson62} for $H^\infty(\mb{D})$ is the following. 
\begin{theorem}[Carleson's corona theorem, \cite{Carleson62}]\label{thm:CCT}
Let $\delta>0$, and suppose that $f_1, \ldots, f_N \in H^\infty(\mb{D})$ satisfy 
\[
\sum_{i=1}^N |f_i(z)|^2 \geq \delta \,\, , \,\, \textrm{ for all } z \in \mb{D}. 
\]
Then there exist $g_1, \ldots, g_N \in H^\infty(\mb{D})$ such that
\[
\sum_{i=1}^N g_i f_i = 1. 
\]
\end{theorem}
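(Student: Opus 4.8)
The plan is to prove this via the $\overline{\partial}$-approach to the corona problem, due to H\"ormander and greatly simplified by Wolff; I follow in outline the exposition in \cite[Chapter VIII]{Garnett}. After normalizing we may assume $\|f_i\|_\infty \le 1$ and $\sum_{i}|f_i(z)|^2 \ge \delta > 0$ on $\mb{D}$. First I would write down the obvious \emph{non-holomorphic} solution $\varphi_i(z) = \overline{f_i(z)}\big/\sum_{j}|f_j(z)|^2$, which is $C^\infty$ on $\mb{D}$, bounded by $1/\delta$, and satisfies $\sum_i \varphi_i f_i \equiv 1$. To correct the $\varphi_i$ into holomorphic functions I look for
\be\label{eq:corona_ansatz}
g_i = \varphi_i + \sum_{j} (b_{ij} - b_{ji})\, f_j ,
\ee
and observe that, since $\overline{\partial} f_j = 0$ and $\sum_j(\overline{\partial}\varphi_j)f_j = \overline{\partial}\big(\sum_j\varphi_j f_j\big) = 0$, each $g_i$ is holomorphic and still satisfies $\sum_i g_i f_i = 1$ provided the $b_{ij}$ solve the inhomogeneous Cauchy--Riemann equations $\overline{\partial} b_{ij} = h_{ij}$, where $h_{ij} := \varphi_i\,\overline{\partial}\varphi_j$. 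Furthermore, because $|\varphi_i|\le 1/\delta$ and $|f_j|\le 1$ on $\partial\mb{D}$, (\ref{eq:corona_ansatz}) shows $g_i\in H^\infty(\mb{D})$ with $\|g_i\|_\infty\le C(N,\delta)$ as soon as the $b_{ij}$ have bounded boundary values.

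The second step is to check that the data $h_{ij}$ is mildly regular: both $|h_{ij}(z)|^2\log\frac{1}{|z|}\,dA(z)$ and $|\partial_z h_{ij}(z)|\log\frac{1}{|z|}\,dA(z)$ are Carleson measures on $\mb{D}$, with Carleson norms bounded by a power of $1/\delta$ (times a power of $N$). This follows by differentiating the explicit formula for $\varphi_i$: the function $\overline{\partial}\varphi_i$ and the first derivatives of $h_{ij}$ are rational expressions in the $f_j,\overline{f_j},f_j'$, dominated pointwise by $\delta^{-C}\sum_k|f_k'(z)|^2$ (for the terms involving $\partial_z\overline{\partial}\varphi_j=\frac14\Delta\varphi_j$ one uses that the pure holomorphic and pure antiholomorphic parts of $\varphi_j$ are harmonic, so only cross terms $f_k'\,\overline{f_\ell'}$ survive), combined with the classical Littlewood--Paley fact that for $f\in H^\infty(\mb{D})$ the measure $|f'(z)|^2(1-|z|^2)\,dA(z)$ is Carleson with norm $\lesssim\|f\|_\infty^2$ (equivalently, $H^\infty(\mb{D})$ is contained in BMOA), together with the comparison $\log\frac{1}{|z|}\asymp 1-|z|$ near $\partial\mb{D}$.

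The third, and decisive, step is the $\overline{\partial}$-solvability estimate known as Wolff's lemma: if $h\in C^\infty(\overline{\mb{D}})$ and both $|h(z)|^2\log\frac{1}{|z|}\,dA$ and $|\partial_z h(z)|\log\frac{1}{|z|}\,dA$ are Carleson measures, then $\overline{\partial} b = h$ has a solution $b$ whose boundary values lie in $L^\infty(\partial\mb{D})$ with norm controlled by those two Carleson norms. The proof takes the Cauchy-transform particular solution of $\overline{\partial} b = h$, forms the second-order quantity $\Delta(|b|^2)$, and integrates it against the Poisson kernel by means of Green's theorem on $\mb{D}$; the two Carleson hypotheses are precisely what is needed to absorb the resulting error terms, using that a Carleson measure gives a bounded embedding $H^2 \hookrightarrow L^2(\mu)$. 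Applying Wolff's lemma to each $h_{ij}$ produces bounded $b_{ij}$, and (\ref{eq:corona_ansatz}) then yields $g_1,\dots,g_N\in H^\infty(\mb{D})$ with $\sum_i g_i f_i = 1$, completing the proof.

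I expect the hard part to be Step 3: Wolff's lemma concentrates all of the genuine analysis --- Green's theorem on the disc, the delicate interplay of the two Carleson conditions, and the $H^2$-boundedness of Carleson embeddings --- whereas Steps 1 and 2 are essentially formal together with the standard Littlewood--Paley / BMOA dictionary. An alternative route is Carleson's original proof \cite{Carleson62}, which constructs the $g_i$ directly through an intricate stopping-time decomposition of $\mb{D}$ adapted to the Blaschke products built from the zeros of the $f_i$; it avoids the $\overline{\partial}$-machinery entirely but is technically much heavier, and I would fall back on it only if the Carleson-measure estimates of Step 2 turned out to be unavailable in the form required.
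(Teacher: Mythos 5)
The paper does not prove Theorem \ref{thm:CCT}; it simply states it with a citation to Carleson's original 1962 paper, as classical background for the several-variable corona discussion that follows. There is therefore no ``paper's own proof'' to compare against. That said, your outline is a correct account of the now-standard H\"ormander--Wolff $\overline{\partial}$-proof: the Koszul-type antisymmetrization $g_i = \varphi_i + \sum_j(b_{ij}-b_{ji})f_j$ does make each $g_i$ holomorphic while preserving $\sum g_i f_i = 1$; the Carleson-measure estimates on $|h_{ij}|^2\log\frac1{|z|}\,dA$ and $|\partial_z h_{ij}|\log\frac1{|z|}\,dA$ do reduce, via $H^\infty\subset\mathrm{BMOA}$ and Littlewood--Paley, to the boundedness of $|f_k'|^2(1-|z|)\,dA$; and Wolff's lemma is exactly the right $\overline{\partial}$-solvability statement. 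One small point you gloss over: Wolff's lemma as you state it asks for $h\in C^\infty(\overline{\mb{D}})$, whereas the $h_{ij}$ built from arbitrary $f_j\in H^\infty$ are only smooth on the open disc; the standard fix is to run the whole argument with the dilates $f_j(rz)$, get bounds uniform in $r<1$, and pass to the limit by a normal-families argument. This is routine, but in a complete write-up it needs to be said. This is also the route that deviates from Carleson's original proof (which the paper cites), the latter being the stopping-time/corona-construction argument you mention as a fallback; the $\overline{\partial}$-approach is the one actually followed in \cite[Chapter VIII]{Garnett}, and it generalizes more readily (e.g.\ to the $\cM_d$ baby corona estimates in \cite{CSW12} that the paper goes on to discuss).
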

An equivalent way of phrasing this theorem is that the point evaluation functionals 
\[
H^\infty(\mb{D}) \ni f \mapsto f(\lambda)
\]
are weak-$*$ dense in the maximal ideal space of $H^\infty(\mb{D})$, in other words $\mb{D}$ is dense in $\mathfrak{M}(H^\infty(\mb{D}))$ --- hence the metaphor {\em corona}. In fact, Carleson proved a stronger result, which included bounds on the norm of $g_1, \ldots, g_N$ in terms of $\delta$ and the norms $f_1, \ldots, f_N$. 

Over the years a lot of effort was put into proving an analogue of this celebrated theorem in several variables, and some results were obtained \cite{AC00,KL95,Lin94,TW05,TW09,Varopoulos77}; see also the recent survey \cite{Dou12}. However, the most natural several variables analogues of Theorem \ref{thm:CCT}, which are precisely the same statement in the theorem but with the disc $\mb{D}$ replaced by either the unit ball $\mb{B}_d$ or the polydisc $\mb{D}^d$, remain to this day out of reach. 

The growing role that the Drury-Arveson space played in multivariable operator theory suggests that the ``correct" multivariable analogue of $H^\infty(\mb{D})$ is not $H^\infty(\mb{B}_d)$ or $H^\infty(\mb{D}^d)$, but $\cM_d$. Indeed, using a mixture of novel harmonic analytic techniques with available operator theoretic machinery, Costea, Sawyer and Wick \cite{CSW12} proved a corona theorem for $\cM_d$. Their main technical result is the following result that they call the {\em baby corona theorem}. 

\begin{theorem}[Baby corona theorem. Theorem 2, \cite{CSW12}]\label{thm:baby_corona}
Fix $\delta>0$ and $d<\infty$. Let $f_1, \ldots f_N \in \cM_d$ satisfy
\be\label{eq:bound_cor2}
\sum_{n=1}^N |f_n(z)|^2 \geq \delta \,\, , \,\,\ \textrm{ for all } z \in \mb{B}_d \,.
\ee
Then for all $h \in H^2_d$, there exist $g_1, \ldots, g_N \in H^2_d$ 
such that 
\be\label{eq:baby_corona}
\sum_{n=1}^N f_n g_n = h . 
\ee
Moreover, there is a constant $C = C(d,\delta)$ such that whenever $f_1, \ldots, f_N$ satisfy 
\be\label{eq:bound_cor1}
\sum_{n=1}^N M_{f_n}^* M_{f_n} \leq I
\ee
then $g_1, \ldots, g_N$ can be chosen to satisfy
\be\label{eq:bound_cor3}
\sum_{n=1}^N \|g_n\|^2 \leq C \|h\|^2 .
\ee
\end{theorem}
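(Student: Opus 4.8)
The plan is to recast Theorem~\ref{thm:baby_corona} as an operator inequality and then to prove that inequality by a $\overline{\partial}$-argument. Consider the row multiplier $M_F : \bigoplus_{n=1}^N H^2_d \to H^2_d$ given by $M_F(g_1,\dots,g_N) = \sum_{n=1}^N f_n g_n$; under the normalization (\ref{eq:bound_cor1}) this is a contraction. The assertion that every $h \in H^2_d$ admits $g_1,\dots,g_N \in H^2_d$ solving (\ref{eq:baby_corona}) with the bound (\ref{eq:bound_cor3}) is exactly the statement that $M_F$ is onto with a right inverse of norm at most $\sqrt{C}$, and by the standard duality between surjections and bounded-below operators this is equivalent to the operator inequality $\sum_{n=1}^N M_{f_n} M_{f_n}^* \ge c(d,\delta)\, I$ on $H^2_d$ (the unquantified first half of the theorem is the same with some $c>0$ in place of $c(d,\delta)$). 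So the heart of the matter is that the \emph{pointwise} lower bound (\ref{eq:bound_cor2}) forces an \emph{operator} lower bound. It is worth noting why this is not formal: testing the inequality against reproducing kernels gives only $\langle \sum_n M_{f_n}M_{f_n}^* k_\lambda, k_\lambda\rangle = \big(\sum_n|f_n(\lambda)|^2\big)\|k_\lambda\|^2 \ge \delta\|k_\lambda\|^2$, i.e. the inequality restricted to $[\{k_\lambda : \lambda \in \mb{B}_d\}]$, which does not imply the full operator inequality since $H^2_d$ is infinite dimensional. An explicit construction of the $g_n$ is unavoidable.

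For the construction I would use the Koszul-complex / Wolff scheme adapted to $H^2_d$. Begin with the smooth, generally non-holomorphic, solution $\psi_n = \overline{f_n}\,h\big/\sum_k |f_k|^2$, which satisfies $\sum_n f_n\psi_n = h$ in $\mb{B}_d$ and the pointwise bound $|\psi_n| \le \delta^{-1/2}|h|$. Since $h$ and the $f_n$ are holomorphic, $\sum_n f_n\,\overline{\partial}\psi_n = 0$, and one corrects $(\psi_n)$ to a holomorphic solution by subtracting suitable combinations $\sum_m f_m u_{nm}$ with $u_{nm}$ antisymmetric in $n,m$; antisymmetry preserves the identity $\sum_n f_n g_n = h$, and holomorphy of $g_n$ is arranged by solving the finite chain of $\overline{\partial}$-equations of increasing bidegree produced by the Koszul complex, with forcing terms built polynomially from the $\psi_m$ and their $\overline{\partial}$-derivatives. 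The entire problem thus reduces to solving these $\overline{\partial}$-equations on $\mb{B}_d$ with a quantitative bound on the solution in the Drury--Arveson norm in terms of $\|h\|_{H^2_d}$ and the multiplier norms $\|f_n\|_{\cM_d} \le 1$.

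To carry out the $\overline{\partial}$-estimate I would pass from $\|\cdot\|_{H^2_d}$ to the equivalent Besov--Sobolev norm $\|\cdot\|_{m,1/2,2}$ of Theorem~\ref{thm:equiv_def} (for a fixed integer $m > (d-1)/2$), which converts everything into weighted integral inequalities with the weight $(1-|z|^2)^{2m-d}$ --- the natural home for the explicit $\overline{\partial}$-solution operators on the ball (Charpentier / Berndtsson--Andersson kernels), controlled by Schur-test and Forelli--Rudin type estimates. The forcing terms involve $\overline{\partial}\psi_m$, hence the gradients of the $f_k$ and of $h$; the size of the $f_k$-gradients is precisely controlled by the hypothesis (\ref{eq:bound_cor1}) through the Carleson-measure characterization of multipliers (namely that $\sum_{|\alpha|=m}|\partial^\alpha f_k|^2(1-|z|^2)^{2m-d}\,d\lambda$ is a Carleson measure for $H^2_d$, \cite{ARS08}). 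Consequently the whole estimate funnels into a Carleson-embedding / two-weight testing inequality for $B^{1/2}_2(\mb{B}_d) = H^2_d$ at the \emph{critical} index $\sigma = 1/2$.

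This critical Carleson estimate is the main obstacle. For $0 \le \sigma < 1/2$ the $T1$-type geometric characterization of Carleson measures of \cite{ARS08} is available and comparatively soft, but at $\sigma = 1/2$ it degenerates and one must instead run a $T(b)$-theorem-style stopping-time argument in the spirit of \cite{Tch08} and \cite{VW12} to keep the $\overline{\partial}$-solution inside $H^2_d$ (rather than in a slightly larger Besov--Sobolev space) with norm control. Once this is in hand, assembling the pieces --- the $\overline{\partial}$-kernel bounds, the Carleson-norm control of the $f_k$-data, and the elementary estimate $|\psi_n| \le \delta^{-1/2}|h|$ with $h \in H^2_d$ --- yields (\ref{eq:baby_corona}) together with the quantitative bound (\ref{eq:bound_cor3}) for a constant $C = C(d,\delta)$. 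With Theorem~\ref{thm:baby_corona} in place, the full corona theorem for $\cM_d$ then follows by feeding the equivalent operator inequality $\sum_n M_{f_n}M_{f_n}^* \ge c\,I$ into the Toeplitz-corona machinery available in every complete Pick space --- Theorem~\ref{thm:comp_Pick} together with the commutant lifting theorem~\ref{thm:BTV} --- which upgrades the $g_n$ to multipliers; but that is a separate, soft step.
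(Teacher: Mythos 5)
The paper itself does not prove Theorem \ref{thm:baby_corona}; being a survey, it simply quotes the result from Costea--Sawyer--Wick \cite{CSW12} without argument, so there is no in-text proof to compare your outline against. Your sketch does correctly identify the main moving parts of the \cite{CSW12} argument: the equivalence of the quantitative conclusion with the operator inequality $\sum_n M_{f_n} M_{f_n}^* \geq c(d,\delta)\,I$, the Koszul-complex reduction starting from the smooth solution $\psi_n = \overline{f_n}\,h\big/\sum_k |f_k|^2$, the passage to the Besov--Sobolev formulation of Theorem \ref{thm:equiv_def}, and the reliance on explicit $\overline{\partial}$-solution operators on the ball controlled via Carleson-measure machinery. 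But as written this is a proof \emph{plan}, not a proof: the entire analytic content of the theorem --- the $\overline{\partial}$-estimates in the critical $B^{1/2}_2(\mb{B}_d) = H^2_d$ norm, with uniform control in $d$ and $\delta$ --- is deferred to ``a $T(b)$-theorem-style stopping-time argument in the spirit of \cite{Tch08} and \cite{VW12}'' without being carried out. That estimate is where the theorem lives; everything else in your outline is the (well-known) reduction to it.

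Two smaller points. The assertion that (\ref{eq:bound_cor1}) makes the row operator $M_F$ a contraction is not immediate: the hypothesis $\sum_n M_{f_n}^* M_{f_n} \leq I$ bounds the \emph{column} operator $g \mapsto (f_1 g, \ldots, f_N g)$, whereas $\|M_F\|^2 = \|\sum_n M_{f_n} M_{f_n}^*\|$, and for non-normal multiplication operators on $H^2_d$ the row and column norms need not agree. This is harmless for the overall strategy (either normalization supplies a constant), but the claim as stated is unjustified. Also, the phrase ``the inequality restricted to $[\{k_\lambda : \lambda \in \mb{B}_d\}]$'' is misleading, since that closed span is all of $H^2_d$; what testing on kernels actually verifies is the quadratic-form bound on the \emph{set} of kernel vectors, not on their linear span, which is precisely why the passage from a pointwise to an operator lower bound is nontrivial.
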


\begin{remark}\label{rem:sigma}
Note that $C$ does not depend on $N$. In fact, the theorem also holds for $N= \infty$, and also in a semi-infinite matricial setting. Moreover, the theorem holds with $B^\sigma_p(\mb{B}_d)$ replacing $H^2_d$ and $\Mult(B^\sigma_p(\mb{B}_d))$ replacing $\cM_d$ for all $1<p<\infty$ and $\sigma \geq 0$ (see \cite{CSW12}). 
\end{remark}

To see why Theorem \ref{thm:baby_corona} is called the ``baby" corona theorem note the following. A full (or ``grown-up") corona theorem for $\cM_d$ would be that given $f_1, \ldots, f_N \in \cM_d$ satisfying (\ref{eq:bound_cor2}), there are $\tilde{g}_1, \ldots, \tilde{g}_N$ in $\cM_d$ for which $\sum f_n \tilde{g}_n = 1$ (implying that $\mb{B}_d$ is dense in $\mathfrak{M}(\cM_d)$). In the baby corona theorem (Theorem \ref{thm:baby_corona}) $g_1, \ldots, g_N$ are only required to be in the (much larger) space $H^2_d$. Clearly the full corona theorem implies the baby theorem, because if $\tilde{g}_1, \ldots, \tilde{g}_N$ are as in the full corona theorem, then given $h$ the functions $g_n : = \tilde{g}_n h \in H^2_d$ clearly satisfy (\ref{eq:baby_corona}). 

Stated differently, the assertion of Theorem \ref{thm:baby_corona} is that, given (\ref{eq:bound_cor2}), the row operator $T:=[M_{f_1} \, M_{f_2} \, \cdots \, M_{f_N}] : H^2_d \otimes \mb{C}^N \rightarrow H^2_d$ is surjective, equivalently, it says that 
\be\label{eq:Tcond}
\sum_{n=1}^N M_{f_n}M_{f_n}^* \geq \epsilon^2 I
\ee
for some $\epsilon > 0$. On the other hand, the full corona theorem asserts that under the same hypothesis the tuple $(M_{f_1}, \ldots, M_{f_N})$ is an invertible tuple in the Banach algebra $\cM_d$. 

In \cite[Section 6]{Arv75} Arveson showed, in the setting of $H^\infty(\mb{D})$, that (\ref{eq:Tcond}) implies a full corona theorem. This was extended to several variables by Ball, Trent and Vinnikov, using their commutant lifting theorem (Theorem \ref{thm:BTV}). 
 
\begin{theorem}[Toeplitz corona theorem. p. 119, \cite{BTV01}]\label{thm:TCT}
Suppose $f_1, \ldots f_N \in \cM_d$ satisfy (\ref{eq:Tcond}). Then there are $g_1, \ldots, g_N \in \cM_d$ such that 
\[
\sum_{n=1}^N f_n g_n = 1. 
\]
Moreover, $g_1, \ldots, g_N$ can be chosen such that $\sum \|M_{g_n}\|^2 \leq \epsilon^{-2}$. 
\end{theorem}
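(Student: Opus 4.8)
The plan is to reduce this to the commutant lifting theorem (Theorem~\ref{thm:BTV}), following the argument of Arveson \cite{Arv75} in the classical disc case, in the form used by Ball, Trent and Vinnikov \cite{BTV01}. First, package $f_1,\dots,f_N$ into the operator-valued row multiplier $\Phi \in \cM_d(\mb{C}^N,\mb{C})$ given by $\Phi(z) = \begin{bmatrix} f_1(z) & \cdots & f_N(z)\end{bmatrix}$, so that $M_\Phi = \begin{bmatrix} M_{f_1} & \cdots & M_{f_N}\end{bmatrix} : H^2_d \otimes \mb{C}^N \to H^2_d$ is the row operator associated to $f_1,\dots,f_N$. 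The hypothesis (\ref{eq:Tcond}) says exactly that $M_\Phi M_\Phi^* \geq \epsilon^2 I_{H^2_d}$, hence that $M_\Phi^* : H^2_d \to H^2_d \otimes \mb{C}^N$ is bounded below by $\epsilon$; in particular $M_\Phi^*$ is injective with closed range $\cR := \operatorname{ran}(M_\Phi^*)$, so $M_\Phi^* : H^2_d \to \cR$ is a bounded bijection with $\|(M_\Phi^*)^{-1}\| \leq \epsilon^{-1}$.

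Next I would set up the data for Theorem~\ref{thm:BTV} with $K_1 = \mb{C}$ and $K_2 = \mb{C}^N$. Since $\Phi$ is a multiplier, $M_\Phi(S_i \otimes I_{\mb{C}^N}) = S_i M_\Phi$ for each $i$, so passing to adjoints $(S_i \otimes I_{\mb{C}^N})^* M_\Phi^* = M_\Phi^* S_i^*$; this shows that $\cR$ is co-invariant for $S \otimes I_{\mb{C}^N}$, while $M_1 := H^2_d$ is co-invariant for $S$ trivially. Let $X : H^2_d \to \cR$ be the operator whose adjoint $X^* : \cR \to H^2_d$ is the inverse of the bijection $M_\Phi^* : H^2_d \to \cR$; then $\|X\| = \|X^*\| \leq \epsilon^{-1}$ and $X^* M_\Phi^* = I_{H^2_d}$. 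The required intertwining is then a one-line check: for $w = M_\Phi^* h \in \cR$,
\[
X^*(S_i \otimes I_{\mb{C}^N})^* w = X^* M_\Phi^* S_i^* h = S_i^* h = S_i^* X^* w ,
\]
so $X^*(S \otimes I_{\mb{C}^N})^*\big|_{\cR} = S^* X^*$, which is precisely the hypothesis of Theorem~\ref{thm:BTV}.

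Theorem~\ref{thm:BTV} then produces $\Psi \in \cM_d(\mb{C},\mb{C}^N)$ with $M_\Psi^*\big|_{\cR} = X^*$ and $\|M_\Psi\| = \|X\| \leq \epsilon^{-1}$. Writing $\Psi(z) = \begin{bmatrix} g_1(z) \\ \vdots \\ g_N(z)\end{bmatrix}$, where $g_n \in \cM_d$ is obtained by composing $\Psi$ with the constant multiplier $e_n^* : \mb{C}^N \to \mb{C}$, one has $M_\Psi = \operatorname{col}(M_{g_1},\dots,M_{g_N})$. Since $M_\Phi^* h \in \cR$ for every $h \in H^2_d$, the identity $M_\Psi^*\big|_{\cR} = X^*$ gives $M_\Psi^* M_\Phi^* = X^* M_\Phi^* = I_{H^2_d}$, i.e. $M_\Phi M_\Psi = I_{H^2_d}$; but $M_\Phi M_\Psi = \sum_n M_{f_n} M_{g_n} = M_{\sum_n f_n g_n}$, and the injectivity of $f \mapsto M_f$ then forces $\sum_n f_n g_n = 1$ in $\cM_d$. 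The norm bound is exactly $\|M_\Psi\|^2 = \big\|\sum_n M_{g_n}^* M_{g_n}\big\| \leq \epsilon^{-2}$.

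Because the commutant lifting theorem is available to us, there is no genuine analytic difficulty in this argument; the only delicate point is the bookkeeping — identifying correctly which co-invariant subspace plays the role of $M_1$ and which of $M_2$ in Theorem~\ref{thm:BTV}, and checking that the intertwining relation constructed above matches its hypothesis verbatim. An alternative route avoiding Theorem~\ref{thm:BTV} is to run the same argument with the noncommutative commutant lifting theorem of \cite{Popescu89} and then compress to $\cF_+(E)$, as explained in \cite[Section 3]{DL10}.
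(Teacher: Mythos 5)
Your argument is correct and follows precisely the route the paper has in mind: Arveson's reduction of the Toeplitz corona problem to commutant lifting, in the several-variable form of Ball--Trent--Vinnikov. The bookkeeping is handled properly — the lower bound $M_\Phi M_\Phi^* \geq \epsilon^2 I$ forces $\cR := \operatorname{ran}(M_\Phi^*)$ to be closed, $\cR$ is co-invariant because $(S_i \otimes I)^* M_\Phi^* = M_\Phi^* S_i^*$, and $X^* := (M_\Phi^*|^{\cR})^{-1}$ satisfies the intertwining hypothesis of Theorem~\ref{thm:BTV} by the one-line computation you give. Applying Theorem~\ref{thm:BTV} with $K_1 = \mathbb{C}$, $K_2 = \mathbb{C}^N$, $M_1 = H^2_d$, $M_2 = \cR$ produces a column multiplier $\Psi$ with $M_\Psi^* M_\Phi^* = I$, hence $\sum_n f_n g_n = 1$, and $\|M_\Psi\| = \|X\| \leq \epsilon^{-1}$. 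This is exactly the strategy the paper alludes to in the paragraph preceding the theorem.

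One point deserves a caveat. Your final sentence asserts that the norm bound "is exactly" $\|M_\Psi\|^2 = \|\sum_n M_{g_n}^* M_{g_n}\| \leq \epsilon^{-2}$, identifying this with the theorem's stated $\sum_n \|M_{g_n}\|^2 \leq \epsilon^{-2}$. These are not literally the same quantity: since each $M_{g_n}^* M_{g_n} \geq 0$, one only has $\|\sum_n M_{g_n}^* M_{g_n}\| \leq \sum_n \|M_{g_n}\|^2$, so what your argument establishes is the column-operator-norm bound, which is a priori weaker than the paper's literal phrasing. The column norm bound is in fact what the commutant lifting argument produces (and what makes the converse direction, recorded in the paper's remark following the theorem, go through cleanly), so the paper's statement is best read as shorthand for $\|\sum_n M_{g_n}^* M_{g_n}\| \leq \epsilon^{-2}$; but you should either state your conclusion in that form or flag the discrepancy explicitly rather than claim the two expressions coincide.
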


\begin{remark}
The converse is immediate. 
\end{remark}
\begin{remark}\label{rem:Pick}
Both the theorem and its converse hold for $d = \infty$. In fact, the theorem and its converse hold for any multiplier algebra of a complete Pick space. 
\end{remark}

As a consequence of Theorems \ref{thm:baby_corona} and \ref{thm:TCT}, one has the full corona theorem for $\cM_d$. 

\begin{theorem}[Corona theorem for $\cM_d$. Theorem 1, \cite{CSW12}]\label{thm:CSW_corona}
Let $\delta>0$, and suppose that $f_1, \ldots, f_N \in \cM_d$ satisfy 
\[
\sum_{i=1}^N |f_i(z)|^2 \geq \delta \,\, , \,\, \textrm{ for all } z \in \mb{B}_d. 
\]
Then there exist $g_1, \ldots, g_N \in \cM_d$ such that
\[
\sum_{i=1}^N g_i f_i = 1. 
\]
\end{theorem}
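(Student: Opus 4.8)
The plan is to obtain this ``grown-up'' corona statement by assembling two ingredients already in hand: the baby corona theorem (Theorem~\ref{thm:baby_corona}) and the Toeplitz corona theorem (Theorem~\ref{thm:TCT}). Essentially all of the genuine difficulty is buried inside Theorem~\ref{thm:baby_corona}, whose proof rests on the harmonic-analytic machinery of Costea, Sawyer and Wick \cite{CSW12}; the remaining step --- upgrading solutions $\sum f_n g_n = h$ with $g_n \in H^2_d$ to a solution $\sum f_n g_n = 1$ with $g_n \in \cM_d$ --- is soft and passes through commutant lifting.

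First I would normalize so that the hypothesis of the quantitative (``moreover'') part of Theorem~\ref{thm:baby_corona} is met. Since the $f_n$ are multipliers, $M^2 := \bigl\|\sum_{n=1}^N M_{f_n}^* M_{f_n}\bigr\| < \infty$; replacing each $f_n$ by $f_n/M$ we may assume $\sum_{n=1}^N M_{f_n}^* M_{f_n} \le I$, at the cost of replacing $\delta$ by $\delta/M^2 > 0$. A solution $(g_1,\dots,g_N)$ of $\sum f_n g_n = 1$ for the rescaled tuple yields $(g_1/M,\dots,g_N/M)$ for the original tuple, so nothing is lost. With this normalization Theorem~\ref{thm:baby_corona} provides a constant $C = C(d,\delta/M^2)$ such that for every $h \in H^2_d$ there exist $g_1,\dots,g_N \in H^2_d$ with $\sum_{n=1}^N f_n g_n = h$ and $\sum_{n=1}^N \|g_n\|^2 \le C\|h\|^2$.

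The key step is to reread this as a statement about the row multiplication operator
\[
T = \begin{bmatrix} M_{f_1} & M_{f_2} & \cdots & M_{f_N}\end{bmatrix} \colon H^2_d \otimes \mb{C}^N \longrightarrow H^2_d,
\]
namely that $T$ is surjective and admits a right inverse of norm at most $\sqrt{C}$. I would then use the elementary fact that a surjection with a bounded right inverse is bounded below on $(\ker T)^\perp$: if $TR = I_{H^2_d}$ with $\|R\| \le \sqrt{C}$, then for every $k \in H^2_d$ one has $\|k\|^2 = \langle Rk, T^* k\rangle \le \sqrt{C}\,\|k\|\,\|T^* k\|$, hence $\|T^* k\| \ge C^{-1/2}\|k\|$ and therefore
\[
\sum_{n=1}^N M_{f_n} M_{f_n}^* = TT^* \ge C^{-1} I.
\]
This is exactly condition (\ref{eq:Tcond}) with $\ep = C^{-1/2}$. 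Finally I would invoke the Toeplitz corona theorem (Theorem~\ref{thm:TCT}), which turns (\ref{eq:Tcond}) into the existence of $g_1,\dots,g_N \in \cM_d$ with $\sum_{n=1}^N f_n g_n = 1$ (and with $\sum_n \|M_{g_n}\|^2 \le C$); reversing the normalization then completes the proof. The main obstacle is of course Theorem~\ref{thm:baby_corona} itself; within the assembly above, the step deserving the most care is the equivalence between the quantitative surjectivity of $T$ delivered by the baby corona theorem and the operator lower bound $TT^* \ge \ep^2 I$ demanded by the Toeplitz corona theorem.
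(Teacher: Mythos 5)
Your proposal is correct and follows essentially the same route the paper indicates: use the quantitative conclusion of the baby corona theorem (Theorem~\ref{thm:baby_corona}) to obtain the operator lower bound $\sum_n M_{f_n}M_{f_n}^*\geq \ep^2 I$ of (\ref{eq:Tcond}), and then apply the Toeplitz corona theorem (Theorem~\ref{thm:TCT}) to lift the $H^2_d$--solutions to multiplier solutions. The paper itself only sketches this assembly in the discussion surrounding (\ref{eq:Tcond}); your contribution is to make explicit the normalization $\sum M_{f_n}^*M_{f_n}\le I$ and the elementary step from the bounded-below right inverse to $TT^*\ge C^{-1}I$, both of which are carried out correctly.
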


\begin{remark}
Since for $\sigma \in [0,1/2]$ the space $B^\sigma_2(\mb{B}_d)$ is a complete Pick space, the above theorem also holds for the algebra $\Mult(B^\sigma_2(\mb{B}_d))$, $\sigma \in [0,1/2]$ (see Remarks \ref{rem:sigma} and \ref{rem:Pick}). 
\end{remark}

\appendix
\section{Recent developments in the Drury--Arveson space}

The theory of the Drury--Arveson space has progressed since the first version
of this article was written. In this appendix, some of these new developments are surveyed
    (in some cases, earlier results are also mentioned for context).
Because of the large number of new articles featuring the Drury--Arveson space, not everything could be covered here.
The author of the appendix offers his apologies to anyone whose work was overlooked.

We continue to write $H^2_d$ for the Drury-Arveson space, $\cM_d$ for the multiplier algebra of $H^2_d$, and $\cA_d$ for the norm closure of the polynomials in $\cM_d$.
Unless otherwise stated, we will assume throughout that $d < \infty$.
However, it will be mentioned that a number of results in fact hold for all normalized complete Pick spaces,
and this in particular includes $H^2_d$ for $d = \infty$.

\subsection{Connection to noncommutative function theory}

As explained in Section \ref{sec:sym}, $H^2_d$ can be identified with the symmetric Fock space over $\mathbb{C}^d$,
which in turn is a subspace of the full Fock space over $\mathbb{C}^d$.
It has become clear that there is also a function space picture of the full Fock space, involving noncommutative holomorphic functions.
This picture makes the procedure of ``compressing theorems'' from full Fock space to $H^2_d$, which was also mentioned  in Section \ref{sec:sym}, especially transparent.
This noncommutative approach to $H^2_d$ was for instance used to obtain an inner/outer factorization
in $H^2_d$, which will be discussed below (see also Remark \ref{rem:iso_via_nc} for another application of the noncommutative approach).

Let $F_d^+$ be the free monoid on $d$ generators, meaning that $F_d^+$ consists of all words of finite length
in the letters $1,\ldots,d$, along with the empty word. Let $x = (x_1,\ldots,x_d)$ be non-commuting variables.
Given $w = w_1\ldots w_r \in F_d^+$, we form the noncommutative monomial $x^w = x_{w_1} x_{w_2} \ldots x_{w_r}$.
The \emph{noncommutative Hardy space} $H^2_{nc}$ in $d$ variables is the space of all formal noncommutative power series
$F = \sum_{w \in F_d^+} a_w x^w$ satisfying
\begin{equation*}
  \|F\|^2 = \sum_{w \in F_d^+} |a_w|^2 < \infty.
\end{equation*}
Thus, the noncommutative monomials $(x^w)_{w \in F_d^+}$ form an orthonormal
basis of $F_d^+$.

Denoting the standard basis of $\mathbb{C}^d$ with $e_1,\ldots,e_d$,
the space $H^2_{nc}$ can be identified with the full Fock space over $\mathbb{C}^d$
by identifying a monomial $x^w$, where $w = w_1 \ldots w_r$, with the elementary
tensor $e_{w_1} \otimes \ldots \otimes e_{w_r}$. The crucial point is that in this identification,
the noncommutative $d$-shift $L = (L_1,\ldots,L_d)$ on the full Fock space corresponds
to the tuple of left multiplication by the variables $x_1,\ldots,x_d$.

If $d=1$, then $H^2_{nc}$ is the classical Hardy space $H^2(\mathbb{D})$, and the formal power series
in $H^2(\mathbb{D})$ in fact converge on $\mathbb{D}$ and define bona fide holomorphic functions there.
If $d > 1$, the key idea is to evaluate the noncommutative formal power series
not only on tuples of scalars, but on certain tuples of matrices.
Given a (not neccessarily commuting) tuple
$X = (X_1,\ldots,X_d) \in M_n(\mathbb{C})^d$
and $w = w_1 \ldots w_r \in F^+_d$, we let $X^w = X_{w_1} \ldots X_{w_r}$.
We also denote the row norm of $X$ by
\begin{equation*}
  \|X\|_{row} = \Big\| \sum_{j=1}^d X_j X_j^* \Big\|^{1/2}
\end{equation*}
and let
\begin{equation*}
  \mathbb{B}_d^{n \times n} = \{X \in M_n(\mathbb{C})^d: \|X\|_{row} < 1 \}.
\end{equation*}
A simple estimate using the Cauchy--Schwarz inequality shows that if $X \in \mathbb{B}_d^{n \times n}$, then
for each $F = \sum_{w \in F^+_d} a_w x^w \in H^2_{nc}$, the sum
\begin{equation*}
  F(X) = \sum_{w \in F^+_d} a_w X^w
\end{equation*}
converges in $M_n(\mathbb{C})$.
Thus, one can think of elements of $H^2_{nc}$ as functions on the disjoint union $\mathbb{B}_d^{nc} = \bigcup_{n=1}^\infty \mathbb{B}_d^{n \times n}$,
which is called the noncommutative row ball.

Now, passing from an element $F \in H^2_{nc}$ to an element of $H^2_d$ can be achieved
by simply restricting $F$ to $\mathbb{B}_d = \mathbb{B}_d^{1 \times 1}$, which is usually referred to as ``level $1$''
of the noncommutative set $\mathbb{B}_d^{nc}$.
This is the function space picture corresponding to the orthogonal projection from full Fock space to symmetric Fock space.
More explictly, the restriction map
\begin{equation}
  \label{eqn:Fock_restriction}
   H^2_{nc} \to H^2_d, \quad F \mapsto F \big|_{\mathbb{B}_d},
\end{equation}
is a co-isometry. See also \cite[Section 2.7]{Har22} for more details.

In the function space picture, the algebra $\mathcal{L}_d$ also has a very nice description:
it corresponds to
\begin{equation*}
  H^\infty_{nc} = \{ \Phi \in H^2_{nc}: \Phi \text{ is bounded on } \mathbb{B}_d^{nc} \};
\end{equation*}
see \cite[Section 3]{SSS18} for a detailed explanation.
The identification of $\mathcal{M}_d$ with a quotient of $\mathcal{L}_d$, explained in Section \ref{sec:sym}, now translates to the fact that the restriction map
\begin{equation*}
  H^\infty_{nc} \to \mathcal{M}_d, \quad \Phi \mapsto \Phi \big|_{\mathbb{B}_d},
\end{equation*}
is a (complete) quotient map; see \cite[Section 11]{SSS18} and \cite[Section 3.4]{Har22} for more details.
(This statement is considerably deeper than its Hilbert space counterpart \eqref{eqn:Fock_restriction}.)
Pushing this line of reasoning even further, one finds that $\mathcal{M}_d$ can be identified
with the restriction of $H^\infty_{nc}$ to the subvariety $\mathfrak{C} \mathbb{B}_d^{nc}$ of $\mathbb{B}_d^{nc}$ consisting
of all commuting tuples of matrices; see again \cite[Section 11]{SSS18}.
The multiplier norm of $H^2_d$, which is classically not the supremum
norm over $\mathbb{B}_d$ (Theorem \ref{thm:MnotHinfty}), now becomes a supremum norm
again, but over the set $\mathfrak{C} \mathbb{B}_d^{nc}$.

The idea behind $H^2_{nc}$ goes back to work of Popescu \cite{Popescu06a}.
The elements of $H^2_{nc}$ are really noncommutative holomorphic functions in the sense of Taylor \cite{Taylor72,Taylor73};
  see also \cite{AMY20} and \cite{KV14} for more recent treatments of the theory. In fact,
one can regard $H^2_{nc}$ as a noncommutative reproducing kernel Hilbert space as introduced by Ball, Marx and Vinnikov \cite{BMV16,BMV18};
see again \cite[Section 3]{SSS18} for this point of view on $H^2_{nc}$.

\subsection{Characteristic function and spectrum}

The classical Sz.-Nagy--Foias characteristic function of a completely non-unitary contraction $T$ on Hilbert space
is an operator-valued bounded analytic function on the disc that serves as a complete unitary invariant.
In addition, the spectrum of $T$ is encoded in function theoretic properties of the characteristic function;
see \cite[Chapter 6]{SzNF2010}.

Many of these ideas were extended to commuting row contractions by Bhattacharyya, Eschmeier and Sarkar \cite{BES05}.
Let $T= (T_1,\ldots,T_d)$ be a pure commuting row contraction on $\mathcal{H}$.
We regard $T$ as a row operator $\mathcal{H}^d \to \mathcal{H}$ and consider the defect operators
\begin{equation*}
  D_{T^*} = ( I_{\mathcal{H}^d} - T^* T)^{1/2} \in B(\mathcal{H}^d) \quad \text{ and } \quad
  D_{T} = (I_{\mathcal{H}} - T T^*)^{1/2} \in B(\mathcal{H})
\end{equation*}
and the defect spaces
\begin{equation*}
  \mathcal{D}_{T^*} = \overline{D_{T^*} \mathcal{H}^d} \quad \text{ and } \mathcal{D}_{T} = \overline{ D_{T} \mathcal{H}}.
\end{equation*}
(Compared to \cite{BES05}, the two defect operators and spaces are each interchanged, but the notation is chosen so as to be consistent with that in Subsection \ref{ss:defect}.)
The \emph{characteristic function} $\theta_T$ of $T$ is then defined to be
\begin{equation*}
  \theta_T: \mathbb{B}_d \to B(\mathcal{D}_{T^*},\mathcal{D}_{T}), \quad
  \theta_T(z) = - T + D_{T}(I_{\mathcal{H}} - Z(z) T^*)^{-1} Z(z) D_{T^*},
\end{equation*}
where $Z(z)$ stands for the row operator $[z_1 , \ldots, z_d]$.
The characteristic function is a transfer function in the sense of Theorem \ref{thm:transfer},
so it follows from that theorem that $\theta_T$ is a contractive operator-valued multiplier of $H^2_d$.
The characteristic function is related to the dilation map $W$ of Lemma \ref{lem:Poisson_kernel} via
\begin{equation*}
  W W^* + M_{\theta_T} M_{\theta_T}^* = I_{H^2_d \otimes \mathcal{D}_{T}},
\end{equation*}
see \cite[Lemma 3.6]{BES05}. In particular, this gives a description
of the dilation space $K$ in Theorem \ref{thm:model_pure}, namely as the orthogonal complement
of the range of $M_{\theta_T}$.

The following result is \cite[Theorem 4.4]{BES05}.

\begin{theorem}
  Two pure commuting row contractions are unitarily equivalent if and only if their characteristic functions
  are equal.
\end{theorem}

We say that $\theta_T$ is \emph{surjective at $\lambda \in \mathbb{B}_d$} if $\theta_T(\lambda) \mathcal{D}_{T^*} = \mathcal{D}_{T}$.
Moreover, we say that $\theta_T$ is surjective at $\lambda \in \partial \mathbb{B}_d$ if $\theta_T$ extends
to a holomorphic map $\widetilde{\theta}$ on an open set containing $\lambda$ and
$\widetilde{\theta}(\lambda) \mathcal{D}_{T^*} = \mathcal{D}_{T}$.

  The following description of the Taylor spectrum of $T$ was obtained by Didas, Eschmeier, Hartz and Scherer \cite[Corollary 10]{DEH+24}.

\begin{theorem}
  Let $T$ be a pure commuting row contraction with $\dim \mathcal{D}_{T} < \infty$. Then
  \begin{equation*}
    \sigma(T) = \{\lambda \in \overline{\mathbb{B}_d}: \theta_T \text{ is not surjective at } \lambda \}.
  \end{equation*}
\end{theorem}

This result follows from a description of the Taylor spectrum of certain quotients of the $d$-shift.
Other descriptions of the spectra of such quotients were obtained by Clou\^atre and Timko; see \cite{CT23}.

\subsection{de Branges--Rovnyak spaces and Alexandrov--Clark theory}

Let $b \in H^\infty(\mathbb{D})$ be a non-constant function with $\|b\|_\infty \le 1$.
The classical {de Branges--Rovnyak space} $\mathcal{H}(b)$ associated with $b$ is the reproducing kernel Hilbert space on the disc with reproducing
kernel
\begin{equation*}
  \frac{1 - b(z) \overline{b(w)}}{1 - z \overline{w}}.
\end{equation*}
Among other things, these spaces serve as model spaces for certain classes of operators.

The theory of de Branges--Rovnyak spaces was generalized to the setting of the Drury--Arveson space by Jury
\cite{Jury14} and by Jury and Martin \cite{JM18b}. Given a contractive multiplier $b \in \mathcal{M}_d$,
the corresonding \emph{de Branges--Rovnyak space} is defined to be the reproducing kernel Hilbert space on $\mathbb{B}_d$
with reproducing kernel
\begin{equation*}
  \frac{1 - b(z) \overline{b(w)}}{1 - \langle z,w \rangle }.
\end{equation*}
In particular, much of classical Alexandrov--Clark theory, which deals with rank one perturbations
of backward shifts on $\mathcal{H}(b)$ spaces, was generalized to the setting of the Drury--Arveson space \cite{Jury14,JM18b}.
The generalization is not straightforward, and remarkably, many of the arguments in several variables rely on the noncommutative theory.
For instance, a noncommutative analogue of the Herglotz formula for holomorphic functions with positive real part is used.
The role of the backward shift is played by contractive solutions to the Gleason problem in the de Branges--Rovnyak space.
As in one variable, the theory is substantially different depending on whether or not the contractive multiplier $b$
is an extreme point of the unit ball of $\mathcal{M}_d$; see \cite{JM18c,Hartz20}.

\subsection{von Neumann's inequality for row contractive matrices}

For $d \ge 2$, it follows from the incompatibility of multiplier norm and supremum norm (Theorem \ref{thm:MnotHinfty})
that there do not exist finite constants $C_d$ such that the von Neumann-type inequality
\begin{equation*}
  \|p(T)\| \le C_d \|p\|_\infty
\end{equation*}
holds for all polynomials $p$ and all $d$-variable commuting row contractions $T$.
However, when one restricts to Hilbert spaces of a fixed finite dimension, such constants do exist.
In fact, one can take them to be uniform in the number of variables $d$.
The following is the main result of \cite{HRS21}, proved by Hartz, Richter and Shalit.

\begin{theorem}
  For all $n \in \mathbb{N}$, there exists a constant $C_n$ such that for all $d \in \mathbb{N}$, the inequality
  \begin{equation*}
    \|p(T)\| \le C_n \|p\|_\infty
  \end{equation*}
  holds for all row contractions $T$ consisting of $d$ commuting $n \times n$ matrices and all polynomials $p$ in $d$ variables.
\end{theorem}

\subsection{Cyclic functions in the Drury--Arveson space}

A function $f \in H^2_d$ is said to be \emph{cyclic} if $\{f \cdot \varphi: \varphi \in \mathcal{M}_d \}$
is dense in $H^2_d$.
As in any function space, it is natural to try to determine
which functions in $H^2_d$ are cyclic.
The cyclic functions in $H^2(\mathbb{D})$ are precisely the outer functions.
For general $d \in \mathbb{N}$, no simple description of cyclic functions is known,
but there are necessary and sufficient conditions.

Since point evaluations are continuous, a cyclic function cannot vanish anywhere
on $\mathbb{B}_d$.
Even if $d=1$, this necessary condition is not sufficient,
as witnessed by singular inner functions.
Other necessary conditions come from the size of the zero set of $f$ on $\partial \mathbb{B}_d$ (appropriately interpreted); see for instance \cite[Theorem 4.8]{APR+24}.

The following result contains some sufficient conditions for cyclicity
obtained by Aleman, Perfekt, Richter, Sundberg and Sunkes.

\begin{theorem}
  Let $f \in H^2_d$ be a function without zeros in $\mathbb{B}_d$.
  Each of the following conditions is sufficient for cyclicity of $f$:
\begin{itemize}
  \item $|f|$ is bounded below; see the discussion following Theorem 3.7 in \cite{APR+24};
  \item $\frac{1}{f} \in H^2_d$; this follows from the Smirnov
factorization discussed below, see also \cite[Theorem 3.1]{APR+23} for a more general statement;
  \item $f$ has bounded argument and $\log f \in H^2_d$
\cite[Theorem 1.1]{APR+23}.
\end{itemize}
\end{theorem}

The question of cyclicity is even interesting for polynomials.
If $d \le 2$, then any polynomial without zeros in $\mathbb{B}_2$ is cyclic,
but if $d \ge 4$, then there are polynomials
without zeros in $\mathbb{B}_d$ that are not cyclic in $H^2_d$;
see \cite{KV23} and the discussion following Theorem 3.5 in \cite{APR+24}. The case $d = 3$ appears to be open.

\subsection{Inner/outer factorization}
\label{ss:inner/outer}

A cornerstone of the theory of the Hardy space $H^2(\mathbb{D})$ is the inner/outer factorization:
each non-zero function $f \in H^2(\mathbb{D})$ factors essentially uniquely as $f = \varphi g$,
where $\varphi \in H^\infty(\mathbb{D})$ is inner and $g \in H^2(\mathbb{D})$ is outer.
One way to interpret these terms is to say that $\varphi$ induces an isometric multiplication
operator, and $g$ is a cyclic function.

The factorization was generalized to the Drury--Arveson space by Jury and Martin;
see \cite{JM21} and \cite[Theorem 1.1]{JM18} for the precise statement below.

\begin{theorem}
  \label{thm:inner_outer}
  Each $f \in H^2_d \setminus \{0\}$ factors as
  $f = \varphi g$, where $\varphi \in \mathcal{M}_d$ with $\|\varphi\|_{\mathcal{M}_d} \le 1$
  and $g \in H^2_d$ is cyclic with $\|f\| = \|g\|$.
\end{theorem}

If $d=1$, then $\varphi$ in the previous theorem will automatically be inner, but if $d \ge 2$,
then there are no non-constant isometric multipliers of $H^2_d$; see \cite[Proposition 8.36]{AM02}.
The proof of Jury and Martin goes through the noncommutative universe and uses a
Beurling theorem of Arias--Popescu and Davidson--Pitts in the full Fock space.
No commutative proof of Theorem \ref{thm:inner_outer} appears to be known.

The factors $\varphi$ and $g$ in Theorem \ref{thm:inner_outer} can be characterized
intrinsically in terms of $H^2_d$, and there is a corresponding uniqueness statement.
The factor $g$ is what has been called \emph{free outer},
a property that is generally stronger than cyclicity. It was shown by Aleman, Hartz, \mcc\ and Richter
in \cite{AHM+22} that
the factorization in Theorem \ref{thm:inner_outer} becomes essentially unique
if one insists that $g$ be free outer.
The multiplier $\varphi$ in this factorization is called a \emph{subinner} multiplier,
meaning that the multiplication operator is isometric on the linear span of $g$.
Thus, the factorization is called the \emph{subinner/free outer} factorization.

The factorization extends to vector valued $H^2_d$:
Each non-zero $F \in H^2_d \otimes \ell^2$ factors as $F = \Phi g$,
where $\Phi$ is a multiplier
from $H^2_d$ to $H^2_d \otimes \ell^2$ of norm one
and $g \in H^2_d$ is a scalar-valued free outer function with $\|g\| = \|F\|$;
see \cite[Theorem 1.1]{JM18}.
This vector valued factorization is crucial for some applications, such as weak products, which will be discussed below. These results in fact hold more generally for normalized complete Pick spaces.

\subsection{Smirnov factorization}

The classical Smirnov class can be defined as
\begin{equation*}
  N^+ = \Big \{ \frac{\varphi}{\psi}: \varphi,\psi \in H^\infty(\mathbb{D}), \psi \text{ outer} \Big \}.
\end{equation*}
The Hardy space $H^2(\mathbb{D})$ is contained in $N^+$.
Similarly, one can define the Drury--Arveson--Smirnov class as
\begin{equation*}
  N^+(H^2_d) =
  \Big \{ \frac{\varphi}{\psi}: \varphi,\psi \in \mathcal{M}_d, \psi \text{ cyclic} \Big \}.
\end{equation*}
It follows from a result of Alpay, Bolotnikov and Kaptanoglu \cite{ABK02}
that $H^2_d \subset N^+(H^2_d)$.
The precise statement below is \cite[Theorem 1.1]{AHM+17a}.

\begin{theorem}
  \label{thm:Smirnov_factor}
  Let $f \in H^2_d$ with $\|f\|_{H^2_d} \le 1$.
  Then there exist $\varphi,\psi \in \mathcal{M}_d$ with $\|\varphi\|_{\mathcal{M}_d} \le 1$, $\|\psi\|_{\mathcal{M}_d} \le 1$
  and $\psi(0) = 0$ such that
  \begin{equation*}
    f = \frac{\varphi}{1 - \psi}.
  \end{equation*}
  Moreover, the multiplier $1 - \psi$ is cyclic.
  In particular, $H^2_d \subset N^+(H^2_d)$.
\end{theorem}

This Smirnov factorization result has a number of basic consequences. For instance, the zero sets for $H^2_d$ and for $\mathcal{M}_d$ agree; this improves on Theorem \ref{thm:sero_set}.
Moreover, the union of two $H^2_d$-zero sets is another $H^2_d$-zero set; again see \cite{AHM+17a}.

Given a function $f \in H^2_d$, one can explicitly write down multipliers $\varphi,\psi$ as in Theorem \ref{thm:Smirnov_factor};
see \cite[Theorem 1.1]{AHM+17c}.
Generally, the Smirnov factorization is different from the inner/outer factorization (i.e.\ the multipliers $\varphi$ in the two
factorizations are different and $\frac{1}{1 - \psi}$ in the Smirnov factorization does not equal the free outer factor $g$).
There is a vector valued version of this factorization; these results hold for all normalized complete Pick spaces \cite{AHM+17a,AHM+17c}.

\subsection{Common range of co-analytic Toeplitz operators}

A theorem of M\textsuperscript{c}Carthy describes the intersection of the ranges of all non-zero co-analytic Toeplitz
operators on $H^2(\mathbb{D})$. Thanks to the inner/outer factorization, this space is the same
as 
\begin{equation*}
  \bigcap_{\varphi \in H^\infty(\mathbb{D}) \text{ outer }} \operatorname{ran}(M_\varphi^*).
\end{equation*}

In \cite{AHM+ar}, Aleman, Hartz, \mcc\ and Richter described the common range
\begin{equation*}
  \mathcal{R} = \bigcap_{\varphi \in \mathcal{M}_d \text{ cyclic} } \operatorname{ran}(M_\varphi^*) \subset H^2_d.
\end{equation*}
Roughly speaking, a function belongs to $\mathcal{R}$ if and only if its Taylor coefficients
satisfy a simple decay condition. Moreover, $\mathcal{R}$ is the dual space of the Smirnov class $N^+(H^2_d)$.

\subsection{Weak products}
\label{ss:wp}

In the study of the Hardy space $H^2(\mathbb{D})$, several related spaces are relevant,
such has $H^1(\mathbb{D}), H^\infty(\mathbb{D})$ and more generally $H^p(\mathbb{D})$.
For the Drury--Arveson space, the role of $H^\infty(\mathbb{D})$ is played by the multiplier algebra
$\mathcal{M}_d$.
There is compelling evidence that the appropriate generalization of $H^1(\mathbb{D})$ is the
\emph{weak product space}
\begin{equation*}
  H^2_d \odot H^2_d = \Big\{ h = \sum_{n=1}^\infty f_n g_n: f_n,g_n \in H^2_d, \sum_{n=1}^\infty \|f_n\| \, \|g_n\| < \infty \Big\}.
\end{equation*}
The norm in the weak product space is the infimum of all sums on the right.

The definition of the weak product space goes back to Coiffman, Rochberg and Weiss \cite{CRW76}.
It is inspired by the classical fact that
\begin{equation*}
  H^1(\mathbb{D}) = \{h = f g: f,g \in H^2(\mathbb{D}) \},
\end{equation*}
but the definition is modified to ensure that the weak product space is a vector space (and better yet, a Banach space).

A remarkable theorem of Jury and Martin \cite{JM18} shows that the simple description
of $H^1(\mathbb{D})$ in fact generalizes. The precise version below is \cite[Theorem 1.3]{Hartz20}.

\begin{theorem}
  \label{thm:JM_weak_product}
  If $h \in H^2_d \odot H^2_d$, then there exist $f,g \in H^2_d$ with $h = f g$
  and $\|f\|_{H^2_d} \|g\|_{H^2_d} = \|h\|_{H^2_d \odot H^2_d} $.
\end{theorem}

The multiplier algebra of $H^1(\mathbb{D})$ is $H^\infty(\mathbb{D})$, which is also the multiplier algebra of $H^2(\mathbb{D})$.
Whereas the description of $\Mult(H^2_d) = \mathcal{M}_d$ is not as simple, the equality
of multiplier algebras for weak product and Hilbert space remains true.
The following result was proved by Richter and Wick for $d \le 3$ \cite{RW16}, and by Clou\^atre and Hartz
for general $d$ \cite{CH19}.
The precise statement is \cite[Theorem 1.4]{Hartz20}.

\begin{theorem}
  $\Mult(H^2_d \odot H^2_d) = \Mult(H^2_d)$ and the multiplier norms are equal.
\end{theorem}

Beurling's theorem shows that the multiplier invariant subspaces of $H^2(\mathbb{D})$ are of the
form $\varphi H^2(\mathbb{D})$ for some inner function $\varphi$. Similarly,
the multiplier invariant subspaces of $H^1(\mathbb{D})$ are of the form $\varphi H^1(\mathbb{D})$ for some
inner function $\varphi$. In particular, there is a one-to-one correspondence
between multiplier invariant subspaces of $H^2(\mathbb{D})$ and of $H^1(\mathbb{D})$.
This principle remains true in the Drury--Arveson space.
Part of the following result was shown in \cite{RS16} by Richter and Sunkes, the full statement
was obtained by Aleman, Hartz, \mcc\ and Richter in \cite[Theorem 3.7]{AHM+18}.

\begin{theorem}
  The maps
  \begin{align*}
    {M} &\mapsto \overline{{M}}^{H^2_d \odot H^2_d} \\
    {N} \cap H^2_d &\mapsfrom {N},
  \end{align*}
  are mutually inverse bijections between closed multiplier invariant subspaces ${M}$ of $H^2_d$
  and closed multiplier invariant subspaces ${N}$ of $H^2_d \odot H^2_d$.
\end{theorem}

There are versions of the inner/outer and the Smirnov factorization for the weak product.
The following inner/outer factorization is \cite[Theorem 1.11]{AHM+22}.

\begin{theorem}
  Let $h \in H^2_d \odot H^2_d \setminus \{0\}$.
  Then there exist $\varphi \in \mathcal{M}_d$ of multiplier norm one
  and a free outer function $g \in H^2_d$ with $h = \varphi g^2$ and $\|h\|_{H^2_d \odot H^2_d}
  = \|g^2\|_{H^2_d \odot H^2_d} = \|g\|_{H^2_d}^2$.
\end{theorem}

There is also a version of the Smirnov factorization for the weak product space,
established in \cite{AHM+18}. The precise statement below is \cite[Theorem 4.4]{Hartz20}.

\begin{theorem}
  Let $h \in H^2_d \odot H^2_d$ with $\|h\|_{H^2_d \odot H^2_d} \le 1$.
  Then there exist $\varphi,\psi \in \mathcal{M}_d$ with $\|\varphi\|_{\mathcal{M}_d} \le 1$, $\|\psi\|_{\mathcal{M}_d} \le 1$
  and $\psi(0) = 0$ such that
  \begin{equation*}
    f = \frac{\varphi}{(1 - \psi)^2}.
  \end{equation*}
\end{theorem}

All results in this section in fact hold for all normalized complete Pick spaces.

\subsection{The column-row property}

The results in Subsection \ref{ss:wp} all depend on the column-row property of $H^2_d$,
which will now be discussed. Given a sequence $(\varphi_n)$ in $\mathcal{M}_d$, one can
consider two (potentially unbounded) operators, namely the column operator
\begin{equation*}
  \begin{bmatrix}
    M_{\varphi_1} \\ M_{\varphi_2} \\ \vdots
  \end{bmatrix}: H^2_d \to H^2_d \otimes \ell^2
\end{equation*}
and the row operator
\begin{equation*}
  \begin{bmatrix}
    M_{\varphi_1} & M_{\varphi_2} & \cdots
  \end{bmatrix}: H^2_d \otimes \ell^2 \to H^2_d.
\end{equation*}
For general Hilbert space operators, there is no relationship between boundedness of the row
and boundedness of the column. For multiplication operators on $H^2(\mathbb{D})$ (i.e.\ $d=1$ above),
the norm of the row and that of the column are both equal to $\sup_{z \in \mathbb{D}} \|(\varphi_n(z)\|_{\ell^2}$.
But this in no longer true for $d \ge 2$, since multiplier norm and supremum norm
are not even comparable; see Theorem \ref{thm:MnotHinfty}.

Nonetheless, we have the following result from \cite{Hartz20}.

\begin{theorem}
  Let $(\varphi_n)$ be a sequence in $\mathcal{M}_d$. Then
\begin{equation*}
  \left\|
  \begin{bmatrix}
    M_{\varphi_1} \\ M_{\varphi_2} \\ \vdots
  \end{bmatrix} \right\|
  \le 
  \left\|\begin{bmatrix}
    M_{\varphi_1} & M_{\varphi_2} & \cdots
  \end{bmatrix} \right\|.
\end{equation*}
\end{theorem}

This result is usually phrased as ``$H^2_d$ satisfies the column-row property with constant $1$''.
It was shown earlier in \cite{AHM+18}, extending an argument of Trent \cite{Trent04},
that $H^2_d$ satisfies the column-row property with some constant $c_d \ge 1$, which appeared
as a factor on the right-hand side.
If $d \ge 2$, then there are examples of sequences in $\mathcal{M}_d$ that yield unbounded columns but bounded rows
\cite{AHM+18}.
Thus, the column-row property is really asymmetrical and there is no ``row-column property''.
The column-row property in fact holds for all normalized complete Pick spaces.

The relevance of the column-row property
can be explained for instance in the context of Theorem \ref{thm:JM_weak_product}.
Let $h  = \sum_{n=1}^\infty f_n g_n \in H^2_d \odot H^2_d$; the goal is to factor $h$
as a product of two functions in $H^2_d$.
By trading constant factors between $f_n$ and $g_n$, we may without loss of generality
assume that $\|f_n\| = \|g_n\|$ for all $n$, so $\sum_{n=1}^\infty \|f_n\|^2 < \infty$
and $\sum_{n=1}^\infty \|g_n\|^2 < \infty$.
We now apply the vector-valued inner/outer factorization to $(f_n) \in H^2_d \otimes \ell^2$
and $(g_n) \in H^2_d \otimes \ell^2$; see Subsection \ref{ss:inner/outer}.
Thus, we obtain $F, G \in H^2_d$ and sequences $(\varphi_n), (\varphi_n)$ in $\mathcal{M}_d$,
each forming a bounded column multiplier, such that $f_n = \varphi_n F$ and $g_n = \psi_n G$
for all $n$. Hence,
\begin{equation*}
  h = \sum_{n=1}^\infty \varphi_n \psi_n F G.
\end{equation*}
If we knew that $\theta: = \sum_{n=1}^\infty \varphi_n \psi_n \in \mathcal{M}_d$, then we would
get our desired factorization of $h$ into a product of two functions in $H^2_d$,
namely $\theta F$ and $G$.
But this is guaranteed by the column-row property, since
\begin{equation*}
  \theta =
  \begin{bmatrix}
    \varphi_1 & \varphi_2 & \cdots
  \end{bmatrix}
  \begin{bmatrix}
    \psi_1 \\ \psi_2 \\ \vdots
  \end{bmatrix},
\end{equation*}
the column of the $\psi_n$ is bounded by the statement of the inner/outer factorization,
and the row of the $\varphi_n$ is bounded by the column-row property, since the column is.
With a little extra work and using the fact that the column-row property holds with constant $1$,
one obtains the norm equality in Theorem \ref{thm:JM_weak_product}; see \cite[Theorem 1.3]{JM18} for details.

In addition to weak products, the column-row property also plays a role in the context
of interpolating sequences and of de Branges--Rovnyak spaces; see \cite{Hartz20} more details.

\subsection{Hankel operators}

A function $b \in H^2_d$ is said to be a \emph{Hankel symbol} if
there exists a constant $C \in [0,\infty)$ such that
\begin{equation*}
  | \langle \varphi f, b \rangle | \le \| \varphi\|_{H^2_d} \| f\|_{H^2_d}
  \quad \text{ for all } 
\varphi \in \mathcal{M}_d, f \in H^2_d.
\end{equation*}
We write $\Han(H^2_d)$ for the space of Hankel symbols.
Each $b \in \Han(H^2_d)$ gives rise to a (little) \emph{Hankel operator} $H_b$,
mapping $H^2_d$ into the conjugate Hilbert space $\overline{H^2_d}$. This operator
is characterized by the equation
\begin{equation}
  \label{eqn:Hankel}
  \langle H_b f, \overline{\varphi} \rangle_{\overline{H^2_d}}
  = \langle \varphi f, b \rangle_{H^2_d}
  \quad \text{ for all } \varphi \in \mathcal{M}_d, f \in H^2_d.
\end{equation}

If $d=1$, then a theorem of Fefferman shows that $\Han(H^2(\mathbb{D})) = \operatorname{BMOA}$,
the space of analytic functions of bounded mean oscillation;
see for instance \cite[Chapter VI]{Garnett}.
This space is important in the study of $H^2(\mathbb{D})$.
We think of $\Han(H^2_d)$ as playing the role of of $\operatorname{BMOA}$.
It was shown by Richter and Sunkes that $\mathcal{M}_d \subset \Han(H^2_d)$ for $d < \infty$,
see \cite[Theorem 1.1]{RS16}.

Again if $d=1$, it is classical theorem of Nehari that the dual space of $H^1(\mathbb{D})$
can be identified with $\Han(H^2(\mathbb{D}))$. This result
was generalized to the Drury--Arveson space by Richter and Sundberg \cite[Theorem 1.3]{RS14}.

\begin{theorem}
    $(H^2_d \odot H^2_d)^* \cong \Han(H^2_d)$.
\end{theorem}

Equation \eqref{eqn:Hankel} implies the intertwining relation
\begin{equation*}
  H_b M_\varphi = M_{\overline{\varphi}}^* H_b
  \quad \text{ for all } b \in \Han(H^2_d), \varphi \in \mathcal{M}_d.
\end{equation*}
It follows that $\ker(H_b)$ is a closed multiplier invariant subspace of $H^2_d$ for all $b \in \Han(H^2_d)$.
The following converse is due to Richter and Sunkes \cite[Theorem 4.2]{RS16}.

\begin{theorem}
  If ${M}$ is a non-zero closed multiplier invariant subspace of $H^2_d$, then there
  exists a sequence $(b_n)$ in $\Han(H^2_d)$ such that
  \begin{equation*}
    {M} = \bigcap_n \ker(H_{b_n}).
  \end{equation*}
\end{theorem}

This result continues to hold for all normalized complete Pick spaces; see \cite[Corollary 3.8]{AHM+18} and
\cite{Hartz20}.

\subsection{$H^p$-scale}

It is possible to use the complex method of interpolation of Banach spaces to define an $H^p$-scale
($1 \le p < \infty$) for $H^2_d$ by interpolating between the weak product space $H^2_d \odot H^2_d$
and the space $\Han(H^2_d)$ of Hankel symbols; see \cite{AHM+20}.
For $p=2$, one recovers the Hilbert space $H^2_d$.
Functions in the $H^p$-space
can at most grow like $(1 - \|z\|^2)^{-1/p}$ near $\partial \mathbb{B}_d$,
and this is sharp; see \cite[Theorem 3.6]{AHM+20}.
However, many basic questions, such as an intrinsic description of the elements of the $H^p$-space,
remain open.

\subsection{Membership in $\mathcal{M}_d$}

The known characterizations of multipliers, such as Theorem \ref{thm:mult_char}, are sometimes
difficult to use in practice. Thus, one looks for simpler necessary or sufficient conditions for membership in $\mathcal{M}_d$.
Many of these involve the reproducing kernel $k_z = k(\cdot,z)$ of $H^2_d$.

It is immediate that the condition
\begin{equation}
  \label{eqn:nec_mult}
  \sup_{z \in \mathbb{B}_d} \frac{\|f k_z\|}{\|k_z\|} < \infty
\end{equation}
is necessary for membership in $\mathcal{M}_d$. Since $f(z) = \|k_z\|^{-2} \langle f k_z,k_z \rangle$,
this condition in particular implies boundedness of $f$. However, it was shown by Fang and Xia that \eqref{eqn:nec_mult}
is not sufficient for membership in $\mathcal{M}_d$ \cite{FX15}.

As for sufficient conditions, we have the following result of Aleman, Hartz, \mcc\ and Richter, which is \cite[Corollary 4.6]{AHM+17c}.

\begin{theorem}
  If $f \in H^2_d$ satisfies
  \begin{equation}
    \label{eqn:real_part}
    \sup_{z \in \mathbb{B}_d} \operatorname{Re} \langle f, k_z f \rangle < \infty,
  \end{equation}
  then $f \in \mathcal{M}_d$.
\end{theorem}

If $d=1$, then \eqref{eqn:real_part} means that the Poisson integral of $|f|^2$ is bounded, so it is also necessary.
However, for $d \ge 2$, then \eqref{eqn:real_part} is not necessary for membership in $\mathcal{M}_d$, see \cite{FX20} and
\cite[Proposition 8.1]{AHM+20a}.

\subsection{Interpolating sequences}

Interpolating sequences for $\mathcal{M}_d$ were discussed in Subsection \ref{ss:is}.
In the meantime, Aleman, Hartz, \mcc\ and Richter characterized interpolating sequences in terms of Carleson measure
and separation conditions, extending Carleson's characterization of interpolating sequences
for $H^\infty(\mathbb{D})$.
The following is the main result of \cite{AHM+17}.

\begin{theorem}\label{thm:interpolating_DA}
  A sequence $(z_n)$ in $\mathbb{B}_d$ is an interpolating sequence for $\mathcal{M}_d$
  if and only if it is separated in the pseudo-hyperbolic metric of $\mathbb{B}_d$
  and the measure
  $\sum_{n=1}^\infty (1 - \|z_n\|^2) \delta_{z_n}$
  is a Carleson measure for $H^2_d$.
\end{theorem}
Explicitly, the Carleson measure condition means that
there exists a constant $C$ such that
\begin{equation*}
  \sum_{n=1}^\infty (1 - \|z_n\|^2) |f(z_n)|^2 \le C \|f\|^2 \quad \text{ for all } f \in H^2_d.
\end{equation*}
The first proof of this result relied on the solution of the Kadison--Singer problem due to Marcus, Spielman and Srivastava \cite{MSS15}.
There is a second proof using the column-row property of the Drury--Arveson space, see \cite[Remark 3.7]{AHM+17}, \cite[Section 4]{AHM+18} and \cite[Theorem 4.5]{Hartz20}.

A sequence $(z_n)$ in $\mathbb{B}_d$ is said to be \emph{simply interpolating} for $H^2_d$ if the map
\begin{equation*}
  f \mapsto \big( (1 - \|z_n\|^2)^{1/2} f(z_n)  \big)_{n=1}^\infty
\end{equation*}
maps $H^2_d$ onto (but not necessarily into) $\ell^2$. Theorem \ref{thm:ms_is} implies
in particular that every interpolating sequence for $\mathcal{M}_d$ is simply interpolating for $H^2_d$.

It was shown by Chalmoukis, Dayan and Hartz \cite[Theorem 1.1]{CDH23} that a sequence is simply interpolating if and only if it is what is called strongly separated.
In the case of $H^2(\mathbb{D})$, simply interpolating and (multiplier) interpolating sequences agree.
If $d \ge 2$, then there are simply interpolating sequences for $H^2_d$ that are not interpolating for $\mathcal{M}_d$ \cite[Theorem 1.2]{CDH23}.

Appropriate versions of the results discussed in this subsection hold for all normalized complete Pick spaces.

\subsection{Henkin theory and peak interpolation}
\label{ss:Henkin}

Henkin measures for $\mathcal{M}_d$ are complex regular Borel measures $\mu$ on $\partial \mathbb{B}_d$
that, very roughly speaking, can be thought of as being absolutely continuous
with respect to the multiplier algebra of $H^2_d$.
More precisely, $\mu$ is said to be \emph{$\mathcal{M}_d$-Henkin} if whenever $(p_n)$ is a sequence
of polynomials that is bounded in the multiplier norm of $H^2_d$ and that converges to $0$
pointwise on the open ball, then $\lim_{n \to \infty} \int_{\partial \mathbb{B}_d} p_n \, d \mu = 0$.
This is the same as demanding that there is a weak-$*$ continuous linear functional
on $\mathcal{M}_d$ that agrees with integration against $\mu$ on $\mathcal{A}_d$.

It is a consequence of the F.\ and M. Riesz theorem that if $d=1$, then a measure
is Henkin if and only if it is absolutely continuous with respect to Lebesgue measure
on the circle. In several variables, there is an extensive theory
of Henkin measures for $H^\infty(\mathbb{B}_d)$; see \cite[Chapter 9]{RudinBall}.
For the Drury--Arveson space, they were introduced by Clou\^atre and Davidson \cite{CD16}.
Every $H^\infty(\mathbb{B}_d)$-Henkin measure is $\mathcal{M}_d$-Henkin,
but the converse may fail \cite{Hartz17}.

Henkin measures play a role in the description of the dual space of the algebra $\mathcal{A}_d$.
We say that a measure $\mu$ is \emph{$\mathcal{M}_d$-totally singular} if it is singular
with respect to every $\mathcal{M}_d$-Henkin measure.
The space of $\mathcal{M}_d$-totally singular measures will be denoted by $\operatorname{TS}$.
We also let $(\mathcal{M}_d)_*$ be the subspace of $\mathcal{A}_d^*$ consisting of those
functionals that extend weak-$*$ continuously to $\mathcal{M}_d$.
The following result is due to Clou\^atre and Davidson; it is a combination of results in \cite[Section 4]{CD16};
see also \cite[Theorem 3.2]{DH20} for a different proof.

\begin{theorem}
  $\mathcal{A}_d^* = (\mathcal{M}_d)_* \oplus_1 \operatorname{TS}$. 
\end{theorem}

Henkin measures also come up in the context of peak interpolation.
A compact subset $E \subset \partial \mathbb{B}_d$ is said to be a \emph{peak
interpolation set} for $\mathcal{A}_d$ if for every non-zero continuous function
$g$ on $E$, there exists a function $f \in \mathcal{A}_d$ such that $f \big|_E = g$,
$\|f\|_{\mathcal{M}_d} \le \|g\|_\infty$ and $|f(z)| < \|g\|_\infty$ for all
$z \in \overline{\mathbb{B}_d} \setminus E$.
If $d=1$, then classical theorems of Rudin and Carleson show that the peak interpolation sets
for the disc algebra are precisely the compact subsets of $\partial \mathbb{D}$ of Lebesgue measure
zero.
For $d \ge 2$, the key notion is the following: A compact set $E \subset \mathbb{B}_d$ is called $\mathcal{M}_d$-totally null
if $|\mu(E)| = 0$ for all $\mathcal{M}_d$-Henkin measures $\mu$.
The following result was shown by Davidson and Hartz \cite[Theorem 1.8]{DH20},
a slightly weaker statement was obtained earlier by Clou\^atre and Davidson \cite{CD16}.

\begin{theorem}
  A compact subset of $\partial \mathbb{B}_d$ is a peak interpolation
  set for $\mathcal{A}_d$ if and only if it is totally null.
\end{theorem}
Being a peak interpolation set is equivalent
to being an \emph{interpolation set} (the same property as peak interpolation but without norm or pointwise control)
and also to being a \emph{peak set} (peak interpolation for $g=1$);
see \cite[Theorem 1.8]{DH20}.
These results can also be approached via the noncommutative interpolation theory
developed by Blecher and others; see \cite{BC24}.

\subsection{Functional calculus}

Drury's inequality (Theorem \ref{thm:DruryVNineq}) implies that every $d$-con\-traction $T$
admits an $\mathcal{A}_d$-functional calculus, i.e.\ the obvious polynomial functional
calculus $p \mapsto p(T)$ for $T$ extends to a continuous algebra homomorphism
on $\mathcal{A}_d$.
In case $d=1$, it is a classical result of Sz.-Nagy and Foias that every contraction
$T$ without unitary direct summand even admits an
$H^\infty(\mathbb{D})$-functional calculus.
The following generalization to $d$-contractions is due to Clou\^atre and Davidson
\cite[Theorem 4.3]{CD16a}; see also \cite[Theorem 1.1]{BHM18} for a different proof.

\begin{theorem}
  Let $T$ be a $d$-contraction without spherical unitary direct summand.
  Then $T$ admits an $\mathcal{M}_d$-functional calculus, i.e.\ the polynomial
  functional calculus for $T$ extends to a weak-$*$ continuous algebra
  homomorphism on $\mathcal{M}_d$.
\end{theorem}

Proofs of this result use the theory of Henkin measures. A general
$d$-contraction $T$ decomposes into a direct sum of a spherical unitary
and a $d$-contraction without spherical unitary summand. A $d$-contraction acting
on a separable Hilbert space 
admits a weak-$*$ continuous $\mathcal{M}_d$-functional calculus
if and only if the spectral measure of the spherical unitary part is $\mathcal{M}_d$-Henkin;
see \cite[Lemma 3.1]{CD16a} or \cite[Theorem 4.3]{BHM18}.

\subsection{Ideals in $\mathcal{A}_d$}

Closed ideals in the disc algebra are described by classical results of Carleson and Rudin; see for instance \cite[Chapter 6]{Hoffman62}
for an exposition. Closed ideals in the algebra $\mathcal{A}_d$ were studied
by Clou\^atre and Davidson \cite{CD18}. Given an ideal $J \subset \mathcal{A}_d$,
let $Z(J) \subset  \overline{\mathbb{B}_d}$ be the common zero set of the functions in $J$.
Conversely, if $K \subset \overline{\mathbb{B}_d}$, we let $I(K) \subset \mathcal{A}_d$
be the ideal of all functions vanishing on $K$.

The following is \cite[Theorem 4.1]{CD18}.

\begin{theorem}
  Let $J \subset \mathcal{A}_d$ be a closed ideal and let $K = Z(J) \cap \partial \mathbb{B}_d$.
  Then
  \begin{equation*}
    J = I(K) \cap \widetilde{J},
  \end{equation*}
  where $\widetilde{J}$ is the weak-$*$ closure of $J$ in $\mathcal{M}_d$.
\end{theorem}

In turn, weak-$*$ closed ideals (which are the same as WOT-closed ideals) are in one-to-one correspondence
with closed multiplier invariant subspaces of $\mathcal{M}_d$; see Theorem \ref{thm:complete_lattice_iso}
and also \cite[Remark 3.5]{AHM+ar}.

\subsection{Boundary behavior and potential theory}

A classical theorem of Fatou shows that every function in $H^2(\mathbb{D})$ has a non-tangential
limit at every point in $\partial \mathbb{D}$ outside of a set of linear Lebesgue measure zero;
see for instance \cite[Section II.3]{Garnett}.
In several variables, Kor\'anyi's theorem shows that every function
in the Hardy space on the ball has a non-tangential limit at every point in $\partial \mathbb{B}_d$
outside of a set of surface measure zero; in fact, one can take Kor\'anyi limits,
which are more general than non-tangential limits;
see \cite[Theorem 5.6.4]{RudinBall}.

Since $H^2_d$ is contained in the Hardy space on the ball, Kor\'anyi's theorem applies to functions in $H^2_d$, but much more can be said in this case.
It turns out that the sharp version of Fatou's theorem involves a suitable notion of capacity.
The following definitions and results are all contained in work of Chalmoukis and Hartz \cite{CH24}.
Briefly, the \emph{energy} of a regular Borel probablity measure $\mu$ on $\partial \mathbb{B}_d$
is defined to be
\begin{equation*}
  \mathcal{E}(\mu) = \sup_{0 \le r < 1} \int_{\partial \mathbb{B}_d} \int_{\partial \mathbb{B}_d} \operatorname{Re} K(rz,rw) \, d \mu(w) d \mu(z) \in [0,\infty],
\end{equation*}
where $K(z,w) = \frac{1}{1 - \langle z,w \rangle }$ is the reproducing kernel of $H^2_d$.
This is the same as the square of the norm of the densely defined
integration functional $f \mapsto \int_{\partial \mathbb{B}_d} f \, d \mu$.
The \emph{capacity} of a compact set $E \subset \partial \mathbb{B}_d$ is then defined to be
\begin{equation*}
  \operatorname{cap}(E) = \sup\{ \mathcal{E}(\mu)^{-1}: \mu \text{ is a probability measure supported on }E \}.
\end{equation*}
In particular, $E$ has capacity zero if and only if $E$ does not support a probability measure of finite energy.
From there, one can extend the definition of capacity to arbitrary Borel sets in a standard manner,
in particular by approximation by compact sets from within.

Fatou's / Kor\'anyi's theorem then takes the following form in $H^2_d$.

\begin{theorem}
  If $f \in H^2_d$, then there exists a Borel set $E \subset \partial \mathbb{B}_d$ of capacity zero
  such that $f$ has a Kor\'anyi (in particular non-tangential) limit at every point
  in $\partial \mathbb{B}_d \setminus E$.
\end{theorem}

The capacity zero condition is sharp in the following sense.

\begin{theorem}
  If $E \subset \partial \mathbb{B}_d$ is a compact set of capacity zero, then there exists $f \in H^2_d$
  with $\lim_{r \nearrow 1} |f(r \zeta)| = \infty$ for all $\zeta \in E$.
\end{theorem}

The notion of capacity is related to totally null sets (see Subsection \ref{ss:Henkin}) in the following way.

\begin{theorem}
  A Borel set $E \subset \partial \mathbb{B}_d$ has capacity zero if and only if it is $\mathcal{M}_d$-totally null.
\end{theorem}

The capacity also plays a role in the context of cyclic functions in $H^2_d$.

\subsection{Other representations of the Drury--Arveson space}
In Subsection \ref{sec:BS}, we saw several Sobolev-type norms that are equivalent to the $H^2_d$-norm.
Using slightly more complicated differential operators, one can actually obtain a Sobolev-type norm that
is equal (and not just equivalent) to the $H^2_d$-norm; see \cite[Theorem 6.1]{AMP+19} of
Arcozzi, Monguzzi, Peloso and Salvatori.

In the classical theory of the Hardy space on the disc, it is sometimes convenient to work with the upper half plane
instead of the unit disc. In several variables, the unit ball is biholomorphically equivalent to the Siegel upper half space,
which plays the role of the upper half plane. The theory of the Drury--Arveson space on the Siegel upper half space was developed by Arcozzi, Chalmoukis, Monguzzi, Peloso and Salvatori \cite{ACM+21}.

\subsection{Embedding dimension}

Theorem \ref{thm:compPickUniversal}, due to Agler and \mcc, shows that every separable
irreducible complete Pick space embeds into $H^2_d$ for some $d \in \mathbb{N} \cup \{\infty\}$.
For many spaces, such as the Dirichlet space on the unit disc, one has to take $d = \infty$;
see \cite{Rochberg16} and \cite[Corollary 11.9]{Hartz17a}.
For the Dirichlet space, this remains true if one merely wants to realize the multiplier algebra
algebraically as one of the algebras $\mathcal{M}_V$ introduced in Subsection \ref{subsec:quot_var}; see \cite{Hartz22}. 
See \cite{MironovThesis} for a related study.

\subsection{Spaces of Dirichlet series}

The Drury--Arveson space $H^2_d$ consists of holomorphic functions in $d$ variables.
Especially in the case when $d = \infty$, this complicates function theoretic approaches to $H^2_d$.
M\textsuperscript{c}Carthy and Shalit showed that even in case $d = \infty$, the space $H^2_d$
is weakly isomorphic to a Hilbert space of Dirichlet series, whose elements are holomorphic functions of one complex
variable; see \cite{MS15}.



\makeatletter
\newcounter{mybibitem}
\def\bibitem#1{\stepcounter{mybibitem}\@lbibitem[A\the\value{mybibitem}]{#1}}
\makeatother

\renewcommand{\refname}{References for appendix}

\end{document}